\documentclass[aap,preprint]{imsart}

\RequirePackage{amsthm,amsmath,amsfonts,amssymb}
\RequirePackage[numbers]{natbib}
\RequirePackage[colorlinks,citecolor=blue,urlcolor=blue]{hyperref}%% uncomment this for coloring bibliography citations and linked URLs
\RequirePackage{graphicx}%% uncomment this for including figures

\startlocaldefs
\numberwithin{equation}{section}
\theoremstyle{plain}
\newtheorem{theorem}{Theorem}[section]
\newtheorem{proposition}[theorem]{Proposition}
\newtheorem{corollary}{Corollary}[theorem]
\newtheorem{lemma}[theorem]{Lemma}
\theoremstyle{definition}
\newtheorem{assumption}{Assumption}

\newtheorem*{remark}{Remark}
\usepackage{algorithmicx}
\usepackage{listings}
\usepackage{xspace}
\usepackage{algorithm}
\usepackage{algpseudocode}
\newcommand{\R}{\mathbb{R}}
\newcommand{\E}{\mathbb{E}}
\renewcommand{\L}{\mathcal{L}}
\newcommand{\half}{\frac{1}{2}}

\usepackage[dvipsnames]{xcolor}
\newcommand{\mhc}[1]{}%{[{\color{Cerulean} #1}]} % mhc comments
\newcommand{\todan}[1]{}%{{\bf[{\color{BurntOrange}Dan: #1}]}} % comments for dan to address
\newcommand{\todo}[1]{}%{[\textit{{\color{CadetBlue} #1}}]} % 
\newcommand{\redan}[1]{}%{{\itshape\color{ForestGreen}[Re: #1]}}
\endlocaldefs

\begin{document}

\begin{frontmatter}
%%%%%%%%%%%%%%%%%%%%%%%%%%%%%%%%%%%%%%%%%%%%%%
%%                                          %%
%% Enter the title of your article here     %%
%%                                          %%
%%%%%%%%%%%%%%%%%%%%%%%%%%%%%%%%%%%%%%%%%%%%%%
\title{A modified Euler-Maruyama method to simulate a one-dimensional sticky diffusion}
\runtitle{Simulating 1D sticky diffusions}

\begin{aug}
%%%%%%%%%%%%%%%%%%%%%%%%%%%%%%%%%%%%%%%%%%%%%%%
%% Only one address is permitted per author. %%
%% Only division, organization and e-mail is %%
%% included in the address.                  %%
%% Additional information such as            %%
%% identifying the corresponding author must %%
%% be included in in the Acknowledgments     %%
%% section if necessary.                     %%
%% ORCID can be inserted by command:         %%
%% \orcid{0000-0000-0000-0000}               %%
%%%%%%%%%%%%%%%%%%%%%%%%%%%%%%%%%%%%%%%%%%%%%%%
\author[A]{\fnms{Chenqi}~\snm{Jiang}\ead[label=e1]{chenqijiang2028@u.northwestern.edu}}
\author[B]{\fnms{Miranda}~\snm{Holmes-Cerfon}\ead[label=e2]{holmescerfon@math.ubc.ca}}
%%%%%%%%%%%%%%%%%%%%%%%%%%%%%%%%%%%%%%%%%%%%%%
%% Addresses                                %%
%%%%%%%%%%%%%%%%%%%%%%%%%%%%%%%%%%%%%%%%%%%%%%
\address[A]{Department of Industrial Engineering and Management Sciences, Northwestern University\printead[presep={ ,\ }]{e1}}

\address[B]{Department of Mathematics, University of British Columbia\printead[presep={,\ }]{e2}}
\end{aug}

\begin{abstract}
    A sticky diffusion is a process that can stick to and detach from a lower-dimensional boundary. A challenge in simulating such a process is in capturing the change in dimension in a dynamically consistent way. We introduce a numerical algorithm to simulate a one-dimensional sticky diffusion, which sticks to and detaches from a point. Our method is a simple modification of the standard Euler-Maruyama scheme, which chooses with some probability between a reflected Euler-Maruyama update and a jump to the sticky point. We show how to choose this probability to be consistent with the generator of the desired dynamics, and we prove that our scheme converges weakly to a sticky diffusion with order 1. 
\end{abstract}

%\begin{keyword}[class=MSC]
%\kwd[Primary ]{???}
%\kwd{???}
%\kwd[; secondary ]{???}
%\end{keyword}
%
%\begin{keyword}
%\kwd{???}
%\kwd{???}
%\end{keyword}

\end{frontmatter}
%%%%%%%%%%%%%%%%%%%%%%%%%%%%%%%%%%%%%%%%%%%%%%
%% Please use \tableofcontents for articles %%
%% with 50 pages and more                   %%
%%%%%%%%%%%%%%%%%%%%%%%%%%%%%%%%%%%%%%%%%%%%%%
\tableofcontents

%%%%%%%%%%%%%%%%%%%%%%%%%%%%%%%%%%%%%%%%%%%%%%
%%%% Main text entry area:

\newpage 

\section{Introduction}

Sticky diffusions are diffusion processes that can spend a nonzero amount of time on a lower-dimensional boundary. The simplest example is a sticky Brownian motion, which behaves as a Brownian motion away from the origin, but is slowed down at the origin in such a way that it spends a positive measure of time there. A more general sticky diffusion solves a stochastic differential equation (SDE) in an ambient space, but it can stick to a boundary, where it can move according to an SDE that can be different from that in the ambient space. 

Sticky diffusions have been studied mathematically since the pioneering work of Feller in the 1950s \cite{Feller.1952,Feller.1957}, who aimed to classify all possible boundary behaviours of a one-dimension diffusion process. His work spurred subsequent developments in both probability and PDE theory \cite{Venttsel.1959,Ikeda.1961,Ito.1963,Stroock.1971,Watanabe.1971}; see also the  mathematical history by Peskir \cite{Peskir.2015}. 

Somewhat neglected for a few decades, sticky diffusions have regained attention recently because they have been found to be useful models in a wide variety of applications. In finance, they are used to model financial variables whose values may linger near certain threshholds \cite{Anagnostakis.2025},  such as Japanese interest rates during 1995-2004 \cite{Kabanov.2007}, US interest rates following the 2008 economic crisis \cite{Nie.2020}, and US Treasury bill yields in the 1930s  \cite{Longstaff.1992}, which all spent long times hovering near zero. In physics and chemistry, sticky diffusion processes can model the dynamics of small  particles in a fluid with short-ranged adhesive interactions, where particles may form transient bonds \cite{Baxter.1968,Stell.1991,Miller.2004mws,Meng.2010,Holmes-Cerfon.2013,Perry.2015}. 
Biology has found a range of applications of sticky diffusions, including in cell biology, where they have been used to study the capture and release of molecules diffusing on a cell surface \cite{Gandolfi.1985,Bressloff.2023}; epidemiology, where the evolution of pathogen load can have a sticky boundary at zero to capture the nonzero probability of an individual being not infected \cite{Calsina.2012,Pugliese.2018}; and ecology, where they have recently been evoked as a mechanism to sustain the distribution of biomass and in particular the abundance of rare species \cite{Nes.2024}. 
Operations researchers have used sticky diffusions in queuing models and storage processes \cite{Harrison.2016}, and have recently generalized such ideas to networks, modelling e.g. traffic flow  or data transmission, where sticky boundary conditions at the vertices of a network can capture congestion effects \cite{Berry.2025}.
Sticky diffusions have also found a range of applications in mathematical coupling techniques, used to prove convergence rates of sampling algorithms and solutions to  stochastic differential equations \cite{Howitt:2007,Eberle.2019,Durmus.2024}.

Given their burgeoning areas of application, it is natural to ask how to simulate a sticky diffusion. 
Yet, doing so is a challenge because of the need to capture a singular measure on the boundary. Standard methods for simulating SDEs, such as the Euler-Maruyama method \cite{Kloeden.1992}, are not straightforward to adapt, as they take random steps in space so they will never lie exactly on the boundary. Inspired by interacting particle systems, one can approximate a sticky boundary condition by applying a strong, short-ranged force to keep a process near the boundary \cite{Peters.2002qkj,Fantoni.2006,Bou-Rabee.2020}, but such a method is not efficient as resolving the force requires time steps much smaller than the timescales of interest.

An alternative idea, and the one which has seen the most development recently, is to take random steps in \emph{time}, and move on a set of pre-defined points in space, leading to a continuous-time Markov chain  approximation \cite{Bou-Rabee.2018}. This idea was first proposed to simulate a sticky Brownian motion in  \cite{Bou-Rabee.2020}, and it was later extended to more general one-dimensional \cite{Meier.2021fag}  and multidimensional \cite{Meier.2023} sticky diffusions with flat boundaries. While this method works effectively for low-dimensional problems, there remain a few drawbacks, particularly for higher-dimensional problems. One is that to choose a direction to move in, the method requires evaluating a number of rate constants which scales at least linearly but often quadratically or higher with dimension. Two, it can be hard to find a discretization of space which leads to a valid continuous-time Markov chain. The authors in \cite{Meier.2023} propose to compute the discrete moves using the eigenvectors of the diffusion covariance matrix, but in high-dimensional problems such an eigendecomposition can become infeasible. Three, it is not clear how feasible such a method will be when applied to problems with curved boundaries, such as arise when considering distance constraints between point particles. Finally, such a method is significantly different from how diffusion processes and in particular interacting particle systems are normally simulated, making it hard to incorporate into existing simulation methodologies \cite{Anderson.2020,Thompson.2022,frenkel2023understanding}. 

A recent study made steps toward developing a perhaps more natural discretization based on the Euler-Maruyama method \cite{Sharma.2025}. This method takes a (discrete) step based on a discrete version of an Euler-Maruyama proposal step, but if a boundary is crossed then it reflects the step as for a reflecting diffusion \cite{Leimkuhler.2020er8} while simultaneously adjusting the time step to account for the slowdown near the sticky boundary. This method can simulate a sticky diffusion  where the drift and diffusion are the same in the interior and on the boundary, because such a process can be constructed from a time-change of a reflected diffusion process. Unfortunately, it does not seem easy to generalize to allow for different drift and diffusion on the boundary, because such a process cannot be constructed as an aforementioned time-changed process. It also seems a disadvantage that the method never moves exactly to the boundary, as it fails to leverage the many methods developed for simulating processes with constraints \cite{Barth.1995,Vanden-Eijnden.20069yk,Ciccotti.2007,Bou-Rabee.2014,Leimkuhler.2016}.

\emph{Our contribution.}
In this paper we make an important step toward developing a general method to simulate arbitrary sticky diffusions, by proposing a modified Euler-Maruyama method to simulate a one-dimensional sticky reflecting diffusion process, which can jump exactly to and from a boundary. The method is only a small modification of the standard reflected Euler-Maruyama scheme, but it allows the process to lie exactly on the boundary, without requiring it to be a time-change of a reflecting diffusion.  
The method works as follows: when the process is far enough away from its sticky point (which we take to be at $x=0$), the algorithm proposes updates as for the regular Euler-Maruyama (EM) algorithm with reflection. When the process is close enough to 0, it performs a reflected EM update with some probability $\lambda(x)$ depending on its current position $x$, and it moves exactly to 0 with some probability $1-\lambda(x)$.
The function $\lambda$ is chosen to make the transition operator over one step of the discrete scheme, a good approximation for the transition operator over a timestep $h$ of the continuous dynamics. 

We prove our method converges weakly with order $O(h)$. The global error arises from a combination of the local error, i.e. the error over one timestep, and the number of points in each state: near the boundary, or away from the boundary. Specifically, away from the boundary, the local error of the EM update is $O(h^2)$; the number of points in such a state is $O(h^{-1})$ leading to $O(h)$ global convergence. Near the boundary, the local error of our scheme is $O(h^{3/2})$, however the number of points near the boundary is $O(h^{-1/2})$ contributing to the same global error. The most technical part of the proof is bounding the number of points near the boundary. 

While we develop the method in detail here 
for a one-dimensional process, we anticipate the ideas can be extended to a multi-dimensional process by following the heuristics and rigorous convergence tools that we describe in some detail here. An advantage of such an extension compared with other methods, is that we propose standard moves, with the only modification being that we choose from the discrete set of moves with probabilities consistent with a sticky diffusion process. Thus, we expect such a method to be easy to incorporate into existing simulation methodologies, which allow for such discrete choices of dynamics, for example when incorporating chemical reactions.

\emph{Overview of the paper.} In Section \ref{sec:introsticky} we briefly recall the properties of one-dimensional sticky diffusion processes. We also outline some formal tools for deriving their associated forward and backward equations and for understanding their properties. In Section \ref{sec:SBM} we present our method for a sticky Brownian motion (SBM). This section gives an overview of the method, a formal overview of how to derive the function $\lambda$, and some numerical experiments, before presenting our proofs of convergence. We focus on an SBM first, because the ideas are easier to follow, and because the more general case reuses some of the results developed for an SBM. A general one-dimensional sticky diffusion is considered in Section \ref{sec:generalsticky}, following the same structure as for SBM. The main convergence results and the lemmas used to prove them are presented at the end of the section, but the proofs are saved for the Supplement as they are technically involved but use standard techniques.

%%%%%%%%%%%%%%%%%%%%%%
%%%% Intro to sticky diffusions      %%%%
%%%%%%%%%%%%%%%%%%%%%%

\section{A brief introduction to one-dimensional sticky diffusion processes}\label{sec:introsticky}

In this section we give an overview of the key equations characterizing one-dimensional sticky diffusions. Our overview will be mainly formal, with the goal of providing accessible ways of writing down the equations characterizing with such diffusions. 

\subsection{Stochastic differential equation}
A sticky diffusion process is the weak solution to a particular stochastic differential equation (SDE). Though not the main tool we use to characterize such processes, we present the SDE here for completeness. 

We consider a process which, on domain $D=(0,\infty)$, behaves as a diffusion process with drift $b(x)$ and diffusion $\sigma(x)$. At the boundary, $\partial D = \{0\}$, the process is delayed before it is reflected, in such a way that it cumulatively spends a finite, nonzero amount of time there. The process can be constructed as the solution to the following system of stochastic differential equations \cite{Ikeda.1981}:
\begin{subequations}\label{eq:SDE}
\begin{align}
dX_t &= b(X_t)1_D(X_t)dt + \sigma(X_t)1_D(X_t) dW_t + dL_t,\label{sde_a}\\
\frac{\sigma^2(X_t)}{2}1_{\partial D}(X_t)dt &= \kappa dL_t. \label{sde_b}
\end{align}
\end{subequations}
Here $1_A(\cdot)$ is the indicator function for set $A$, $W_t$ is a one-dimensional Brownian motion, and $L_t$ is the local time process, a nondecreasing process which increases only when $X_t$ is on the boundary, as
\[
L_t = \int_0^t 1_{\partial D}(X_s)dL_s. 
\]
The constant $\kappa>0$ characterizes how ``sticky'' the origin is. Larger $\kappa$ implies the origin is stickier, while $\kappa=0$ is not at all sticky and corresponds to a purely reflected process. 

The existence and uniqueness of weak solutions to \eqref{eq:SDE} is shown in \cite[Theorem 7.2]{Ikeda.1981}, where a sufficient condition is shown to be that $\sigma,b$ are bounded and Lipschitz continuous on $(0,\infty)$ with $\sigma(0)^2>0$. (This reference shows existence and uniqueness for a general $d$-dimensional sticky diffusion on a half space but we specialize the equation and remarks here to a one-dimensional process.) 

\subsection{Backward equation}
The key object we will work with to show weak convergence of our numerical scheme  is the backward equation associated with \eqref{eq:SDE}. 
We start by recalling the generator for \eqref{eq:SDE}  is the differential operator $\L$ given by 
\begin{equation}\label{eq:L}
    \L f(x) = b(x)\partial_x f + a(x) \partial_{xx}f.
\end{equation}
As usual $a(x) = \half \sigma^2(x)$. The sticky boundary implies a restriction on the domain of the generator, formulated as the sticky boundary condition \cite{Ikeda.1981}
\begin{equation}\label{eq:BC}
    %a(x)\partial_x f = \kappa \L f \qquad \text{at } x=0.
    a\partial_x f = \kappa \L f \qquad \text{at } x=0.
\end{equation}
The backward equation is thus characterized by the following. 

\begin{proposition}
    Let $\phi:[0,\infty)\to \R$ be a bounded function, let $T>0$, and suppose the backward equation 
    \begin{subequations}\label{eq:back}
    \begin{align}
        \partial_s u + \mathcal L u &= 0, & (x\geq 0) \label{back_sde}\\
        a\partial_x u &= \kappa \L u & (x=0) \label{back_bc}\\
        u(T,x) &= \phi(x). \label{back_terminal}
    \end{align}
    \end{subequations}
    has a solution $u:[0,T]\times[0,\infty)\to \R$ which is such that $u,\partial_tu,\partial_xu,\partial_{xx}u$ are continuous and bounded on $[0,T]\times[0,\infty)$, and $\sigma\partial_xu$ is bounded on that same domain. 
    Then $u$ has the representation $u(t,x) = \E[\phi(X_T)|X_t=x]$, where $(X_t)_{t\geq 0}$ is a weak solution to \eqref{eq:SDE}. 
\end{proposition}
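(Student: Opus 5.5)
Apply Itô's formula to $u(s,X_s)$ on $[t,T]$, where $X$ is a weak solution of \eqref{eq:SDE} started at $X_t=x$, and show that $u(s,X_s)$ is a martingale. Taking expectations and using the terminal condition \eqref{back_terminal} then gives $u(t,x)=\E[u(T,X_T)\mid X_t=x]=\E[\phi(X_T)\mid X_t=x]$.

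Since $u\in C^{1,2}$ up to the boundary, with bounded derivatives on $[0,T]\times[0,\infty)$, and $X$ is a continuous semimartingale, Itô's formula gives
\begin{equation*}
du(s,X_s) = \partial_s u\,ds + \partial_x u\,dX_s + \tfrac12 \partial_{xx}u\, d\langle X\rangle_s .
\end{equation*}
Because $L$ has finite variation, \eqref{sde_a} gives $d\langle X\rangle_s = \sigma^2(X_s)1_D(X_s)\,ds$. Substituting $dX_s$ and splitting $ds = 1_D(X_s)ds + 1_{\partial D}(X_s)ds$ yields
\begin{equation*}
du = \bigl(\partial_s u + \L u\bigr)1_D(X_s)\,ds + \partial_s u\,1_{\partial D}(X_s)\,ds + \sigma\partial_x u\,1_D(X_s)\,dW_s + \partial_x u\,dL_s .
\end{equation*}
The first term vanishes by \eqref{back_sde}.

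The boundary terms are handled as follows. Since $L$ increases only when $X_s=0$, \eqref{sde_b} gives $dL_s = \kappa^{-1} a(0) 1_{\partial D}(X_s)\,ds$. Hence
\begin{equation*}
\partial_x u(s,0)\,dL_s = \kappa^{-1} a(0)\partial_x u(s,0)\,1_{\partial D}(X_s)\,ds = \L u(s,0)\,1_{\partial D}(X_s)\,ds,
\end{equation*}
where the last equality is the boundary condition \eqref{back_bc}. The boundary contribution is therefore $(\partial_s u + \L u)(s,0)\,1_{\partial D}(X_s)\,ds$. This vanishes because \eqref{back_sde} holds at $x=0$ as well: the equation is assumed for $x\ge 0$, and in any case it extends to the boundary by continuity of $\partial_s u$ and $\L u$.

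This identification of the local-time term with $\L u$ via the sticky condition is the main step. The points needing care are that $\sigma(0)^2>0$ and $\kappa>0$, so that \eqref{sde_b} can be inverted, and that Itô's formula can be applied to a function that is only defined and $C^2$ on the closed half-line. For the latter I would extend $u$ to a $C^2$ function on $\R$, for instance by reflecting the second-order Taylor polynomial across $0$. Since $X_s\ge 0$, the extension never matters.

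What remains is $du = \sigma\partial_x u\,1_D(X_s)\,dW_s$. Because $\sigma\partial_x u$ is bounded by hypothesis, this stochastic integral is a true square-integrable martingale, not merely a local one. Hence $\E[u(T,X_T)-u(t,X_t)\mid X_t=x]=0$, which is the claim. The identity holds for any weak solution, and boundedness of $\phi$ and $u$ makes all the expectations finite.
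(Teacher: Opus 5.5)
Your proposal is correct and follows the paper's proof essentially step for step. Both arguments apply It\^{o}'s formula, convert $dL_s$ into $\kappa^{-1}a(0)1_{\partial D}(X_s)\,ds$ via \eqref{sde_b}, cancel the boundary term with the sticky condition \eqref{back_bc} and the bulk term with \eqref{back_sde}, and use boundedness of $\sigma\partial_x u$ for the martingale property; the paper only groups terms differently, as $1_D\mathcal{L}u=\mathcal{L}u-1_{\partial D}\mathcal{L}u$, rather than splitting $\partial_s u$. Your extra care about applying It\^{o}'s formula up to $x=0$ goes beyond the paper, but the second-order Taylor extension in $x$ with $t$-dependent coefficients is $C^1$ in $t$ only if $\partial_x u(t,0)$ and $\partial_{xx}u(t,0)$ are differentiable in $t$, which is not assumed. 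A reflection $\tilde u(t,x)=\sum_{j=1}^3 c_j u(t,-jx)$ for $x<0$, with $\sum_j c_j(-j)^k=1$ for $k=0,1,2$, avoids this and uses only the stated regularity.
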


\begin{proof}
    Let $u$ solve \eqref{eq:back}. Using It\^{o}'s formula, rewriting $1_D\mathcal L u = \mathcal L u-1_{\partial_D}\mathcal L u$, and using \eqref{sde_b}, we evaluate: 
    \begin{multline*}
    u(T,X_T) \\= u(t,X_t) + \int_t^T (\partial_s u + 1_D\mathcal L u)(s,X_s)ds + \int_t^T (\sigma 1_D\partial_x u)(s,X_s)dW_s
                + \int_t^T \partial_x u(s,X_s)dL_s\\
              =   u(t,X_t) + \int_t^T (\partial_s u + \mathcal L u)(s,X_s)ds + \int_t^T (\sigma 1_D\partial_x u)(s,X_s)dW_s 
                + \int_t^T 1_{\partial D}(-\mathcal Lu + \frac{a}{\kappa}\partial_x u)(s,X_s))ds.
    \end{multline*}
    The deterministic integrals exist because of the regularity assumptions on $u$ and its derivatives, and the stochastic integral is a martingale because of the regularity assumption on $\sigma \partial u$.
    
    The first integrand is identically zero, by \eqref{back_sde}. The last integrand is identically zero, by the sticky boundary condition \eqref{back_bc}. Taking $\E[\cdot | X_t = x]$ shows that $\E[u(X_T,T) | X_t = x] = u(x,t)$. Then we apply the terminal condition \eqref{back_terminal} to obtain the result.
\end{proof}

% \todan{please check you are ok with this proof}
% \redan{I am okay with this proof, modulo a few small but important cleanups. We shall use $u(t,X_t)$ instead of $u(X_t,t)$ to be consistent with \eqref{eq:back}. We should fix the typo in the stochastic-integral limits, use the argument order $u(t,x)$ consistently, and make the regularity assumptions explicit. Bounded $\phi$ is not enough for Itô proof. I propose we let $\phi:[0,\infty)\to\mathbb R$ be bounded. Assume that the backward problem
% \eqref{eq:back} admits a solution $u$ with terminal condition $u(T,x)=\phi(x)$,
% such that $u,\partial_tu,\partial_xu,\partial_{xx}u$ are continuous and bounded (so that deterministic integrals in ito's lemma are valid)
% on $[0,T]\times[0,\infty)$ and that $\sigma\partial_xu$ is bounded (the stochastic integral is a martingale with expectation 0).}
% \todan{I changed all of this, except the typo mentioned -- can you change this? I couldn't see a typo. Please check you are happy with the statement as is. }

\subsection{Stationary distribution}

One way to see the role of parameter $\kappa$ in determining the stickiness of the process \eqref{eq:SDE} is to compute the stationary distribution. 
We will show (formally) that a locally stationary distribution is
\begin{equation}\label{eq:pi}
    \pi(x) = \tilde\pi(x)(1 + \kappa \delta(x)),
\end{equation}
where $\tilde\pi$ is a locally stationary distribution for a Markov process with generator \eqref{eq:L} assuming reflecting boundary conditions. 
That is, $\tilde\pi$ satisfies $\L^*\tilde\pi = 0$, where $\L^*$ is the formal adjoint of $\L$:
\begin{equation}\label{eq:Lstar}
    \L^* p = \partial_x(-b(x)p + \partial_x(a(x)p)). 
\end{equation}
Furthermore we have $-b\pi +\partial_x(a\pi)|_{x=0,\infty}=0$ by the reflecting boundary condition on $\tilde\pi$. 
Hence, the $\kappa$ is directly proportional to the atom of probability on the boundary.

We use a weak formulation of the usual condition $\mathcal L^*\pi=0$ for a stationary distribution, which asks to show that $\langle \pi,\L f\rangle=0$ for all $f$ in the domain of the generator, where $\langle f ,g\rangle=\int_0^\infty fgdx$ is the $L_2$ inner product on $\bar D$. %(This is the weak form of the equation $\partial_t p= 0$, for a probability density $p$ evolving as $\partial_tp = \L^*p$.) 
We make use of an easy calculation: 
\begin{equation}\label{eq:LLstar}
    p\L f = f\L^* p + \partial_x\Big(f\big(pb - \partial_x(pa)\big) + pa\partial_x f\Big).
\end{equation}
Now we may calculate: 
\begin{align*}
    \langle \pi,\L f\rangle &= \tilde \pi\kappa \L f\big|_{x=0} 
    + \Big(\tilde \pi a \partial_x f + f(\tilde\pi b - \partial_x(\tilde \pi a) \Big)\Big|_{x=0}^\infty
    + \int_0^\infty f\L^*\tilde\pi dx\\
    &= \tilde \pi(\kappa \L f-a \partial_x f )\\
    &= 0.
\end{align*}
The last two terms in the first line were zero because of the condition that $\tilde\pi$ was a locally stationary distribution for $\mathcal L^*$. 
The final terms vanished by the boundary condition \eqref{eq:BC}.

\subsection{Derivation via conservation of probability}

We'll now consider an alternative formulation of the evolution equations for a sticky diffusion, in terms of the evolution of its probability measure. This formulation can be derived from simple considerations of conservation of probability. Consider a process that has probability density $p_1(x,t)$ for $x>0$, and which has an atom of probability $p_0(t)$ at 0, so the total density of the process can be written as 
\[
\rho(x,t) = p_1(x,t)1(x>0) + p_0(t)\delta(x).
\]
Suppose $p_1$ evolves with the diffusion dynamics described by the operator $\mathcal L^*$ in \eqref{eq:Lstar}:
\[
\partial_t p_1 = \mathcal L^* p_1.
\]
Let's write $j = bp_1 - \partial_x(ap_1)$ for the flux associated with $\mathcal L^*$, so that $\mathcal L^*p_1 = -\partial_x j$. 

Probability is exchanged with the atom via flux from the exerior:
\[
\partial_t p_0 = -j|_{x=0}.
\]
One can verify this condition conserves the total probability $\int_0^\infty \rho dx$. 

We need one additional condition to close this system and obtain a well-posed system of equations for $p_0,p_1$. One such condition is 
\[
p_0(t) = \kappa p_1(0,t)
\]
for some constant $\kappa>0$.
This condition was shown in \cite{Holmes-Cerfon.2013,Bou-Rabee.2020} to be consistent with the stickiness near zero arising from the limit of a strong, short-ranged force pushing the diffusion process towards 0. 

Putting this together and using that $\partial_t p_0=\kappa\partial_t p_1|_{x=0}  = \kappa \mathcal L^* p_1|_{x=0}$, shows the probability density is 
\begin{equation}\label{eq:rho}
\rho(x,t) = p_1(x,t)(1+\kappa \delta(x))
\end{equation}
where $p_1$ solves
\begin{equation}\label{eq:FP}
\partial_t p_1 = \mathcal L^* p_1 \quad (x\geq 0), \qquad \kappa \mathcal L^* p_1 = -j \quad (x=0),
\end{equation}
with $j = bp_1 - \partial_x(ap_1)$.

\subsection{Equivalence of the forward and backward equations}

Finally, let's show that the operator $\mathcal L^*$ in \eqref{eq:FP} with the given boundary condition, is the (formal) adjoint of $\mathcal L$ with boundary condition \eqref{eq:BC}. We wish to show that 
\begin{equation}\label{eq:adjoint}
\langle f,\partial_t \rho\rangle = \langle \L f,\rho\rangle
\end{equation}
for all test functions $f$ satisfying \eqref{eq:BC}, and for all $\rho$ of the form \eqref{eq:rho} where $p_1$ is a test function (infinitely differentiable with compact support). 
This is a weak form of the usual condition $\langle f,\L^*\rho\rangle = \langle \L f,\rho\rangle$, and is more suited to handling $\delta$-functions. 
We calculate
\[
\langle f,\partial_t \rho\rangle = \int_0^\infty f\partial_t p_1 dx \;\;+ f\kappa\partial_t p_1\Big|_{x=0},
\]
and, using \eqref{eq:LLstar},
\begin{align*}
\langle \L f,\rho\rangle &= \int_0^\infty f\L^* p_1dx \;\; - \Big(fj + p_1a\partial_x f  \Big)\Big|_{x=0}
\; + \kappa p_1\L f\Big|_{x=0}\\
&= \int_0^\infty f\L^* p_1dx - fj\Big|_{x=0}\;\; ,
\end{align*}
by the sticky boundary condition \eqref{eq:BC}.
Thus, we can see that \eqref{eq:adjoint} holds, exactly when $p_1$ satisfies \eqref{eq:FP}.

%%%%%%%%%%%%%%%%%%%%%%
%%%%              SBM          %%%%
%%%%%%%%%%%%%%%%%%%%%%

\section{Sticky Brownian motion}\label{sec:SBM}

In this section we present our algorithm for simulating a sticky Brownian motion (SBM), which has drift $b(x)=0$ and diffusion $\sigma(x)=1$.  We first give a high-level description of the algorithm and our numerical experiments (Section \ref{sec:SBMoverview}), then we show how to formally derive the algorithm, using simple Taylor expansions (Section \ref{sec:lambdaSBM}). In Section \ref{sec:converge1} we present our main convergence results, with proofs in Section \ref{sec:Convergence proofs_SBM}. Section \ref{sec:sufficient_condition_assumptions} proves a sufficient condition for regularity of the solution to the associated backward equation.

\subsection{Overview of the simulation algorithm}\label{sec:SBMoverview}

Consider a SBM  $\{Y_t\}_{t\in [0,T]}$, where $T$ is the total length of time.  We construct a numerical approximation $\{X_k\}_{k=1,\ldots,N}$, where $X_k$ is the value of the approximation at time $t_k=kh$, and $h$ is our time step, so that $N=T/h$ is the total number of steps. We always assume that $h$ is chosen such that $N$ is an integer. 

The numerical scheme works as follows. We divide the domain into two regions: 
\begin{align*}
    \text{boundary layer:}\quad  & \Omega_b=[0,\sqrt{3h}],\\
    \text{exterior:}\quad & \Omega_e = (\sqrt{3h},\infty).
\end{align*}
The algorithm proposes different updates, depending on whether $X_k$ is in the boundary layer or the exterior. 

Given a point in the \emph{exterior}, $X_k \in \Omega_e$,  it is updated with a standard Euler-Maruyama (EM) increment. 
Specifically, 
\begin{equation}
\label{eqn:cont_scheme_outlayer}
    X_{k+1} = X_k + Z_{k},    
\end{equation}
where $\{Z_k\}_{k=0,\ldots,N-1} \sim \mathcal{U}([-\sqrt{3h}, \sqrt{3h}])$ is a sequence of \textit{iid} uniform random variables, which have mean 0 and variance $\text{Var}(Z_k) = h$. We choose uniform random variables rather than normal random variables to simplify our later analysis. 
Notice that, because of the choice of uniform steps,  if $X_k\in \Omega_e$, then $X_{k+1} > 0$, so we do not need to consider reflection at this stage. 
%, however we expect that, as is typical for such schemes, the algorithm will perform equally well with any random variable with mean 0 and variance $h$. 

\begin{figure}
    \centering
    \includegraphics[width=0.85\linewidth]{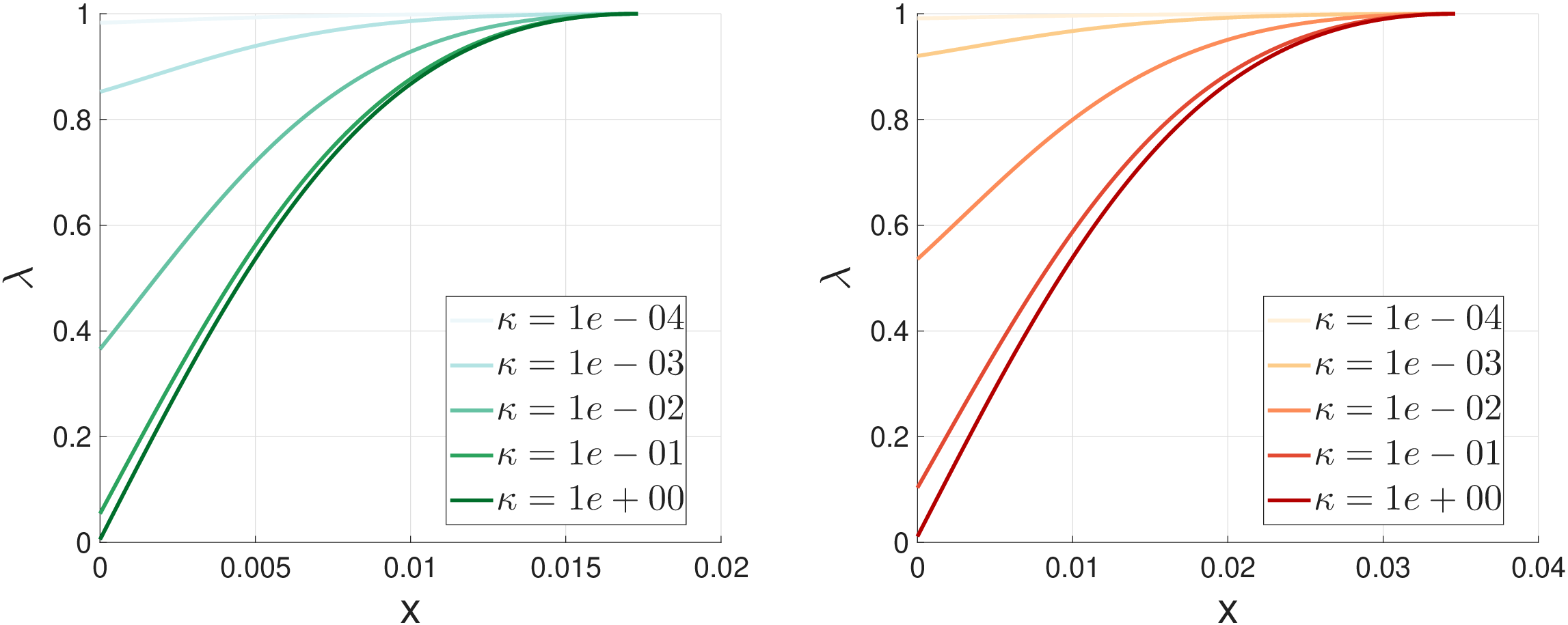}
    \caption{Plot of $\lambda(x)$ for $x\in [0,\sqrt{3h}]$ with various $\kappa$ (shown in the legends) and time step $h=10^{-4}$ (left) and $h=4\times 10^{-4}$ (right). We always have $\lambda(\sqrt{3h}) = 1$, while $\lambda(0)$ is a positive constant for all $\kappa$ which decreases to 0 as $\kappa$ increases. Increasing $h$ has the effect of increasing $\lambda(\cdot)$.
    %Notice that as $\kappa\to 0$, $\lambda(\cdot)\to 1$, and as $\kappa \to \infty$ goes to $+\infty$, $\lambda(\cdot)$ converges to a concave function.
==
    }
    \label{fig:lambda_plot}
\end{figure}

\begin{algorithm}
\caption{Approximation of SBM $X_T$ starting from $x \in [0,\infty)$.}
\label{alg:cont_scheme}
\begin{algorithmic}[1]
\State \textbf{Set} $X_0=x$.
\For{$k = 0$ \textbf{to} $N-1$}
    \State Generate $Z_k \sim \mathcal{U}([-\sqrt{3h},\sqrt{3h}])$.
    \If{$X_k > \sqrt{3h}$}
        \State Update $X_{k+1}$ using \eqref{eqn:cont_scheme_outlayer}.
    \Else \quad ($X_k \in [0,\sqrt{3h}]$)
        \State Calculate $\lambda(X_k)$ using \eqref{eqn:lambda_cont_scheme}.
        \State Generate $u \sim \mathcal{U}([0,1])$.
        \If{$u \leq \lambda(X_k)$}
            \State Update $X_{k+1} = |X_k + Z_k|$.
        \Else
            \State Set $X_{k+1} = 0$.
        \EndIf
    \EndIf
\EndFor
\end{algorithmic}
\end{algorithm}

Given a point $X_k$ in the \emph{boundary layer}, $X_k\in \Omega_b$, it does one of two things: with probability $\lambda(X_k)$ it is updated with a reflected EM increment \cite{Leimkuhler.2020er8}, and with probability $1-\lambda(X_k)$ it jumps exactly to $0$:
\begin{equation}
\label{eqn:cont_scheme_inlayer}
    X_{k+1} =
    \begin{cases} 
    \left|X_k + Z_{k}\right|, & \text{with probability  } \lambda(X_k) \\
    0, & \text{with probability  } 1-\lambda(X_k). \\
   \end{cases}
\end{equation}
We remark that this same update is performed when $X_k=0$; if the second case in \eqref{eqn:cont_scheme_inlayer} is chosen, the process simply stays at 0. 

We will show momentarily that we achieve good convergence properties when $\lambda$ is the following function: 
\begin{equation}\label{eqn:lambda_cont_scheme}
    \lambda(x) = \frac{x^2 + 2\kappa x + h}{\left(\frac{\kappa}{\sqrt{3h}}+1\right)x^2 + h + \kappa\sqrt{3h}}.
\end{equation}
Figure \ref{fig:lambda_plot} shows $\lambda(x)$ for different values of $\kappa$ and $h$. It has many sensible properties: one, $\lambda$ is an increasing function of $x$, with $\lambda(\sqrt{3h}) =1$; this is expected because when $X_k$ is further away from 0, it should have a smaller probability to jump there. Two, as $\kappa$ increases, $\lambda$ decreases, which also makes sense because when the boundary is stickier, there should be a greater chance of jumping there. 

Our algorithm is summarized in algorithm \ref{alg:cont_scheme}. Some realizations from our algorithm are illustrated in Figure \ref{fig:X1_SBM_realization}.  We can see that as sticky parameter $\kappa$ increases, realizations spend more time within the boundary layer.

\begin{figure}
    \centering
    \includegraphics[width=0.99\linewidth]{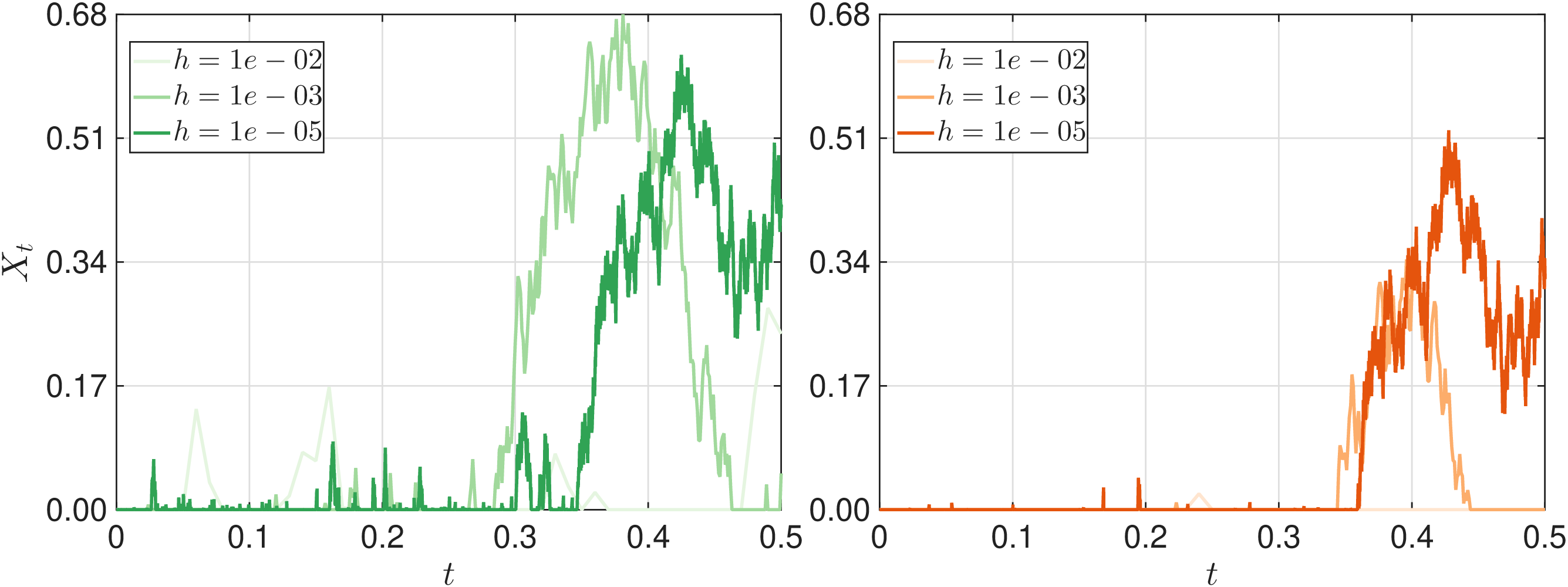}
    \caption{Simulations of an SBM with $X_0=0$ using Algorithm \ref{alg:cont_scheme} with sticky parameter  $\kappa=0.6$ (left) and $\kappa=3$ (right), and with time steps $h \in \{10^{-2}, 10^{-3}, 10^{-5}\}$ as shown in the legend.
    The same realizations for the random variables are used for both plots for the same $h$. 
    Readers can observe how stickiness causes trajectories to spend more time near the sticky boundary, before beginning excursions.
    }
    \label{fig:X1_SBM_realization}
\end{figure}

Note that it is essential that  $\lambda$ be a valid probability, i.e. that $0\leq \lambda(x)\leq 1$. 
This is not guaranteed in the derivation to follow, but is something that must be checked a posteriori. %Let's prove it for the formula \eqref{eqn:lambda_cont_scheme} above. 

\begin{lemma}
\label{lemma:bound_on_lambda_SBM}
The jump probability $\lambda$ defined in \eqref{eqn:lambda_cont_scheme} is a valid probability: 
    $\lambda(x) \in [0,1]$ for all $x \in [0, \sqrt{3h}]$.
\end{lemma}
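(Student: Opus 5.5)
Writing $\lambda(x)=N(x)/D(x)$ with
\[
N(x)=x^2+2\kappa x+h,\qquad D(x)=\Big(\frac{\kappa}{\sqrt{3h}}+1\Big)x^2+h+\kappa\sqrt{3h},
\]
the plan is to check separately that $\lambda\ge 0$ and that $\lambda\le 1$ on $[0,\sqrt{3h}]$.

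For nonnegativity, note that $\kappa>0$, $h>0$ and $x\ge 0$. Hence $N(x)\ge h>0$ and $D(x)\ge h+\kappa\sqrt{3h}>0$. So $\lambda$ is well defined (there is no vanishing denominator) and strictly positive on the whole interval.

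For the upper bound, since $D>0$ it suffices to show $D(x)-N(x)\ge 0$. The $h$ terms cancel, leaving
\[
D(x)-N(x)=\frac{\kappa}{\sqrt{3h}}x^2-2\kappa x+\kappa\sqrt{3h}=\frac{\kappa}{\sqrt{3h}}\big(x^2-2\sqrt{3h}\,x+3h\big)=\frac{\kappa}{\sqrt{3h}}\big(x-\sqrt{3h}\big)^2 .
\]
This is a nonnegative multiple of a perfect square, so $\lambda(x)\le 1$. Equality holds exactly at $x=\sqrt{3h}$, which is consistent with the remark that $\lambda(\sqrt{3h})=1$.

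In fact the bound $0<\lambda\le 1$ holds for every $x\ge 0$, not only on $[0,\sqrt{3h}]$. The only step needing care is spotting that $D-N$ factors as a perfect square. That is a short computation rather than a genuine obstacle, so I do not expect any difficulty here.
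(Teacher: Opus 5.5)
Your proof is correct and is essentially the paper's argument. The paper rewrites the denominator as $x^2+h+2\kappa x+\frac{\kappa}{\sqrt{3h}}(x-\sqrt{3h})^2$, which is exactly your identity $D-N=\frac{\kappa}{\sqrt{3h}}(x-\sqrt{3h})^2$, and then reads off $0<\lambda\le 1$ with equality only at $x=\sqrt{3h}$ (or when $\kappa=0$, which the paper also notes).
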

\begin{proof}
By factoring part of the denominator, we can write $\lambda$ as 
\[
\lambda(x) = \frac{x^2+h+2\kappa x}{x^2+h + 2\kappa x+ \frac{\kappa}{\sqrt{3h}}(x-\sqrt{3h})^2 }.
\]
From this is it clear that 
$\lambda(x)> 0$, since every term in the expression is positive. It is also clear that $\lambda(x)\leq 1$, with equality only when $x=\sqrt{3h}$ or when $\kappa=0$. 
\end{proof}

\subsection{How to choose $\lambda(x)$}\label{sec:lambdaSBM}

The key step of this algorithm is in choosing the function $\lambda$ which governs the probability of jumping directly to 0, instead of performing an EM update. This is done by solving for a $\lambda$ that makes the transition operator of the discrete scheme a good approximation to the transition operator of SBM over one timestep. In this section we derive a formula for $\lambda$ formally, using Taylor expansions of the generator for SBM. 
Our intent is to show the key steps one must follow to apply these ideas more generally; we prove rigorous convergence statements in subsequent sections. 

% \mhc{we use notation $\E_x$ for expectation with initial point $x$? consistent throughout?}\todan{will think about this}
% \redan{I think we can use $\E_x$ here, but we have to explicitly say that we write $\E_x(\cdot):=\E(\cdot\mid X_0=x)$ for expectation under the numerical scheme started from $x$.
% Since the scheme is Markov and time-homogeneous, the same notation may be used in one-step calculations at later times through the identity
% $\E\bigl(g(X_{k+1})\mid X_k=x\bigr)=\E_x\bigl(g(X_1)\bigr).$
% In the global-error analysis, however, we fix a deterministic initial condition $X_0=Y_0=x_0$ and suppress the subscript $x_0$ on unconditional expectations for notational simplicity. See comments/edit at beginning of Sec 3.3 and \eqref{eq:err_decompose}}

Let the transition operator of our numerical scheme with timestep $h$ be denoted by $P^h$; it
characterizes the evolution of statistics via 
\[
(P^h f)(x)  = \E_{x} f(X_1).
\]
We write $\mathbb{E}_x(\cdot):=\mathbb{E}(\cdot|X_{0}=x)$. 
We wish to find $\lambda$ such that $P^h$ is a good approximation for the transition operator of the true dynamics; i.e. such that 
\begin{equation}\label{eq:genapprox0}
(P^h f)(x) -f(x) \approx  \E_x \int_0^h \L f(Y_s)ds\,,
\end{equation}
where $\L f = \frac{1}{2}\partial_{xx}f$ for a SBM. 
We have that 
\[
\E_x\int_0^h  \L f(Y_s)ds\approx h(\L f)(x), 
\]
and
\begin{equation*}
\begin{split}
     (P^h f)(x) - f(x) &=  \int_0^{+\infty} p_h(y|x)\left(f(t,y)-f(t,x)\right)\;dy\\
    &\approx \partial_xf(t,x)\cdot \mathbb{E}_x(\Delta X) + \frac{1}{2}\partial_{xx}f(t,x)\cdot \mathbb{E}_x(\Delta X^2).
\end{split}
\end{equation*}
Here $p_h(y|x)$ is the transition probability to move from $x$ to $y$ in one step of the scheme, and $\Delta X := X_{1} - X_0$.
Thus, asking that the approximations for both sides of \eqref{eq:genapprox0} are within some tolerance say $\epsilon$, our task is to find a $\lambda$ such that 
\begin{equation}\label{eq:genapprox}
    |\mathbb{E}_x(\Delta X)(\partial_xf) + \frac{1}{2} \mathbb{E}_x(\Delta X^2)(\partial_{xx}f) - \frac{h}{2}(\partial_{xx}f)| \leq \epsilon.
\end{equation}
Later, we will show that choosing $\epsilon =O(h^2+hx)$ gives good convergence properties. 
% \[
% \frac{P^h f - f}{h}\approx  \L f,
% \]

% Specifically, we will look for a $\lambda$ such that 
% \begin{equation}\label{eq:genapprox}
%     |P^h f - f - h\L f| \leq C(h^2+hx)
% \end{equation}
% where $C$ is some constant, independent of $x$. The error on the right-hand side is related to the local error of the scheme, i.e. the error it makes in approximating statistics at each timestep.  

% Let us first approximate $P^hf-f$. Writing $p_h(y|x)$ for the transition probability to move from $x$ to $y$, and defining $\Delta X_k := X_{k+1} - X_k$, $\mathbb{E}_x(\Delta X_k):=\mathbb{E}(\Delta X_k|X_{k}=x)$,  we have 
% \begin{equation*}
% \begin{split}
%      P^h f - f &=  \int_0^{+\infty} p_h(y|x)\left(f(t,y)-f(t,x)\right)\;dy\\
%     &\approx \partial_xf(t,x)\cdot \mathbb{E}_x(\Delta X) + \frac{1}{2}\partial_{xx}f(t,x)\cdot \mathbb{E}_x(\Delta X^2).
% \end{split}
% \end{equation*}
% We will later show the neglected terms are $O(h^2+hx)$.

We must choose $\lambda$ such that the coefficients of $\partial_x f, \partial_{xx}f$ on the left-hand side of \eqref{eq:genapprox} are sufficiently small. This is not yet possible since there is only one term involving $\partial_xf$. Thus, our next step is to approximate $\partial_xf$ in terms of $\partial_{xx}f$, using the sticky boundary condition $\partial_xf(t,0) = \kappa \partial_{xx}f(t,0)$. Taylor-expanding $\partial_xf, \partial_{xx}f$ around $x$ shows that
\begin{align*}
        \partial_xf(t,0) &= \partial_xf(t,x) - x\partial_{xx}f(t,x) + \frac{x^2}{2}f^{(3)}(t,z_x),\\
        \partial_{xx}f(t,0) &= \partial_{xx}f(t,x) - xf^{(3)}(t,y_x) ,
\end{align*}
for some $z_x, y_x \in [0,x]$. From the sticky boundary condition, we have
\begin{equation}\label{eq:taylorsticky}
    \partial_xf(t,x) = \kappa\partial_{xx}f(t,x) +x\partial_{xx}f(t,x)- \kappa xf^{(3)}(t,y_x) - \frac{x^2}{2}f^{(3)}(t,z_x).
\end{equation}
Therefore, an approximation of $\partial_xf(t,x)$ using only $\partial_{xx}f(t,x)$ is 
\begin{equation}\label{eq:stickyapprox}
    \partial_xf(t,x) = (\kappa+x)\partial_{xx}f(t,x) + O(h+x).
\end{equation}
%
%\todan{seems like error should be $O(x+h)?$ do you agree? Maybe we should just delete the error term in \eqref{eq:stickyapprox}}
%\redan{Yes, I agree should be $O(x+h)?$, I think it would be fine to delete the big-O and replace $=$ by $\approx$, will be estimated precisely in following proofs}
We remark that to obtain an error of order $O(h+x)$, we do not need the additional term $x\partial_{xx}f(t,x)$; in fact, this term has the same magnitude as some higher-order terms that we are dropping. However, we found that without including this term, we obtained a function $\lambda$ that was not always a valid probability, i.e. $\lambda\notin[0,1]$. %It seems in principle that it would be possible to add terms of the form $\alpha(x)\partial_{xx}f(t,x)$ to the right-hand side of \eqref{eq:stickyapprox} where $\alpha \sim O(h+x)$ and still obtain a valid approximation, however we have not explored whether doing so improves the scheme. 

Substituting for $\partial_xf$ into  \eqref{eq:genapprox}, shows that we must find a function $\lambda$ such that 
\begin{equation}\label{eq:genapprox2}
    |(\kappa+x)\mathbb{E}_x(\Delta X) + \half \mathbb{E}_x(\Delta X^2) - \frac{h}{2} |\approx 0.
\end{equation}
%for some sufficiently small $\epsilon'$.

At this point we may evaluate $\mathbb{E}_x(\Delta X), \mathbb{E}_x(\Delta X^2)$, and then solve \eqref{eq:genapprox2} for $\lambda$, treating \eqref{eq:genapprox2} as an equality. We have that 
\[
p_h(y|x) = (1-\lambda)1_{\{0\}}(y) + \frac{\lambda}{\sqrt{3h}}1_{[0,x+\sqrt{3h}]}(y) 
+ \frac{\lambda}{\sqrt{3h}}1_{[0,|x-\sqrt{3h}|]}(y).
\]
From this, we can calculate
\begin{align}
\mathbb{E}_x(\Delta X_k) &=-x + \frac{\lambda\sqrt{3h}}{2} + \frac{\lambda  x^2}{2\sqrt{3h}}\,,  \nonumber\\
\mathbb{E}_x(\Delta X_k^2) &= -\frac{\lambda x^3}{\sqrt{3h}} + (1+\lambda)x^2 - \sqrt{3h}\lambda x + \lambda  h\,.  \label{eq:DelX}
\end{align}
Substituting into \eqref{eq:genapprox2} and solving for $\lambda$ gives 
gives Formula \eqref{eqn:lambda_cont_scheme}.
% \[
% \frac{\lambda}{2\sqrt{3h}}\left( (\kappa+x)(x^2+3h)  + h\sqrt{3h}  + \sqrt{3h}x^2 - 3hx- x^3\right)
% = \epsilon' + \kappa x + x^2 - \frac{x^2}{2}.
% \]
%Formula \eqref{eqn:lambda_cont_scheme} comes from setting $\epsilon=0$ and solving for $\lambda$. 

\medskip

Our calculations above have demonstrated two Lemmas, which we will use shortly in our convergence proofs. 

\begin{lemma}
\label{lemma:sticky_bc@x}
Consider a function $u(t,x)$ satisfying the sticky boundary condition at $x=0$, i.e. such that $\frac{1}{2}\partial_xu(t,0) = \kappa \partial_{xx}u(t,0)$.  
    Given $x \in \Omega_b$ and $t\in [0,+\infty)$, there exists $z_x,y_x \in [0,x]$ such that 
    \begin{equation}
    \label{eqn:sticky_bc@x}
       \partial_xu(t,x) = (\kappa + x)\cdot \partial_{xx}u(t,x) - \kappa x u^{(3)}(t,y_x) - \frac{x^2}{2}u^{(3)}(t,z_x).
    \end{equation}
\end{lemma}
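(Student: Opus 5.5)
The plan is to repeat the Taylor-expansion argument behind \eqref{eq:taylorsticky}, now with Lagrange remainders written out explicitly. I would assume $u(t,\cdot)$ is three times continuously differentiable on $[0,x]$. I would also read the sticky boundary condition in the SBM form of \eqref{eq:BC}: with $a=\half$ and $\L=\half\partial_{xx}$, the condition $a\partial_x u=\kappa\L u$ becomes $\partial_x u(t,0)=\kappa\,\partial_{xx}u(t,0)$. The factors of $\half$ cancel, and this is the normalization the conclusion \eqref{eqn:sticky_bc@x} requires. Throughout, $t$ and $x\in\Omega_b$ are fixed, and only the spatial variable is expanded. The case $x=0$ is trivial, with $y_x=z_x=0$, so I take $x>0$.

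\textbf{Step 1.} I expand $\partial_x u(t,\cdot)$ about $x$ and evaluate at $0$, using Taylor's theorem with Lagrange remainder to second order. This gives a point $z_x\in[0,x]$ with
\[
\partial_x u(t,0)=\partial_x u(t,x)-x\,\partial_{xx}u(t,x)+\frac{x^2}{2}u^{(3)}(t,z_x).
\]

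\textbf{Step 2.} I expand $\partial_{xx}u(t,\cdot)$ about $x$ to first order. By the mean value theorem there is a point $y_x\in[0,x]$ with
\[
\partial_{xx}u(t,0)=\partial_{xx}u(t,x)-x\,u^{(3)}(t,y_x).
\]

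\textbf{Step 3.} I substitute both expansions into $\partial_x u(t,0)=\kappa\,\partial_{xx}u(t,0)$. This yields
\[
\partial_x u(t,x)-x\partial_{xx}u(t,x)+\tfrac{x^2}{2}u^{(3)}(t,z_x)=\kappa\partial_{xx}u(t,x)-\kappa x\,u^{(3)}(t,y_x).
\]
Moving $x\partial_{xx}u(t,x)$ and $\tfrac{x^2}{2}u^{(3)}(t,z_x)$ to the right-hand side gives exactly \eqref{eqn:sticky_bc@x}.

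There is no genuine obstacle; the argument is pure algebra once the expansions are in place. The only points needing care are bookkeeping ones. First, the intermediate points must lie in $[0,x]$, which Lagrange's form of the remainder guarantees. Second, the regularity assumed for $u$ must include a third spatial derivative on $\Omega_b$, which is more than the backward-equation proposition assumes, so it should be stated explicitly. Third, the boundary condition must be used with the normalization $\partial_x u=\kappa\partial_{xx}u$. If the literal form $\half\partial_x u=\kappa\partial_{xx}u$ were used instead, the conclusion would carry $2\kappa$ in place of $\kappa$, so I would flag the $\half$ in the statement as a typo.
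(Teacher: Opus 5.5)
Your proof is correct and is exactly the paper's argument. The paper proves this lemma by pointing to the Taylor expansions that lead to \eqref{eq:taylorsticky}, which are your Steps 1--3 with the same Lagrange points $z_x, y_x$. You are also right that the $\tfrac12$ in the hypothesis is a typo: both that derivation and the SBM backward equation \eqref{eq:backSBM} use $\partial_x u(t,0)=\kappa\,\partial_{xx}u(t,0)$, which is what \eqref{eq:BC} gives with $a=\tfrac12$.
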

\begin{proof}
This follows from our calculations deriving \eqref{eq:taylorsticky}. 
    % We know by Taylor expansion with remainder, there exists $z_x, y_x \in [0,x]$
    % \begin{equation}
    % \label{eqn:1st_derivative@0_Taylor}
    %     \partial_xu(t,0) = \partial_xu(t,x) - \partial_{xx}u(t,x) + \frac{x^2}{2}u^{(3)}(t,z_x))
    % \end{equation}
    % \begin{equation}
    % \label{eqn:2nd_derivative@0_Taylor}
    %     \partial_{xx}u(t,0) = \partial_{xx}u(t,x) - xu^{(3)}(t,y_x)
    % \end{equation}
    % Utilizing the sticky boundary condition: $\partial_xu(t,0) = \kappa \partial_{xx}u(t,0)$, I know
    % $$\partial_xu(t,x) - x\partial_{xx}u(t,x) + \frac{x^2}{2}u^{(3)}(t,z_x)) = \kappa \cdot \left( \partial_{xx}u(t, x) - xu^{(3)}(t,y_x) \right)$$
    % which gives
    % $$\partial_xu(t,x) = (\kappa + x)\cdot \partial_{xx}u(t,x) - \kappa xu^{(3)}(t,y_x) - \frac{x^2}{2}u^{(3)}(t,z_x)$$
\end{proof}

\begin{lemma}\label{lem:moments}
For $\lambda$ chosen as \eqref{eqn:lambda_cont_scheme}, and given $x\in \Omega_b$,
    \begin{equation}
    (\kappa+x)\mathbb{E}_x(\Delta X) + \half \mathbb{E}_x(\Delta X^2) - \frac{h}{2} =0.
\end{equation}
\end{lemma}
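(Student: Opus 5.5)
The plan is a direct computation. Since $\lambda(x)$ in \eqref{eqn:lambda_cont_scheme} was obtained by treating \eqref{eq:genapprox2} as an equality, it suffices to write the left-hand side of the lemma as an affine function of $\lambda$, namely $A(x)+\lambda B(x)$, and check that $\lambda=-A/B$ with $B\neq 0$. Throughout, write $s=\sqrt{3h}$, so $h=s^2/3$ and $Z\sim\mathcal U([-s,s])$ has $\E Z=0$ and $\E Z^2=h$.

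\textbf{Step 1: the one-step moments \eqref{eq:DelX}.} I will rederive these by conditioning on the two branches of \eqref{eqn:cont_scheme_inlayer}.
\begin{itemize}
\item On the jump branch (probability $1-\lambda$), $\Delta X=-x$.
\item On the reflected branch (probability $\lambda$), $\Delta X=|x+Z|-x$.
\end{itemize}
The only nontrivial ingredient is $\E|x+Z|$ for $0\le x\le s$. Splitting the integral at $z=-x$ gives
\[
\E|x+Z|=\frac{1}{2s}\Big(\int_{-x}^{s}(x+z)\,dz+\int_{-s}^{-x}(-x-z)\,dz\Big)=\frac{x^2+s^2}{2s}.
\]
Combining the branches,
\[
\E_x(\Delta X)=-x+\lambda\,\frac{s^2+x^2}{2s}.
\]
For the second moment I use $(|x+Z|-x)^2=(x+Z)^2-2x|x+Z|+x^2$, whose expectation is $2x^2+h-xs-x^3/s$. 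Therefore
\[
\E_x(\Delta X^2)=(1-\lambda)x^2+\lambda\Big(2x^2+h-xs-\tfrac{x^3}{s}\Big)=x^2+\lambda\Big(x^2+h-xs-\tfrac{x^3}{s}\Big).
\]
Both expressions agree with \eqref{eq:DelX}.

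\textbf{Step 2: substitute and collect terms.} Inserting these moments into $(\kappa+x)\E_x(\Delta X)+\half\E_x(\Delta X^2)-\frac h2$ and multiplying by $2$ gives $A+\lambda B$.
\begin{itemize}
\item The $\lambda$-free part is $A=-(x^2+2\kappa x+h)$.
\item The coefficient of $\lambda$ is
\[
B=\frac{(\kappa+x)(s^2+x^2)}{s}+x^2+h-xs-\frac{x^3}{s}.
\]
\end{itemize}
In $B$, the terms $xs$ and $x^3/s$ cancel against the corresponding terms from $(\kappa+x)(s^2+x^2)/s$. This leaves
\[
B=\kappa s+\Big(\frac{\kappa}{s}+1\Big)x^2+h .
\]

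\textbf{Step 3: conclude.} Note that $B>0$, and $-A/B$ is exactly the expression \eqref{eqn:lambda_cont_scheme}. Hence the displayed quantity vanishes identically for $x\in\Omega_b$.

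The only place requiring care is the restriction $x\le\sqrt{3h}$. It is what makes the reflected density equal to $\frac{\lambda}{\sqrt{3h}}1_{[0,x+\sqrt{3h}]}+\frac{\lambda}{\sqrt{3h}}1_{[0,\sqrt{3h}-x]}$, and hence what gives the formula for $\E|x+Z|$. For $x>\sqrt{3h}$ one would instead have $\E|x+Z|=x$, so the computation and the lemma are genuinely specific to $\Omega_b$. The rest is routine algebra.
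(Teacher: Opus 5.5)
Your proof is correct and follows the paper's own argument: compute the moments \eqref{eq:DelX}, substitute them into \eqref{eq:genapprox2}, and observe that \eqref{eqn:lambda_cont_scheme} is the unique root of the resulting linear equation in $\lambda$, whose coefficient $B=\kappa\sqrt{3h}+\bigl(\frac{\kappa}{\sqrt{3h}}+1\bigr)x^2+h$ is positive; you simply write out the algebra the paper leaves implicit. One cosmetic slip is in your closing remark: the reflected density should be $\frac{\lambda}{2\sqrt{3h}}\bigl(1_{[0,x+\sqrt{3h}]}+1_{[0,\sqrt{3h}-x]}\bigr)$, since the version you quote (which the paper also prints) has total mass $2\lambda$; your computation of $\E|x+Z|$ uses the correct normalization $\frac{1}{2\sqrt{3h}}$, so nothing downstream is affected.
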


\begin{proof}
    This follows from our calculations following \eqref{eq:genapprox2}, which showed that we chose $\lambda$ such that the above equation holds. 
\end{proof}

%\todo{From Dan: The only comment I have is about whether we can add extra terms like $\alpha(x)\partial_{xx}f(t,x)$ to RHS of (16). Perhaps I have not explained clearly in my previous email. It is fine to have $\partial_x f(t,x) =\kappa\partial_{xx}f(t,x)$, but we may have $\lambda$ not a valid probability if time step-size $h$ is not small enough. In this particular case, we can have a valid lambda as probability when $h$ is small enough (at least $h < 3\kappa^2$, could be much worse) and prove the same order of local error. I think we can deliver the following message: we can have different $\lambda$ with the same order of local error and prove they are valid eventually/$h$ small enough. They might be impractical to simulate in certain cases (in our case, when $\kappa$ is small). We show our scheme is valid for all $h$ because we derive it by keeping track of $\partial_x f(t,x)$ and $\partial_{xx}f(t,x)$ to our best.}

\subsection{Primary convergence results}\label{sec:converge1}

Now we give an overview of our main convergence results. 
We wish to show weak convergence of the numerical scheme with order $h$. That is, we wish to show that, for $h$ sufficiently small, there is a constant $C>0$ such that 
\begin{equation}
    |\E \phi(X_N) - \E\phi(Y_T)| \leq Ch,
\end{equation}
where $\phi$ is from a suitable class of functions. This is shown by decomposing the error into a local, one-step error, which is different for the boundary and the interior, and a contribution related to the number of steps accumulating each type of error. Our proof strategy closely follows that used to show convergence of diffusions satisfying Robin boundary conditions \cite{Leimkuhler.2020er8}.

At this point we make a small but important remark about the assumed initial condition. 
Throughout the global-error analysis in this subsection, we fix a deterministic initial condition, and assume
$$
X_0=Y_0=x_0.
$$
All expectations and conditional expectations are taken under this initial condition. 
We make this assumption mainly in order to simplify notation and the readability of proofs, as it allows us to suppress the dependence on the initial condition in our notation. Thus, we suppress the dependency of $\E$ on $x_0$, except when the dependence on the starting point needs to be emphasized.

The only place that the initial condition could affect our estimates is in the global error analysis, where we could have a bounding constant depending on $x_0$. 
%This convention is mainly relevant for the constants appearing in the global-error bounds. 
For an SBM, the constants obtained below are uniform in $x_0$. For the sticky diffusion scheme in Section \ref{sec:generalsticky}, the present argument may produce constants that depend on $x_0$. If one instead allows a random initial condition $X_0$, the same argument may be applied conditionally on $X_0$, after which one integrates with respect to the law of $X_0$; in that case one must assume that the resulting $x_0$-dependent bound is integrable.

\medskip

As is typical in proofs of weak convergence \cite{Milstein.2021,Leimkuhler.2020er8}, our main tool will be the backward equation \eqref{eq:back}, repeated here for an SBM for ease of reference:
\begin{gather}\label{eq:backSBM}
    \partial_t u + \frac{1}{2} \partial^2_{xx}u = 0 \qquad (x\geq 0), \\
    \partial_x u(t,0) = \kappa \cdot \partial^2_{xx}u(t,0), \quad
    u(T,x) = \phi(x).\nonumber
\end{gather}
% with boundary condition and terminal condition \mhc{include in prev. eqn?}
% \begin{equation}\label{eq:backbc}
%     \partial_x u|_{x=0} = \kappa \cdot \partial^2_{xx}u|_{x=0}, \quad
%     u(T,x) = \phi(x).
% \end{equation}
 %Let us write 
%$\E_{t,x}\phi(Y_T) \equiv \E[\phi(Y_T) | Y_t=x]$. Thus, 
We may observe that for $X_0=x_0$,
\[
\E(\phi(Y_T)) = u(0,x_0), \qquad \E(\phi(X_N)) = \E(u(T,X_N)),
\]
where the latter comes from observing that
$u(T,X_N) =  \E[\phi(Y_T) | Y_T=X_N] = \phi(X_N)$.
Therefore, writing $t_k=kh$, $u_k=u(t_k,X_k)$, we have 
\begin{equation}\label{eq:telescope}
\E\left(\phi(X_N)\right) - \E\left(\phi(Y_T)\right) = \E\left(\sum_{k=0}^{N-1}u_{k+1}-u_k\right).
\end{equation}
This sum gives us a way to decompose the global error into a sum of local errors, via: %\mhc{need subscript $x$ on outer $\E$? must be careful with different expectations, what initial conditions are}\todan{check conditional expectation}
%\redan{I remove the subscript because the subscript used to represent the initial condition, but I think we can just state that $X_0=x_0$ is a constant random variable for our error analysis so that we do not confuse the subscript used later to represent conditional on $X_k=x$.}
\begin{align}
|\E \phi(X_N) - \E\phi(Y_T)|
&= \left|\mathbb{E}\left(\sum_{k=0}^{N-1} \mathbb{E}\left(u_{k+1}-u_k|X_k\right)\right)\right|\nonumber\\
&\leq \mathbb{E}\left(\sum_{k=0}^{N-1} \left|\mathbb{E}\left(u_{k+1}-u_k|X_k\right)\right|\cdot \mathbf{1}_{X_k \in [0,\sqrt{3h}]}\right) \nonumber\\&\quad  
        + \mathbb{E}\left(\sum_{k=0}^{N-1} \left|\mathbb{E}\left(u_{k+1}-u_k|X_k\right)\right|\cdot \mathbf{1}_{X_k \notin [0,\sqrt{3h}]}\right).
        \label{eq:err_decompose}
\end{align}
The global error thus arises from two things: the local errors $\left|\E\left(u_{k+1}-u_k\mid X_k\right)\right|$ in the boundary  and exterior, and the number of steps in each of these states. 

\medskip

To present our main  results we need some assumptions. %\mhc{see if these assumptions can be reduced to smoothness assumptions on $\phi$. we also need to clarify the space that $\phi$ lives in. }

\begin{assumption}
\label{assump:4th_spatial_derivative_bound}
The solution $u(t,x)$ of \eqref{eq:backSBM} has uniformly bounded spatial derivatives up to order $4$. That is,
$$
\|\partial_x^j u\|_\infty
:=
\sup_{(t,x)\in [0,T]\times[0,\infty)}
\left|\partial_x^j u(t,x)\right|
<\infty,
\qquad j=1,2,3,4.
$$
\end{assumption}

\begin{assumption}
\label{assump:2nd_time_derivative_bound}
    The solution $u(t,x)$ of \eqref{eq:backSBM} has time derivative up to order 2 where $\left\|\partial_{tt} u\right\|_{\infty} < \infty$. %\todan{do you also need time deriative to be bounded?} 
    %\redan{I do not think we need to assume a separate uniform bound on the first time derivative in either the SBM proof or the sticky diffusion proof. For error analysis, we replace the first order time derivative by spatial derivatives. For SBM, $\partial_tu = -\frac{1}{2}\partial_{xx}u$. For sticky diffusion, $\partial_tu = -b(x)\partial_xu - \frac{1}{2}\sigma^2(x)\partial_{xx}u$}
\end{assumption}

We further suppose that $h$ is always chosen such that $N := T/h \in \mathbb{Z}$. 

\begin{theorem}
\label{thm:local_err_cont_scheme}
    (Local Error of $\{X_k\}_{k\in\mathbb{N}}$ in Algorithm \ref{alg:cont_scheme})\\
    %\mhc{add: assumption on form of local error, ie \eqref{eq:genapprox}, or \eqref{eq:genapprox2}, and what size $\epsilon$ must be, for this statement to hold.}
    %\todan{Re--I make similar derivation for sticky diffusion, \eqref{eq:genapprox2_diffusion} shows that $\epsilon \leq C(hx+h^2)$ is the error we can tolerate for \eqref{eq:genapprox2}.}
    Under Assumptions \ref{assump:4th_spatial_derivative_bound},  \ref{assump:2nd_time_derivative_bound}, there exists a constant $M$ depending on $\left\| u^{(3)} \right\|_{\infty}$ $\left\| u^{(4)} \right\|_{\infty}$ and $\left\|\partial_{tt} u\right\|_{\infty}$, such that 
    \begin{equation}
    \label{eqn:local_err_cont_scheme}
        \left|\mathbb{E}\left(u_{k+1}-u_k|X_k=x\right)\right| \leq \begin{cases}
            M( h^2 + hx), & \forall x\in [0,\sqrt{3h}]\\
            Mh^2 & \forall x > \sqrt{3h}.
        \end{cases}
    \end{equation}
\end{theorem}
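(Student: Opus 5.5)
The plan is to Taylor-expand $u_{k+1}-u_k = u(t_k+h,X_{k+1})-u(t_k,X_k)$ jointly in time and space around $(t_k,x)$. Write $\Delta X = X_{k+1}-x$ and $t=t_k$. The expansion is
\[
u(t+h,X_{k+1})-u(t,x) = h\partial_t u(t,x) + \partial_x u(t,x)\Delta X + \half\partial_{xx}u(t,x)\Delta X^2 + R,
\]
where the remainder $R$ collects three pieces. The first is $\frac{h^2}{2}\partial_{tt}u$ at an intermediate time. The second is the spatial third- and fourth-order terms $\frac{1}{6}u^{(3)}(t,x)\Delta X^3 + \frac{1}{24}u^{(4)}(t,\xi)\Delta X^4$. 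The third is the mixed terms. To avoid needing mixed derivatives, I will split the increment as $[u(t+h,X_{k+1})-u(t,X_{k+1})] + [u(t,X_{k+1})-u(t,x)]$. For the first bracket I expand in time at the point $X_{k+1}$, giving $h\partial_tu(t,X_{k+1}) + O(h^2\|\partial_{tt}u\|_\infty)$. Then I use the PDE $\partial_t u = -\half\partial_{xx}u$ and expand $\partial_{xx}u(t,X_{k+1})$ around $x$ to second order:
\[
\partial_{xx}u(t,X_{k+1}) = \partial_{xx}u(t,x) + u^{(3)}(t,x)\Delta X + O(\|u^{(4)}\|_\infty \Delta X^2).
\]
The only constants that appear are then $\|u^{(3)}\|_\infty$, $\|u^{(4)}\|_\infty$ and $\|\partial_{tt}u\|_\infty$, as the theorem claims. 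Since the PDE holds for $x\ge 0$ and $X_{k+1}\ge 0$, this is legitimate.

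\emph{Exterior case} ($x>\sqrt{3h}$). Here $\Delta X = Z_k$ is uniform with $\E Z_k=0$, $\E Z_k^2=h$, $\E Z_k^3=0$ and $|Z_k|\le\sqrt{3h}$. Collecting terms, $\E(u_{k+1}-u_k\mid X_k=x)$ equals $-\frac h2\partial_{xx}u + \frac h2\partial_{xx}u$ (which cancels), plus $-\frac h2u^{(3)}\E\Delta X = 0$, plus $\frac16u^{(3)}\E\Delta X^3=0$. The remaining terms are all $O(h\E\Delta X^2 + \E\Delta X^4 + h^2) = O(h^2)$. This gives the bound $Mh^2$.

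\emph{Boundary case} ($x\in[0,\sqrt{3h}]$). Now $\Delta X$ is no longer centered, but it is still bounded: both branches of \eqref{eqn:cont_scheme_inlayer} land in $[0,x+\sqrt{3h}]$, so $|\Delta X|\le 2\sqrt{3h}$ surely. The leading part of the expectation is
\[
\partial_x u(t,x)\,\E_x\Delta X + \half\partial_{xx}u(t,x)\,\E_x\Delta X^2 - \frac h2\partial_{xx}u(t,x).
\]
I substitute Lemma \ref{lemma:sticky_bc@x} for $\partial_xu(t,x)$. The $(\kappa+x)\partial_{xx}u$ part combines with the rest to give exactly $\partial_{xx}u\cdot[(\kappa+x)\E_x\Delta X + \half\E_x\Delta X^2 - \frac h2]$, which is $0$ by Lemma \ref{lem:moments}. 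What is left is $-(\kappa x u^{(3)} + \frac{x^2}{2}u^{(3)})\E_x\Delta X$. Together with the higher-order terms $\frac16u^{(3)}\E\Delta X^3$, $-\frac h2 u^{(3)}\E\Delta X$, $O(\E\Delta X^4)$, $O(h\E\Delta X^2)$ and $O(h^2)$, these must be bounded by $M(h^2+hx)$.

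The main obstacle is that the crude bound $|\E\Delta X|\le C\sqrt h$ only gives $\kappa x\sqrt h$ and $\E|\Delta X|^3\sim h^{3/2}$, neither of which is obviously $O(h^2+hx)$. I expect to need the sharper fact that $\E_x\Delta X = O(h+x)$-ish, more precisely $|\E_x\Delta X|\le C(x+\sqrt h)$ with a careful use of $\lambda$. Here is what I would try first. Lemma \ref{lem:moments} gives $(\kappa+x)\E_x\Delta X = \frac h2 - \half\E_x\Delta X^2$, and $\E_x\Delta X^2 \le C(x^2+h)$ by \eqref{eq:DelX}, so $|\E_x\Delta X|\le C(h+x^2)/(\kappa+x)$. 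Then $\kappa x|\E_x\Delta X| \le C x(h+x^2)\le C(hx)$, using $x^2\le 3h$. Similarly $x^2|\E_x\Delta X|$ and $h|\E_x\Delta X|$ are $O(h^2+hx)$. The same constraint $x^2\le3h$ handles $\E\Delta X^4 = O(h^2)$ and $h\E\Delta X^2=O(h^2)$.

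The delicate term is $\E\Delta X^3$. I would compute it explicitly from $p_h(y|x)$, as was done for \eqref{eq:DelX}. The reflected uniform contributes a third moment that is $O(\lambda x h + x^3)$ plus a term like $\lambda h^{3/2}$ coming from the asymmetry of reflection. I expect to absorb this last piece by once more using the relation from Lemma \ref{lem:moments}, i.e.\ that $\lambda$ is forced close to $O((h+x)/\sqrt h)$ scale when $\kappa>0$. If that fails, I would fall back on expanding the spatial Taylor series only to third order for the $u^{(3)}$ terms and explicitly checking cancellation at $x=0$. If the constant turns out to depend on $\kappa$, that is acceptable as $\kappa$ is fixed.
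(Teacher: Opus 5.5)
Your plan is correct and is essentially the paper's argument: Taylor-expand, cancel the leading terms with Lemma~\ref{lemma:sticky_bc@x} and Lemma~\ref{lem:moments}, and use Lemma~\ref{lem:moments} together with $x^2\le 3h$ to get $|\E_x\Delta X|\le Ch$. The third-moment step you left hedged closes exactly as in Lemma~\ref{lemma:3rd_mom_bound}: the bound $\lambda(x)\le A\sqrt h+Bx/\sqrt h$ follows either from the explicit formula or, as you suggest, from $\tfrac12\lambda\sqrt{3h}\le x+Ch$, and it gives $|\E_x\Delta X^3|\le \lambda(3h)^{3/2}+x^3=O(h^2+hx)$ with no explicit moment computation needed.

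Your splitting of the time and space increments, which picks up the extra term $-\tfrac h2 u^{(3)}\E_x\Delta X=O(h^2)$, and your fourth-order spatial expansion are only minor technical variants. If anything, the split is more careful than the paper's expansion \eqref{eq:taylor4}, which silently omits the mixed $h\,\partial_{tx}u\,\Delta X$ term.
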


Thus, the local error of the scheme in the boundary layer is $O(h^{3/2})$ and in the exterior is $O(h^2)$. 

A naive estimate for the global error would be $O(h^{1/2})$, which comes from taking the worst-case local error of $O(h^{3/2})$, and multiplying by the total number of steps which is $O(h^{-1})$. This however is too pessimistic, as will turn out that the number of steps in the boundary layer is $O(h^{-1/2})$, leading to a global error of $O(h^{3/2}\cdot h^{-1/2}) + O(h^2\cdot h^{-1}) \sim O(h)$.  

The specific type of estimate we must construct on the number of steps in the boundary layer arises from this corollary. %, which follows immediately from Theorem \ref{thm:local_err_cont_scheme} by taking the expectation. 

\begin{corollary} Under the assumptions of Theorem \ref{thm:local_err_cont_scheme}, 
\label{corollary:local_err}
    \[\mathbb{E}\left(\sum_{k=0}^{N-1} \left|\mathbb{E}\left(u_{k+1}-u_k|X_k\right)\right|\cdot \mathbf{1}_{X_k \in [0,\sqrt{3h}]}\right) \leq M_1\left(h +  h \cdot \mathbb{E}\left(\sum_{k=0}^{N-1} X_k \cdot \mathbf{1}_{X_k \in [0,\sqrt{3h}]} \right)\right).\]
\end{corollary}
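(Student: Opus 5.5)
The bound follows by applying Theorem \ref{thm:local_err_cont_scheme} pointwise inside the expectation and then summing. On the event $\{X_k\in[0,\sqrt{3h}]\}$ the boundary-layer case of \eqref{eqn:local_err_cont_scheme} applies with $x=X_k$. This gives, almost surely,
\[
\left|\mathbb{E}\left(u_{k+1}-u_k|X_k\right)\right|\cdot \mathbf{1}_{X_k \in [0,\sqrt{3h}]} \leq M\left(h^2 + hX_k\right)\mathbf{1}_{X_k \in [0,\sqrt{3h}]}.
\]
Here we use that the conditional expectation given $X_k$ is a measurable function $g(X_k)$, with $g(x)=\mathbb{E}(u_{k+1}-u_k|X_k=x)$. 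Since the scheme is time-homogeneous Markov, the bound of the theorem holds for $g$ at every $k$ with the same constant $M$. The constant $M$ is uniform in $k$ because it depends only on the sup norms of $u^{(3)},u^{(4)},\partial_{tt}u$ over $[0,T]\times[0,\infty)$.

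Next we sum over $k=0,\dots,N-1$ and take expectations, which is legitimate since all terms are nonnegative. The $h^2$ part is bounded by dropping the indicator:
\[
\sum_{k=0}^{N-1} Mh^2\mathbf{1}_{X_k\in[0,\sqrt{3h}]}\le MNh^2 = MTh.
\]
The $hX_k$ part gives exactly $Mh\,\mathbb{E}\left(\sum_{k=0}^{N-1}X_k\mathbf{1}_{X_k\in[0,\sqrt{3h}]}\right)$. Taking $M_1=M\max(T,1)$ yields the claimed inequality.

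There is no real obstacle here. The only point needing care is that the bound in Theorem \ref{thm:local_err_cont_scheme} is uniform in $t_k$, so that it can be applied inside the conditional expectation at every step with one constant. The substantive work, bounding $h\,\mathbb{E}\sum_k X_k\mathbf{1}_{X_k\in\Omega_b}$ by $O(h)$, is deferred to the subsequent counting estimate and is not part of this corollary.
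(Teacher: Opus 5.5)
Your proof is correct and takes the same route as the paper. You apply the boundary-layer case of Theorem \ref{thm:local_err_cont_scheme} on the event $\{X_k\in[0,\sqrt{3h}]\}$, take expectations, sum over $k$, and use $Nh^2=Th$; your remark that $M$ is uniform in $t_k$ (it depends only on sup norms over $[0,T]\times[0,\infty)$) makes explicit what the paper's one-line proof leaves implicit.
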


%\todan{should the first $h$ be an $h^2$?}
%\redan{$N = T/h$, so $N\cdot h^2 = Th$}

\begin{proof}
    This is simply the expectation applied to the first case of \eqref{eqn:local_err_cont_scheme}, and summed over all steps. 
\end{proof}

% \begin{proof}
%     Trivial by linearity of expectation and application of \textbf{Theorem} \ref{thm:local_err_cont_scheme}.
% \end{proof}

Our main lemma to estimate the number of points in the boundary layer is the following. 

\begin{lemma}
\label{lemma:key}
    For sufficiently small $h$, there exists a  constant $C>0$, independent of $h$ and initial position $x_0\ge0$, such that
   \[\mathbb{E}\left(\sum_{k=0}^{N-1} X_k \cdot \mathbf{1}_{X_k \in [0,\sqrt{3h}]} \right) < C.\]
\end{lemma}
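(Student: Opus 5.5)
The plan is a discrete Lyapunov (Foster) argument. The idea is to find a test function $V$ whose one-step drift $D_V(x):=\E_x\bigl(V(X_1)\bigr)-V(x)$ has three properties: it is bounded below by $c\,\sqrt h\,x$ on the boundary layer, it is bounded below by $-Ch^2$ on the exterior, and $\E V(X_N)-V(x_0)$ is bounded by a constant independent of $h$ and $x_0$. Summing the one-step drifts then gives
\[
c\sqrt h\,\E\Big(\sum_{k=0}^{N-1}X_k\mathbf 1_{X_k\in\Omega_b}\Big)\le \E V(X_N)-V(x_0)+CNh^2 .
\]
This yields a bound of order $h^{-1/2}$, not $O(1)$. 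To get $O(1)$ as the lemma claims, I would need $D_V\ge c\,x$ on $\Omega_b$ with $V$ of bounded growth. Alternatively, I would need $D_V\ge c\sqrt h\,x$ together with $\E V(X_N)-V(x_0)=O(\sqrt h)$. Deciding which normalisation is achievable is the crux, discussed below.

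First I would record exact one-step moments in the layer, using \eqref{eq:DelX} and the factorised form of $\lambda$ from the proof of Lemma \ref{lemma:bound_on_lambda_SBM}. Write $s=\sqrt{3h}$ and $\mathrm{den}(x)=x^2+h+2\kappa x+\tfrac{\kappa}{s}(x-s)^2$. A short computation gives
\[
D_1(x):=\E_x(\Delta X)=\frac{(s-x)^2(x^2+h)}{2s\,\mathrm{den}(x)}\ \ge 0 .
\]
In the exterior, $\E_x(\Delta X)=0$ exactly, because the uniform step is symmetric and reflection is never triggered. Lemma \ref{lem:moments} then gives the companion identity $\E_x(\Delta X^2)+2x\E_x(\Delta X)=h-2\kappa D_1(x)$, so $V_2(x)=x^2$ has drift $h-2\kappa D_1$. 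It also gives that $V(x)=x^2+2\kappa x$ has drift exactly $h$ everywhere. These exact identities are what make the SBM case tractable, and they give the uniform-in-$x_0$ bound $\E(X_N-x_0)\le C$, since $\E(X_N^2+2\kappa X_N)=x_0^2+2\kappa x_0+T$. For $x\in[0,s/2]$ one has $\mathrm{den}\le C\kappa s$ for small $h$, so $D_1(x)\ge c(x^2+h)/\kappa\ge c'\sqrt h\,x$ by AM--GM.

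Second, I would handle $x\in(s/2,s]$ separately, where $(s-x)^2$ degenerates. There I would show that from such $x$ the walk exits to $\Omega_e$, or to $\{0\}$, with probability bounded below. Then the expected number of consecutive visits to $(s/2,s]$ per excursion is $O(1)$, and their contribution is bounded by a constant times the contribution from $[0,s/2]$ plus the number of entries from $\Omega_e$. The entries from $\Omega_e$ are in turn controlled by the exterior drift being zero.

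The main obstacle is the normalisation. With $V=x$, the estimate above only gives $\sum \E X_k\mathbf 1_{\Omega_b}\le C h^{-1/2}\,\E(X_N-x_0)=O(h^{-1/2})$. The reason is that $D_1(0)\approx h/(2\kappa)\neq0$: the many steps spent exactly at $0$ (of order $h^{-1}$ in number) consume the drift budget while contributing nothing to $\sum X_k$. So I would try to find a combination $V=x-\beta(h)\,x^2$, or more generally $V=a(h)\,x+b(h)\,x^2$ truncated to be linear for $x\ge 1$ so that $\E V(X_N)-V(x_0)$ stays uniformly bounded, chosen so that its drift vanishes (or is $O(h^{2})$) at $x=0$ but is $\ge c\,x$ on $(0,s/2]$. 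Using $D_{x^2}=h-2\kappa D_1$, the drift of $a x+b x^2$ is $(a-2\kappa b)D_1+bh$. Making this vanish at $x=0$ and grow linearly in $x$ requires expanding $D_1(x)-D_1(0)$ in $x/s$. From the formula above it behaves like $-c\,x/\kappa$ plus $O(h)$, which suggests taking $a-2\kappa b<0$ with $|a-2\kappa b|=O(1)$ and $bh=-(a-2\kappa b)D_1(0)$, so $b=O(1)$.

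I would first carry out this expansion of $D_1$ carefully, checking the sign of the linear term and that the $O(x^2/s)$ remainder does not spoil the lower bound on $[0,s/2]$. If it does, I would fall back on the second step: a renewal decomposition into excursions from $0$, counting excursions by the local-time proxy $\E(X_N-x_0)/\sqrt h$ and bounding the weighted occupation $\sum X_k\mathbf 1_{\Omega_b}$ per excursion by $O(\sqrt h)$.
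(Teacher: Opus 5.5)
\textbf{There is a genuine gap.} It sits at the step you flag as the crux: no choice of $V=ax+bx^2$ can work, and the expansion suggesting otherwise is off by a factor $\sqrt h$.

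\emph{Why the quadratic correction fails.} Writing $x=\sqrt{3h}\,y$, your formula becomes $D_1(x)=\frac{3h\,(1-y)^2(y^2+1/3)}{2[\kappa(1+y^2)+\sqrt{3h}(y^2+1/3)]}$. So $D_1=O(h)$ uniformly on $\Omega_b$, and to leading order $D_1(x)-D_1(0)\approx-\frac{\sqrt{3h}}{3\kappa}\,x$, not $-cx/\kappa$.

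\begin{itemize}
\item Since the drift of $ax+bx^2$ is $(a-2\kappa b)D_1+bh$, bounded coefficients give drift $O(h)$, i.e.\ at best of order $\sqrt h\,x$ in the layer. That only reproduces your $O(h^{-1/2})$ bound.
\item Getting drift $\ge cx$ forces $|a-2\kappa b|\gtrsim h^{-1/2}$. Cancelling the drift at $0$ then forces $b\approx|a-2\kappa b|/(2\kappa)\gtrsim h^{-1/2}$.
\item Consequently every exterior step has drift $bh\gtrsim\sqrt h$. The budget $\E V(X_N)-V(x_0)=(a-2\kappa b)(\E X_N-x_0)+bT$ is then only bounded by $bT\sim h^{-1/2}$, with equality if the walk stays in $\Omega_e$. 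Truncating $V$ for $x\ge 1$ changes nothing.
\end{itemize}

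\emph{Why the renewal fallback does not close it.} Counting departures from $0$ is fine: $D_1(0)=\lambda(0)\sqrt{3h}/2$ gives $\lambda(0)\,\E\#\{k:X_k=0\}\le\frac{2}{\sqrt{3h}}\E(X_N-x_0)$. But the claimed $O(\sqrt h)$ weighted occupation per excursion requires bounding how often the chain re-enters the layer from $\Omega_e$ before returning to $0$. The same count is the unproved step in your treatment of $(\sqrt{3h}/2,\sqrt{3h}]$. It is a downcrossing (local-time) estimate at level $\sqrt{3h}$, of the same difficulty as the lemma. The fact that $x$ has zero exterior drift says nothing about it.

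\emph{The missing idea} is a test function whose curvature is of order $h^{-1/2}$ but concentrated in the layer, i.e.\ a kink at scale $\sqrt h$.

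The paper takes $V(t_i,x)=e^{K(T-t_i)}e^{w(x)}$, with $w$ flat on $[0,\sqrt{3h}/2]$ and slope $-s$ ($s>16$) beyond. The concave kink gives a one-step loss $e^{w}f_h$ with $f_h\ge x$ on the layer; this uses $\E_xX_1=x+D_1\ge x$. The $O(h)$ errors elsewhere are absorbed by the time factor, and backward comparison gives $U\le V\le e^{KT+M}$.

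Within your own additive framework, the convex kink $V(x)=(x-\sqrt{3h}/2)_+$ would also work:
\begin{itemize}
\item For $x\le\sqrt{3h}/2$, its drift is $\lambda\bigl(\frac{\sqrt{3h}}{8}+\frac{x^2}{2\sqrt{3h}}\bigr)\ge\frac14\E_xX_1\ge\frac x4$.
\item For $x\in(\sqrt{3h}/2,\sqrt{3h}]$, its drift is $\lambda\frac{(x+\sqrt{3h}/2)^2}{4\sqrt{3h}}-(x-\frac{\sqrt{3h}}{2})\ge\frac{\sqrt{3h}}{16}\ge\frac{x}{16}$, using $\lambda\ge 2y/(1+y^2)$ with $y=x/\sqrt{3h}$.
\item On $\Omega_e$ its drift is nonnegative by Jensen.
\item Your exact identity that $x^2+2\kappa x$ has drift $h$ gives $\E V(X_N)-V(x_0)\le\E X_N-x_0+\frac{\sqrt{3h}}{2}\le\frac{T}{2\kappa}+\frac{\sqrt{3h}}{2}$, uniformly in $x_0$.
\end{itemize}
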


This lemma implies the number of points in the boundary layer is $O(h^{-1/2})$, because $|X_k| =O(h^{1/2})$ in the boundary layer. It is the most technical lemma to prove. %We explain the key tool for proving it below. \mhc{we do? where?}

Applying this Lemma to \eqref{eq:err_decompose} gives the global error bound, our main convergence theorem. 

% \begin{theorem}
% \label{thm:global_err_cont_scheme}
%     (Global Error of $\{X_k\}_{k\in\mathbb{N}}$ in algorithm \ref{alg:cont_scheme})
%    Under Assumptions \ref{assump:4th_spatial_derivative_bound},  \ref{assump:2nd_time_derivative_bound}, there exists a constant $C$, depending on \mhc{insert--what?}\todan{or doesn't depend on?}, such that \todan{conditions on $\phi$ here?}\redan{See updated version at \ref{thm:global_err_cont_scheme} }
%     \begin{equation}
%     \label{eqn:global_err_cont_scheme}
%         |\mathbb{E}\left(\phi(X_N) - \phi(Y_T)\right)| \leq Ch.
%     \end{equation}
% \end{theorem}

\begin{theorem}
\label{thm:global_err_cont_scheme}
    (Global Error of $\{X_k\}_{k\in\mathbb{N}}$ in Algorithm \ref{alg:cont_scheme})\\
    Let $\phi:[0,\infty)\to \mathbb{R}$ be such that the associated backward solution of \eqref{eq:backSBM},
    $$
    u(t,x):=\mathbb{E}\!\left[\phi(Y_T)\mid Y_t=x\right],
    $$
    satisfies Assumptions~\ref{assump:4th_spatial_derivative_bound} and \ref{assump:2nd_time_derivative_bound}.
    Then there exist constants $\delta>0$ and $C>0$, independent of $h$ and initial position $x_0\ge0$, such that for all $h<\delta$ and $x_0 \geq 0$,
    \begin{equation}
    \label{eqn:global_err_cont_scheme}
        \left|\mathbb{E}\bigl(\phi(X_N)-\phi(Y_T)\bigr)\right|\le Ch.
    \end{equation}
\end{theorem}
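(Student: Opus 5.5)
The global bound follows by assembling the pieces already prepared. I start from the telescoping identity \eqref{eq:telescope} and the decomposition \eqref{eq:err_decompose}, which split the global error into a boundary-layer sum and an exterior sum. Because $u(t,x)=\E[\phi(Y_T)\mid Y_t=x]$ solves \eqref{eq:backSBM} and satisfies Assumptions~\ref{assump:4th_spatial_derivative_bound} and \ref{assump:2nd_time_derivative_bound}, Theorem~\ref{thm:local_err_cont_scheme} applies with a constant $M$ that depends only on $\|u^{(3)}\|_\infty$, $\|u^{(4)}\|_\infty$, $\|\partial_{tt}u\|_\infty$. These norms do not involve $h$ or $x_0$.

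For the exterior sum I use the second case of \eqref{eqn:local_err_cont_scheme} and bound each indicator by one:
\[
\E\Big(\sum_{k=0}^{N-1}\big|\E(u_{k+1}-u_k\mid X_k)\big|\,\mathbf 1_{X_k\notin[0,\sqrt{3h}]}\Big)\le N M h^2 = MTh .
\]
For the boundary-layer sum I invoke Corollary~\ref{corollary:local_err}, which bounds it by $M_1\big(h+h\,\E\sum_{k}X_k\mathbf 1_{X_k\in[0,\sqrt{3h}]}\big)$. Here $M_1$ absorbs $M$ and $T$, since $N\cdot Mh^2=MTh$. Lemma~\ref{lemma:key} then gives a $\delta>0$ and a $C'$, independent of $h$ and $x_0$, such that the inner expectation is below $C'$ for $h<\delta$. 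The boundary-layer contribution is therefore at most $M_1(1+C')h$.

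Adding the two contributions gives $|\E\phi(X_N)-\E\phi(Y_T)|\le (MT+M_1(1+C'))h=:Ch$ for all $h<\delta$. To see that $C$ is uniform in $x_0$, I check that each constant is: $M$ and $M_1$ depend only on derivative bounds of $u$ and on $T$, and $C'$ is uniform by the statement of Lemma~\ref{lemma:key}.

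Once Lemma~\ref{lemma:key} is granted, no real obstacle remains. The only care needed is bookkeeping: Theorem~\ref{thm:local_err_cont_scheme} must hold at every step $k$ with the time-dependent $u(t_k,\cdot)$, which the sup-norm assumptions over $[0,T]\times[0,\infty)$ guarantee. The depth of the result lies in Lemma~\ref{lemma:key}, which is assumed here.
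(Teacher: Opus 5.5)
Your proposal is correct and follows the paper's own proof. The paper likewise combines the decomposition \eqref{eq:err_decompose} with the exterior case of Theorem~\ref{thm:local_err_cont_scheme} summed over $N=T/h$ steps, and with Corollary~\ref{corollary:local_err} plus Lemma~\ref{lemma:key} for the boundary layer; your explicit tracking of the constants, and of why they do not depend on $h$ or $x_0$, simply spells out what the paper's one-line proof leaves implicit.
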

\begin{remark}
The main convergence theorems are stated in terms of regularity assumptions on the backward solution $u$, rather than directly on the terminal datum $\phi$. 
A sufficient condition on $\phi$ ensuring Assumptions~\ref{assump:4th_spatial_derivative_bound} and \ref{assump:2nd_time_derivative_bound} is given later in Section~\ref{sec:sufficient_condition_assumptions}; see in particular Proposition~\ref{prop:sufficient_condition_compact_reflecting}. 
For simplicity, that sufficient condition is stated on a compact interval $[0,L]$ with a reflecting boundary condition at $x=L$ (general Wentzell conditions).
In fact, at $0$ and $L$, we can generalize the proof of Proposition \ref{prop:sufficient_condition_compact_reflecting}  to any Wentzell condition.
%If we ensure the compatibility conditions, it is sufficient to make the PDE solution $u$ satisfy Assumptions~\ref{assump:4th_spatial_derivative_bound} and \ref{assump:2nd_time_derivative_bound}.
\end{remark}

\begin{proof}
    This follows from \eqref{eq:err_decompose}, applying Theorem \ref{thm:local_err_cont_scheme}, Corollary \eqref{corollary:local_err}, and Lemma \ref{lemma:key}. 
\end{proof}

Thus, our scheme is weakly convergent with order 1. 

\medskip

\emph{Numerical experiments.}
The convergence results are illustrated numerically in Figure \ref{fig:SBMsim}. 
We run our algorithm with $X_0=0, T=1$ and estimate  $\mathbb{E}[X_T]$. In our algorithm we additionally impose a reflecting boundary condition at $x=L$, which is implemented in our algorithm using a reflected EM update (similar to the update in our algorithm at $x=0$). This is known to converge weakly  with order 1 \cite{Leimkuhler.2020er8}.
We compare our estimated mean, to the solution $u(0,0)$ to the backward equation \eqref{eq:backSBM} with $\phi(x) = x$. This solution is computed using a finite-difference scheme with with extremely small space and time steps; specifically, $\Delta x = 10^{-3}$ and  $\Delta t = \frac{\Delta x^2}{8*5} = 2.5\times10^{-8}$. 

Figure \ref{fig:SBMsim} demonstrates $O(h)$ convergence for two different values of the sticky parameter. It is notable that our choice of $\phi$ does not satisfy the compatibility conditions given in Proposition \ref{prop:sufficient_condition_compact_reflecting}, which are used to show that Assumptions~\ref{assump:4th_spatial_derivative_bound} and \ref{assump:2nd_time_derivative_bound} hold for the solution to the backward equation. Nevertheless, this does not appear to affect our empirical order of convergence.

%\todan{could also show some realizations of the method, eg for the two values of $\kappa$ shown in the Figure. Using different greyscale values, and one can show either (a) multiple realizations in one figure, or (b) realizations with different $h$'s in one figure. }

%\begin{figure}
%    \centering
%    \includegraphics[width=0.48\linewidth]{X1_L2_K5.eps}
%    \includegraphics[width=0.48\linewidth]{X1_L2_K20.eps}
%    \caption{Log-log plot for the weak error $|\mathbb{E}(X_T|X_0=0)-u(T,0)|$ of our simulation scheme
%   \ref{alg:cont_scheme} using $\kappa = 5$ (left) and $\kappa=20$ (right). Both comparisons used $L=2,T=1$. 
%   Error bars are $\pm 1$ empirical standard deviation.\todan{describe how computed -- 1 std dev or 2?} 
%   We used \mhc{insert} realizations to estimate each data point. \todan{please insert number of realizations.}
%   \todan{please compute slope of best-fit line through last 4 data points}
%   \todan{left plot looks like it might need a smaller value of $h$, to see order 1 convergence -- ie to go %down as far as the right plot -- but see what the slope of the best-fit line is.}
%   \todan{could the red line on the right be off-set, eg shited downward?}
%   \todan{fonts need to all be significantly bigger. Also, it would be better if the plots were the same size when generated.}
%    }
%    \label{fig:SBMsim}
%\end{figure}

\begin{figure}
    \centering
    \includegraphics[width=0.98\linewidth]{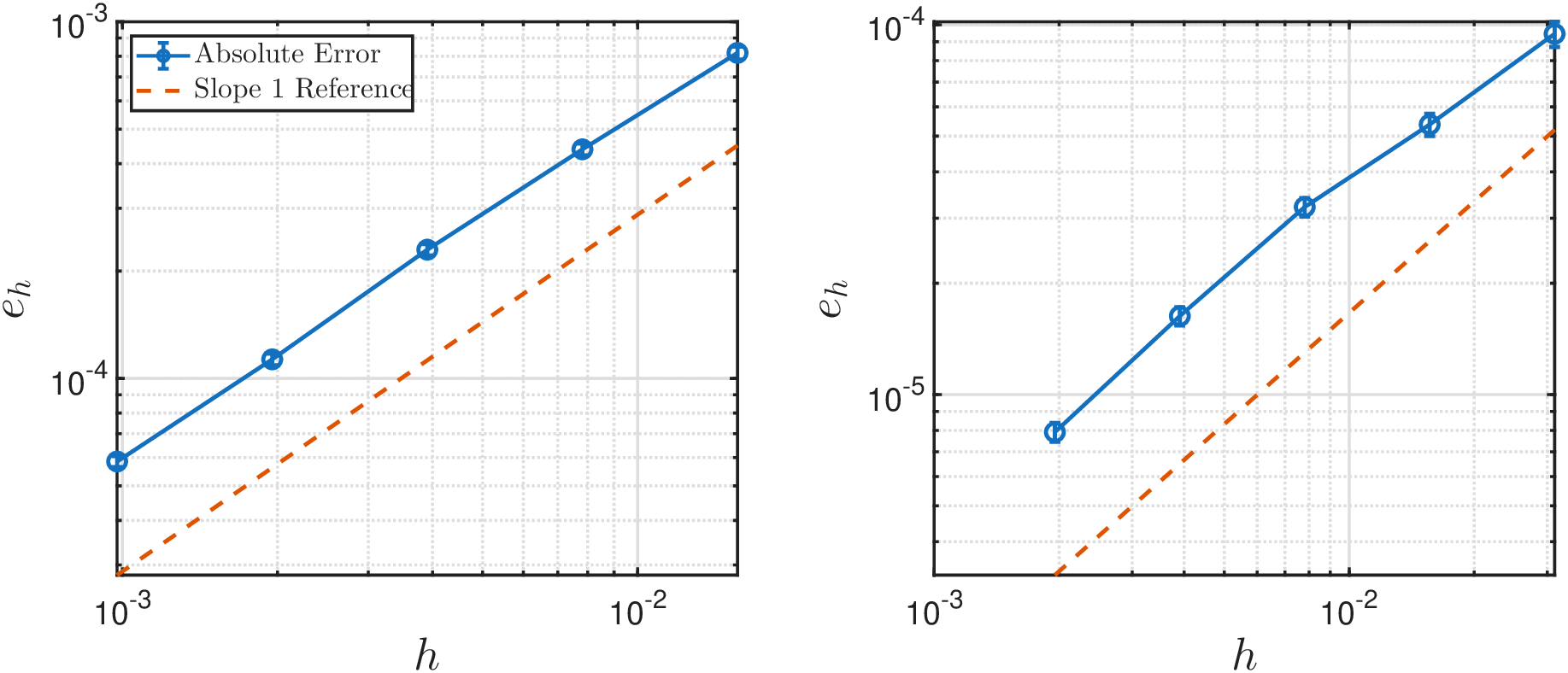}
    \caption{Log-log plot for the weak error $e_h = |\mathbb{E}(X_T|X_0=0)-u(0,0)|$ of our simulation scheme
   \ref{alg:cont_scheme} using $\kappa = 1$ (left) and $\kappa=10$ (right). Both comparisons used $L=2,T=1$. 
   Error bars are $\pm 1$ empirical standard deviation.
   The left plot takes time step-size $h \in \{1/64, 1/128, 1/256, 1/512, 1/1024\}$. To keep the relative error bar similar, we quadruple the number of realizations every time we halve the time step size starting from $2\times 10^8$ realization.
   The least square fit gives the slope of 0.956615.
   The right plot takes time step-size $h \in \{1/32, 1/64, 1/128, 1/256, 1/512\}$. Similarly, to keep the relative error bar similar, we start with $8 \times 10^8$ realizations and we quadruple the number of realizations every time we halve the time step size.
   The least square fit for the last three data points has the slope of 1.011378.
}
    \label{fig:SBMsim}
\end{figure}

\subsection{Convergence proofs}
\label{sec:Convergence proofs_SBM}

It remains to prove Theorem \ref{thm:local_err_cont_scheme} and Lemma \ref{lemma:key}. 
We start with a couple of lemmas regarding the magnitudes of the moments of $\Delta X_k$. 
In this section, we use notation $\E_xf(\Delta X_k) \equiv \E[f(\Delta X_k)|X_k = x]$ for a function $f$ of the increments. 

\begin{lemma} 
\label{lemma:1st_mom_order}
    Given the sequence $\{X_k\}$ generated from algorithm \ref{alg:cont_scheme} with $\kappa>0$, there is a constant $C$ such that 
    \[
    \sup_{x\in [0,\infty)} |\E_x(\Delta X_k^2)| \leq Ch\,, \qquad
    \sup_{x\in [0, \sqrt{3h}]} |\E_x(\Delta X_k)| \leq Ch.\]
\end{lemma}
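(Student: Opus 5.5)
The plan is to split according to whether $x$ lies in the exterior $\Omega_e$ or the boundary layer $\Omega_b$, and to use the explicit formulas \eqref{eq:DelX} together with Lemma \ref{lemma:bound_on_lambda_SBM}.

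\emph{Exterior.} For $x>\sqrt{3h}$ we have $\Delta X_k = Z_k$, so $\E_x(\Delta X_k^2)=\mathrm{Var}(Z_k)=h$. The second-moment bound then holds there with $C=1$.

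\emph{Boundary layer, second moment.} For $x\in[0,\sqrt{3h}]$ we avoid the explicit polynomial and bound pathwise. In the reflected branch, $|\,|x+Z_k|-x\,|\le |Z_k|\le\sqrt{3h}$ by the reverse triangle inequality. In the jump branch, $|\Delta X_k|=x\le\sqrt{3h}$. Hence $\Delta X_k^2\le 3h$ almost surely, which gives $\E_x(\Delta X_k^2)\le 3h$. This also follows from \eqref{eq:DelX} by bounding each term using $x\le\sqrt{3h}$ and $\lambda\in[0,1]$, but the pathwise argument is cleaner. Together with the exterior case this gives the first inequality with $C=3$.

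\emph{Boundary layer, first moment.} Here I would start from \eqref{eq:DelX},
\[
\E_x(\Delta X_k) = -x + \frac{\lambda}{2\sqrt{3h}}\,(x^2+3h),
\]
and substitute the form of $\lambda$ from the proof of Lemma \ref{lemma:bound_on_lambda_SBM}. A more robust route is to use Lemma \ref{lem:moments}, which gives
\[
(\kappa+x)\E_x(\Delta X) = \tfrac{h}{2} - \tfrac12\E_x(\Delta X^2).
\]
Since $\kappa>0$ and $x\ge 0$, we have $\kappa+x\ge\kappa$. Using the second-moment bound $0\le \E_x(\Delta X^2)\le 3h$, the right-hand side lies in $[-h,\,h/2]$. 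Therefore
\[
|\E_x(\Delta X_k)|\le \frac{h}{\kappa+x}\le \frac{h}{\kappa},
\]
and the second inequality holds with $C=\max(3,1/\kappa)$.

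The only delicate point is that the constant degenerates as $\kappa\to0$. This is why the hypothesis $\kappa>0$ is needed; without it the direct formula would only give $O(\sqrt h)$. I expect no other obstacle.
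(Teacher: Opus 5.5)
Your proof is correct and follows the paper's own argument. The pathwise bound $\Delta X_k^2\le 3h$ gives the second-moment estimate, and rearranging the identity of Lemma~\ref{lem:moments} with $\kappa+x\ge\kappa>0$ gives $|\E_x(\Delta X_k)|\le h/\kappa$; you just make the paper's brief argument explicit, including the constants.
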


The second part of the lemma is not trivial, as we might expect $|\E_x(\Delta X_k)|\sim O(h^{1/2})$, since this is the size of a typical jump to 0. 

\begin{proof}
    The bound on $\E_x(\Delta X_k^2)$ follows trivially from the fact that $|\Delta X_k^2|\leq 3h$. 
    The bound on $\E_x(\Delta X_k)$ then follows from Lemma \ref{lem:moments}, which shows that $\E_x(\Delta X_k)$ is a sum of terms which are all $O(h)$ or higher provided $\kappa >0$. 
\end{proof}

%We remark that the second bound would also hold provided $\epsilon \leq Ch$.

%There is a more subtle bound on the third moment of $\Delta X_k$.
\begin{lemma}
\label{lemma:3rd_mom_bound}
    There exists a constant $C$ such that
    \[\sup_{x\in [0, \sqrt{3h}]}\mathbb{E}_x(|\Delta X_k|^3)\leq C(h^2 + h x)\,.
    \]
\end{lemma}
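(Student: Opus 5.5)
We plan to split $\E_x(|\Delta X_k|^3)$ according to the two branches of the update \eqref{eqn:cont_scheme_inlayer}, for $x\in[0,\sqrt{3h}]$:
\[
\E_x(|\Delta X_k|^3) = (1-\lambda(x))\,x^3 + \lambda(x)\,\E\bigl(\bigl||x+Z_k|-x\bigr|^3\bigr).
\]
We bound each term separately.

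\emph{Reflected branch.} The reflection is $1$-Lipschitz, since $\bigl||x+Z_k|-x\bigr| \le |Z_k|$. Hence
\[
\E\bigl(\bigl||x+Z_k|-x\bigr|^3\bigr) \le \E|Z_k|^3 = \frac{1}{\sqrt{3h}}\int_0^{\sqrt{3h}} z^3\,dz = \frac{(3h)^{3/2}}{4}.
\]
This is $O(h^{3/2})$, which is too large on its own; it is not $O(h^2+hx)$ when $x$ is near $0$. To do better we would use that, for $x$ small, most of the reflected step is ``absorbed'' in a more careful way. Instead, we note that the target bound $C(h^2+hx)$ is only weaker than $h^{3/2}$ once $x\gtrsim h^{1/2}$. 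So the estimate must come from combining the moments already computed, rather than from bounding the branches crudely. This is the main obstacle.

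\emph{Using the moment identity.} The plan is to exploit $|\Delta X_k|\le \sqrt{3h}$ in both branches: in the jump branch $|\Delta X_k| = x\le\sqrt{3h}$, and in the reflected branch $|\Delta X_k|\le|Z_k|\le\sqrt{3h}$. This gives
\[
\E_x(|\Delta X_k|^3) \le \sqrt{3h}\,\E_x(\Delta X_k^2).
\]
It therefore suffices to show $\E_x(\Delta X_k^2)\le C(h^{3/2}+h^{1/2}x)$. We get this from Lemma~\ref{lem:moments}, which rearranges to
\[
\tfrac12\E_x(\Delta X_k^2) = \tfrac{h}{2} - (\kappa+x)\E_x(\Delta X_k).
\]
Combined with $|\E_x(\Delta X_k)|\le Ch$ from Lemma~\ref{lemma:1st_mom_order}, this only yields $\E_x(\Delta X_k^2)=O(h)$, which is again insufficient.

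So we must compute directly. From \eqref{eq:DelX},
\[
\E_x(\Delta X_k^2) = \lambda h - \lambda\sqrt{3h}\,x + (1+\lambda)x^2 - \frac{\lambda x^3}{\sqrt{3h}}.
\]
The terms involving $x$ are all bounded by $C h^{1/2}x$ on $\Omega_b$, since $x^2\le\sqrt{3h}\,x$ and $x^3/\sqrt{3h}\le x^2$. The only remaining issue is the term $\lambda h$. For this we use \eqref{eqn:lambda_cont_scheme}: for $\kappa>0$,
\[
\lambda(x) \le \frac{x^2+2\kappa x+h}{\kappa\sqrt{3h}} \le C\Bigl(\frac{x}{\sqrt h}+\sqrt h\Bigr),
\]
where the constant depends on $\kappa$, and we use $x^2\le\sqrt{3h}\,x$ to absorb the $x^2$ term. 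Hence
\[
\lambda h \le C\bigl(h^{1/2}x + h^{3/2}\bigr).
\]
Putting these together gives $\E_x(\Delta X_k^2)\le C(h^{3/2}+h^{1/2}x)$. Multiplying by $\sqrt{3h}$ yields
\[
\E_x(|\Delta X_k|^3)\le C(h^2+hx)
\]
uniformly on $\Omega_b$.

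The key step, and the one to check carefully, is the smallness of $\lambda$ near $0$. It is what makes the near-origin contribution $O(h^2)$ rather than $O(h^{3/2})$: close to the sticky point the scheme almost always stays at or jumps to $0$.
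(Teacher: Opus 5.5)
Your final argument is correct, and it differs from the paper's only in how the third moment is reduced. You use $|\Delta X_k|\le\sqrt{3h}$ on both branches to get $\E_x|\Delta X_k|^3\le\sqrt{3h}\,\E_x(\Delta X_k^2)$. You then bound the second moment through \eqref{eq:DelX}, dropping the two nonpositive terms to get $\E_x(\Delta X_k^2)\le\lambda(x)h+2x^2$.

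The paper instead splits $\Delta X_k$ by sign. Positive increments arise only on the reflected branch and are at most $\sqrt{3h}$, so $\E_x[(\Delta X_k^+)^3]\le\lambda(x)(3h)^{3/2}$. Since $\Delta X_k\ge -x$, the negative part satisfies $\E_x[(\Delta X_k^-)^3]\le x^3\le 3hx$. Both routes rest on the same estimate $\lambda(x)\le C(\sqrt h+x/\sqrt h)$, so they are equivalent in substance. The paper's is a little more economical because it uses only the support of the increment and no exact moment formula.

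The branch decomposition you wrote first and then abandoned already finishes the proof. It gives $(1-\lambda)x^3+\lambda\,\E|Z_k|^3\le 3hx+\frac{1}{4}\lambda(x)(3h)^{3/2}$. The $O(h^{3/2})$ term you called too large carries the factor $\lambda(x)$, which is the smallness you invoke at the end, and that makes it $O(h^2+hx)$. This is essentially the paper's proof, organized by branch rather than by sign.

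One small point concerns absorbing $x^2$ via $x^2\le\sqrt{3h}\,x$ in your bound on $\lambda$. That produces a term $x/\kappa$, which is bounded by $Cx/\sqrt h$ only for bounded $h$. Using $x^2\le 3h$ instead, as the paper does, gives $\lambda(x)\le\frac{4\sqrt h}{\sqrt{3}\,\kappa}+\frac{2x}{\sqrt{3h}}$ with no restriction on $h$.
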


%\mhc{check what is needed -- if only need $h^{3/2}$ then the proof is much easier.}

%\mhc{is there a simpler proof? eg, can we bound the third moment, knowing bounds on the 1st and 2nd moments, and the fact that the distribution has compact support? chebyshev, markov, ... ?}

\begin{proof}
    Fix $x\in[0,\sqrt{3h}]$, an increment $\Delta X_k$ is supported on $[-x,\sqrt{3h}]$.
    We decompose $\Delta X_k$ into its positive and negative parts,
    \[
    \Delta X_k^+=\max\{\Delta X_k,0\},\qquad 
    \Delta X_k^-=\max\{-\Delta X_k,0\},
    \]
    so that $|\Delta X_k|^3=(\Delta X_k^+)^3+(\Delta X_k^-)^3$ and therefore
    \[
    \mathbb{E}_x(|\Delta X_k|^3)
    =\mathbb{E}_x[(\Delta X_k^+)^3]+\mathbb{E}_x[(\Delta X_k^-)^3].
    \]
    
    \medskip\noindent
    \textit{Positive part.}
    Whenever $\Delta X_k>0$ the increment comes from the continuous branch $Z_k$ in Algorithm \ref{alg:cont_scheme}.
    This means the largest positive increment is $\sqrt{3h}$, hence
   \begin{equation}\label{eq:3nd_mom_pospart}
    \mathbb{E}_x[(\Delta X_k^+)^3]\le \lambda(x)\,(3h)^{3/2}.
    \end{equation}
    To bound $\lambda(x)$ in \eqref{eqn:lambda_cont_scheme}, recall that for $x\in[0,\sqrt{3h}]$ we have $x^2\le 3h$.  The
    denominator is
    \[
    \Bigl(\frac{\kappa}{\sqrt{3h}}+1\Bigr)x^2+h+\kappa\sqrt{3h}
    \;\ge\;\kappa\sqrt{3h},
    \]
    The numerator obeys $x^2+2\kappa x+h\le 4h+2\kappa x$.  Therefore there exist constants
    $A,B>0$, independent of $h$ and $x$, such that
    \begin{equation}\label{eq:3nd_mom_lambdabound}
    \lambda(x)\leq \frac{4\sqrt{h}}{\sqrt{3}\,\kappa} + \frac{2x}{\sqrt{3h}}\le A \sqrt{h} + B\frac{x}{\sqrt{h}}.
    \end{equation}
    Combining \eqref{eq:3nd_mom_lambdabound} with \eqref{eq:3nd_mom_pospart} %gives
    % \[
    % \mathbb{E}_x[(\Delta X_k^+)^3]
    % \le (A \sqrt{h} + B\frac{x}{\sqrt{h}})(3h)^{3/2}
    % = 3^{3/2}\cdot A\cdot h^2 + 3^{3/2}\cdot B\cdot hx.
    % \]
    %Hence 
    shows 
    there exist constants $C_1,C_2>0$ for which
    \begin{equation}\label{eq:3nd_mom_pos-final}
    \mathbb{E}_x[(\Delta X_k^+)^3]\le C_1 h^2 + C_2 h x.
    \end{equation}
    
    \medskip\noindent
    \textit{Negative part.}
    Because $\Delta X_k\ge -x$, we have $0\le \Delta X_k^-\le x$, and therefore
    \[
    (\Delta X_k^-)^3 \le x^3.
    \]
    Since $x\in [0,\sqrt{3h}]$,
    \begin{equation}\label{eq:3nd_mom_neg-final}
    \mathbb{E}_x[(\Delta X_k^-)^3] \le x^3 \le 3hx.
    \end{equation}
    
    \medskip\noindent
    Combining \eqref{eq:3nd_mom_pos-final} and \eqref{eq:3nd_mom_neg-final} gives
    \[
    \mathbb{E}_x(|\Delta X_k|^3)
    \;\le\;
    C_1 h^2 + (C_2+3)hx.
    \]
    The bound holds for
    $|\mathbb{E}_x(\Delta X_k^3)|$ with $C=\max\{C_1,C_2+3\}$.  This proves the lemma.
    % \todan{where can I find this proof? also, do you think there is a more direct proof, using e.g. the bounds on the 1st and 2nd moments or the equations they satisfy? Re: Original proof at the top of Supplements on page 30 (Lemma 3.6). I just figured out a more direct proof (not necessarily shorter but cleaner). It is shown as above, I would be appreciated if a double check can be made. The main idea is to exploit the support of increment $\Delta X_k$. I think this proof can be generalized to sticky diffusion, so computation gets much simplified if this works.}
    % \mhc{looks good to me!}
\end{proof}

We are now ready to prove the local error.

\begin{proof}[Proof of Theorem \ref{thm:local_err_cont_scheme}]
    %In this proof, constants $M_1$ and $M_2$ are independent of $x,t,h$ and can vary in different lines.

    Let $u$ be the solution to the backward equation \eqref{eq:backSBM}.
    We perform a Taylor expansion of $u$ to 3rd order centered at $(t_k, X_k)$. We write $u_k=u(t_k,X_k), \partial_xu_k = \partial_xu(t_k,X_k)$ (and similarly for $\partial_{xx}u_k, \partial_{xxx}u_k$), and we write $Y_k$ for a point such that $Y_k\in[X_k,X_{k+1}]$ or $Y_k\in[X_{k+1},X_{k}]$, and $s_k$ will be a point in interval $s_k\in[kh, (k+1)h]$). Expanding gives
    \begin{equation}\label{eq:taylor4}
    \begin{split}
        u_{k+1} - u_k
        &= \partial_t u_k \cdot h  +  \partial_xu_k\cdot \Delta X_k + \frac{1}{2}\partial_{xx}u_k\cdot \Delta X_k^2\\
        &\quad + \frac{1}{2}\partial_{tt} u(s_k, X_k) \cdot h^2 + \frac{1}{6}\partial_{xxx}u(t_k, Y_{k})\cdot \Delta X_k^3.
        %&\quad + \frac{1}{2}\partial_{tt} u(s_k, X_k) \cdot h^2 + \frac{1}{6}\partial_{xxx}u_k\cdot \Delta X_k^3 + \frac{1}{24}\partial_{xxxx}u(t_k, Y_{k})\cdot \Delta X_k^4.
    \end{split}
    \end{equation}

    We are interested in $\E_x(u_{k+1}-u_k)$. We start by assuming $x\in \Omega_b$. We now bound the expectation of each of the terms in the Taylor-expansion \eqref{eq:taylor4}. 
    We have 
    \begin{align*}
    \E_x|\partial_{tt} u(s_k, X_k) \cdot h^2| & \leq h^2\|\partial_{tt}u\|_{\infty}\\
    \E_x|\partial_{xxx}u(t_k, Y_{k})\cdot \Delta X_k^3| &\leq C(h^2+hx)\|u^{(3)}\|_\infty,
    %\E_x|\partial_{xxxx}u(t_k, Y_{k})\cdot \Delta X_k^4|&\leq \|u^{(4)}\|_{\infty}h^2,
    \end{align*}
    where the second inequality follows from Lemma \ref{lemma:3rd_mom_bound}.
    %and the third follows because $\Delta X_k^4 = \mathcal{O}(h^2)$, from Lemma \ref{lemma:1st_mom_order}. 

    We also have, using the fact that $u$ solves the backward equation \eqref{eq:backSBM} and using Lemma \ref{lemma:sticky_bc@x},
    \begin{equation}
        \begin{split}
           & \E_x\left[\partial_t u_k\cdot h + \partial_xu_k\cdot \Delta X_k + \frac{1}{2}\partial_{xx}u_k\cdot (\Delta X_k^2) \right]
            \\
            &\quad = -\frac{1}{2}\partial_{xx} u(t_k,x)\cdot h + (\kappa + x)\cdot\partial_{xx}u(t_k,x) \cdot \mathbb{E}_x(\Delta X_k) 
        + \frac{1}{2}\partial_{xx}u(t_k,x)\cdot \mathbb{E}_x(\Delta X_k^2)\\
        &\qquad \qquad - \E_x(\Delta X_k)\left(\kappa x\cdot u^{(3)}(t_k,y_x) + \frac{x^2}{2}\cdot u^{(3)}(t_k,z_x)\right) \\
        &\quad = - \E_x(\Delta X_k)\left(\kappa x\cdot u^{(3)}(t_k,y_x) + \frac{x^2}{2}\cdot u^{(3)}(t_k,z_x)\right).
        \end{split}
    \end{equation}
    Here $y_x,z_x\in [0,x]$, and the last step used Lemma \ref{lem:moments}. 

    Therefore, applying the triangle inequality to \eqref{eq:taylor4}, and then using Lemma \ref{lemma:1st_mom_order}, shows that there exist constants $M_1,M_2$, independent of $x,t,h$, such that 
   % \begin{equation}\label{eqn:local_err4}
    \begin{align*}
         \left|\mathbb{E}\left(u_{k+1}-u_k|X_k=x\right)\right| 
         & \leq \left|\mathbb{E}_x(\Delta X_k)\right|\cdot \left(\kappa x \|u^{(3)}\|_{\infty} + \frac{h}{2}\|u^{(3)}\|_{\infty}\right) \\
        &\qquad\qquad+ h^2\|\partial_{tt}u\|_{\infty}
        + C(h^2+hx)\|u^{(3)}\|_\infty\\
        %+ \|u^{(4)}\|_{\infty}h^2\\
        &\leq M_1 h^2 + M_2 hx. 
    \end{align*}
    %\end{equation}
    
    We now check $x \in \Omega_e$. 
    Recall that, outside the boundary layer, the simulation scheme makes a standard Euler Maruyama jump, with moments
    $$\mathbb{E}_x(\Delta X_k) = \mathbb{E}_x(\Delta X_k^3) = 0\,, \quad \mathbb{E}_x(\Delta X_k^2) = h, \quad \mathbb{E}_x(\Delta X_k^4) = \frac{9}{5}h^2.$$
    We exploit the Taylor expansion up to fourth order, %\redan{I use fourth order taylor expansion because I do need to worry about estimating $\E_x|\partial_{xxx}u(t_k, Y_{k})\cdot \Delta X_k^3|$ and I do not have lemma similar to \ref{lemma:3rd_mom_bound} where I could pull absolute value inside expectation. Without this lemma, the expectation becomes hard to approximate directly because both the derivative and $\Delta X_k$ are random.}
    \begin{equation}\label{eq:taylor4_ext}
    \begin{split}
        u_{k+1} - u_k
        &= \partial_t u_k \cdot h  +  \partial_xu_k\cdot \Delta X_k + \frac{1}{2}\partial_{xx}u_k\cdot \Delta X_k^2\\
        &\quad + \frac{1}{2}\partial_{tt} u(s_k, X_k) \cdot h^2 + \frac{1}{6}\partial_{xxx}u_k\cdot \Delta X_k^3 + \frac{1}{24}\partial_{xxxx}u(t_k, Y_{k})\cdot \Delta X_k^4.
    \end{split}
    \end{equation}
    Using the fact that $u$ solves the backward equation \eqref{eq:backSBM}, we know,
    \begin{align*}
       & \E_x\left[\partial_t u_k\cdot h + \partial_xu_k\cdot \Delta X_k + \frac{1}{2}\partial_{xx}u_k\cdot (\Delta X_k^2) \right]
        \\
        &\quad = -\frac{1}{2}\partial_{xx} u(t_k,x)\cdot h + \partial_xu_k \cdot 0 
    + \frac{1}{2}\partial_{xx}u(t_k,x)\cdot h\\
    &\quad = 0.
    \end{align*}
    Apply the Taylor expansion \eqref{eq:taylor4} and the fact that $\mathbb{E}_x(\Delta X_k^3) = 0$,
    \begin{align*}
         \left|\mathbb{E}\left(u_{k+1}-u_k|X_k=x\right)\right| 
         &= \left|\frac{1}{2}\partial_{tt} u(s_k, x)\cdot h^2 + 0 + \frac{3}{40}\partial_{xxxx}u(t_k, Y_{k})\cdot h^2\right|\\
        &\leq h^2 \left(\|\partial_{tt}u\|_{\infty} + \|\partial_{xxxx}u\|_{\infty}\right). 
    \end{align*}
\end{proof}

\medskip

It remains to prove Lemma \ref{lemma:key}. 
For this, we first record a standard discrete backward equation for additive functionals of a Markov chain, proven here for completeness \cite{Milstein.2021,Leimkuhler.2020er8}.

\begin{lemma}\label{lem:V}
Let $\{X_i\}_{i=0}^N$ be the Markov chain generated by our numerical scheme, with time grid
$t_i=ih$, $i=0,\dots,N$, and $t_N=T$. Let $P$ denote the transition operator of the scheme, i.e. whenever the conditional expectation is well defined,
\[
PU(t_i,x):=\mathbb{E}\bigl[U(t_{i+1},X_{i+1})\,\big|\,X_i=x\bigr],
\qquad i=0,\dots,N-1.
\]
Let
$$g:\{t_0,\dots,t_{N-1}\}\times [0,\infty)\to \mathbb{R}$$
be measurable, and assume that for every $i=0,\dots,N$ and every $x\in [0,\infty)$,
\[
\mathbb{E}\left(\sum_{k=i}^{N-1} |g(t_k,X_k)|\,\middle|\, X_i=x\right)<\infty.
\]
Define
\[
U(t_i,x):=\mathbb{E}\left(\sum_{k=i}^{N-1} g(t_k,X_k)\,\middle|\, X_i=x\right),
\qquad i=0,\dots,N,
\]
with the convention that the sum is empty when $i=N$. Then $U$ satisfies
\begin{equation}\label{eq:V}
\begin{split}
PU(t_i,x)-U(t_i,x) &= -g(t_i,x),
\qquad (t_i,x)\in \{t_0,\dots,t_{N-1}\}\times [0,\infty),\\
U(t_N,x) &= 0,
\qquad x\in [0,\infty).
\end{split}
\end{equation}
\end{lemma}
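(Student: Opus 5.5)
The plan is to prove Lemma \ref{lem:V} directly from the Markov property by a one-step decomposition of the sum defining $U$. Fix $i\in\{0,\dots,N-1\}$ and $x\in[0,\infty)$. The terminal condition $U(t_N,x)=0$ holds by the empty-sum convention, so only the first line of \eqref{eq:V} needs an argument.

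First split off the $k=i$ term:
\[
U(t_i,x)=g(t_i,x)+\mathbb{E}\left(\sum_{k=i+1}^{N-1} g(t_k,X_k)\,\middle|\,X_i=x\right).
\]
This step is legitimate because $X_i=x$ is deterministic under the conditioning, and the integrability hypothesis makes each piece finite.

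Next apply the tower property, conditioning on $\mathcal F_{i+1}=\sigma(X_0,\dots,X_{i+1})$ inside the remaining expectation. The chain is Markov: $X_{k+1}$ is a measurable function of $X_k$ and fresh independent randomness ($Z_k$ and the uniform $u$ in Algorithm \ref{alg:cont_scheme}). Hence the conditional law of $(X_{i+1},\dots,X_{N-1})$ given $\mathcal F_{i+1}$ depends only on $X_{i+1}$. It follows that
\[
\mathbb{E}\left(\sum_{k=i+1}^{N-1} g(t_k,X_k)\,\middle|\,\mathcal F_{i+1}\right)=U(t_{i+1},X_{i+1})\quad\text{a.s.}
\]
Substituting this and taking $\mathbb{E}(\cdot\mid X_i=x)$ gives $U(t_i,x)=g(t_i,x)+PU(t_i,x)$, which rearranges to \eqref{eq:V}.

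The step needing care is justifying the conditional-expectation manipulations, i.e. that everything is finite and measurable. Finiteness comes from the hypothesis, applied at index $i+1$, which gives $\mathbb{E}(|U(t_{i+1},X_{i+1})|\mid X_i=x)<\infty$, and this in turn follows from the hypothesis at index $i$ together with the tower property applied to $|g|$. Measurability of $x\mapsto U(t_i,x)$ follows from writing $U$ as an integral of the measurable one-step transition kernel $p_h(y\mid x)$ iterated $N-i$ times, so $PU$ is well defined. I expect no real obstacle; the only subtle point is stating the Markov property in the form ``conditional law of the future given $\mathcal F_{i+1}$ equals the law of the chain restarted from $X_{i+1}$'', which holds since the scheme is time-homogeneous with i.i.d. innovations.
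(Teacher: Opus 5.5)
Your proposal is correct and follows the paper's proof essentially line by line: split off the $k=i$ term, then use the tower property together with the Markov property to identify the remaining conditional expectation with $\mathbb{E}[U(t_{i+1},X_{i+1})\mid X_i=x]=PU(t_i,x)$. Your choice to condition on $\mathcal F_{i+1}=\sigma(X_0,\dots,X_{i+1})$ rather than on $X_{i+1}$ alone is the cleaner way to make the tower step rigorous: $\sigma(X_i)\not\subset\sigma(X_{i+1})$, so the paper's version relies on the Markov property there without saying so.
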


\begin{proof}
By assumption, the random variable $\sum_{k=i}^{N-1} g(t_k,X_k)$
is integrable under $\mathbb{P}(\cdot\mid X_i=x)$, so $U(t_i,x)$ is well defined. When $i=N$, the sum is empty, hence
\[
U(t_N,x)=0,
\qquad x\in [0,\infty).
\]
Now fix $i\in\{0,\dots,N-1\}$. Since $X_i=x$ on the conditioning event,
\begin{align*}
U(t_i,x)
&= g(t_i,x)+\mathbb{E}\left(\sum_{k=i+1}^{N-1} g(t_k,X_k)\,\middle|\, X_i=x\right).
\end{align*}
Applying the tower property,
\begin{align*}
\mathbb{E}\left(\sum_{k=i+1}^{N-1} g(t_k,X_k)\,\middle|\, X_i=x\right)
&= \mathbb{E}\left(
\mathbb{E}\left(\sum_{k=i+1}^{N-1} g(t_k,X_k)\,\middle|\, X_{i+1}\right)
\,\middle|\, X_i=x\right)\\
&= \mathbb{E}\bigl[U(t_{i+1},X_{i+1})\,\big|\,X_i=x\bigr]\\
&= PU(t_i,x).
\end{align*}
Therefore,
\[
U(t_i,x)=g(t_i,x)+PU(t_i,x).
\]
\end{proof}

Now, to prove Lemma \ref{lemma:key}, it suffices to show that

$$U(t_0,x)=\mathbb{E}\left(\sum_{k=0}^{N-1} X_k \mathbf{1}_{\{X_k\in[0,\sqrt{3h}]\}} \,\middle|\, X_0=x\right)$$
is uniformly bounded. From Lemma \ref{lem:V}, we know that $U$ solves \eqref{eq:V} with $g(t,x) = x \cdot \mathbf{1}_{x \in [0,\sqrt{3h}]}$.
Therefore, it is enough to construct a uniformly bounded function $V$ such that
\begin{equation*}
\begin{split}
    PV(t_i,x)-V(t_i,x) &\le -x\mathbf{1}_{\{x\in[0,\sqrt{3h}]\}},
\qquad i=0,\dots,N-1,\\
    V(T,x) &\ge 0.
\end{split}
\end{equation*}
Indeed, these two inequalities imply $U(t_i,x)\le V(t_i,x)$ for all grid times by backward induction (shown explicitly in the proof), and in particular
$$U(t_0,x)\le V(t_0,x).$$
Hence the uniform boundedness of $V$ yields the desired bound on $U$.

% Then, Lemma \ref{lemma:key} can be concluded if I construct a uniformly bounded $V(\cdot,\cdot)$ such that $PV(t,x) - V(t,x) \leq -x \cdot \mathbf{1}_{x \in [0,\sqrt{3h}]}$

\begin{proof}[Proof of Lemma \ref{lemma:key}]
%\todan{is there some way to give a bit of intuition into the overall strategy of the construction?}
    % According to Leimkuhler's paper (3.19-3.21)\cite{Leimkuhler.2020er8}, $U(t_0,x) = \mathbb{E}\left(\sum_{k=0}^{N-1} X_k \cdot \mathbf{1}_{X_k \in [0,\sqrt{3h}]} | X_{0} = x\right)$ is the solution of the following boundary value problem where $P$ represents the transition operator of our scheme $PU(t,x) = \mathbb{E}_x(U(t+h, X_{t+h}))$:
    % $$PU(t,x) - U(t,x) = -x \cdot \mathbf{1}_{x \in [0,\sqrt{3h}]},\quad (t,x)\in [t_0,T-h]\times [0,+\infty)$$
    % $$U(T,x) = 0,\quad x\in [0,+\infty)$$
    % it suffices to finish the proof by showing that $U(t_0,x)$ is upper bounded by a uniformly bounded function $V(t,x)$ such that
    % \begin{equation}
    % \label{eqn:V_goal}
    % \begin{split}
    %     PV(t,x) - V(t,x) &= -g(t,x)\\
    %     V(T,x) &= 0,\quad x\in [0,+\infty)\\
    %     g(t,x) &\geq x \cdot \mathbf{1}_{x \in [0,\sqrt{3h}]}
    % \end{split}
    % \end{equation}
    % because
    % $$U(t_0,x) = \mathbb{E}\left(\sum_{k=0}^{N-1} X_k \cdot \mathbf{1}_{X_k \in [0,\sqrt{3h}]} | X_{0} = x\right) \leq \mathbb{E}\left(\sum_{k=0}^{N-1} g(t_k,X_{k})  | X_{0} = x\right) = V(t_0,x)$$
    %\redan{I add an extra paragraph here.}
    We seek a bounded test function whose one-step drift is negative by an amount comparable to
    \(x\) whenever the chain lies in the boundary layer. Once such a function is found, a backward comparison argument will imply \(U\le V\), and hence the desired uniform bound. The function \(e^{w(x)}\) constructed below is maximal near the sticky boundary and decreases away from it, so each visit to \([0,\sqrt{3h}]\) forces a definite loss in one step. The exponential factor \(e^{K(T-t_i)}\) is then used later to absorb the order-\(h\) remainder terms in this drift estimate.
    Readers will see in \eqref{eqn:V_define} later that the function $V(t,x)$ we construct has a structure similar to (3.27) in \cite{Leimkuhler.2020er8}.
    
    We introduce the following function, where $M,s>16$:
    \begin{equation}
    \label{eqn:chosen_wx_continuous}
        w(x) =
        \begin{cases} 
            M , & x\in [0, \sqrt{3h}/2] \\
            M - s\left(x-\frac{\sqrt{3h}}{2}\right), & x\in (\sqrt{3h}/2,+\infty) .
        \end{cases}    
    \end{equation}

    A direct case-by-case computation, provided in the supplement \ref{sec:appendix_sec3}, shows that there exist constants $C_1(s,M)>0$ and $\delta>0$, independent of $x$ and $h$, such that for all $h<\delta$,
    \begin{equation}
    \label{eqn:wx_goal}
        Pe^{w(x)} \leq e^{w(x)}\left(1-f_h(x)+C_1h\right), \qquad \forall x\ge 0,\ \forall h<\delta,
    \end{equation}
    where $f_h$ is defined piecewise by \eqref{eqn:fh_in_0_to_sqrt(3h)/2}, \eqref{eqn:fh_in_sqrt(3h)/2_to_sqrt(3h)}, and \eqref{eqn:fh_in_sqrt(3h)_to_inf} in the supplement, and satisfies
    $$
    f_h(x)\geq x\,\mathbf{1}_{\{x\in[0,\sqrt{3h}]\}}.
    $$
    Now we introduce a new function
    \begin{equation}
    \label{eqn:V_define}
        V(t_i,x) := e^{K(T-t_i)}e^{w(x)}\,,\quad (t,x)\in \{t_0,\dots,t_N\}\times[0,\infty).
    \end{equation}
    % \todan{the above equation seems degenerate -- it is only equal to $e^w$ at $t=T$, otherwise it is 0. Is this intentional?}
    % %Therefore, $V(t,x)$ we define in \eqref{eqn:V_define} \todan{broken reference} satisfies \eqref{eqn:V_goal} and it is uniformly bounded.
    % \redan{Below might be a clearer explanation to replace what I have said from \eqref{eqn:wx_goal} where I am more explicit on the bounding constant for big-$\mathcal{O}$ notations.}\todan{I think I like the extra detail below.}

    Throughout the remainder of the proof, constants are allowed to depend on the fixed parameters $s,M$ (and on $K$ after $K$ is chosen), but are independent of $x$ and $h$.
    After possibly decreasing $\delta$, we may also assume that
    \[
    e^{-Kh}=1-Kh+r_K(h),
    \qquad |r_K(h)|\le C_2(K)h^2,
    \qquad \forall h<\delta.
    \]
    Hence, for every $h<\delta$,
    \begin{equation}\label{eq:PV_step1}
    \begin{split}
    PV(t_i,x)-V(t_i,x)
    &= e^{K(T-t_i-h)}Pe^{w(x)} - e^{K(T-t_i)}e^{w(x)}\\
    &= e^{K(T-t_i)}\Bigl(e^{-Kh}Pe^{w(x)}-e^{w(x)}\Bigr)\\
    &\le e^{K(T-t_i)}e^{w(x)}
    \Bigl[\bigl(1-Kh+C_2(K)h^2\bigr)\bigl(1-f_h(x)+C_1h\bigr)-1\Bigr].
    \end{split}
    \end{equation}
    Expanding the bracket gives
    \begin{equation}\label{eq:PV_step2}
    \begin{split}
    &\bigl(1-Kh+C_2(K)h^2\bigr)\bigl(1-f_h(x)+C_1h\bigr)-1\\
    &\qquad
    = -f_h(x) + (C_1-K)h + K h f_h(x) + C_2(K)h^2\bigl(1-f_h(x)+C_1h\bigr)-K C_1 h^2.
    \end{split}
    \end{equation}
    
    Next, note that $f_h(x)=0$ for $x>\sqrt{3h}$. On the other hand, when $x\in[0,\sqrt{3h}]$, the explicit formulas
    \eqref{eqn:fh_in_0_to_sqrt(3h)/2}--\eqref{eqn:fh_in_sqrt(3h)_to_inf} in the Supplement   together with
    $0\le \lambda(x)\le 1$ imply that
    \[
    0\le f_h(x)\le C_3\sqrt{h},
    \qquad \forall x\ge 0,\ \forall h<\delta,
    \]
    for some constant $C_3>0$ depending only on $s$. Therefore
    \[
    h f_h(x)\le C_3 h^{3/2},
    \qquad \forall x\ge 0,\ \forall h<\delta.
    \]
    After decreasing $\delta$ once more so that $\delta\le 1$, the $h^2$-terms in \eqref{eq:PV_step2}
    can be bounded by $C_4(K)h^{3/2}$ for some constant $C_4(K)>0$. Consequently, there exists a constant
    $C_5(K)>0$ such that $\forall x\ge 0,\ \forall h<\delta$,
    \begin{equation}\label{eq:PV_step3}
    \bigl(1-Kh+C_2(K)h^2\bigr)\bigl(1-f_h(x)+C_1h\bigr)-1
    \le -f_h(x)+(C_1-K)h + C_5(K)h^{3/2}.
    \end{equation}
    Substituting \eqref{eq:PV_step3} into \eqref{eq:PV_step1}, we obtain, $\forall x\ge 0,\ \forall h<\delta$
    \begin{equation}\label{eq:PV_final}
    PV(t_i,x)-V(t_i,x)
    \le
    -\,e^{K(T-t_i)}e^{w(x)}
    \Bigl(
    f_h(x) + (K-C_1)h - C_5(K)h^{3/2}
    \Bigr).
    \end{equation}
    Now choose $K:=C_1+1$. After decreasing $\delta$ again if necessary, we may assume that
    \[
    (K-C_1)h-C_5(K)h^{3/2}\ge 0,
    \qquad \forall h<\delta.
    \]
    Hence \eqref{eq:PV_final} yields
    \[
    PV(t_i,x)-V(t_i,x)\le -\,e^{K(T-t_i)}e^{w(x)}f_h(x),
    \qquad \forall x\ge 0,\ \forall h<\delta.
    \]
    Finally, after decreasing $\delta$ one last time so that
    \[
    M-\frac{s}{2}\sqrt{3h}\ge 0,
    \qquad \forall h<\delta,
    \]
    we have $w(x)\ge 0$ for every $x\in[0,\sqrt{3h}]$, and therefore
    \[
    e^{K(T-t_i)}e^{w(x)}\ge 1
    \qquad \text{on } [0,\sqrt{3h}].
    \]
    Since also $f_h(x)\ge x\,\mathbf{1}_{\{x\in[0,\sqrt{3h}]\}}$, we conclude that
    \begin{equation}
    \label{eq:PV_final_final}
        PV(t_i,x)-V(t_i,x)\le -\,x\,\mathbf{1}_{\{x\in[0,\sqrt{3h}]\}},
    \qquad \forall x\ge 0,\ \forall h<\delta.
    \end{equation}
    % \todan{+ uniformly bounded. is this obvious?}\redan{The uniform bound is $e^{KT+M}$, see next page}
    It remains to compare $V$ with the function
    \[
    U(t_i,x):=\mathbb{E}\left(\sum_{k=i}^{N-1} X_k\mathbf{1}_{\{X_k\in[0,\sqrt{3h}]\}}\,\middle|\,X_i=x\right),
    \qquad i=0,\dots,N.
    \]
    By Lemma \ref{lem:V}, $U$ satisfies
    \begin{equation}
    \label{eq:PU}
        PU(t_i,x)-U(t_i,x)=-x\,\mathbf{1}_{\{x\in[0,\sqrt{3h}]\}},
        \qquad i=0,\dots,N-1,
    \end{equation}
    with terminal condition
    \[
    U(T,x)=0.
    \]
    On the other hand, we have shown that for all sufficiently small $h$,
    \[
    PV(t_i,x)-V(t_i,x)\le -\,x\,\mathbf{1}_{\{x\in[0,\sqrt{3h}]\}},
    \qquad i=0,\dots,N-1.
    \]
    Moreover,
    \[
    V(T,x)=e^{w(x)}\ge 0 = U(T,x).
    \]
    
    We now prove by backward induction that
    \[
    U(t_i,x)\le V(t_i,x), \qquad \forall i=0,\dots,N,\ \forall x\ge 0.
    \]
    The claim is true for $i=N$ by the terminal inequality above. Assume it holds at time $t_{i+1}$, namely
    $U(t_{i+1},x)\le V(t_{i+1},x)$, for all $x\ge 0$.
    This implies
    \[
    PU(t_i,x) = \mathbb{E}\left(U(t_{i+1}, X_{i+1})\mid X_i=x\right)\le \mathbb{E}\left(V(t_{i+1}, X_{i+1})\mid X_i=x\right) = PV(t_i,x), \qquad \forall x\ge 0.
    \]
    Using \eqref{eq:PU}, the supersolution inequality \eqref{eq:PV_final_final}, and $f_h(x) \geq x \cdot \mathbf{1}_{x \in [0,\sqrt{3h}]}$ we obtain
    \begin{align*}
    U(t_i,x)
    &= PU(t_i,x)+x\,\mathbf{1}_{\{x\in[0,\sqrt{3h}]\}}\\
    &\le PV(t_i,x)+x\,\mathbf{1}_{\{x\in[0,\sqrt{3h}]\}}\\
    &\le V(t_i,x).
    \end{align*}
    This closes the induction.
    Finally, since $w(x)\le M$ for all $x\ge 0$, we have
    \[
    V(t_i,x)=e^{K(T-t_i)}e^{w(x)}\le e^{KT+M},
    \qquad \forall i=0,\dots,N,\ \forall x\ge 0.
    \]
    Hence
    \[
    U(t_0,x)\le V(t_0,x)\le e^{KT+M},
    \qquad \forall x\ge 0.
    \]
    %By the tower property and the uniform bound just proved,
    %\[
    %\mathbb{E}\left(\sum_{k=0}^{N-1} X_k\mathbf{1}_{\{X_k\in[0,\sqrt{3h}]\}}\right)
    %=
    %\mathbb{E}\left[
    %\mathbb{E}\left(\sum_{k=0}^{N-1} X_k\mathbf{1}_{\{X_k\in[0,\sqrt{3h}]\}}\,\middle|\,X_0\right)
    %\right]
    %\le e^{KT+M}.
    %\]
    Therefore there exists a constant $C = e^{KT+M}>0$, independent of $h, x_0$, such that
    \[
    \mathbb{E}\left(\sum_{k=0}^{N-1} X_k\mathbf{1}_{\{X_k\in[0,\sqrt{3h}]\}}\right)\le C.
    \]
    
\end{proof}

As a side remark, we in fact use a special case of 3.19-3.21 in Leimkuhler's paper \cite{Leimkuhler.2020er8} where $q(\cdot) = 1$.

\subsection{Sufficient conditions for backward equation regularity}
\label{sec:sufficient_condition_assumptions}

The main convergence theorems are stated in terms of regularity assumptions on the backward solution $u$, rather than directly on the terminal datum $\phi$. 
It is therefore natural to ask for a concrete condition on $\phi$ which guarantees that these regularity assumptions hold. 
On the half-line this requires a separate regularity theory for the sticky/Wentzell problem, which is not esaily available in the literature, so here we record a convenient sufficient condition on a compact interval $[0,L]$. 
For simplicity, we impose a reflecting boundary condition at $x=L$; the same argument extends to a general Wentzell boundary condition, as noted in the remark after proving Proposition \ref{prop:sufficient_condition_compact_reflecting}. 

The key point is that the compatibility conditions are not ad hoc. 
The boundary conditions imposed on $\phi$ ensure that $\phi$ belongs to the domain of the spatial generator $A$ of the SBM, while the additional conditions obtained by applying the same boundary conditions to $\frac12\phi''$ ensure that $A\phi$ also belongs to the domain of $A$, that is, $\phi\in D(A^2)$. 
Once this is established, standard semigroup regularity for the associated boundary-value problem yields uniform bounds on the second time derivative and on the spatial derivatives up to order four. The next proposition makes this precise. %\todan{I defined generator $A$ above, because it was used without saying what it is the generator of -- pls check}\redan{The definition is correct, but $A$ might not be the best notation, we can change if needed.}

\begin{proposition}
\label{prop:sufficient_condition_compact_reflecting}
Let $\phi\in C^4([0,L])$, and let $u(t,x)$ solve
\begin{subequations}\label{eq:back_compact_reflecting}
\begin{align}
\partial_t u + \frac12 \partial_{xx}u &= 0, && (t,x) \in [0,T)\times[0,L], \label{eq:back_compact_reflecting_pde}\\
\partial_x u(t,0) &= \kappa\,\partial_{xx}u(t,0), && t\in[0,T), \label{eq:back_compact_reflecting_sticky}\\
\partial_x u(t,L) &= 0, && t\in[0,T), \label{eq:back_compact_reflecting_reflecting}\\
u(T,x) &= \phi(x), && x\in[0,L]. \label{eq:back_compact_reflecting_terminal}
\end{align}
\end{subequations}
Assume that $\phi$ satisfies compatibility conditions 
$$
\phi'(0)=\kappa \phi''(0), \qquad \phi'(L)=0,
$$
and
$$
\phi^{(3)}(0)=\kappa \phi^{(4)}(0), \qquad \phi^{(3)}(L)=0.
$$
Then
$$
\sup_{(t,x)\in[0,T]\times[0,L]} |\partial_x^j u(t,x)|<\infty,
\qquad j=1,2,3,4,
$$
and
$$
\sup_{(t,x)\in[0,T]\times[0,L]} |\partial_{tt}u(t,x)|<\infty.
$$
In particular, Assumptions~\ref{assump:4th_spatial_derivative_bound} and \ref{assump:2nd_time_derivative_bound} hold on the compact strip $[0,T]\times[0,L]$.
\end{proposition}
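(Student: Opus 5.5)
We will use semigroup theory for the spatial generator. Let $H=C([0,L])$ with the sup norm, or, more conveniently for an energy argument, the weighted space $L^2(\mu)$ with $\mu(dx)=dx+\kappa\delta_0(dx)$, the (unnormalized) stationary measure from \eqref{eq:pi} with $\tilde\pi\equiv1$. Define $A f=\frac12 f''$ on
\[
D(A)=\{f\in C^2([0,L]):\ f'(0)=\kappa f''(0),\ f'(L)=0\},
\]
with the $L^2(\mu)$ version taken as the closure. The computation leading to \eqref{eq:adjoint} shows that $\langle Af,g\rangle_\mu=-\frac12\int_0^L f'g'\,dx$ for $f,g\in D(A)$: the boundary term at $0$ cancels against the atom, and the term at $L$ vanishes. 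Hence $A$ is symmetric and nonpositive. Since the problem is a regular Sturm--Liouville problem with an eigenvalue-dependent (Wentzell) boundary condition, $A$ is self-adjoint with compact resolvent. Reversing time, $v(\tau,\cdot)=u(T-\tau,\cdot)$ solves $\partial_\tau v=Av$, $v(0)=\phi$, so $u(t)=e^{(T-t)A}\phi$.

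The compatibility conditions are exactly what place $\phi$ in $D(A^2)$. The conditions $\phi'(0)=\kappa\phi''(0)$ and $\phi'(L)=0$ say $\phi\in D(A)$. Next, $A\phi=\frac12\phi''$ lies in $D(A)$ precisely when $\frac12\phi^{(3)}(0)=\kappa\cdot\frac12\phi^{(4)}(0)$ and $\phi^{(3)}(L)=0$, and $\phi\in C^4$ gives $A\phi\in C^2$. Because $e^{\tau A}$ commutes with $A$ and is a contraction, we get for every $\tau\in[0,T]$
\[
\|A^j v(\tau)\|=\|e^{\tau A}A^j\phi\|\le\|A^j\phi\|,\qquad j=0,1,2,
\]
with $\partial_\tau v=Av$ and $\partial_{\tau\tau}v=A^2v$. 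This yields time-uniform control of $v$, $\tfrac12 v''$ and $\tfrac14 v^{(4)}$ in the chosen norm. Given these, $\partial_{tt}u=\frac14\partial_x^4 u$ and $\partial_tu=-\frac12\partial_{xx}u$, so everything reduces to uniform sup bounds on $\partial_x^ju$ for $j\le4$.

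The main obstacle is upgrading the $L^2(\mu)$ (or graph-norm) bounds to sup-norm bounds on derivatives in one dimension. The approach is as follows.
\begin{itemize}
\item[(i)] From $\|Av\|_\mu$ and $\|v\|_\mu$ bounded we get $v''\in L^2(0,L)$ uniformly. Interpolation (Gagliardo--Nirenberg on an interval) gives $v\in H^2$ uniformly, hence $v,v'$ bounded by Sobolev embedding.
\item[(ii)] Applying the same argument to $w=Av=\frac12v''$, which satisfies $\partial_\tau w=Aw$ with $w(0)=A\phi\in D(A)$, gives $w\in H^2$ uniformly. So $v''$, $v^{(3)}$ are bounded, and $v^{(4)}=2w''\in L^2$.
\item[(iii)] For a sup bound on $v^{(4)}$ we need $A^2\phi\in D(A)$, which would require $\phi\in C^6$ plus more compatibility conditions. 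To avoid this, we work in $C([0,L])$ instead: $A$ generates a Feller contraction semigroup there (the sticky reflecting Brownian motion on $[0,L]$, by Hille--Yosida with the Wentzell condition, or from \eqref{eq:SDE}). The bound $\|e^{\tau A}A^2\phi\|_\infty\le\|A^2\phi\|_\infty=\frac14\|\phi^{(4)}\|_\infty$ then directly controls $\frac14\|v^{(4)}\|_\infty$, and $\|A\cdot\|_\infty$ controls $\|v''\|_\infty$.
\end{itemize}
The first and third derivatives then follow by integrating from an endpoint where they are known. At $x=L$ we have $v'(L)=0$, and $v^{(3)}(L)=0$ because $w\in D(A)$. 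Thus $|v'|\le L\|v''\|_\infty$ and $|v^{(3)}|\le L\|v^{(4)}\|_\infty$.

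So the plan is: carry out the argument in $C([0,L])$, check that $A$ with domain $D(A)$ generates a contraction semigroup (dissipativity via the maximum principle together with the Wentzell condition, and range condition by solving the resolvent ODE explicitly), and verify that $\phi,A\phi\in D(A)$ from the compatibility conditions. The generation step is where most of the care is needed.
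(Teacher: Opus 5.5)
Your final plan, carried out in $C([0,L])$, is correct and is essentially the paper's proof: the compatibility conditions give $\phi\in D(A^2)$, the semigroup bounds on $T(\tau)A\phi$ and $T(\tau)A^2\phi$ control $v''$, $v^{(4)}$ and $\partial_{\tau\tau}v$ uniformly, the odd derivatives follow by integration, and the $L^2(\mu)$ detour in (i)--(ii) is not needed. The differences are minor: the paper cites Engel's generation theorem for Wentzell conditions where you verify generation via Lumer--Phillips (there you should also note that $D(A)$ is dense in $C([0,L])$, which holds because $D(A)$ contains all smooth functions that are constant near both endpoints), and the paper integrates the odd derivatives from $x=0$ using the sticky condition, whereas you integrate from $x=L$ using $v'(L)=v^{(3)}(L)=0$.
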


\begin{proof}
Set $v(s,x):=u(T-s,x)$ for $(s,x)\in[0,T]\times[0,L]$. Then $v$ solves
\begin{align*}
\partial_s v &= \frac12 \partial_{xx}v, && (s,x) \in (0,T]\times[0,L],\\
\partial_x v(s,0) &= \kappa\,\partial_{xx}v(s,0), && s\in (0,T],\\
\partial_x v(s,L) &= 0, && s\in (0,T],\\
v(0,x) &= \phi(x), && x\in[0,L].
\end{align*}

Let $X:=C([0,L])$, and define $Af:=\frac12 f''$ on
$$
D(A):=\left\{f\in C^2([0,L]) : f'(0)=\kappa f''(0),\ f'(L)=0\right\}.
$$
By Theorem 1.1 in \cite{Engel2003Wentzell} for generalized Wentzell boundary conditions on $C([0,L])$, $A$ generates an analytic semigroup $\{T(s)\}_{s\ge0}$ on $X$. Hence $v(s)=T(s)\phi$.

The first pair of conditions on $\phi$ says exactly that $\phi\in D(A)$. Since $A\phi=\frac12 \phi''$, the second pair of conditions says exactly that $A\phi\in D(A)$. Therefore $\phi\in D(A^2)$.
By the standard differentiability theorem for $C_0$-semigroups (\cite{pazy2012semigroups} Section 2.4), since $\phi\in D(A^2)$, we have $v(s)\in D(A^2)$ for every $s\in[0,T]$, and
$$
v_s(s)=T(s)A\phi, \qquad v_{ss}(s)=T(s)A^2\phi.
$$
Since $\{T(s)\}_{s\ge0}$ is a strongly continuous semigroup on $X=C([0,L])$, it is uniformly bounded on the compact time interval $[0,T]$. Thus, there exists $M_T>0$ such that
$$
\|T(s)f\|_\infty \le M_T\|f\|_\infty,
\qquad f\in X,\quad 0\le s\le T.
$$
Hence
$$
\|v_{ss}(s,\cdot)\|_\infty
=
\|T(s)A^2\phi\|_\infty
\le
M_T\|A^2\phi\|_\infty,
\qquad 0\le s\le T.
$$
%\todan{what is this norm: $\|T(s)\|_{\mathcal L(X)}$ ? }
%\redan{This is the operator norm of $T(s)$ as a bounded linear operator on $X=C([0,L])$ with the supremum norm. To avoid confusion with the differential operator $\mathcal L$, I rewrote the estimate directly as $\|T(s)f\|_\infty\le M_T\|f\|_\infty$.}
Hence, $v_{ss}$ is uniformly bounded,
$$
\|v_{ss}(s,\cdot)\|_\infty \le M_T \|A^2\phi\|_\infty, \qquad 0\le s\le T.
$$
Next, since $Af=\frac12 f''$ and $A^2f=\frac14 f^{(4)}$ in the interior, we obtain $v_{xx}$ and $v_{xxxx}$ are uniformly bounded on $[0,T]\times[0,L]$ by,
$$
\|v_{xx}(s,\cdot)\|_\infty
=2\|Av(s,\cdot)\|_\infty
=2\|T(s)A\phi\|_\infty
\le 2M_T\|A\phi\|_\infty,
$$
$$
\|v_{xxxx}(s,\cdot)\|_\infty
=4\|A^2v(s,\cdot)\|_\infty
=4\|T(s)A^2\phi\|_\infty
\le 4M_T\|A^2\phi\|_\infty.
$$

To control the odd derivatives, we use the boundary conditions and integration. Since $v(s,\cdot)\in D(A)$, we have $\partial_x v(s,0)=\kappa\,\partial_{xx}v(s,0)$ and 
$$
|\partial_x v(s,0)|\le \kappa \|\partial_{xx}v(s,\cdot)\|_\infty.
$$
As a result, $\partial_x v$ is uniformly bounded,
\begin{equation*}
\begin{split}
    |\partial_x v(s,x)|&=|\partial_x v(s,0)|+\left|\int_0^x \partial_{xx}v(s,y)\,dy\right|\\
    &\leq (\kappa+L)\|\partial_{xx}v(s,\cdot)\|_\infty.
\end{split}
\end{equation*}

Similarly, since $v(s,\cdot)\in D(A^2)$, we know $Av(s,\cdot)\in D(A)$. Applying the sticky boundary condition to $Av=\frac12 \partial_{xx}v$ and the uniform bound on fourth order derivative, we obtain $|\partial_{xxx}v(s,0)|\le \kappa \|\partial_{xxxx}v(s,\cdot)\|_\infty$. Hence $\partial_{xxx}v$ is uniformly bounded by similar reasoning above
\begin{equation*}
\begin{split}
    |\partial_{xxx}v(s,x)| &= |\partial_{xxx}v(s,0)|+\left|\int_0^x \partial_{xxxx}v(s,y)\,dy\right|\\
    &\leq (\kappa+L)\|\partial_{xxxx}v(s,\cdot)\|_\infty.
\end{split}
\end{equation*}
We have now shown that
$$
\sup_{(s,x)\in[0,T]\times[0,L]} |\partial_x^j v(s,x)|<\infty,
\qquad j=1,2,3,4,
$$
and
$$
\sup_{(s,x)\in[0,T]\times[0,L]} |\partial_{ss}v(s,x)|<\infty.
$$
Since $u(t,x)=v(T-t,x)$, the same spatial bounds hold for $u$, and $\partial_{tt}u(t,x)=\partial_{ss}v(T-t,x)$. Therefore
$$
\sup_{(t,x)\in[0,T]\times[0,L]} |\partial_x^j u(t,x)|<\infty,
\qquad j=1,2,3,4,
$$
and
$$
\sup_{(t,x)\in[0,T]\times[0,L]} |\partial_{tt}u(t,x)|<\infty.
$$
This completes the proof.
\end{proof}

\begin{remark}
\label{remark:wentzell_extension_compact}
The same argument works if the reflecting boundary condition at $x=L$ in \eqref{eq:back_compact_reflecting_reflecting} is replaced by the general Wentzell boundary condition
$$
\frac12 \partial_{xx}u(t,L)+\beta_L\partial_x u(t,L)+\gamma_L u(t,L)=0.
$$
In that case one replaces the domain of $A$ by
$$
D(A):=\left\{f\in C^2([0,L]) : f'(0)=\kappa f''(0),\ \frac12 f''(L)+\beta_L f'(L)+\gamma_L f(L)=0\right\},
$$
and the compatibility conditions at $x=L$ become
$$
\frac12 \phi''(L)+\beta_L\phi'(L)+\gamma_L\phi(L)=0
$$
and
$$
\frac14 \phi^{(4)}(L)+\frac{\beta_L}{2}\phi^{(3)}(L)+\frac{\gamma_L}{2}\phi''(L)=0.
$$
The proof is unchanged.
\end{remark}

%%%%%%%%%%%%%%%%%%%%%%%%%%%%%%%%%%%%%%%%%%%
%%%%%    General Sticky Diffusion     %%%%%
%%%%%%%%%%%%%%%%%%%%%%%%%%%%%%%%%%%%%%%%%%%

\section{General Sticky Diffusion}\label{sec:generalsticky}

Now we consider a general one-dimensional sticky diffusion with drift $b(x)$ and diffusion $\sigma(x)$.
For ease of reference we recall the generator of the process is 
\begin{equation}\label{eq:Lgen}
    \mathcal{L}u = b \cdot \partial_x u + a\partial_{xx}u,
\end{equation}
with $a = \frac{\sigma^2}{2}$, and with a sticky boundary condition at $x=0$: 
\begin{equation}
\label{eqn:L_diffusion_BC}
    \left.a\partial_xu\right\vert_{x=0} = \left.\kappa\left(b\partial_xu + a\partial_{xx}u\right)\right\vert_{x=0}\, .
\end{equation}
The backward equation describing the process is
\begin{equation}
\label{eqn:diffusion_bckwd_eqn}
    \partial_t u + \mathcal{L}u = 0, \quad x \in (0,+\infty), \qquad u(T,x) = \phi(x), \quad x \in (0,+\infty),
\end{equation}
with the sticky boundary condition \eqref{eqn:L_diffusion_BC}.

%To justify boundary condition \eqref{eqn:L_diffusion_BC}, readers can refer to methodology in Section 2.6 of this paper \cite{Bou-Rabee.2020}.

\subsection{Overview of the simulation algorithm}
\label{sec:sticky_diffusion}

The scheme proceeds in the same way as for a SBM: in a boundary layer, we propose a reflected  EM jump with probability $\lambda(\cdot)$, and otherwise we jump directly to the sticky boundary.  A difference from a SBM is that the boundary layer is defined as a function of the variable diffusion $\sigma(x)$, and hence may not be a continuous interval; this introduces some technicalities into the proofs. Further, the procedure for reflecting the process must be modified, to ensure the jump probability $\lambda(\cdot)$ is indeed a valid probability. 
As reader will see later, subtleties arise in the scheme when the drift term is negative.

We generate a process $\{X_k\}_{k=1,\dots,N}$ with time step $h$. Given $X_k=x$, the boundary layer and exterior are defined as 
\begin{align}
    \text{boundary layer:}\quad  &\Omega_{b}=\{x: 0\leq \frac{x}{\sigma(x)} \leq \sqrt{3h}\},\label{eq:Omegab}\\
    \text{exterior:}\quad & \Omega_e = \{x: \frac{x}{\sigma(x)} > \sqrt{3h}\}.\nonumber
\end{align}

\begin{remark} The reader will observe that the boundary layer as defined above can be disconnected. We'll handle this by showing that, for small enough $h$, it is contained in an interval $[0,\sigma_2\sqrt{3h}]$ for some $\sigma_2>0$. We'll then do a local error analysis that is valid in this larger interval, hence it will apply to our boundary layer, which is a subset of this interval. Furthermore, the local error in the part of the exterior which is in the interval $[0,\sigma_2\sqrt{3h}]$ will be controlled by standard error analyses for reflected EM jumps. %\todan{I added this paragraph, please check}
\end{remark}

% \mhc{say: boundary layer can be disconnected. we'll deal with this by showing that for small enough $h$, it is contained in $[0,\sigma_2\sqrt{3h}$ for some $\sigma_2>0$. then we'll do a local error analysis that is valid in this larger interval, hence it will apply to our boundary layer, which is a subset. Furthermore, the exterior which is in the boundary layer will be controlled by standard error analyses for reflected EM jumps}

% \todan{I'm a bit puzzled about this formulation of a ``state-dependent boundary layer''. If we know $X_k=x$, then either it is in the boundary layer or it isn't. It seems to me that the boundary layer should be defined as 
% \[\Omega_{br} = \{x:\frac{x}{\sigma(x)} < \sqrt{3h}\}.\]
% Would it change much of the presentation if it were defined this way? 
% }
% \redan{Yes, I think your representation make it more clear, saying state-dependent is puzzling. 
% What I really want to emphasize is that, unlike the SBM case, the boundary layer is not given a priori by a fixed interval such as $[0,\sqrt{3h}]$.
% I do not think this will change the proof argument, will check}

Given a point $X_k$ in the exterior, $X_k \in \Omega_e$, we update $X_{k+1}$ using a standard Euler-Maruyama increment with reflection:%  to ensure positivity: % (this is necessary if the drift term is negative):
\begin{equation}
\label{eqn:diffusion_cont_scheme_outlayer}
    X_{k+1} = \left|X_k + \sigma(X_k)Z_k + b(X_k)h\right|.
\end{equation}
Here $\{Z_k\}$ is a sequence of \textit{iid} uniform random variables on $[-\sqrt{3h}, \sqrt{3h}]$.

When the particle is within the boundary layer, $X_k \in \Omega_b$, the scheme performs a type of reflected EM jump with probability $\lambda(X_k)$, %(which now depends on the sticky parameter $\kappa$ and the local coefficients), 
and otherwise it jumps to $0$ with probability $1-\lambda(X_k)$:
\begin{equation}
\label{eqn:diffusion_cont_scheme_inlayer}
    X_{k+1} =
    \begin{cases} 
    \Bigl| \left|X_k + \sigma(X_k)Z_k\right| + b(X_k)h \Bigr|, & \text{with probability } \lambda(X_k), \\
    0, & \text{with probability } 1-\lambda(X_k).
    \end{cases}
\end{equation}

\begin{remark}
The reflected EM jump in \eqref{eqn:diffusion_cont_scheme_inlayer} involves a nested absolute value. A more standard reflection would take a single absolute value of the entire expression. We attempted this more standard reflection first, but it led to an expression for $\lambda(x)$ that we were unable to prove is a valid probability. 
%While taking a single absolute value of the entire expression yields a new $\lambda(x)$, it is hard to check its validity and does not simplify any error analysis we are about to introduce. Both schemes are identical and exception occurs with probability $\mathcal{O}(\sqrt{h})$. 
\end{remark}

The jump probability function  is
\begin{equation}
\label{eqn:diffusion_lambda_cont_scheme}
    \lambda(x) = \frac{(\sigma^2(x)-2\kappa b(x))x^2 + 2\kappa \sigma^2(x) x + \sigma^4(x)h}{\left(\frac{\kappa\sigma(x)}{\sqrt{3h}} + \sigma^2(x) - 2\kappa b(x)\right)x^2 + \sigma^3(x) \kappa \sqrt{3h} + \sigma^4(x)h}.
\end{equation}
This expression is derived shortly via the method of undetermined coefficients. We prove in Section \ref{sec:lamprobgen} that it is a valid probability for small enough $h$.

An summary of the simulation scheme for a general sticky diffusion is given in Algorithm \ref{alg:diffusion_cont_scheme}.

%We note that \eqref{eqn:diffusion_lambda_cont_scheme} can be written in an equivalent, albeit more complex, form by dividing the numerator and denominator by an auxiliary function $M(x) = \frac{\kappa \sigma^2(x)}{\sigma^2(x)-2\kappa b(x)}$. 
%However, the form in \eqref{eqn:diffusion_lambda_cont_scheme} is preferred for implementation and analysis.

%Those assumptions makes it legal for us to do Taylor expansion and bound the coefficients of remainder terms with high-order derivatives.

\begin{algorithm}
\caption{Approximation of Sticky Diffusion $X_{T}$ starting from $x \in [0, \infty)$.}
\label{alg:diffusion_cont_scheme}
\begin{algorithmic}[1]
\State \textbf{Set} $X_0=x$.
\For{$k = 0$ \textbf{to} $N-1$}
    \State Generate $Z_{k} \sim \mathcal{U}([-\sqrt{3h}, \sqrt{3h}])$.
    \State Calculate boundary threshold $L_k = \sigma(X_k)\sqrt{3h}$.
    \If{$X_k > L_k$}
        \State Update $X_{k+1}$ using \eqref{eqn:diffusion_cont_scheme_outlayer}.
    \Else \quad ($X_k \in [0, L_k]$)
        \State Calculate $\lambda(X_k)$ using \eqref{eqn:diffusion_lambda_cont_scheme}.
        \State Generate $u \sim \mathcal{U}([0,1])$.
        \If{$u \leq \lambda(X_k)$}
            \State Update $X_{k+1} = \bigl| |X_k + \sigma(X_k)Z_k| + b(X_k)h \bigr|$.
        \Else
            \State Set $X_{k+1} = 0$.
        \EndIf
    \EndIf
\EndFor
\end{algorithmic}
\end{algorithm}

\subsection{How to choose $\lambda(x)$}

As for a SBM, we want to choose the jump probability $\lambda$ so that the transition operator of our discrete scheme is a good approximation to the generator of sticky diffusion in the boundary layer. 
By Taylor expansion of $P^h f - f$ and the generator $\mathcal L$ defined in \eqref{eq:Lgen}, we want to find $\lambda$ such that
\begin{equation}\label{eq:genapprox_diffusion}
    \left|(\partial_xf)\mathbb{E}_x(\Delta X) + \frac{1}{2}(\partial_{xx}f) \mathbb{E}_x(\Delta X^2) -hb(x)(\partial_xf) - ha(x)(\partial_{xx}f)\right| \leq \epsilon,
\end{equation}
for some sufficiently small $\epsilon$; as before we'll have $\epsilon \leq C(h^2+hx)$.
To replace $\partial_xf$ by $\partial_{xx}f$,
we Taylor-expand both sides of the sticky boundary condition \eqref{eqn:L_diffusion_BC}:
\begin{align*}
    \sigma^2(0) \partial_xf(t,0) &= \sigma^2(x)\partial_x f(t,x) - x\sigma^2(x)\partial_{xx}f(t,x) + \mathcal{O}(h+x),\\
     2\kappa b(0)\partial_x f(t,0) + \kappa \sigma^2(0)\partial_{xx}f(t,0)
    &= 2\kappa b(x)\partial_x f(t,x) +\\& \qquad \left(- 2x\kappa b(x) + \kappa \sigma^2(x)\right)\partial_{xx}f(t,x) + \mathcal{O}(h+x).
\end{align*}
Substituting these expansions back into the boundary condition \eqref{eqn:L_diffusion_BC}, and dropping the error terms, gives
\begin{equation}
\label{eq:taylorsticky_diffusion}
\begin{split}
    \partial_x f(t,x) &= \left(M(x) + x \right)\partial_{xx}f(t,x) + \mathcal{O}(h+x),
\end{split}
\end{equation}
where
$$M(x) = \frac{\kappa \sigma^2(x)}{\sigma^2(x)-2\kappa b(x)}.$$
%$\mathcal{O}(h+x)$ terms above represent remainder terms in Taylor expansion that can be upper bounded by $C(h+x)$ where $C>0$ is a constant depending on derivative bounds of $b(x)$ and $\sigma(x)$.
Approximation \eqref{eq:taylorsticky_diffusion} assumes $b(0) \neq \frac{\sigma^2(0)}{2\kappa}$ for $h$ sufficiently small  (if we assume continuity of $b(x)$ and $\sigma(x)$).
We will revisit the special case where this doesn't hold when we do the local error analysis. 
Substituting \eqref{eq:taylorsticky_diffusion} into \eqref{eq:genapprox_diffusion}, dropping the remainder terms gives
\begin{equation}\label{eq:genapprox2_diffusion}
    (M(x)+x)\left(\mathbb{E}_x(\Delta X)-hb(x)\right) + \half \mathbb{E}_x(\Delta X^2) - ha(x) = \epsilon.
\end{equation}
%Error term $\epsilon$ in \eqref{eq:genapprox2_diffusion} contains remainder from \eqref{eq:taylorsticky_diffusion} that can be controlled by a multiple of $hx+h^2$ if we can show $|\mathbb{E}_x(\Delta X)| \leq Ch$ (already proved for SBM), 
%$$\left(\mathbb{E}_x(\Delta X)-hb(x)\right) \cdot \mathcal{O}(h+x) = \mathcal{O}(hx+h^2).$$
%In other words, we expect $\lambda$ solved by method undetermined coefficients and computing  $\mathbb{E}_x(\Delta X)$, $\mathbb{E}_x(\Delta X^2)$ gives $\epsilon \leq C(hx+h^2)$.
%Similar to our remark for SBM, readers may want to remove term $x\partial_{xx}f$ in \eqref{eq:taylorsticky_diffusion}, but removal will not simplify expression of $\lambda$ or any calculation in proofs we are about to introduce. 
We may now calculate $\mathbb{E}_x(\Delta X)$, $\mathbb{E}_x(\Delta X^2)$, as functions of $\lambda$, and then solve \eqref{eq:genapprox2_diffusion} for $\lambda$ (after setting $\epsilon=0$). %This moment computation is more complicated than for an SBM, because the expressions for the moments depend on the sign of  $x - \sigma(x)\sqrt{3h}$. 
%Hence, we decided to concentrate on the case $x - \sigma(x)\sqrt{3h} < 0$ and to solve for $\lambda$ in this case only. 
%\mhc{difference is absorbed in big-O term} \todan{i added last sentence -- is it correct?}
%\todan{I'm still puzzled -- I thought we only needed to compute the moments, for $x$ in the boundary layer. Any $x$ such that $x - \sigma(x)\sqrt{3h} > 0$ is not in the boundary layer. So why do we care about this case? }\redan{I just delete my previous content, I misread the sentence you add, and it is correct.}
%The complementary case $x-\sigma(x)\sqrt{3h}>0$ corresponds to the exterior update, where the scheme does not use $\lambda$; it is handled later in the local error analysis.
For $x-\sigma(x)\sqrt{3h}<0$, readers can verify that
\begin{align}
\mathbb{E}_x(\Delta X_{k}) &= \frac{\lambda x^2}{2\sqrt{3h} \cdot \sigma(x)} + \frac{\sigma(x) \lambda \sqrt{3h}}{2} + \lambda b(x)h - x + \mathcal{O}(hx + h^2)\,, \nonumber\\
\mathbb{E}_x(\Delta X_k^2) &= -\frac{\lambda}{\sigma(x)\sqrt{3h}}\cdot x^3 + (1+\lambda)x^2 - \lambda\sigma(x)\sqrt{3h}\cdot x + \lambda\sigma^2(x)\cdot h + \mathcal{O}(hx + h^2)\,.  \label{eq:DelX_diffusion}
\end{align}
Substituting into \eqref{eq:genapprox2_diffusion}, and given $b(0) \neq \frac{\sigma^2(0)}{2\kappa}$, 
%recall $M(x) = \frac{\kappa \sigma^2(x)}{\sigma^2(x)-2\kappa b(x)}$, 
we have
$$\lambda\cdot\left( \left( \frac{M}{\sigma\sqrt{3h}} + 1 \right)x^2 + M(x)\sigma\sqrt{3h} + \sigma^2 h + 2bMh \right)  = \sigma^2 h + x^2 + 2Mx + 2bMh.$$
Solving for $\lambda$ gives \eqref{eqn:diffusion_lambda_cont_scheme}.

\subsection{$\lambda$ is a probability}\label{sec:lamprobgen}

To show that $\lambda$ defined in \eqref{eqn:diffusion_lambda_cont_scheme} is a valid probability for small enough $h$, we must impose assumptions on the drift and diffusion coefficients. The first is a local regularity assumption, which is used for Taylor expansions and coefficient bounds once the boundary layer has been localized near $0$:

\begin{assumption}[Local coefficient regularity near the sticky boundary]
\label{assump:local_coeff_boundary_regularity}
There exists $r_0>0$ such that $b,\sigma\in C^2([0,r_0])$, $\sigma(0)>0$, and
$$
C_{\mathrm{loc}}
:=
\max_{0\le j\le 2}
\left(
\sup_{x\in[0,r_0]} |\partial_x^j b(x)|
+
\sup_{x\in[0,r_0]} |\partial_x^j \sigma(x)|
\right)
<\infty.
$$
After decreasing $r_0$ if necessary, we also assume that
$$
\sigma(x)\ge \sigma_* >0,\qquad x\in[0,r_0].
$$
\end{assumption}

The second is a linear-growth assumption, which has two roles: it first ensures that the state-dependent boundary layer cannot extend far from the sticky boundary when $h$ is small, and it later provides the standard moment control needed for the Euler--Maruyama part outside the boundary layer.

\begin{assumption}[Linear growth of the coefficients]
\label{assump:linear_growth_coefficients}
There exists $L>0$ such that
$$
|b(x)|+|\sigma(x)|\le L(1+x),\qquad x\ge 0.
$$
\end{assumption}

%\redan{The assumption is subtle, I do not assume compact domain, only sticky boundary at the origin. Assumption~\ref{assump:local_coeff_boundary_regularity} is used to analyze local error near the sticky boundary, Assumption \ref{assump:linear_growth_coefficients} is used to control the local error outside the boundary layer (see Lemma~\ref{lemma:fourth_moment_bound_diffusion_scheme} and the local error outside the boundary layer is of order $h^2(1+x^4)$, not $h^2$). However, to prove local error convergence, we now assume uniform boundedness of the solution of PDE. However, if we use other PDE literature which usually assumes compact domain, Assmuption \ref{assump:linear_growth_coefficients} is unnecessary by uniform boundedness on drift and diffusion.}

%\begin{assumption}
%\label{assump:2nd_drift_derivative_bound}
%    Drift $b(x)$ has spatial derivative up to order 2 where where $\left\|\partial_{xx} b\right\|_{\infty} = \sup_{(t,x)\in [0,+\infty)^2}\|\partial_{xx}b(x)\| < \infty$.
%\end{assumption}
%\begin{assumption}
%\label{assump:2nd_diffusion_derivative_bound}
%    Diffusion $\sigma(x) > 0$ has spatial derivative up to order 2 where where $\left\|\partial_{xx} \sigma\right\|_{\infty} = \sup_{(t,x)\in [0,+\infty)^2}\|\partial_{xx}\sigma(x)\| < \infty$.
%\end{assumption}

The following lemma shows that, once $h$ is small, every point in the boundary layer lies in a compact interval on which $b(x)\in [b_1,b_2]$ and $\sigma(x)\in [\sigma_1,\sigma_2]$.
In particular, $\sigma_1$ and $\sigma_2$ are simply uniform lower and upper bounds for the diffusion coefficient on the region when $x \in \Omega_{br}$.

\begin{lemma}
\label{lemma:bound_on_drift_diffusion}
    Under Assumptions~\ref{assump:local_coeff_boundary_regularity} and \ref{assump:linear_growth_coefficients}, there exists $\delta>0$ and constants $b_1,b_2,\sigma_1,\sigma_2$, independent of $x$ and $h$, with $\sigma_1>0$, such that, for every $h<\delta$,
    $$
    b(x)\in [b_1,b_2],\qquad \sigma(x)\in[\sigma_1,\sigma_2],
    $$
    whenever
    $$
    x-\sigma(x)\sqrt{3h}<0
    \qquad\text{or}\qquad
    x-\sigma(x)\sqrt{3h}+b(x)h<0.
    $$
\end{lemma}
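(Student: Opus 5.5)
The plan is to show that, for small $h$, any $x$ satisfying either inequality lies in the interval $[0,r_0]$ of Assumption~\ref{assump:local_coeff_boundary_regularity}. There the coefficients are continuous on a compact set, so we can take
\[
b_1:=\min_{[0,r_0]} b,\qquad b_2:=\max_{[0,r_0]} b,\qquad \sigma_1:=\sigma_*,\qquad \sigma_2:=\max_{[0,r_0]}\sigma .
\]
These constants depend only on $b,\sigma,r_0$, not on $x$ or $h$, and $\sigma_1>0$ by the same assumption. Everything therefore reduces to localizing the set of admissible $x$.

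To localize, I would use Assumption~\ref{assump:linear_growth_coefficients}, which gives $|\sigma(x)|\le L(1+x)$ and $|b(x)|\le L(1+x)$.

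For the first condition, $x<\sigma(x)\sqrt{3h}$ implies $x<L(1+x)\sqrt{3h}$. Rearranging gives $x\bigl(1-L\sqrt{3h}\bigr)<L\sqrt{3h}$, so whenever $L\sqrt{3h}<1/2$ we get $x<2L\sqrt{3h}$.

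For the second condition, $x<\sigma(x)\sqrt{3h}-b(x)h$ implies $x< L(1+x)(\sqrt{3h}+h)$. With $\epsilon_h:=L(\sqrt{3h}+h)$ this reads $x(1-\epsilon_h)<\epsilon_h$, so whenever $\epsilon_h<1/2$ we get $x<2\epsilon_h$.

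In both cases the admissible $x$ satisfy $0\le x<2L(\sqrt{3h}+h)$, which tends to $0$ as $h\to 0$. I then choose $\delta>0$ so that, for all $h<\delta$, we have $L(\sqrt{3h}+h)<1/2$ and $2L(\sqrt{3h}+h)\le r_0$. This puts every admissible $x$ in $[0,r_0]$, which finishes the argument.

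Two small points need care. First, the linear-growth assumption bounds $|\sigma|$, but we need an upper bound on $\sigma(x)$ itself; this follows since $\sigma(x)\le |\sigma(x)|$, and the same reasoning applies to $-b(x)$. Second, points with $\sigma(x)\le 0$ trivially fail the first inequality and are handled by the same bound in the second, so no sign assumption on $\sigma$ outside $[0,r_0]$ is needed.

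I do not expect a real obstacle. The step needing attention is the second inequality: there the drift term could in principle allow larger $x$, and the fix is to absorb $h|b(x)|\le hL(1+x)$ into the same linear-growth estimate before solving for $x$.
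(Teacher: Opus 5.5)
Your proof is correct and is essentially the first step of the paper's argument: linear growth forces $x<2L(\sqrt{3h}+h)$, hence $x\in[0,r_0]$, and then you take extrema of the continuous coefficients there, with $\sigma_1=\sigma_*>0$. The paper additionally sharpens the localization via the mean value theorem to $x<\sigma(0)\sqrt{3h}+Ch$ and takes extrema over that smaller set. This is not needed for the statement as written, but it is exactly what the occupation-time lemma later cites (``by the proof of'' this lemma) to obtain $\Omega_{br}\subset[0,\sqrt{3h}+K_Bh]$, and your coarser bound would not supply it.
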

\begin{proof}
    See Supplement.
\end{proof}
As a consequence, whenever $h<\delta$, for any $x\in \Omega_b$, we may apply the fixed boundary-layer bounds
$$
b(x)\in[b_1,b_2],\qquad \sigma(x)\in[\sigma_1,\sigma_2].
$$
Moreover,
$$
x<\sigma(x)\sqrt{3h}\le \sigma_2\sqrt{3h},
$$
so every point in the boundary layer is in fact $O(\sqrt h)$ close to the sticky boundary, exactly as in the SBM case where the boundary layer is explicit.
%\mhc{REF to lemma, does this appear later?}

%\redan{The point is not only that the boundary layer shrinks with $h$, but that Lemma~\ref{lemma:bound_on_drift_diffusion} lets us replace the implicit boundary-layer condition by the explicit bound $x\le \sigma_2\sqrt{3h}$. This is the diffusion analogue of the SBM fact that points in the boundary layer are automatically $O(\sqrt h)$ close to the sticky boundary. We use this repeatedly afterward to apply local coefficient bounds and to estimate Taylor remainders by powers of $h$.}

\begin{lemma}
\label{lemma:bound_on_lambda_diffusion}
    Under Assumptions~\ref{assump:local_coeff_boundary_regularity} and \ref{assump:linear_growth_coefficients}, there exists $\delta>0$ such that, for every $h<\delta$ and every $x\in \Omega_b$
    the jump probability \eqref{eqn:diffusion_lambda_cont_scheme} satisfies
    $$
    \lambda(x)\in[0,1].
    $$
\end{lemma}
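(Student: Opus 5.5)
Our plan is to mimic the proof of Lemma \ref{lemma:bound_on_lambda_SBM}: rewrite the denominator of \eqref{eqn:diffusion_lambda_cont_scheme} as the numerator plus a manifestly nonnegative remainder. We then check that the numerator is nonnegative for small $h$. Write $\sigma=\sigma(x)$, $b=b(x)$, and denote by $N(x)$ and $D(x)$ the numerator and denominator. Subtracting gives
\[
D(x)-N(x)=\frac{\kappa\sigma}{\sqrt{3h}}x^2-2\kappa\sigma^2 x+\kappa\sigma^3\sqrt{3h}
=\frac{\kappa\sigma}{\sqrt{3h}}\bigl(x-\sigma\sqrt{3h}\bigr)^2\ \ge 0 .
\]
This is the exact analogue of the SBM factorization, and it uses only $\sigma>0$, which holds on the boundary layer. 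Consequently, once we know $N(x)\ge 0$ and $D(x)>0$, we obtain $0\le\lambda(x)=N/(N+(D-N))\le 1$.

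The main task is therefore to show $N(x)\ge0$ uniformly on $\Omega_b$ for small $h$. By Lemma \ref{lemma:bound_on_drift_diffusion}, for $h<\delta$ every $x\in\Omega_b$ satisfies $0\le x\le\sigma_2\sqrt{3h}$, $\sigma(x)\in[\sigma_1,\sigma_2]$ with $\sigma_1>0$, and $b(x)\in[b_1,b_2]$. The only term of $N$ that can be negative is $-2\kappa b x^2$, and only when $b>0$. We bound it against the positive terms:
\[
N(x)\ge \sigma^2x^2+2\kappa\sigma^2x+\sigma^4h-2\kappa|b_2|\,x^2 .
\]
Since $x\le\sigma_2\sqrt{3h}$, we have $2\kappa|b_2|x^2\le 2\kappa|b_2|\sigma_2\sqrt{3h}\,x$. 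This is at most $2\kappa\sigma_1^2 x\le 2\kappa\sigma^2 x$ as soon as $\sqrt{3h}\le \sigma_1^2/(|b_2|\sigma_2)$. Shrinking $\delta$ accordingly gives $N(x)\ge\sigma^4h>0$, and hence $D(x)\ge N(x)>0$, so $\lambda$ is well defined.

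We expect this step to be the main obstacle, mild as it is. For the SBM every coefficient was positive. Here a positive drift near the boundary makes the coefficient $\sigma^2-2\kappa b$ of $x^2$ possibly negative, and the sign of $N$ is rescued only by comparing $x^2$ with $\sqrt{h}\,x$ inside the $O(\sqrt h)$ boundary layer. This is why the lemma is stated only for small $h$, and why the localization of Lemma \ref{lemma:bound_on_drift_diffusion} is needed. No condition such as $b(0)\neq\sigma^2(0)/(2\kappa)$ is required, because the formula for $\lambda$ never divides by $\sigma^2-2\kappa b$.
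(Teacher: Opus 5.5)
Your proposal is correct and follows essentially the same route as the paper's proof: you localize via Lemma~\ref{lemma:bound_on_drift_diffusion}, use the identity $D-N=\frac{\kappa\sigma}{\sqrt{3h}}(x-\sigma\sqrt{3h})^2$ (the paper's formula for $1-\Lambda$), and absorb the possibly negative term $-2\kappa b x^2$ into $2\kappa\sigma^2 x$ using $x\le\sigma_2\sqrt{3h}$. The only difference is organizational: you get $D>0$ for free from $D\ge N>0$, whereas the paper splits into three cases on the sign of $\sigma^2-2\kappa b$ and imposes a second smallness condition on $h$ to check the denominator separately.
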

\begin{proof}
    See Supplement.    
\end{proof}

\subsection{Primary convergence results}
\label{sec:Primary convergence results}
Now we present our main convergence results. 
We wish to show weak convergence of the numerical scheme with order $h$ and $X_0=x_0$, i.e. that
\begin{equation}
    |\E \phi(X_N) - \E\phi(Y_T)| \leq Ch
\end{equation}
for some constant $C$, where $\phi$ is from a suitable class of functions.
Similar to our previous discussion in SBM, we let $u(t,x)= \E[\phi(Y_T) | Y_t=x]$ be a function solving the backward equation for the sticky diffusion, \eqref{eqn:diffusion_bckwd_eqn}.
Writing $t_k=kh$, $u_k=u(t_k,X_k)$, we have a similar decomposition of the error as in \eqref{eq:err_decompose}:
\begin{align}
|\E \phi(X_N) - \E\phi(Y_T)| %&= \left|\E\left(\sum_{k=0}^{N-1}u_{k+1}-u_k\right)\right|\\ 
%&= \left|\mathbb{E}\left(\sum_{k=0}^{N-1} \mathbb{E}\left(u_{k+1}-u_k|X_k\right)\right)\right|\nonumber\\
&\leq \mathbb{E}\left(\sum_{k=0}^{N-1} \left|\mathbb{E}\left(u_{k+1}-u_k|X_k\right)\right|\cdot \mathbf{1}_{X_k \in \Omega_{br}}\right) \nonumber\\&\quad  
        + \mathbb{E}\left(\sum_{k=0}^{N-1} \left|\mathbb{E}\left(u_{k+1}-u_k|X_k\right)\right|\cdot \mathbf{1}_{X_k \notin \Omega_{br}}\right).
        \label{eq:err_decompose_diffusion}
\end{align}
Here $\Omega_{br} = \Omega_{b} \cup \Omega_{r}$ with
\begin{align}
    %\Omega_{b}&:= \{x:\;x \leq \sigma(x)\sqrt{3h}\},  \nonumber\\
    \Omega_{r} &:= \{x > \sigma(x)\sqrt{3h};\; x-\sigma(x)\sqrt{3h}+b(x)h\leq 0\}.\label{eq:Omegabi}
\end{align}
We include $\Omega_{r}$ along with $\Omega_b$ because both regions have the same form of local error, and we refer to $\Omega_{br}$ henceforth as the boundary layer.  
%\mhc{both boundary layer terms are treated together because they have the same form of local error}

To present main results in the error analysis, we need the same assumptions we have for SBM and Assumptions~\ref{assump:local_coeff_boundary_regularity}, \ref{assump:linear_growth_coefficients}.
%The error outside the boundary layer $\Omega_e$ is trivial to analyze because the scheme does standard EM jumps.
Suppose that $h$ is always chosen such that $N := T/h \in \mathbb{Z}$.
%\begin{theorem}
%\label{thm:local_err_cont_scheme_diffusion_general}
%    (Local Error of $\{X_k\}_{k\in\mathbb{N}}$ in algorithm \ref{alg:diffusion_cont_scheme})\\
%    Under Assumptions \ref{assump:4th_spatial_derivative_bound},  \ref{assump:2nd_time_derivative_bound}, \ref{assump:2nd_drift_derivative_bound}, \ref{assump:2nd_diffusion_derivative_bound}, chose $\epsilon = \mathcal{O}(hx + h^2)$ in \eqref{eq:genapprox2_diffusion}, there exists a constant $M$ depending on $\left\| u^{(3)} \right\|_{\infty}$ $\left\| u^{(4)} \right\|_{\infty}$, $\left\|\partial_{tt} u\right\|_{\infty}$, $\|\partial_{xx}b\|_{\infty}$, and $\|\partial_{xx}\sigma\|_{\infty}$ such that 
%    \begin{equation}
%    \label{eqn:local_err_cont_scheme}
%        \left|\mathbb{E}\left(u_{k+1}-u_k|X_k=x\right)\right| \leq \begin{cases}
%            M( h^2 + hx), & \forall x\in \Omega_{br}\\
%            Mh^2 & \forall x \in \Omega_e.
%        \end{cases}
%    \end{equation}
%\end{theorem}
\begin{theorem}
\label{thm:local_err_cont_scheme_diffusion_general}
    (Local Error of $\{X_k\}_{k\in\mathbb{N}}$ in Algorithm \ref{alg:diffusion_cont_scheme})\\
    Under Assumptions \ref{assump:4th_spatial_derivative_bound}, \ref{assump:2nd_time_derivative_bound}, \ref{assump:local_coeff_boundary_regularity}, and \ref{assump:linear_growth_coefficients}, choose $\epsilon=\mathcal{O}(hx+h^2)$ in \eqref{eq:genapprox2_diffusion}. Then there exists a constant $M>0$, independent of $x$ and $h$, depending on the relevant derivative bounds of $u$, the local boundary constants for $b,\sigma$, the linear-growth constant of $b,\sigma$, and $\kappa$, such that for all sufficiently small $h$,
    \begin{equation}
    \label{eqn:local_err_cont_scheme}
        \left|\mathbb{E}\left(u_{k+1}-u_k\mid X_k=x\right)\right| \leq
        \begin{cases}
            M(h^2+hx), & x\in \Omega_{br},\\
            Mh^2(1+x^4), & x\in \Omega_e.
        \end{cases}
    \end{equation}
\end{theorem}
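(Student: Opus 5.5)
The plan mirrors the proof of Theorem \ref{thm:local_err_cont_scheme}: Taylor-expand $u_{k+1}-u_k$ about $(t_k,X_k)$, replace $\partial_t u$ by $-\L u=-b\partial_xu-a\partial_{xx}u$ using \eqref{eqn:diffusion_bckwd_eqn}, and bound the remainders with moment estimates for $\Delta X_k$. The estimates differ on the two regions, so we treat them separately. Throughout, take $h<\delta$ as in Lemmas \ref{lemma:bound_on_drift_diffusion} and \ref{lemma:bound_on_lambda_diffusion}. Then every $x\in\Omega_{br}$ satisfies $x\le \sigma_2\sqrt{3h}$ (for $\Omega_r$ this follows from $x<\sigma(x)\sqrt{3h}-b(x)h\le \sigma_2\sqrt{3h}+|b_1|h$). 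It also gives $b\in[b_1,b_2]$, $\sigma\in[\sigma_1,\sigma_2]$, and $\lambda\in[0,1]$, and Assumption \ref{assump:local_coeff_boundary_regularity} applies on $[0,r_0]\supset\Omega_{br}$.

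\emph{Boundary layer.} Expand to third order as in \eqref{eq:taylor4}. The $\partial_{tt}u$ term is $O(h^2)$. For the cubic remainder we first prove an analogue of Lemma \ref{lemma:3rd_mom_bound}:
\[
\E_x|\Delta X_k|^3\le C(h^2+hx).
\]
The positive part is at most $\lambda(x)(\sigma_2\sqrt{3h}+|b|h)^3$, and from \eqref{eqn:diffusion_lambda_cont_scheme} we get $\lambda(x)\le A\sqrt h+Bx/\sqrt h$, since the denominator is at least a constant times $\sqrt h$ for small $h$. The negative part is bounded by $x^3+O(h^3)$. For $x\in\Omega_r$ the scheme is a single EM step; there $|\Delta X_k|\le C\sqrt h$ and $x=O(\sqrt h)$, so the bound still holds.

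For the main terms, Taylor-expand the sticky condition \eqref{eqn:L_diffusion_BC} with explicit remainders, using $C^2$ bounds on $b,\sigma$ and bounds on $u^{(3)}$, to get
\[
\partial_xu(t,x)=(M(x)+x)\partial_{xx}u(t,x)+R(t,x),\qquad |R|\le C(x+h).
\]
This is the analogue of Lemma \ref{lemma:sticky_bc@x}. If $b(0)=\sigma^2(0)/(2\kappa)$ the coefficient of $\partial_xu(t,0)$ vanishes, and then the relation gives $\partial_{xx}u(t,0)=0$ directly; in that case one works with $\partial_{xx}u=O(x)$ and keeps $\partial_xu$ unchanged, using that $\E_x(\Delta X)-hb$ is small. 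Substituting the relation, the main terms become
\[
\partial_{xx}u\Bigl[(M+x)(\E_x\Delta X-hb)+\tfrac12\E_x\Delta X^2-ha\Bigr]+R\,(\E_x\Delta X-hb).
\]
By the choice of $\lambda$ and \eqref{eq:DelX_diffusion}, the bracket equals $\epsilon=O(hx+h^2)$. The bracket identity also gives $|\E_x\Delta X|\le Ch$, the analogue of Lemma \ref{lemma:1st_mom_order}. Hence $R(\E_x\Delta X-hb)=O((x+h)h)$, and the boundary-layer case is done. For $x\in\Omega_r$ no $\lambda$ is used, so there we verify the bracket identity directly: $x=\sigma\sqrt{3h}+O(h)$, and the reflection has probability $O(\sqrt h)$ and size $O(h)$.

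\emph{Exterior.} For $x\in\Omega_e$ we have $x-\sigma\sqrt{3h}+bh>0$, so the absolute value in \eqref{eqn:diffusion_cont_scheme_outlayer} is inactive and $\Delta X=\sigma Z+bh$. Then
\[
\E\Delta X=bh,\quad \E\Delta X^2=\sigma^2h+b^2h^2,\quad \E\Delta X^3=O(h^2(|b|\sigma^2+|b|^3h)),\quad \E\Delta X^4\le C h^2(\sigma^4+b^4h^2).
\]
Expanding to fourth order, the $O(h)$ terms cancel by the PDE. The leftover terms are $\tfrac12 b^2h^2\partial_{xx}u$, the third- and fourth-moment terms, and $h^2\partial_{tt}u$. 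By Assumption \ref{assump:linear_growth_coefficients} each is bounded by $Ch^2(1+x)^4\le Mh^2(1+x^4)$.

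\emph{Main obstacle.} The hardest part is the boundary-layer moment bookkeeping for the nested reflection $\bigl||x+\sigma Z|+bh\bigr|$. We must show that \eqref{eq:DelX_diffusion}, including its $O(hx+h^2)$ errors, gives a residual $\epsilon=O(hx+h^2)$ uniformly, and that $\E_x\Delta X=O(h)$. The degenerate case $b(0)=\sigma^2(0)/2\kappa$, where $M$ blows up, also needs care. I would handle both by writing the identity multiplied through by $\sigma^2-2\kappa b$, as in the denominator of \eqref{eqn:diffusion_lambda_cont_scheme}, so that no division by $\sigma^2-2\kappa b$ is ever needed.
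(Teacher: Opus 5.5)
Your proposal is correct and follows essentially the same route as the paper. Both arguments use a third-order Taylor expansion with the sticky-condition rewrite $\partial_xu=(M+x)\partial_{xx}u+O(x+h)$, the cancellation built into $\lambda$ (which also gives $\E_x\Delta X=O(h)$), the bound $\lambda\le C(\sqrt h+x/\sqrt h)$ for the third absolute moment, separate treatment of $\Omega_r$ and of the degenerate case via $\partial_{xx}u(t,0)=0$, and a fourth-order expansion with linear growth on the exterior. The differences are minor: you obtain $\E_x\Delta X-hb=O(hx+h^2)$ in the degenerate case by clearing the denominator $\sigma^2-2\kappa b$ rather than by rewriting $\lambda$ with $\sigma^2-2\kappa b=O(x)$ as the paper does. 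Also, on $\Omega_r$ the step $h^{3/2}\le C\,hx$ rests on the lower bound $x\ge\sigma_1\sqrt{3h}$ there, not on $x=O(\sqrt h)$ as you wrote.
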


Hence, as for a SBM, the local error in the boundary layer is $O(h^{3/2})$, while that in the exterior is $O(h^2)$. Also as for a SBM,
we will show the number of steps in the boundary layer is $O(h^{-1/2})$, leading to a global error of $O(h)$. 
We have $Mh^2(1+x^4)$ because we analyze the local error outside the boundary layer by Taylor expansion up to fourth order, which contains products of drift and diffusion of order 4 and we apply the linear growth condition. 
Analogous to Corollary \ref{corollary:local_err} and Lemma~\ref{lemma:key}, we have
\begin{corollary} Under the assumptions of Theorem \ref{thm:local_err_cont_scheme_diffusion_general}, 
\label{corollary:local_err_diffusion}
    \[\mathbb{E}\left(\sum_{k=0}^{N-1} \left|\mathbb{E}\left(u_{k+1}-u_k|X_k\right)\right|\cdot \mathbf{1}_{X_k \in \Omega_{br}}\right) \leq M\left(h +  h \cdot \mathbb{E}\left(\sum_{k=0}^{N-1} X_k \cdot \mathbf{1}_{X_k \in \Omega_{br}} \right)\right).\]
\end{corollary}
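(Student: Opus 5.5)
This corollary follows from the first case of Theorem \ref{thm:local_err_cont_scheme_diffusion_general}, in the same way that Corollary \ref{corollary:local_err} follows from Theorem \ref{thm:local_err_cont_scheme}.

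The plan is to bound each summand pointwise and then take expectations. Fix $k\in\{0,\dots,N-1\}$. On the event $\{X_k\in\Omega_{br}\}$, the conditional expectation $\mathbb{E}(u_{k+1}-u_k\mid X_k)$ is the function $x\mapsto \mathbb{E}(u_{k+1}-u_k\mid X_k=x)$ evaluated at $x=X_k$. This uses the Markov property of the scheme and the fact that $t_k$ is deterministic. Hence, for all sufficiently small $h$, the first case of \eqref{eqn:local_err_cont_scheme} gives
\[
\left|\mathbb{E}\left(u_{k+1}-u_k\mid X_k\right)\right|\mathbf{1}_{X_k\in\Omega_{br}}\le M\left(h^2+hX_k\right)\mathbf{1}_{X_k\in\Omega_{br}}\le Mh^2+MhX_k\mathbf{1}_{X_k\in\Omega_{br}},
\]
where we dropped the indicator on the $h^2$ term. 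The constant $M$ is uniform in $x$ and $h$, since the theorem provides exactly this. There is one point to check. Theorem \ref{thm:local_err_cont_scheme_diffusion_general} is stated on the whole of $\Omega_{br}=\Omega_b\cup\Omega_r$, not only on the region where $\lambda$ is used. Therefore no separate treatment of $\Omega_r$, where the ordinary reflected Euler--Maruyama step is applied, is needed here; that case has already been absorbed into the theorem.

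Next, sum over $k$ and take expectations. All terms are nonnegative, so linearity of expectation applies with no integrability issues beyond finiteness of the right-hand side. If the right-hand side is infinite the bound is trivial; in any case Lemma \ref{lemma:bound_on_drift_diffusion} gives $X_k\mathbf{1}_{X_k\in\Omega_{br}}\le\sigma_2\sqrt{3h}$, so it is finite. The $h^2$ terms contribute $N Mh^2 = MTh$. The remaining terms give $Mh\,\mathbb{E}\bigl(\sum_k X_k\mathbf{1}_{X_k\in\Omega_{br}}\bigr)$. Renaming $M\max(T,1)$ as $M$ yields the stated inequality.

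I do not expect a real obstacle in this argument. The only subtle point is confirming that the bound $\sigma(x)\le\sigma_2$ from Lemma \ref{lemma:bound_on_drift_diffusion} holds on $\Omega_r$ as well as on $\Omega_b$, which the lemma explicitly covers through its second condition. Because of this, the constants, and hence the smallness threshold on $h$, are independent of $x_0$.
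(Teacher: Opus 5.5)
Your proposal is correct and is essentially the paper's argument: apply the $\Omega_{br}$ case of Theorem~\ref{thm:local_err_cont_scheme_diffusion_general} pointwise, sum over $k$, and use $Nh^2=Th$, exactly as in the boundary-layer part of the proof of Theorem~\ref{thm:global_err_cont_scheme_diffusion}. One small nit: on $\Omega_r$ the localization only gives $x\le\sigma_2\sqrt{3h}+O(h)$ rather than $x\le\sigma_2\sqrt{3h}$. You use that bound only for finiteness, which is not needed anyway, since all terms are nonnegative.
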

\begin{lemma}
\label{lemma:key_diffusion_general}
    For sufficiently small $h$, there exists a positive constant $C$, independent of $h$ and initial condition $x_0$, such that
   \[\mathbb{E}\left(\sum_{k=0}^{N-1} X_k \cdot \mathbf{1}_{X_k \in \Omega_{br}} \right) < C.\]
\end{lemma}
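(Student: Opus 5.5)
We follow the proof of Lemma \ref{lemma:key} and use the same Lyapunov-type comparison. Lemma \ref{lem:V} applies verbatim to Algorithm \ref{alg:diffusion_cont_scheme} with $g(t,x)=x\mathbf{1}_{\{x\in\Omega_{br}\}}$. It therefore suffices to build a bounded supersolution $V(t_i,x)=e^{K(T-t_i)}e^{w(x)}$ satisfying $PV-V\le -x\mathbf{1}_{\{x\in\Omega_{br}\}}$ and $V(T,\cdot)\ge0$. The backward induction argument then gives $U(t_0,x_0)\le e^{KT+M}$, uniformly in $x_0$ and $h$. By Lemma \ref{lemma:bound_on_drift_diffusion}, for $h<\delta$ we have $\Omega_{br}\subset[0,\sigma_2\sqrt{3h}]$ (on $\Omega_r$, $x<\sigma(x)\sqrt{3h}$ too), and on this set $b\in[b_1,b_2]$ and $\sigma\in[\sigma_1,\sigma_2]$. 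We therefore take the rescaled profile
\[
w(x)=\begin{cases} M, & x\in[0,\tfrac{\sigma_1}{2}\sqrt{3h}],\\ M-s\bigl(x-\tfrac{\sigma_1}{2}\sqrt{3h}\bigr), & x>\tfrac{\sigma_1}{2}\sqrt{3h},\end{cases}
\]
with $M,s$ large, and aim to prove the analogue of \eqref{eqn:wx_goal}: $Pe^{w(x)}\le e^{w(x)}(1-f_h(x)+C_1h)$ with $f_h(x)\ge x\mathbf{1}_{\Omega_{br}}$ and $0\le f_h\le C\sqrt h$. The algebra \eqref{eq:PV_step1}--\eqref{eq:PV_final_final} then carries over unchanged once $K=C_1+1$.

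The drift estimate splits into cases. \emph{Exterior far from $0$} ($x>\sigma_2\sqrt{3h}+Ch$): the update is a plain EM step, and $w$ is linear on the support of the step, except possibly where reflection occurs, which only increases $X_{k+1}$ and hence decreases $w$ since $w$ is nonincreasing. So $Pe^{w}\le e^{w(x)}\,\E e^{-s(b(x)h+\sigma(x)Z)}$. Expanding, and using $|b|\le L(1+x)$, this is $e^{w(x)}(1+sL(1+x)h+Cs^2\sigma^2(x)h)$. Growth in $x$ is the issue here. The fix we would try first is to note that $w$ is eventually very negative. Alternatively, replace the linear tail by a slope that flattens (for example $w$ constant at $M-s\sigma_2\sqrt{3h}$ beyond $2\sigma_2\sqrt{3h}$, which is still a supersolution up to $O(h)$ since the function is constant there apart from a finite transition region). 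With this flat tail the $O(h)$ bound holds with constants independent of $x$, and linear growth is only needed on a compact set near $0$.

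\emph{Boundary layer} ($x\in\Omega_b$): with probability $1-\lambda(x)$ the chain jumps to $0$, where $w=M\ge w(x)$, so this branch costs a factor $e^{w(0)-w(x)}\le e^{s(x-\sigma_1\sqrt{3h}/2)^+}$. With probability $\lambda$ we take the nested reflected step, which lands in $[0,x+\sigma\sqrt{3h}+|b|h]$. A uniform increment leaves the flat region $[0,\sigma_1\sqrt{3h}/2]$ with probability bounded below by some $c>0$ and drops $w$ by an amount of order $s\sqrt h$ there. The gain on this branch is therefore $\lambda(x)\,c\,s\sqrt h$, up to $O(h)$ corrections.

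The key input, and the main obstacle, is a lower bound on $\lambda$ together with control of the jump-to-$0$ penalty. From \eqref{eqn:diffusion_lambda_cont_scheme} and the bounds $\sigma\in[\sigma_1,\sigma_2]$, $b\in[b_1,b_2]$, we get $\lambda(x)\ge c'(\sqrt h+x/\sqrt h)$ for small $h$; the numerator contains the term $2\kappa\sigma^2x$ and the denominator is of order $\sqrt h$. The gain $\lambda\, s\sqrt h\gtrsim s(h+x)$ dominates $x$ once $s$ is large. The penalty $(1-\lambda)(e^{s(x-\sigma_1\sqrt{3h}/2)^+}-1)$ must then be shown to be bounded by half the gain; it is nonzero only on $x>\sigma_1\sqrt{3h}/2$. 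Getting this to close is the delicate point. In SBM it was the case-by-case supplement computation. Here we would instead use the exact first moment: by the defining relation \eqref{eq:genapprox2_diffusion} (with $\epsilon=O(hx+h^2)$), $\E_x\Delta X=O(h)$, exactly as in Lemma \ref{lemma:1st_mom_order}. Since $w$ is Lipschitz with constant $s$ and $e^{w}$ is convex on pieces, we can write $Pe^w-e^w\le e^{w(x)}\bigl(-s\E_x[(\Delta X)^+\,\text{past the kink}]+s\E_x(\Delta X)^-\cdot\mathbf{1}+O(s^2h)\bigr)$. Combining this with $\E\Delta X^+-\E\Delta X^-=O(h)$ and the explicit support of the reflected step should yield $f_h(x)\ge x$ on $\Omega_b$.

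Finally, on $\Omega_r$ and on the strip of the exterior within $O(\sqrt h)$ of $0$, where reflection is possible, the jump to $0$ is absent. The reflected step keeps $X_{k+1}\ge0$ and has mean increment $O(\sqrt h)\cdot\mathbf{1}$ from reflection. Since $w$ is flat near $0$, the same first-order estimate gives $f_h\ge x$ there. This requires $s$ large and $h<\delta$ with $\delta$ depending only on $\kappa$, $b_{1,2}$, $\sigma_{1,2}$, and $L$.
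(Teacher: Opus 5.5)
There are two genuine gaps: the tail of your test function fails, and the central drift estimate near the boundary is not actually obtained.

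\emph{The tail.} Your flat-tail modification is not a supersolution up to $O(h)$. If $w$ has slope $-s$ up to $x_*=2\sigma_2\sqrt{3h}$ and is constant beyond it, then $w$ has a \emph{convex} kink at distance $O(\sqrt h)$ from $0$. At $x=x_*$ the Euler--Maruyama step stays to the right of $a:=\sigma_1\sqrt{3h}/2$, so $w(X_{k+1})-w(x_*)=s(x_*-X_{k+1})^+$. Hence $Pe^{w}(x_*)\ge e^{w(x_*)}\bigl(1+s\,\mathbb{E}_{x_*}\Delta X^-\bigr)\approx e^{w(x_*)}(1+c\,s\sqrt h)$, and the factor $e^{K(T-t_i)}$ cannot absorb this. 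Keeping the linear tail does not work with a multiplicative $V$ either: under linear growth the relative error $\mathbb{E}e^{-s(b(x)h+\sigma(x)Z)}-1$ is of order $(1+x)^2h$, not uniformly $O(h)$. The paper flattens instead at an $h$-independent distance $R$, through a $C^2$ cutoff $\eta_R$. On $[R/2,4R]$ a second-order Taylor expansion with locally bounded $b,\sigma$ gives $|P\varphi_h-\varphi_h|\le Ch$. For $x\ge 4R$, linear growth keeps $X_{k+1}$ in the region where $\varphi_h$ is constant.

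\emph{The gain near the boundary.} As written, your sketch does not produce $f_h\ge x$; the mechanism you describe cancels. For $x\ge a$ one has exactly $w(y)-w(x)=-s(y-x)-s(a-y)^+$. Bounding the change of $w$ by $-s\Delta X^+ + s\Delta X^-$ and then using $\mathbb{E}_x\Delta X^+-\mathbb{E}_x\Delta X^-=O(h)$ therefore leaves zero net first-order gain. Yet you need $f_h\ge x\asymp\sqrt h$ on the sloped part $(a,\sigma(x)\sqrt{3h}]$ of $\Omega_b$ and on all of $\Omega_r$. Note that on $\Omega_r$ one has $x>\sigma(x)\sqrt{3h}\ge\sigma_1\sqrt{3h}$, contrary to your remark. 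Reflection is not the source of the gain there either: by Lemma \ref{lemma:first_mom_est_reflection_layer_diffusion}, $\mathbb{E}_x\Delta X=b(x)h+O(hx+h^2)$, so reflection contributes only $O(h^{3/2})$ to the mean.

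The missing ingredient is the Jensen gap at the kink. For $x\ge a$,
\[
Pe^{w}\le e^{w(x)}\bigl(1-s\,\mathbb{E}_x\Delta X-s\,\mathbb{E}_x(a-X_{k+1})^+ +O(s^2h)\bigr),
\]
and you need a lower bound $\mathbb{E}_x(a-X_{k+1})^+\ge c\sqrt h$ with $c$ independent of $s$. This holds on $\Omega_{br}\cap(a,\infty)$ because mass is always placed left of the kink. Either the chain jumps to $0$, or it takes the reflected step, which for $x\in\Omega_{br}$ lands in $[0,a/2]$ with probability bounded below by a constant multiple of $\sigma_1/\sigma_2$. Combine this with $\mathbb{E}_x\Delta X\ge -Ch$ (Lemmas \ref{lemma:1st_mom_order_negative_drifted_diffusion}, \ref{lemma:1st_2nd_mom_combination_generator_diffusion_special}, \ref{lemma:first_mom_est_reflection_layer_diffusion}) and take $s$ large, and the estimate closes. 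On the flat part $[0,a]$ your lower bound $\lambda\gtrsim\sqrt h+x/\sqrt h$ is correct and suffices.

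Repaired this way, your argument would be a more direct route than the paper's, which never estimates the gain afresh. The paper instead:
\begin{itemize}
\item normalizes $\sigma(0)=1$;
\item couples $P$ to an SBM-type operator $P_0$, with error $O(h)$ from the Lipschitz bound on $e^{w_h}$ and from $1-\lambda=O(h)$ in the threshold-mismatch strip;
\item replaces $\lambda$ by $\tilde\lambda$ using Lemma \ref{lemma:lambda_approx};
\item reuses the explicit SBM computation on $[0,\sqrt{3h}]$;
\item treats the extra strip $[\sqrt{3h},\sqrt{3h}+K_Bh]$ by a separate direct calculation.
\end{itemize}
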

The proof will recycle most of the construction we used to prove Lemma~\ref{lemma:key}. If  $\sigma(0) = 1$, we can show the jump probability $\lambda$ and the transition operator $P^h$ for SBM and the sticky diffusion are sufficiently close.

The following lemma helps us analyze the error coming from the exterior where the scheme does a standard EM jump where we will transform the drift and diffusion into $(1+X_k)^4$ from the linear growth assumption.
\begin{lemma}
\label{lemma:fourth_moment_bound_diffusion_scheme}
Under assumptions~\ref{assump:local_coeff_boundary_regularity}, \ref{assump:linear_growth_coefficients}, and suppose that $X_0=x_0$ deterministically. Then, for $T=Nh$, there exists a constant $C_T>0$, independent of $h$ and $x_0$, such that
$$
\sup_{0\le k\le N}\E(1+X_k^4)\le C_T(1+x_0^4).
$$
\end{lemma}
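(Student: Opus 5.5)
We will prove the fourth-moment bound by a one-step discrete Gronwall argument. The target is
\[
\E_x\bigl(1+X_{k+1}^4\bigr)\le (1+Ch)(1+x^4)
\]
for every $x\ge 0$, with $C$ independent of $x$ and $h$. Taking expectations and iterating over $k\le N=T/h$ then gives $\E(1+X_k^4)\le (1+Ch)^N(1+x_0^4)\le e^{CT}(1+x_0^4)$. To get the one-step bound, we split the state space into the exterior $\Omega_e$ and the boundary layer $\Omega_b$, as in Algorithm \ref{alg:diffusion_cont_scheme}.

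\emph{Exterior.} For $x\in\Omega_e$ the reflection $|\cdot|$ does not change $X_{k+1}^4$, so $X_{k+1}^4=(x+\sigma(x)Z_k+b(x)h)^4$. Expand this binomially in $\Delta:=\sigma(x)Z_k+b(x)h$. Because $Z_k$ is symmetric with $\E Z_k^2=h$ and $|Z_k|\le\sqrt{3h}$, we have:
\begin{itemize}
\item the first-order term has mean $4x^3b(x)h$;
\item $\E\Delta^2\le \sigma^2h+b^2h^2$;
\item $|\E\Delta^3|\le C(|b|\sigma^2h^2+|b|^3h^3)$;
\item $\E\Delta^4\le C(\sigma^4h^2+b^4h^4)$.
\end{itemize}
By Assumption \ref{assump:linear_growth_coefficients}, $|b|,\sigma\le L(1+x)$. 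Each term is then bounded by $Ch(1+x)^4\le C'h(1+x^4)$ for $h\le 1$: for example $4x^3|b|h\le 4Lh x^3(1+x)$ and $6x^2\sigma^2h\le Ch x^2(1+x)^2$. This gives the desired one-step inequality on $\Omega_e$.

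\emph{Boundary layer.} For $x\in\Omega_b$ we use Lemma \ref{lemma:bound_on_drift_diffusion}: for $h<\delta$, we have $x\le\sigma_2\sqrt{3h}$, $\sigma(x)\le\sigma_2$ and $|b(x)|\le\max(|b_1|,|b_2|)$. The update is either $0$ or $\bigl||x+\sigma Z_k|+bh\bigr|$, so in either case
\[
X_{k+1}\le x+\sigma_2\sqrt{3h}+|b|h\le C\sqrt h .
\]
Hence $X_{k+1}^4\le Ch^2$, and $\E_x(1+X_{k+1}^4)\le 1+Ch^2\le(1+Ch)(1+x^4)$ trivially. No information about $\lambda$ is needed beyond Lemma \ref{lemma:bound_on_lambda_diffusion}, which only guarantees the scheme is well defined.

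\emph{Assembling.} The two cases give the uniform one-step bound for all $x\ge0$ and all $h<\min(\delta,1)$. We then apply the tower property,
\[
\E(1+X_{k+1}^4)=\E\bigl[\E(1+X_{k+1}^4\mid X_k)\bigr]\le (1+Ch)\,\E(1+X_k^4),
\]
and induct on $k$. This yields $C_T=e^{CT}$, which is independent of $h$ and $x_0$. For $h\ge\delta$ there is nothing to prove, since the statement is only used for small $h$; alternatively, one can absorb those $h$ into the constant.

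The main obstacle is the exterior drift term $4x^3 b(x)h$, which is of order $h$ rather than $h^2$. The argument needs a bound of the form $Ch(1+x^4)$, not smallness, so linear growth of $b$ is exactly what is required. Care is also needed that the constant not depend on $x$: every cross term must be reduced to $(1+x)^4$ using linear growth and $h\le1$.
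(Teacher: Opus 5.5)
Your proof is correct and takes essentially the same route as the paper's proof. You obtain a one-step bound by expanding the fourth power of the Euler--Maruyama update and using linear growth on the exterior, and by a crude $O(\sqrt h)$ bound on the boundary layer via Lemma~\ref{lemma:bound_on_drift_diffusion}; you then close with discrete Gronwall. Two bookkeeping points are slightly cleaner than in the paper: noting $|y|^4=y^4$ makes the reflection irrelevant on all of $\Omega_e$, including the strip $\Omega_r$, and working directly with $1+X^4$ gives the $(1+x_0^4)$ dependence explicitly.
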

%\begin{lemma}
%\label{lemma:fourth_moment_bound_diffusion_scheme}
%Under assumptions~\ref{assump:local_coeff_boundary_regularity}, \ref{assump:linear_growth_coefficients} and assuming $\mathbb{E}(X_0^4)<\infty$, then for $T=Nh$, there exists a constant $C_T>0$, independent of $h$, such that
%$$
%\sup_{0\le k\le N}\mathbb{E}(1+X_k^4)\le C_T.
%$$
%\end{lemma}

Applying these Lemmas to \eqref{eq:err_decompose_diffusion} gives the global error bound for the sticky diffusion, our main convergence theorem. 

\begin{theorem}
\label{thm:global_err_cont_scheme_diffusion}
    (Global Error of $\{X_k\}_{k\in\mathbb{N}}$ in Algorithm \ref{alg:diffusion_cont_scheme})\\
    Let $\phi:[0,\infty)\to\mathbb{R}$ be such that the associated backward solution
    $$
    u(t,x):=\E\!\left[\phi(Y_T)\mid Y_t=x\right]
    $$
    satisfies Assumptions~\ref{assump:4th_spatial_derivative_bound},
    \ref{assump:2nd_time_derivative_bound},
    \ref{assump:local_coeff_boundary_regularity}, and
    \ref{assump:linear_growth_coefficients}. Assume moreover that $X_0=Y_0=x_0$ deterministically. 
    Then there exist constants $\delta>0$ and $C>0$,
    independent of $h$ and $x_0$, such that for all $h<\delta$,
    \begin{equation}
    \label{eqn:global_err_cont_scheme}
        \left|\E\bigl(\phi(X_N)-\phi(Y_T)\bigr)\right|
        \le C(1+x_0^4)h.
    \end{equation}
\end{theorem}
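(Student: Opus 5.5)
The plan is to follow the same route as for Theorem \ref{thm:global_err_cont_scheme}, now starting from the decomposition \eqref{eq:err_decompose_diffusion} and bounding its two sums separately. For the first sum (steps in the boundary layer $\Omega_{br}$) I will apply Corollary \ref{corollary:local_err_diffusion}, which rests on the first case of Theorem \ref{thm:local_err_cont_scheme_diffusion_general}. This gives a bound $M\bigl(h + h\,\E\sum_{k} X_k\mathbf 1_{X_k\in\Omega_{br}}\bigr)$. Lemma \ref{lemma:key_diffusion_general} then bounds the expectation inside by a constant $C$ that does not depend on $h$ or $x_0$. Hence the boundary-layer contribution is at most $M(1+C)h$, uniformly in $x_0$, and trivially at most $M(1+C)(1+x_0^4)h$.

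For the second sum (steps in the exterior) I will use the second case of Theorem \ref{thm:local_err_cont_scheme_diffusion_general}. This gives $\bigl|\E(u_{k+1}-u_k\mid X_k)\bigr|\mathbf 1_{X_k\notin\Omega_{br}}\le Mh^2(1+X_k^4)$. Points outside $\Omega_{br}$ lie in $\Omega_e$, since $\Omega_b\subset\Omega_{br}$. Taking expectations and summing over $k=0,\dots,N-1$ gives at most $Mh^2\sum_k\E(1+X_k^4)$. Lemma \ref{lemma:fourth_moment_bound_diffusion_scheme} bounds each term by $C_T(1+x_0^4)$, so the sum is at most $Mh^2 N C_T(1+x_0^4)=MTC_T(1+x_0^4)h$. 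Adding the two contributions gives \eqref{eqn:global_err_cont_scheme} with $C=M(1+C)+MTC_T$. I will take $\delta$ to be the minimum of the thresholds from Lemmas \ref{lemma:bound_on_drift_diffusion}, \ref{lemma:bound_on_lambda_diffusion} and \ref{lemma:key_diffusion_general} and from Theorem \ref{thm:local_err_cont_scheme_diffusion_general}, so that the scheme is well defined (with $\lambda\in[0,1]$) and every estimate holds for all $h<\delta$.

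Two preliminary checks come before the decomposition. First, the telescoping identity \eqref{eq:telescope} must hold. This needs $\E\phi(Y_T)=u(0,x_0)$, which follows from the representation in the Proposition of Section 2.2 together with the regularity in Assumptions \ref{assump:4th_spatial_derivative_bound}--\ref{assump:2nd_time_derivative_bound}. It also needs $u(T,\cdot)=\phi$. Second, the conditional expectations must be integrable. This follows from the boundedness of the derivatives of $u$ and from the fourth-moment bound, since $u$ has at most linear growth.

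Given the earlier results, this assembly is routine. The real difficulty sits in Lemma \ref{lemma:key_diffusion_general} and in the $x$-dependent exterior error. The point that needs care here is that the exterior estimate grows like $x^4$, so I must apply Lemma \ref{lemma:fourth_moment_bound_diffusion_scheme} rather than a uniform bound; this is the source of the factor $(1+x_0^4)$.
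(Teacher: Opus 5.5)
Your proposal is correct and is essentially the paper's own proof. You split via \eqref{eq:err_decompose_diffusion}. On $\Omega_{br}$ you use the $M(h^2+hx)$ local error with Lemma~\ref{lemma:key_diffusion_general}, which gives $O(h)$ uniformly in $x_0$. Off $\Omega_{br}$, which lies in $\Omega_e$ where the reflection is inactive, you sum the $Mh^2(1+x^4)$ local error against Lemma~\ref{lemma:fourth_moment_bound_diffusion_scheme}; this is exactly where the factor $(1+x_0^4)$ enters. The only cosmetic difference is that the paper writes the boundary-layer constant directly as $MT+MC_b$, using $Nh^2=Th$, rather than going through Corollary~\ref{corollary:local_err_diffusion}, and this changes nothing.
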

\begin{proof}
    This follows from the error decomposition \eqref{eq:err_decompose_diffusion}. By Theorem~\ref{thm:local_err_cont_scheme_diffusion_general}, there exists a constant $M>0$, independent of $x$ and $h$, such that
    $$
    \left|\E\left(u_{k+1}-u_k\mid X_k=x\right)\right|
    \le
    \begin{cases}
    M(h^2+hx), & x\in\Omega_{br},\\
    Mh^2(1+x^4), & x\in\Omega_e.
    \end{cases}
    $$
    Applying this bound to \eqref{eq:err_decompose_diffusion}, we obtain
    \begin{equation*}
    \begin{split}
    \left|\E\phi(X_N)-\E\phi(Y_T)\right|
    &\le
    M\E\left(\sum_{k=0}^{N-1}(h^2+hX_k)\mathbf{1}_{\{X_k\in\Omega_{br}\}}\right)\\
    &\quad+
    Mh^2\sum_{k=0}^{N-1}\E\left((1+X_k^4)\mathbf{1}_{\{X_k\in\Omega_e\}}\right).
    \end{split}
    \end{equation*}
    For the boundary-layer contribution, Lemma~\ref{lemma:key_diffusion_general} gives
    $$
    \E\left(\sum_{k=0}^{N-1}X_k\mathbf{1}_{\{X_k\in\Omega_{br}\}}\right)\le C_b,
    $$
    where $C_b$ is independent of $h$ and $x_0$. Since $N=T/h$, we also have
    $$
    \E\left(\sum_{k=0}^{N-1}h^2\mathbf{1}_{\{X_k\in\Omega_{br}\}}\right)
    \le Nh^2
    =Th.
    $$
    Therefore,
    $$
    M\E\left(\sum_{k=0}^{N-1}(h^2+hX_k)\mathbf{1}_{\{X_k\in\Omega_{br}\}}\right)
    \le (MT+MC_b)h.
    $$
    For the exterior contribution, Lemma~\ref{lemma:fourth_moment_bound_diffusion_scheme} yields
    $$
    \sup_{0\le k\le N}\E(1+X_k^4)\le C_T(1+x_0^4),
    $$
    where $C_T$ is independent of $h$ and $x_0$. Hence
    \begin{equation*}
    \begin{split}
    Mh^2\sum_{k=0}^{N-1}\E\left((1+X_k^4)\mathbf{1}_{\{X_k\in\Omega_e\}}\right)
    &\le
    Mh^2\sum_{k=0}^{N-1}\E(1+X_k^4)\\
    &\le
    Mh^2N\, C_T(1+x_0^4)\\
    &=
    MTC_T(1+x_0^4)h.
    \end{split}
    \end{equation*}

    Combining the two estimates gives
    $$
    \left|\E\phi(X_N)-\E\phi(Y_T)\right|
    \le
    (MT+MC_b)h + MTC_T(1+x_0^4)h.
    $$
    Since $1\le 1+x_0^4$, we may absorb the first term into the second and obtain
    $$
    \left|\E\phi(X_N)-\E\phi(Y_T)\right|
    \le
    C(1+x_0^4)h
    $$
    for some constant $C>0$ independent of $h$ and $x_0$.
\end{proof}

%We test the convergence results by using our algorithm to simulate a sticky diffusion with a reflecting boundary at $x=L$ and estimate $\mathbb{E}(X_T|X_0=0)$.

\medskip

\begin{figure}
    \centering
    \includegraphics[width=0.85\linewidth]{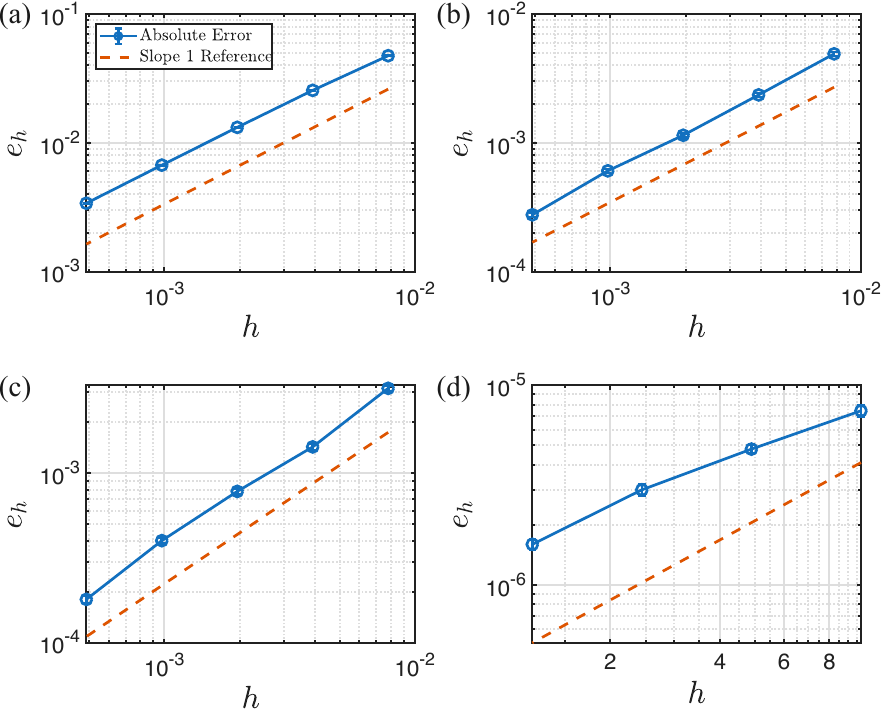}
    \caption{Weak error 
    $e_h = |\mathbb{E}[X_T\mid x_0=0]-u(T,0)|$ of Algorithm
    \ref{alg:diffusion_cont_scheme} for different choices of drift, diffusion, and sticky parameter: (a) $b(x)=2+\sin(x)$, $\sigma(x)=\frac{1}{2}+x$, $\kappa=4$; (b) $b(x)=2+\sin(x)$, $\sigma(x)=2+x$,  $\kappa=1$; (c) $b(x)=-1+\sin(x)$, $\sigma(x)=2+x$, $\kappa=1$; (d) $b(x)=-2+\sin(x)$, $\sigma(x)=\frac{1}{2}+x$,  $\kappa=4$. 
    All experiments use $L=4$, $T=1$, and $x_0=0$.
    Error bars (commensurate with marker sizes) are $\pm 1$ empirical standard deviation.
    The respective least-squares fitted slopes are (a) $0.954162$ (b) $1.026675$, (c) $1.010317$, (d) $0.907653$ (fitted only through the last 2 data points in this case). }
    \label{fig:SDsim}
\end{figure}

\subsection{Numerical experiments}
Figure \ref{fig:SDsim} empirically validates our convergence results using different choices for $b,\sigma$ as given in the caption.  As for a SBM, we impose a reflecting boundary condition at $x=L$, in order to compare our stochastic estimates with a ``ground truth'' constructed by solving the backward equation numerically using finite difference with a very small time step. Also as before we choose $\phi(x)=x$, which does not satisfy the boundary condition compatibility conditions that are typically required in PDE proofs of smoothness. Figure \ref{fig:SDsim} (a,b,c) used timesteps $h\in\{1/128,1/256,1/512,1/1024,1/2048\}$, and a  number of realizations which started at $5\times 10^7$ and was quadrupled each time $h$ was halved. Figure \ref{fig:SDsim} (d) used $h\in\{1/1024,1/2048,1/4096,1/8192\}$, with initial number of realizations equal to $10^9$.

All cases approach $O(h)$ convergence, though it is notable that case (d) with strongly negative drift and weaker diffusion approaches it more slowly, requiring must smaller timesteps to approach a slope of 1. The error however in this case is several orders of magnitude smaller than for the others.

%Besides, we need to address the following two questions to ensure that our scheme for sticky diffusion is well-defined and handles the subtlety in the boundary layer caused by negative drift:
%\begin{enumerate}
%    \item Is $\lambda(\cdot)$ we define in \eqref{eqn:diffusion_lambda_cont_scheme} stays in $[0,1]$ so that the scheme itself is well-defined as we have checked in Lemma \ref{lemma:bound_on_lambda_SBM}? 
%    \item Do we have the same $3/2$ order of local error in the scheme within the boundary layer as we have shown for sticky brownian motion scheme?
%    In particular, there are two situation that make local error analysis subtle.
%    The first situation is similar to what we have encountered in SBM scheme where the particle has probability $1-\lambda(X_k)$ to get absorbed to the sticky boundary.
%    The second situation may be caused by negative drift $b(X_k)$ where our scheme does a reflected EM jump when $x - \sigma(x)\sqrt{3h} + b(x)h< 0$. 
%    It is non-trivial to show this situation gives desired order of local accuracy.
%\end{enumerate}
%The next part will focus on resolving those questions.

\subsection{Key steps in the error analysis}

We now show the key steps in performing a local error analysis, Theorem \ref{thm:local_err_cont_scheme_diffusion_general}, and in proving Lemma \ref{lemma:key_diffusion_general}. 
Compared to an SBM, an additional challenge arises when we have negative drift $b(X_k)$, where our scheme does a reflected EM jump when $x - \sigma(x)\sqrt{3h} + b(x)h< 0$. 
It is non-trivial to show this situation gives desired order of local accuracy.
Compared to the SBM case, the challenge is that the near-boundary region is no longer exactly $[0,\sqrt{3h}]$. The variable diffusion changes the boundary threshold, and negative drift can create an additional thin reflection region. Thus, for the number-of-steps lemma, we first need to show that this enlarged region is still contained in an $O(\sqrt h)$ neighborhood of the boundary, and then compare the resulting transition operator with that for an SBM.

This section collects the intermediate statements behind the convergence results in Section~\ref{sec:Primary convergence results}. All the proofs use standard techniques and are found in the Supplement.

\subsubsection{Local error: Theorem \ref{thm:local_err_cont_scheme_diffusion_general}}

%This subsection collects the intermediate statements behind the convergence results in Section~\ref{sec:Primary convergence results}.
The role of this section is the same as that of Section~\ref{sec:Convergence proofs_SBM} for sticky Brownian motion: we first isolate the local error estimate in the boundary layer, then record the occupation bound needed in the global error argument, and finally return to the weak error decomposition \eqref{eq:err_decompose_diffusion}.
Unlike in Section~\ref{sec:Convergence proofs_SBM}, the error in $\Omega_e$ is less straightforward because we do not want to impose uniform boundedness assumptions on the drift and diffusion. Instead, we assume linear growth and local regularity in the boundary layer.\footnote{If we can make assumptions like compact domain, making drift and diffusion uniformly bounded, the $\Omega_e$ local error becomes  straightforward to show.} The Euler-Maruyama jump and its local weak error is of order $h^2$. 

We start by concentrating on $\Omega_{br}$. Recall this was divided into two separate pieces $\Omega_{b},\Omega_{r}$ defined in \eqref{eq:Omegab},\eqref{eq:Omegabi}.
There are three situations to treat separately:
\begin{enumerate}
    \item $x\in \Omega_{b}$ and $b(0)\neq \frac{\sigma^2(0)}{2\kappa}$;
    \item $x\in \Omega_{r}$, which is the additional reflection strip created by negative drift;
    \item $x\in \Omega_{b}$ and $b(0)= \frac{\sigma^2(0)}{2\kappa}$ (notice $\Omega_{br} = \Omega_{b}$ and $\Omega_{r} = \emptyset$ because$ b(x)>0$ for small $h$).
\end{enumerate}
All lemmas and theorems below impose Assumptions \ref{assump:local_coeff_boundary_regularity}-\ref{assump:linear_growth_coefficients} and the proofs can be found in the Supplements.

%The following lemma shows that, once $h$ is small, every point in the boundary layer lies in a compact interval on which $b(x)\in [b_1,b_2]$ and $\sigma(x)\in [\sigma_1,\sigma_2]$.
%In particular, $\sigma_1$ and $\sigma_2$ are simply uniform lower and upper bounds for the diffusion coefficient on the region when $x \in \Omega_{br}$.
%\begin{lemma}
%\label{lemma:bound_on_drift_diffusion}
%    There exists $\delta>0;\; b_1, b_2, \sigma_1, \sigma_2 \in \mathbb{R}$ such that $\forall h < \delta$, $b(x) \in [b_1, b_2]$ and $\sigma(x) \in [\sigma_1, \sigma_2]$ whenever $x - \sigma(x)\sqrt{3h} < 0$  or  $x - \sigma(x)\sqrt{3h} + b(x)h< 0$.
    %%In particular, denote $\sigma_0 = \sigma(0)$, $\sigma_2$ can be expressed as $\sigma_2 := \sigma_0 + Kh$ for some constant $K>0$.
%\end{lemma}
%\begin{proof}
%    See Supplement.
%\end{proof}
%The following lemma shows that the scheme we propose is well-defined.
%%It may seem strange that we choose $x\in [0,\sigma_2\sqrt{3h}]$ in following lemmas, but $B$ we define above is in fact its subset (\textit{i.e. $\sigma_0\sqrt{3h}+Kh \leq \sigma_2 \sqrt{3h}$}).
%\begin{lemma}
%\label{lemma:bound_on_lambda_diffusion}
%    There exists $\delta>0$ such that $\forall h < \delta$, $\forall x$ such that $x - \sigma(x)\sqrt{3h} < 0,$
%    $$\lambda(x) \in [0,1], \;$$
%\end{lemma}
%\begin{proof}
%    See Supplement.    
%\end{proof}

%Notice that $\delta > 0$ we choose is independent of $x$, for all remaining lemmas, we always make $\delta$ we choose independent of $x$.
Similar to the proofs for SBM, we proceed to show that $\lambda(x)$ is upper bounded by a multiple of $\frac{x}{\sqrt{h}}+\sqrt{h}$. This not only tells us the jump probability to the sticky boundary is higher when the particle position $x$ is close to the sticky boundary, but it also enables us to conclude that $\lambda(x)\cdot h^{3/2} = \mathcal{O}(hx+h^2)$ in the local error analysis.

\begin{lemma}
\label{lemma:UB_lambda_diffusion}
    There exists constant $C, \delta > 0$, depending on $\sigma$ and $b$, such that $\forall h < \delta$ and $\forall x\in \Omega_{b}$, and for $\lambda$ defined in \eqref{eqn:diffusion_lambda_cont_scheme}, 
    $$0 \leq \lambda(x) \leq S_1\cdot \left(\frac{x}{\sqrt{h}} + \sqrt{h} \right).$$
\end{lemma}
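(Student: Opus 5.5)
The lower bound $\lambda(x)\ge 0$ is already given by Lemma~\ref{lemma:bound_on_lambda_diffusion} for $h<\delta$. The work is the upper bound. I will mimic the SBM computation \eqref{eq:3nd_mom_lambdabound}: bound the numerator of \eqref{eqn:diffusion_lambda_cont_scheme} from above, bound the denominator from below by a positive multiple of $\sqrt h$, and divide.

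\textbf{Localizing the coefficients.} By Lemma~\ref{lemma:bound_on_drift_diffusion}, for $h<\delta$ and $x\in\Omega_b$ we have $b(x)\in[b_1,b_2]$, $\sigma(x)\in[\sigma_1,\sigma_2]$ with $\sigma_1>0$, and $0\le x\le \sigma_2\sqrt{3h}$. Set $B:=\max(|b_1|,|b_2|)$.

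\textbf{Numerator.} Using $x^2\le 3\sigma_2^2 h$, the numerator is at most
\[
(\sigma_2^2+2\kappa B)\,3\sigma_2^2 h + 2\kappa\sigma_2^2 x + \sigma_2^4 h \le A_1 h + A_2 x,
\]
with $A_1,A_2$ independent of $x$ and $h$.

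\textbf{Denominator.} This is the main obstacle, because the coefficient $\sigma^2-2\kappa b$ of $x^2$ may be negative when $b>0$ is large. I group the terms as
\[
\frac{\kappa\sigma}{\sqrt{3h}}x^2 + (\sigma^2-2\kappa b)x^2 + \sigma^3\kappa\sqrt{3h} + \sigma^4 h .
\]
The first, third and fourth terms are nonnegative, and the third is at least $\kappa\sigma_1^3\sqrt{3h}$. The possibly negative middle term satisfies
\[
(\sigma^2-2\kappa b)x^2 \ge -2\kappa B\,x^2 \ge -6\kappa B\sigma_2^2 h .
\]
This is $O(h)$, so after shrinking $\delta$ it is at most half of $\kappa\sigma_1^3\sqrt{3h}$. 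Hence the denominator is at least $\tfrac12\kappa\sigma_1^3\sqrt{3h}>0$. This also keeps $\lambda$ well defined.

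\textbf{Conclusion.} Dividing the two bounds gives
\[
\lambda(x)\le \frac{2(A_1h+A_2x)}{\kappa\sigma_1^3\sqrt{3h}} \le S_1\Bigl(\frac{x}{\sqrt h}+\sqrt h\Bigr),
\]
with $S_1=\frac{2\max(A_1,A_2)}{\sqrt3\,\kappa\sigma_1^3}$. This constant depends only on $\kappa$ and the bounds $b_1,b_2,\sigma_1,\sigma_2$, as the statement requires.
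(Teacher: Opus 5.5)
Your proof is correct and follows essentially the same route as the paper: localize via Lemma~\ref{lemma:bound_on_drift_diffusion}, take $\lambda\ge 0$ from Lemma~\ref{lemma:bound_on_lambda_diffusion}, bound the numerator using $x^2\le 3\sigma_2^2h$, bound the denominator below by a multiple of $\sqrt{h}$, and divide. The only cosmetic difference is in the denominator. The paper shrinks $\delta$ so that $\frac{\kappa\sigma_1}{\sqrt{3h}}\ge 2\kappa B_M$, which makes the whole $x^2$-coefficient nonnegative and gives $D\ge\kappa\sigma_1^3\sqrt{3h}$. You instead absorb the $O(h)$ negative term into half of $\kappa\sigma_1^3\sqrt{3h}$, which costs only a factor of $2$ in $S_1$.
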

%\begin{proof}
%    See Supplement.
%\end{proof}

Now we would like to prove a couple of lemmas to prepare for showing that our diffusion scheme has local error of order $3/2$.
We begin with the generic case $x\in \Omega_{b}$ and $b(0)\neq \frac{\sigma^2(0)}{2\kappa}$.
The first lemma is the analogue of Lemma \ref{lemma:sticky_bc@x} from the SBM proof.
It rewrites the sticky boundary condition in a form that lets us replace $\partial_xu$ by $\partial_{xx}u$ up to a remainder whose size is exactly what the local error proof can absorb.

\begin{lemma}
\label{lemma:sticky_bc@x_diffusion}
    Let $u$ be the solution of \eqref{eqn:diffusion_bckwd_eqn} satisfying corresponding boundary condition \eqref{eqn:L_diffusion_BC} and assume that $b(0) \neq \frac{\sigma^2(0)}{2\kappa}$.
    Then, there exists $\delta>0$ such that for all $h<\delta$ and $x\in \Omega_{b}$,
    \begin{equation}
    \label{eqn:diffusion_BC_consequence}
    \begin{split}
        \partial_x u(t,x) &= \left(x+M(x) \right)\partial_{xx}u(t,x) + Re(x)
    \end{split}
    \end{equation}
    where
    $$
    |Re(x)|\leq C(h+x)=\mathcal{O}(h+x),
    $$
    and $C$ depends on bounds of $\partial_xu,\partial_{xx}u,u^{(3)}$ and on the local bounds of $b,\sigma$ and their derivatives up to order $2$ on the boundary neighborhood $[0,r_0]$, but is independent of $x$ and $h$.
\end{lemma}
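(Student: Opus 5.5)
The plan mirrors the proof of Lemma~\ref{lemma:sticky_bc@x}. Write the boundary condition \eqref{eqn:L_diffusion_BC} as
\[
\bigl(\sigma^2(0)-2\kappa b(0)\bigr)\partial_x u(t,0)=\kappa\sigma^2(0)\,\partial_{xx}u(t,0).
\]
Then Taylor-expand each quantity at $0$ about the point $x\in\Omega_b$. By Lemma~\ref{lemma:bound_on_drift_diffusion}, $x\le \sigma_2\sqrt{3h}\le r_0$ once $h<\delta$, so Assumption~\ref{assump:local_coeff_boundary_regularity} applies on $[0,x]$. The expansions are
\[
\partial_x u(t,0)=\partial_xu(t,x)-x\partial_{xx}u(t,x)+\tfrac{x^2}{2}u^{(3)}(t,z_x),\qquad
\partial_{xx}u(t,0)=\partial_{xx}u(t,x)-xu^{(3)}(t,y_x),
\]
with $y_x,z_x\in[0,x]$. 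For the coefficients we use first-order Taylor expansions:
\[
\sigma^2(0)=\sigma^2(x)+O(x),\qquad b(0)=b(x)+O(x).
\]
The constants in these $O(x)$ terms are controlled by $C_{\mathrm{loc}}$.

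Substituting, we obtain an identity of the form
\[
D(x)\,\partial_xu(t,x)=\bigl(\kappa\sigma^2(x)+xD(x)\bigr)\partial_{xx}u(t,x)+R(x),
\qquad D(x):=\sigma^2(x)-2\kappa b(x).
\]
Every term of $R(x)$ is one of the following:
\begin{itemize}
\item an $O(x)$ coefficient error multiplied by $\partial_xu$ or $\partial_{xx}u$;
\item $x^2u^{(3)}$;
\item $xu^{(3)}$ multiplied by a bounded coefficient;
\item the cross term $x\cdot O(x)\,\partial_{xx}u$.
\end{itemize}
Hence $|R(x)|\le C(x+x^2)\le C'x$, with $C'$ depending on $\|\partial_xu\|_\infty$, $\|\partial_{xx}u\|_\infty$, $\|u^{(3)}\|_\infty$, $\kappa$ and $C_{\mathrm{loc}}$. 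Note that the $x\partial_{xx}u$ term is kept explicitly to match \eqref{eq:taylorsticky_diffusion}.

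The next step is to divide by $D(x)$. Since $b(0)\neq\sigma^2(0)/(2\kappa)$, we have $D(0)\neq 0$, and $D$ is continuous (indeed $C^2$) on $[0,r_0]$. Choose $\delta$ small enough that $\sigma_2\sqrt{3\delta}$ lies inside a neighborhood $[0,\rho]$ on which $|D(x)|\ge |D(0)|/2$. After dividing,
\[
\partial_xu(t,x)=\bigl(M(x)+x\bigr)\partial_{xx}u(t,x)+\frac{R(x)}{D(x)},
\]
and $Re(x):=R(x)/D(x)$ satisfies $|Re(x)|\le (2C'/|D(0)|)\,x\le C(h+x)$. The constant is independent of $x$ and $h$.

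The main obstacle is this uniform lower bound on $|D|$ over the boundary layer. The rest is routine bookkeeping of Taylor remainders. One also checks that $M(x)=\kappa\sigma^2(x)/D(x)$ stays bounded there, which follows from the same lower bound. The $h$ in the stated bound $O(h+x)$ is only slack: the argument in fact gives $O(x)$.
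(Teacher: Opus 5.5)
Your proposal is correct and follows essentially the same route as the paper: Taylor-expand both sides of the sticky boundary condition about $x$, keep the $x\,\partial_{xx}u$ terms, absorb everything else into an $O(x)$ remainder, and divide by $\sigma^2(x)-2\kappa b(x)$, which stays bounded away from zero on the boundary layer by continuity and $b(0)\neq\sigma^2(0)/(2\kappa)$. The only cosmetic difference is that you expand the coefficients and the derivatives of $u$ separately, whereas the paper expands the products $\sigma^2\partial_xu$, $b\,\partial_xu$, $\sigma^2\partial_{xx}u$ as single functions; your version makes the $O(x)$ bookkeeping more explicit and needs only $C^1$ coefficients.
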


The general idea of the proof is to Taylor expand both sides of the boundary condition \eqref{eqn:L_diffusion_BC} and bound remainder terms by regularity assumptions.

%\begin{proof}
%    See Supplement. 
%    The general idea is to Taylor expand both sides of the boundary condition \eqref{eqn:L_diffusion_BC} and bound remainder terms by regularity assumptions.
%\end{proof}

The following two lemmas record the first and second conditional moments of the increment in $\Omega_{b}$.
They are the diffusion analogues of the moment identities for SBM.
The first and second moments both have an extra remainder term when the drift is negative because of the nested absolute value; we apply Lemma \ref{lemma:UB_lambda_diffusion} to show that this term is  of order $hx+h^2$.

\begin{lemma}
\label{lemma:1st_mom_diffusion}
There exists $\delta>0$ such that $\forall h < \delta$ and $\forall x\in \Omega_{b}$, we have, 
\begin{equation}
    \label{eqn:first_mom_negative_drifted_diffusion_approx}
    \begin{split}
        \mathbb{E}_x(\Delta X_{k}) &= \frac{\lambda(x)x^2}{2\sqrt{3h} \cdot \sigma(x)} + \frac{\sigma(x) \lambda(x)\sqrt{3h}}{2} + \lambda(x)b(x)h - x + \mathcal{O}(hx + h^2).\\
    \end{split}
    \end{equation}
\end{lemma}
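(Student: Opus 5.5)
We compute $\mathbb{E}_x(\Delta X_k)$ directly from the update \eqref{eqn:diffusion_cont_scheme_inlayer}. Fix $x\in\Omega_b$, write $\sigma=\sigma(x)$, $b=b(x)$, $\lambda=\lambda(x)$, and let $W:=|x+\sigma Z_k|$. Then
\[
\mathbb{E}_x(X_{k+1}) = \lambda\,\mathbb{E}\bigl|W+bh\bigr| ,
\qquad
\mathbb{E}_x(\Delta X_k)=\lambda\,\mathbb{E}|W+bh| - x .
\]
By Lemma~\ref{lemma:bound_on_drift_diffusion}, for $h<\delta$ we have $b\in[b_1,b_2]$, $\sigma\in[\sigma_1,\sigma_2]$ and $x\le\sigma_2\sqrt{3h}$, so all constants below are uniform in $x,h$.

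\emph{Step 1: the reflected Gaussian-free part.} Since $x\le \sigma\sqrt{3h}$, the variable $x+\sigma Z_k$ is uniform on $[x-\sigma\sqrt{3h},x+\sigma\sqrt{3h}]$, an interval containing $0$. The same computation as in \eqref{eq:DelX} (with $\sqrt{3h}$ replaced by $\sigma\sqrt{3h}$) gives exactly
\[
\mathbb{E}W=\frac{x^2}{2\sigma\sqrt{3h}}+\frac{\sigma\sqrt{3h}}{2}.
\]

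\emph{Step 2: the drift and the outer absolute value.} Write $|W+bh| = W+bh + 2(W+bh)^-$. If $b\ge0$, the negative part vanishes since $W\ge0$, and we get $\mathbb{E}|W+bh|=\mathbb{E}W+bh$ exactly. If $b<0$, the correction is $R:=2\,\mathbb{E}\bigl[(W+bh)^-\bigr]=2\,\mathbb{E}[(|b|h-W)\mathbf 1_{W<|b|h}]$, which satisfies $0\le R\le 2|b|h\,\mathbb{P}(W<|b|h)$. The density of $W$ is bounded by $2/(2\sigma\sqrt{3h})=1/(\sigma_1\sqrt{3h})$, so $\mathbb{P}(W<|b|h)\le |b_1| h/(\sigma_1\sqrt{3h}) = O(\sqrt h)$, giving $R=O(h^{3/2})$. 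Combining the steps,
\[
\mathbb{E}_x(\Delta X_k)=\frac{\lambda x^2}{2\sqrt{3h}\,\sigma}+\frac{\sigma\lambda\sqrt{3h}}2+\lambda b h - x + \lambda R .
\]

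\emph{Step 3: absorbing the remainder.} This is the main point. The bound $R=O(h^{3/2})$ alone is not $O(hx+h^2)$, so we multiply by $\lambda$ and invoke Lemma~\ref{lemma:UB_lambda_diffusion}:
\[
\lambda R\le S_1\Bigl(\frac{x}{\sqrt h}+\sqrt h\Bigr)C h^{3/2} = O(hx+h^2).
\]
This yields \eqref{eqn:first_mom_negative_drifted_diffusion_approx}. The only delicate points are ensuring that the density bound for $W$ uses the uniform lower bound $\sigma_1>0$, and that $\delta$ is taken as the minimum of those in Lemmas~\ref{lemma:bound_on_drift_diffusion} and \ref{lemma:UB_lambda_diffusion}, so that the constant is independent of $x$ and $h$.
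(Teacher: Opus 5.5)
Your proof is correct and follows the same route as the paper. Both compute $\mathbb{E}|x+\sigma Z_k|$ exactly, isolate the extra term that the outer absolute value produces when $b<0$, show it is $O(h^{3/2})$, and absorb $\lambda$ times it into $O(hx+h^2)$ via Lemma~\ref{lemma:UB_lambda_diffusion}.

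The only difference is how that extra term is handled. The paper computes it in closed form, $\frac{b^2h^2}{\sigma\sqrt{3h}}$, by a four-case integration that tacitly assumes $x+|b|h\le\sigma\sqrt{3h}$. Your identity $|y|=y+2y^-$ together with the density bound on $W$ gives only an inequality, but that inequality holds on all of $\Omega_b$, including the thin strip near the layer edge where the paper's closed form is not exact.
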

%\begin{proof}
%    See Supplement.
%\end{proof}

\begin{lemma}
\label{lemma:2nd_mom_order_negative_drifted_diffusion}
There exists $\delta>0$ and $C > 0$ such that $\forall h < \delta$ and $\forall x\in \Omega_{b}$, we have, 
\begin{equation*}
    \mathbb{E}_x(\Delta X_k^2) = -\frac{\lambda(x)}{\sigma(x)\sqrt{3h}}\cdot x^3 + (1+\lambda(x))x^2 - \lambda(x)\sigma(x)\sqrt{3h}\cdot x + \lambda(x)\sigma^2(x)\cdot h + R_2(x),
\end{equation*}
    where
    $$|R_2(x)| \leq C\cdot (hx + h^2) = \mathcal{O}(hx + h^2).$$
\end{lemma}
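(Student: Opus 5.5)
We will compute $\mathbb{E}_x(\Delta X_k^2)$ directly from the transition law in \eqref{eqn:diffusion_cont_scheme_inlayer}, writing $\sigma=\sigma(x)$, $b=b(x)$, $\lambda=\lambda(x)$. With probability $1-\lambda$ the chain jumps to $0$, which contributes $(1-\lambda)x^2$. With probability $\lambda$ the new point is $Y:=\bigl||x+\sigma Z_k|+bh\bigr|$. The contribution of this branch is $\lambda\,\mathbb{E}(Y-x)^2$.

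The first step is the driftless reflected part $R:=|x+\sigma Z_k|$. Here $Z_k\sim\mathcal U([-\sqrt{3h},\sqrt{3h}])$ and $x<\sigma\sqrt{3h}$ because $x\in\Omega_b$. Hence $R$ has density $\frac{1}{\sigma\sqrt{3h}}$ on $[0,\sigma\sqrt{3h}-x]$ and $\frac{1}{2\sigma\sqrt{3h}}$ on $(\sigma\sqrt{3h}-x,\sigma\sqrt{3h}+x]$. Integrating $(r-x)^2$ against this density is exactly the SBM computation \eqref{eq:DelX} with $\sqrt{3h}$ replaced by $\sigma\sqrt{3h}$. Combined with the $(1-\lambda)x^2$ from the jump branch, this gives
\[
-\frac{\lambda x^3}{\sigma\sqrt{3h}}+(1+\lambda)x^2-\lambda\sigma\sqrt{3h}\,x+\lambda\sigma^2h .
\]
We also record $\mathbb{E}(R-x)=\frac{x^2}{2\sigma\sqrt{3h}}+\frac{\sigma\sqrt{3h}}{2}-x$, which is $O(\sqrt h)$. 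So the main terms of the lemma come out exactly, and everything else goes into $R_2(x)$.

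The second step is to control the drift correction. Write $Y=|R+bh|$. When $b\ge0$, or on the event $\{R\ge|b|h\}$, we have $Y=R+bh$. Then
\[
(Y-x)^2-(R-x)^2=2bh(R-x)+b^2h^2 .
\]
Taking expectations and using $\mathbb{E}|R-x|\le 2\sigma_2\sqrt{3h}$ gives a bound $C h^{3/2}$. Multiplying by $\lambda$ and using Lemma~\ref{lemma:UB_lambda_diffusion}, $\lambda\le S_1(x/\sqrt h+\sqrt h)$, we get $\lambda\,O(h^{3/2})=O(hx+h^2)$. On the complementary event $\{R<|b|h\}$ with $b<0$, we have $Y=|b|h-R\in[0,|b|h]$. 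There both $(Y-x)^2$ and $(R+bh-x)^2$ are bounded by $(x+|b|h)^2$, and their difference is at most $C h (x+h)$ since $|Y-(R+bh)|\le 2|b|h$. This event has probability $O(\sqrt h)$ because $R$ has density $O(h^{-1/2})$, so after multiplying by $\lambda\le1$ its contribution is $O(h^{3/2}(x+h))$, which is well within $O(hx+h^2)$. Throughout, Lemma~\ref{lemma:bound_on_drift_diffusion} supplies uniform constants $b\in[b_1,b_2]$ and $\sigma\in[\sigma_1,\sigma_2]$, and it also gives $x\le\sigma_2\sqrt{3h}$, so $x^2=O(h)$ whenever needed.

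The main obstacle is bounding the drift cross term $2bh\,\mathbb{E}(R-x)$. It is only $O(h^{3/2})$ on its own, and it becomes $O(hx+h^2)$ only after it is multiplied by $\lambda$ and the bound of Lemma~\ref{lemma:UB_lambda_diffusion} is applied. This is the same mechanism used for the positive part in Lemma~\ref{lemma:3rd_mom_bound}. The negative-drift event also needs care, but it only contributes higher order. Collecting all these remainders into $R_2(x)$ with a constant depending on $\sigma_1,\sigma_2,b_1,b_2,S_1$ proves the lemma for all $h<\delta$, where $\delta$ is the minimum of the thresholds in Lemmas~\ref{lemma:bound_on_drift_diffusion} and \ref{lemma:UB_lambda_diffusion}.
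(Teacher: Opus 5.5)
Your proof is correct and follows essentially the same route as the paper. The driftless reflected part reproduces the SBM moment formula with $\sqrt{3h}$ replaced by $\sigma(x)\sqrt{3h}$, and the $O(h^{3/2})$ drift cross term becomes $O(hx+h^2)$ only after multiplying by $\lambda$ and invoking Lemma~\ref{lemma:UB_lambda_diffusion}. The one difference is how you treat the outer absolute value when $b<0$: the paper uses $Y^2=(R+bh)^2$ exactly and imports $\mathbb{E}Y$, with its $b^2h^2/(\sigma\sqrt{3h})$ correction, from Lemma~\ref{lemma:1st_mom_diffusion}, whereas you bound the reflection event directly via $\mathbb{P}(R<|b|h)=O(\sqrt h)$. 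Your version is self-contained and does not depend on the exact integration limits used in that lemma.
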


%\begin{proof}
%    See Supplement.
%\end{proof}

The next lemma is the exact analogue of the moment identity used in Lemma~\ref{lem:moments}.
Its purpose is to isolate the leading-order cancellation built into the choice of $\lambda(x)$.
We apply the computation done in Lemma \ref{lemma:1st_mom_diffusion}-\ref{lemma:2nd_mom_order_negative_drifted_diffusion}. They produce the remainder term in the following lemma.

\begin{lemma}
\label{lemma:1st_2nd_mom_combination_generator_diffusion}
Assume that $b(0) \neq \frac{\sigma^2(0)}{2\kappa}$, there exist $\delta>0$ such that $\forall h < \delta$ and $\forall x\in \Omega_{b}$
    \begin{equation}
    \label{eqn:1st_2nd_mom_goal_cont_scheme_drifted}
        2\left(\mathbb{E}_x(\Delta X_k) - hb(x)\right) \cdot (x + M(x)) 
        + \mathbb{E}_x(\Delta X_k^2) = \sigma^2(x)h + R_3(x) 
    \end{equation}
    where $|R_3(x)| \leq K_3 \cdot (hx + h^2) = \mathcal{O}(hx + h^2)$ for some $K_3$ independent of $h$ but  depending on the boundary-layer constants and the nondegeneracy constant from $b(0)\neq\sigma^2(0)/(2\kappa)$.
    %\todan{what does this latter statement mean?}
    %\redan{I mean the positive lower bound on $|\sigma^2(x)-2\kappa b(x)|$ in a sufficiently small neighborhood of $0$. This is what controls the denominator in $M(x)=\kappa\sigma^2(x)/(\sigma^2(x)-2\kappa b(x))$, and it is used here and again in the next first-moment lemma.}
\end{lemma}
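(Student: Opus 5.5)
The proof is a direct substitution of the moment expansions of Lemmas~\ref{lemma:1st_mom_diffusion} and \ref{lemma:2nd_mom_order_negative_drifted_diffusion} into the left-hand side of \eqref{eqn:1st_2nd_mom_goal_cont_scheme_drifted}. Since $\lambda$ in \eqref{eqn:diffusion_lambda_cont_scheme} was chosen as the exact solution of the leading-order equation, the main terms cancel identically, and only products of the $\mathcal{O}(hx+h^2)$ remainders with bounded coefficients survive.

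\textbf{Splitting the moments.} Write
\[
\mathbb{E}_x(\Delta X_k)=m_1(x)+R_1(x),\qquad \mathbb{E}_x(\Delta X_k^2)=m_2(x)+R_2(x),
\]
where $m_1,m_2$ are the explicit polynomial expressions in $x,\sqrt{h},\lambda(x),\sigma(x),b(x)$ appearing in the two lemmas, and $|R_1|,|R_2|\le C(hx+h^2)$. The left-hand side of \eqref{eqn:1st_2nd_mom_goal_cont_scheme_drifted} then equals
\[
\Bigl[2(m_1-hb)(x+M)+m_2\Bigr]+2R_1(x+M)+R_2 .
\]

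\textbf{Bounding $M$.} I first bound $M(x)=\kappa\sigma^2/(\sigma^2-2\kappa b)$ uniformly on $\Omega_b$. Since $b(0)\neq\sigma^2(0)/(2\kappa)$ and $b,\sigma$ are continuous on $[0,r_0]$ by Assumption~\ref{assump:local_coeff_boundary_regularity}, there is a neighbourhood $[0,r_1]$ on which $|\sigma^2-2\kappa b|\ge c_0>0$. Lemma~\ref{lemma:bound_on_drift_diffusion} gives $x\le\sigma_2\sqrt{3h}$ on $\Omega_b$, so for $h<\delta$ small we have $\Omega_b\subset[0,r_1]$. Hence $|M|\le \kappa\sigma_2^2/c_0$, and $|x+M|$ is bounded. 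It follows that $2R_1(x+M)+R_2=\mathcal{O}(hx+h^2)$.

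\textbf{Exact cancellation.} It remains to check that the bracket equals $\sigma^2h$ exactly. This is the algebra done in the derivation of \eqref{eqn:diffusion_lambda_cont_scheme}. Multiplying out, the bracket is affine in $\lambda$, so the equation bracket${}=\sigma^2h$ takes the form
\[
\lambda\left(\left(\tfrac{M}{\sigma\sqrt{3h}}+1\right)x^2+M\sigma\sqrt{3h}+\sigma^2h+2bMh\right)=\sigma^2h+x^2+2Mx+2bMh .
\]
I would verify this carefully, including that the $2\lambda bh(x+M)$ term coming from $m_1$ and the $-2hb(x+M)$ term combine as claimed. Then I multiply numerator and denominator by $(\sigma^2-2\kappa b)/\kappa$ and check that this reproduces \eqref{eqn:diffusion_lambda_cont_scheme}.

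\textbf{Main obstacle.} The main obstacle is this bookkeeping. The derivation displayed before \eqref{eqn:diffusion_lambda_cont_scheme} may have dropped some terms. For example, $-2hbx$ or the $x^3$ term in $m_2$ could leave residuals that do not cancel exactly. If such residuals appear, I would bound them directly. Terms like $hbx$ and $\lambda bhx$ are already $\mathcal{O}(hx)$. Terms with $\lambda$ times $x^3/\sqrt{h}$ are controlled using $x\le\sigma_2\sqrt{3h}$ together with Lemma~\ref{lemma:UB_lambda_diffusion}, $\lambda\le S_1(x/\sqrt h+\sqrt h)$, which turns such products into $\mathcal{O}(hx+h^2)$. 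This also covers any discrepancy with the simplified form of $\lambda$. Collecting the constants, which depend on $\sigma_1,\sigma_2,b_1,b_2,\kappa,c_0$, gives $K_3$.
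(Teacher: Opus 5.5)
Your approach is the paper's: you split each moment into its principal part plus an $\mathcal{O}(hx+h^2)$ remainder, use the defining relation for $\lambda$ to cancel the leading terms, and control $2R_1(x)(x+M(x))+R_2(x)$ via the uniform bound on $M$ that follows from $|\sigma^2-2\kappa b|\ge c_0$ near $0$. As you suspected, the cancellation is not exact: the principal bracket equals $\sigma^2(x)h+2x(\lambda(x)-1)b(x)h$, because the $-2hbx$ and $2\lambda bhx$ terms survive, and your fallback handles this correctly, since $\lambda(x)\in[0,1]$ and $b$ is bounded on the boundary layer, so this residual is at most $2\max(|b_1|,|b_2|)\,hx$.
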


%\begin{proof}
%    See Supplement.
%\end{proof}

To complete the local error proof we still need bounds on the first and third conditional moments themselves.
The first-moment estimate is not immediate, because jumps to $0$ are of size $\mathcal{O}(\sqrt{h})$.
Moreover, we will estimate first moment depending on $x\in \Omega_{b}$ and $x\in \Omega_{r}$ separately.
Exactly as in the SBM argument, the previous cancellation lemma is what lowers the first moment to order $h$.
\begin{lemma}
\label{lemma:1st_mom_order_negative_drifted_diffusion}
    There exists $\delta>0$ such that $\forall h < \delta$ and $\forall x\in \Omega_{b}$, 
     \begin{equation}
     \label{eqn:first_mom_negative_drifted_bound_diffusion}
        \left|\mathbb{E}_x(\Delta X_k)\right| \leq C(h + hx+h^2),
     \end{equation}
     where $C>0$ is independent of $x$ and $h$ and depends only on $\kappa$, the local boundary bounds for $b,\sigma$, and the nondegeneracy constant coming from $b(0)\neq \sigma^2(0)/(2\kappa)$.
\end{lemma}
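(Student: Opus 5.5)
The plan is to copy the SBM argument of Lemma~\ref{lemma:1st_mom_order}. There, the cancellation identity of Lemma~\ref{lem:moments} was solved for the first moment. Here we solve the identity \eqref{eqn:1st_2nd_mom_goal_cont_scheme_drifted} of Lemma~\ref{lemma:1st_2nd_mom_combination_generator_diffusion} for $\mathbb{E}_x(\Delta X_k)$. Because $b(0)\neq \sigma^2(0)/(2\kappa)$, continuity and Assumption~\ref{assump:local_coeff_boundary_regularity} give a neighborhood $[0,r_1]$ of $0$ on which
\[
|\sigma^2(x)-2\kappa b(x)|\ge c_0>0 .
\]
On this neighborhood $M(x)$ is bounded. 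By Lemma~\ref{lemma:bound_on_drift_diffusion} we have $\Omega_b\subset[0,\sigma_2\sqrt{3h}]\subset[0,r_1]$ once $h$ is small.

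The key step is a lower bound on $|x+M(x)|$. We need $x+M(x)$ to stay uniformly away from zero on $\Omega_b$; otherwise we cannot divide by it. Continuity gives $M(x)\to M(0)=\kappa\sigma^2(0)/(\sigma^2(0)-2\kappa b(0))$, which is nonzero because $\kappa>0$ and $\sigma(0)>0$. Shrinking $\delta$ so that $\sigma_2\sqrt{3h}$ is small then gives $|x+M(x)|\ge |M(0)|/2=:m_0>0$ on $\Omega_b$. This is where the nondegeneracy constant enters. It is the only delicate point: if $M(0)<0$, i.e. the drift is large and positive, $x+M(x)$ could in principle vanish for larger $x$. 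Since $x=O(\sqrt h)$, the smallness of $h$ rules this out.

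With this bound, rearranging \eqref{eqn:1st_2nd_mom_goal_cont_scheme_drifted} gives
\[
\mathbb{E}_x(\Delta X_k)=hb(x)+\frac{\sigma^2(x)h-\mathbb{E}_x(\Delta X_k^2)+R_3(x)}{2(x+M(x))}.
\]
We then bound each term:
\begin{itemize}
\item $|hb(x)|\le \max(|b_1|,|b_2|)\,h$.
\item $\sigma^2(x)h\le\sigma_2^2h$.
\item $|R_3(x)|\le K_3(hx+h^2)$.
\item $\mathbb{E}_x(\Delta X_k^2)=O(h)$, which we still need to justify.
\end{itemize}
For the second moment we use Lemma~\ref{lemma:2nd_mom_order_negative_drifted_diffusion}. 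Its explicit terms are each $O(h)$ because $x\le\sigma_2\sqrt{3h}$ and $\lambda\in[0,1]$ (Lemma~\ref{lemma:bound_on_lambda_diffusion}): $x^3/\sqrt h$, $x^2$, $\sqrt h\,x$ and $h$ are all $O(h)$. Its remainder $R_2$ is $O(hx+h^2)$. Alternatively, one can bound the increment directly: it lies in $[-x,\ \sigma_2\sqrt{3h}+|b|h]$, so $\Delta X_k^2=O(h)$.

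Dividing the sum of these bounds by $2m_0$ gives
\[
|\mathbb{E}_x(\Delta X_k)|\le C(h+hx+h^2),
\]
with $C$ depending only on $\kappa$, $\sigma_2$, $b_1,b_2$, $K_3$ and $c_0$. This is \eqref{eqn:first_mom_negative_drifted_bound_diffusion}. The final $\delta$ is the minimum of the $\delta$'s from the invoked lemmas and the one needed for the lower bound on $|x+M(x)|$.
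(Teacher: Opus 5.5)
Your proposal is correct and follows essentially the same route as the paper's proof. Both bound $|\sigma^2-2\kappa b|$ below by $c_0$ near $0$, and both use $M(0)\neq 0$ together with $x\le\sigma_2\sqrt{3h}$ to bound $|x+M(x)|$ away from zero. Both then solve the identity of Lemma~\ref{lemma:1st_2nd_mom_combination_generator_diffusion} for $\mathbb{E}_x(\Delta X_k)$, bounding the second moment crudely by $O(h)$ from the $O(\sqrt h)$ support of the increment.
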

%\begin{proof}
%    See Supplement.
%\end{proof}

\begin{lemma}
\label{lemma:3rd_mom_order_negative_drifted_diffusion}
    There exists $\delta>0$ such that $\forall h < \delta$ and $\forall x\in \Omega_{br}$,
     \begin{equation}
     \label{eqn:first_mom_drifted_bound}
        \left|\mathbb{E}_x(\Delta X_k^3)\right| \leq K_4(hx + h^2),
     \end{equation}
     where $K_4$ is a fixed constant dependent on drift, diffusion, and $\kappa$.
\end{lemma}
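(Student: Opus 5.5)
We bound $\mathbb{E}_x(|\Delta X_k|^3)$, which dominates $|\mathbb{E}_x(\Delta X_k^3)|$, by copying the support argument of Lemma \ref{lemma:3rd_mom_bound}. We split $|\Delta X_k|^3=(\Delta X_k^+)^3+(\Delta X_k^-)^3$. First we localize. By Lemma \ref{lemma:bound_on_drift_diffusion}, for $h<\delta$ every $x\in\Omega_{br}$ satisfies $b(x)\in[b_1,b_2]$ and $\sigma(x)\in[\sigma_1,\sigma_2]$. Moreover $x\le \sigma(x)\sqrt{3h}-b(x)h\le \sigma_2\sqrt{3h}+|b_1|h\le C\sqrt h$ on $\Omega_r$, and $x\le\sigma_2\sqrt{3h}$ on $\Omega_b$. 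Hence all coefficients are bounded and $x=O(\sqrt h)$ throughout $\Omega_{br}$. We treat $\Omega_b$ and $\Omega_r$ separately, since only $\Omega_b$ involves the jump to $0$.

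\emph{Negative part.} The scheme always produces $X_{k+1}\ge 0$, so $\Delta X_k\ge -x$. This gives $(\Delta X_k^-)^3\le x^3\le Chx$ on $\Omega_{br}$, using $x^2\le Ch$. This bound holds in both regions and on both branches of the scheme.

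\emph{Positive part on $\Omega_b$.} A positive increment can occur only on the reflected-EM branch, which has probability $\lambda(x)$. On that branch, $X_{k+1}\le x+\sigma(x)\sqrt{3h}+|b(x)|h$ after undoing the nested absolute values. Since $x\le\sigma(x)\sqrt{3h}$, this gives $\Delta X_k^+\le \sigma_2\sqrt{3h}+Ch\le C\sqrt h$. Therefore $\mathbb{E}_x[(\Delta X_k^+)^3]\le C\lambda(x)h^{3/2}$. Lemma \ref{lemma:UB_lambda_diffusion} gives $\lambda(x)\le S_1(x/\sqrt h+\sqrt h)$, so this is at most $C(hx+h^2)$.

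\emph{Positive part on $\Omega_r$.} This is the delicate case, and I expect it to be the main obstacle, because no $\lambda$ factor is available. Here the exterior update $|x+\sigma Z_k+bh|$ is used with $b(x)<0$ (negativity is forced by $x>\sigma\sqrt{3h}$ together with $x-\sigma\sqrt{3h}+bh\le0$). The defining inequalities pin $x$ to the thin strip $\sigma(x)\sqrt{3h}<x\le\sigma(x)\sqrt{3h}+|b_1|h$. So $x$ itself is of size $\sqrt h$, and the target $C(hx+h^2)$ is of order $h^{3/2}$. A crude bound $\Delta X_k^+\le C\sqrt h$ therefore already gives $\mathbb{E}_x[(\Delta X_k^+)^3]\le Ch^{3/2}$. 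It remains to check that $h^{3/2}\le C hx$ on $\Omega_r$. This holds because $x>\sigma_1\sqrt{3h}$ there, so $h^{3/2}\le (\sigma_1\sqrt3)^{-1}hx$. The same lower bound $x\gtrsim\sqrt h$ also absorbs the negative part on $\Omega_r$. I would double-check that $\sigma_1>0$ from Lemma \ref{lemma:bound_on_drift_diffusion} is indeed uniform, and that the reflection keeps $\Delta X_k\ge -x$.

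Summing the positive and negative parts over both regions gives $\mathbb{E}_x|\Delta X_k|^3\le K_4(hx+h^2)$, with $K_4$ depending on $\sigma_1,\sigma_2,b_1,b_2,S_1$ and $\kappa$. This yields \eqref{eqn:first_mom_drifted_bound}. It also gives the stronger absolute-moment version needed in the Taylor remainder of the local error proof.
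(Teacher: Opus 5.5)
Your proof is correct. On $\Omega_b$ it is essentially the paper's argument: split $|\Delta X_k|^3$ into positive and negative parts, bound the negative part by $x^3\le Chx$, and bound the positive part by $\lambda(x)(\sigma_2\sqrt{3h}+B_Mh)^3$ together with Lemma~\ref{lemma:UB_lambda_diffusion}.

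On $\Omega_r$ you take a genuinely different route.

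\emph{The paper's route.} It compares the reflected step with the unreflected increment $\Delta X_k^{\mathrm{EM}}=\sigma(x)Z_{k+1}+b(x)h$. Symmetry of $Z_{k+1}$ gives $|\mathbb{E}_x[(\Delta X_k^{\mathrm{EM}})^3]|=|3b\sigma^2h^2+b^3h^3|\le Ch^2$. It then controls the correction on the reflection event, which has probability $O(\sqrt h)$. The result is an $x$-independent bound $|\mathbb{E}_x(\Delta X_k^3)|\le Ch^2$ on the \emph{signed} moment.

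\emph{Your route.} You bound the \emph{absolute} moment crudely by $Ch^{3/2}$. You then convert this to $O(hx)$ using the lower bound $x>\sigma_1\sqrt{3h}$ that holds on the strip, which is the same device the paper uses in Lemma~\ref{lemma:first_mom_est_reflection_layer_diffusion}.

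\emph{Comparison.} The paper's estimate is sharper, but only for $\mathbb{E}_x(\Delta X_k^3)$. Your argument is shorter and avoids the bookkeeping on the reflection event. More importantly, it controls $\mathbb{E}_x|\Delta X_k|^3$, which is what Theorem~\ref{thm:local_err_cont_scheme_diffusion_ring} actually uses: the Taylor remainder $\frac{1}{6}\partial_{xxx}u(t_k,Y_k)\Delta X_k^3$ has a random intermediate point $Y_k$. On $\Omega_r$ the quantity $\mathbb{E}_x|\sigma(x)Z_{k+1}+b(x)h|^3$ is genuinely of order $h^{3/2}$. So the paper's $O(h^2)$ signed estimate cannot be upgraded to an absolute one there. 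Your $O(hx)$ absolute bound is the tight form and exactly what the local-error proof needs.
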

%\begin{proof}
%    See Supplement.
%\end{proof}

We now prove the local error estimate in $\Omega_{b}$ for the generic case $b(0)\neq \frac{\sigma^2(0)}{2\kappa}$.
The proof follows the same Taylor-expansion template as in Section 3.5, but the cancellation now uses Lemma \ref{lemma:sticky_bc@x_diffusion} and Lemma \ref{lemma:1st_2nd_mom_combination_generator_diffusion}.
\begin{theorem}
\label{thm:local_err_cont_scheme_diffusion}
    (Local Error of $\{X_k\}_{k\in\mathbb{N}}$ in algorithm \ref{alg:diffusion_cont_scheme}, situation 1)\\
    Given $b(0) \neq \frac{\sigma^2(0)}{2\kappa}$, there exists $\delta > 0$ and $C\geq 0$ depending on the relevant bounds of derivatives of $u$ and on the local boundary constants for $b,\sigma$ such that $\forall h < \delta$ and $\forall x\in \Omega_{b}$,
    \begin{equation}
    \label{eqn:local_err_cont_scheme_drifted}
        \left|\mathbb{E}\left(u_{k+1}-u_k|X_k=x\right)\right| \leq C(h^2 + hx) = \mathcal{O}(hx + h^2) 
    \end{equation}
\end{theorem}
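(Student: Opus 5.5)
We follow the template of the proof of Theorem~\ref{thm:local_err_cont_scheme}. Fix $x\in\Omega_b$ and expand $u_{k+1}-u_k$ around $(t_k,X_k)$ to third order in space and second order in time, exactly as in \eqref{eq:taylor4}:
\[
u_{k+1}-u_k=\partial_t u_k\,h+\partial_x u_k\,\Delta X_k+\tfrac12\partial_{xx}u_k\,\Delta X_k^2+\tfrac12\partial_{tt}u(s_k,X_k)h^2+\tfrac16\partial_{xxx}u(t_k,Y_k)\Delta X_k^3 .
\]
The $\partial_{tt}$ term contributes at most $\tfrac12\|\partial_{tt}u\|_\infty h^2$ by Assumption~\ref{assump:2nd_time_derivative_bound}. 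The cubic remainder must be bounded by $\E_x|\Delta X_k|^3$, not by $|\E_x\Delta X_k^3|$, because $Y_k$ is random. For this we repeat the positive/negative-part argument of Lemma~\ref{lemma:3rd_mom_bound}:
\begin{itemize}
\item the negative part satisfies $\Delta X_k^-\le x$, so its cube is at most $x^3\le \sigma_2^2\,3h\,x$ (using $x\le \sigma_2\sqrt{3h}$ from Lemma~\ref{lemma:bound_on_drift_diffusion});
\item the positive part arises only from the reflected EM branch, which has probability $\lambda(x)$ and increment at most $\sigma_2\sqrt{3h}+|b|h$, so its cube is at most $C\lambda(x)h^{3/2}\le C(hx+h^2)$ by Lemma~\ref{lemma:UB_lambda_diffusion}.
\end{itemize}
Hence the remainder is $O(hx+h^2)$, with the constant depending on $\|u^{(3)}\|_\infty$.

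For the leading terms, use the backward equation to write $\partial_t u_k=-b(x)\partial_x u_k-a(x)\partial_{xx}u_k$. Then the expectation of the first three terms equals
\[
\partial_x u(t_k,x)\bigl(\E_x\Delta X_k-hb(x)\bigr)+\tfrac12\partial_{xx}u(t_k,x)\bigl(\E_x\Delta X_k^2-\sigma^2(x)h\bigr).
\]
Substitute Lemma~\ref{lemma:sticky_bc@x_diffusion}, $\partial_x u=(x+M(x))\partial_{xx}u+Re(x)$. The $\partial_{xx}u$ coefficient becomes $\tfrac12\bigl[2(x+M)(\E_x\Delta X_k-hb)+\E_x\Delta X_k^2-\sigma^2h\bigr]=\tfrac12R_3(x)$, which is $O(hx+h^2)$ by Lemma~\ref{lemma:1st_2nd_mom_combination_generator_diffusion}. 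It is multiplied by the bounded quantity $\|\partial_{xx}u\|_\infty$. What remains is $Re(x)\bigl(\E_x\Delta X_k-hb(x)\bigr)$. Since $|Re|\le C(h+x)$, and $|\E_x\Delta X_k|+|hb|\le C h$ by Lemma~\ref{lemma:1st_mom_order_negative_drifted_diffusion} together with $b\in[b_1,b_2]$, this product is at most $C(h+x)h=C(h^2+hx)$.

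Finally, choose $\delta$ as the minimum of the $\delta$'s in the invoked lemmas, so that all of them hold simultaneously for every $h<\delta$ and every $x\in\Omega_b$. Summing the four contributions gives \eqref{eqn:local_err_cont_scheme_drifted}.

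The main obstacle is the cubic remainder. Only a bound on $|\E_x\Delta X_k^3|$ is recorded (Lemma~\ref{lemma:3rd_mom_order_negative_drifted_diffusion}), but the absolute moment is what is needed. If the positive/negative-part argument is awkward because of the nested absolute value, the fallback is a fourth-order expansion: keep $\tfrac16\partial_{xxx}u_k\,\E_x\Delta X_k^3$, which is controlled by that lemma, and bound the fourth-order remainder by $\E_x\Delta X_k^4\le \lambda Ch^2+x^4\le C(h^2+hx)$, using Assumption~\ref{assump:4th_spatial_derivative_bound}.
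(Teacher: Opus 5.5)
Your proposal is correct and follows essentially the same route as the paper: the same Taylor expansion, the same cancellation via Lemma~\ref{lemma:sticky_bc@x_diffusion} and Lemma~\ref{lemma:1st_2nd_mom_combination_generator_diffusion} leaving $\tfrac12\partial_{xx}u\,R_3(x)+Re(x)\bigl(\mathbb{E}_x(\Delta X_k)-hb(x)\bigr)$, and the same first-moment bound from Lemma~\ref{lemma:1st_mom_order_negative_drifted_diffusion}. You are right that the \emph{statement} of Lemma~\ref{lemma:3rd_mom_order_negative_drifted_diffusion} only controls $|\mathbb{E}_x(\Delta X_k^3)|$; however, its proof on $\Omega_b$ already bounds $\mathbb{E}_x|\Delta X_k|^3$ by exactly the positive/negative-part argument you give, and this is what the paper implicitly relies on, so your fourth-order fallback is not needed.
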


%\begin{proof}
%    See Supplement.
%\end{proof}

Now, as mentioned before, if $b(x) < 0$, then for some $x\in \Omega_{r} \subseteq [0,\sigma_2\sqrt{3h}]$, we may have $x - \sigma(x)\sqrt{3h} + b(x)h < 0$ and $x - \sigma(x)\sqrt{3h} > 0$.
According to our scheme (Algorithm \ref{alg:diffusion_cont_scheme}), we do a reflected EM jump, and there is zero probability to jump to the sticky boundary.
It is non-trivial to show this will give the desired order of local error because the first conditional moment of $\Delta X_k$ might not be a good approximation of $b(x)h$.
The next lemma aims to resolve this doubt.

\begin{lemma}
\label{lemma:first_mom_est_reflection_layer_diffusion}
   There exists $\delta > 0$ and a constant $C$, which depends only on local boundary constants for $b,\sigma$ coming from Lemma \ref{lemma:bound_on_drift_diffusion} such that $\forall h < \delta$ and $\forall x\in \Omega_{r}$, 
   %\redan{In this lemma, only the diffusion lower bound $\sigma_1$ and the drift bound $B_M:=|b_1|\vee |b_2|$ are actually used.}
    $$|E_x(\Delta X_k) - b(x)h| \leq C( hx + h^2) = \mathcal{O}(hx + h^2).$$ 
\end{lemma}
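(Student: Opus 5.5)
In $\Omega_r$ the scheme makes no jump to $0$ and does not use $\lambda$, so the whole proof is an explicit first-moment computation for a reflected uniform increment. Since $x>\sigma(x)\sqrt{3h}$, $x$ lies outside $\Omega_b$, and Algorithm \ref{alg:diffusion_cont_scheme} applies the exterior update \eqref{eqn:diffusion_cont_scheme_outlayer}:
\[
X_{k+1}=\bigl|x+\sigma(x)Z_k+b(x)h\bigr|.
\]
I write $c:=x+b(x)h$ and $s:=\sigma(x)\sqrt{3h}$. Then $c+\sigma(x)Z_k$ is uniform on $[c-s,c+s]$ with density $1/(2s)$. By Lemma~\ref{lemma:bound_on_drift_diffusion}, for $h<\delta$ we have $b(x)\in[b_1,b_2]$ and $\sigma(x)\in[\sigma_1,\sigma_2]$ with $\sigma_1>0$. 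I set $B_M:=|b_1|\vee|b_2|$.

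\textbf{Step 1: locate $c$ relative to $s$.} The defining inequality $x-s+b(x)h\le 0$ of $\Omega_r$ gives $c\le s$, and it forces $b(x)<0$. The condition $x>s$ gives $c>s+b(x)h\ge s-B_Mh$. Hence
\[
0\le s-c\le B_M h .
\]
Also $c\ge \sigma_1\sqrt{3h}-B_Mh>0$ once $h$ is small, which I impose by decreasing $\delta$. So the interval $[c-s,c+s]$ straddles $0$, and only a sliver $[c-s,0]$ of length at most $B_Mh$ is negative.

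\textbf{Step 2: exact first moment.} I use the identity $|y|=y+2y^-$, where $y^-=\max\{-y,0\}$. This gives
\[
\mathbb{E}_x(\Delta X_k)=\mathbb{E}(c+\sigma Z_k)-x+2\,\mathbb{E}\bigl[(c+\sigma Z_k)^-\bigr]=b(x)h+2\,\mathbb{E}\bigl[(c+\sigma Z_k)^-\bigr].
\]
Integrating the uniform density over the negative sliver gives
\[
\mathbb{E}\bigl[(c+\sigma Z_k)^-\bigr]=\frac{(s-c)^2}{4s}.
\]

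\textbf{Step 3: bound the reflection correction.} Steps 1 and 2 give
\[
0\le \mathbb{E}_x(\Delta X_k)-b(x)h\le \frac{B_M^2h^2}{2\sigma_1\sqrt{3h}}=O(h^{3/2}).
\]
This $h^{3/2}$ is the one point that needs care, because the target is $O(hx+h^2)$. On $\Omega_r$ we have $x>\sigma(x)\sqrt{3h}\ge\sigma_1\sqrt{3h}$, so $\sqrt h\le x/(\sigma_1\sqrt3)$. Therefore
\[
h^{3/2}\le \frac{hx}{\sqrt3\,\sigma_1}.
\]
This yields $|\mathbb{E}_x(\Delta X_k)-b(x)h|\le C\,hx\le C(hx+h^2)$, with $C$ depending only on $B_M$ and $\sigma_1$. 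That matches the dependence announced in the statement.
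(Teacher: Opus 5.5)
Your proof is correct and follows the same route as the paper. The paper also computes the first moment of the reflected uniform step exactly, obtaining $\mathbb{E}_x(\Delta X_k)-b(x)h=\frac{(x+b(x)h-\sigma(x)\sqrt{3h})^2}{2\sigma(x)\sqrt{3h}}$; you reach the same expression through $|y|=y+2y^-$. It then bounds the numerator by $B_M^2h^2$ and converts the resulting $h^{3/2}$ into $hx$ using $x\ge\sigma_1\sqrt{3h}$ on $\Omega_r$, exactly as you do.
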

%\begin{proof}
%    See Supplement.
%\end{proof}

It is not hard to further compute that the second conditional moment of $\Delta X_k$ is approximately $\sigma^2(x)h$ with a remainder term of order $hx + h^2$.
Now, we are ready to show that the local error for $x$ such that $x - \sigma(x)\sqrt{3h} + b(x)h < 0$ and $x - \sigma(x)\sqrt{3h}>0$, 
\begin{theorem}
\label{thm:local_err_cont_scheme_diffusion_ring}
    (Local Error of $\{X_k\}_{k\in\mathbb{N}}$ in algorithm \ref{alg:diffusion_cont_scheme}, situation 2)\\
    Given $b(0) \neq \frac{\sigma^2(0)}{2\kappa}$, there exists $\delta > 0$ and $C \geq 0$ depending on the relevant derivative bounds of $u$ and on the local boundary constants for $b$ and $\sigma$ such that $\forall h < \delta$ and $\forall x\in \Omega_{r}$,
    \begin{equation}
    \label{eqn:local_err_cont_scheme_drifted_ring}
        \left|\mathbb{E}\left(u_{k+1}-u_k|X_k=x\right)\right| \leq C(h^2 + hx) = \mathcal{O}(hx + h^2).
    \end{equation}
\end{theorem}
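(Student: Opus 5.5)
We follow the Taylor-expansion template of the proof of Theorem \ref{thm:local_err_cont_scheme}, with one simplification: in $\Omega_r$ the scheme never jumps to $0$, so we do \emph{not} need the sticky boundary condition to trade $\partial_x u$ for $\partial_{xx}u$. The target is the standard Euler--Maruyama cancellation, in which the first and second moments of the increment match $b(x)h$ and $\sigma^2(x)h$ up to $\mathcal{O}(hx+h^2)$.

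First we localize. By Lemma \ref{lemma:bound_on_drift_diffusion}, for $h<\delta$ every $x\in\Omega_r$ satisfies $x<\sigma(x)\sqrt{3h}-b(x)h\le\sigma_2\sqrt{3h}+|b_1|h$, so $x=\mathcal{O}(\sqrt h)$, with $b\in[b_1,b_2]$ and $\sigma\in[\sigma_1,\sigma_2]$. Since $X_{k+1}=\bigl||x+\sigma Z_k|+bh\bigr|$ with $|Z_k|\le\sqrt{3h}$, we get $|\Delta X_k|\le C\sqrt h$ almost surely. Next, as in \eqref{eq:taylor4}, we expand $u_{k+1}-u_k$ to third order in space and second order in time. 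The remainder terms are bounded by $h^2\|\partial_{tt}u\|_\infty$ and $\|u^{(3)}\|_\infty\E_x|\Delta X_k|^3$. For the cubic term, Lemma \ref{lemma:3rd_mom_order_negative_drifted_diffusion} only controls $\E_x(\Delta X_k^3)$, not $\E_x|\Delta X_k|^3$. We therefore either redo its support argument, as in Lemma \ref{lemma:3rd_mom_bound}, or argue directly: no jump to $0$ occurs and $\E_x|\Delta X_k|^3\le C h^{3/2}$. The bound $h^{3/2}$ is not $\mathcal{O}(hx+h^2)$ when $x$ is tiny, but $\Omega_r$ is only a strip just beyond $\sigma(x)\sqrt{3h}$, so $x\ge\sigma(x)\sqrt{3h}>\sigma_1\sqrt{3h}$ there. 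Hence $h^{3/2}\le Chx$, and the cubic remainder is $\mathcal{O}(hx)$.

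The leading terms are
\[
\partial_t u\,h+\partial_x u\,\E_x\Delta X_k+\tfrac12\partial_{xx}u\,\E_x\Delta X_k^2
=\partial_x u\bigl(\E_x\Delta X_k-b(x)h\bigr)+\tfrac12\partial_{xx}u\bigl(\E_x\Delta X_k^2-\sigma^2(x)h\bigr),
\]
where we used $\partial_tu=-b\partial_xu-a\partial_{xx}u$. The first bracket is $\mathcal{O}(hx+h^2)$ by Lemma \ref{lemma:first_mom_est_reflection_layer_diffusion}, and $\partial_xu$ is bounded by Assumption \ref{assump:4th_spatial_derivative_bound}.

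It remains to show $|\E_x\Delta X_k^2-\sigma^2(x)h|\le C(hx+h^2)$. This is the step we expect to be the main obstacle, although it is milder than the first-moment lemma. The increment $\Delta X_k^2$ equals $(\sigma Z_k+bh)^2$ except on the event where one of the reflections is active. The inner absolute value is active when $x+\sigma Z_k<0$, and there $|x+\sigma Z|+bh$ differs from $x+\sigma Z+bh$ by $2|x+\sigma Z|$. The outer absolute value is active only on an event of probability $\mathcal{O}(\sqrt h)$, with a discrepancy of size $\mathcal{O}(h)$. We plan to compute the reflection correction explicitly, using $X_{k+1}^2$ in place of $(x+\sigma Z+bh)^2$; squaring removes the outer absolute value entirely, so only the inner reflection contributes. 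That correction is an integral over $Z\in[-\sqrt{3h},-x/\sigma]$ of a term of size $\mathcal{O}(h^{3/2})$, and the length of this interval is $\mathcal{O}(h)$ because $x\ge\sigma\sqrt{3h}-|b|h$ on $\Omega_r$. Dividing by the density normalization $\sqrt{3h}$ gives a contribution of order $h^{3/2}\cdot h/\sqrt h=h^2$. Expanding $\E(\sigma Z+bh)^2=\sigma^2h+b^2h^2$ then yields the claim.

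Combining the three pieces gives $\bigl|\E(u_{k+1}-u_k\mid X_k=x)\bigr|\le C(h^2+hx)$, with $C$ depending on $\|\partial_xu\|_\infty,\|\partial_{xx}u\|_\infty,\|u^{(3)}\|_\infty,\|\partial_{tt}u\|_\infty$ and on the constants $b_1,b_2,\sigma_1,\sigma_2$.
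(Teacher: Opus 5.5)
Your route matches the paper's: a Taylor expansion, the backward equation to reduce the leading terms to $\partial_xu\,(\E_x\Delta X_k-b(x)h)+\tfrac12\partial_{xx}u\,(\E_x\Delta X_k^2-\sigma^2(x)h)$, Lemma~\ref{lemma:first_mom_est_reflection_layer_diffusion} for the first bracket, and a separate second-moment estimate. Your handling of the cubic remainder is correct, and it is more careful than the paper's. The paper invokes Lemma~\ref{lemma:3rd_mom_order_negative_drifted_diffusion}, but on $\Omega_r$ that lemma only controls $|\E_x(\Delta X_k^3)|$. Your almost-sure bound $|\Delta X_k|\le C\sqrt h$, combined with $x>\sigma_1\sqrt{3h}$, is the right way to get $\E_x|\Delta X_k|^3=O(hx)$.

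The second-moment paragraph, however, is wrong as written, on two counts.

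\emph{The inner reflection.} On $\Omega_r$ you have $x>\sigma(x)\sqrt{3h}$, so $x+\sigma(x)Z_k>0$ for every admissible $Z_k$. Hence the inner absolute value is never active and the interval $[-\sqrt{3h},-x/\sigma]$ is empty. The inner-reflection correction you plan to compute is therefore identically zero. This is also why the exterior update $|x+\sigma Z_k+bh|$, which Algorithm~\ref{alg:diffusion_cont_scheme} actually applies on $\Omega_r$, agrees with your nested formula.

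\emph{The outer reflection.} This is the one that matters, and it does not disappear. Squaring removes it from $X_{k+1}^2$, but not from $\Delta X_k^2=X_{k+1}^2-2xX_{k+1}+x^2$, whose cross term retains it. Carried out as described, your computation would assert $\E_x(\Delta X_k^2)=\sigma^2h+b^2h^2$ exactly, which is false. The correct statement is the identity
\[
\E_x(\Delta X_k^2)=\E_x\bigl(x+\sigma Z_k+bh\bigr)^2-2x\,\E_x(X_{k+1})+x^2=\sigma^2(x)h+b^2(x)h^2-2x\bigl(\E_x(\Delta X_k)-b(x)h\bigr).
\]
The proof of Lemma~\ref{lemma:first_mom_est_reflection_layer_diffusion} gives
\[
0\le\E_x(\Delta X_k)-b(x)h=\frac{(x+bh-\sigma\sqrt{3h})^2}{2\sigma\sqrt{3h}}\le Ch^{3/2},
\]
so the correction is $O(xh^{3/2})=O(h^2)$, since $x=O(\sqrt h)$. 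This is what the paper means by feeding the bound on $\E_x(X_{k+1})$ into the second-moment calculation.

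Your own remark that the outer reflection is active with probability $O(\sqrt h)$ and moves $X_{k+1}$ by $O(h)$ also suffices. On that event $|\Delta X_k^2-(\sigma Z_k+bh)^2|=2x\,|X_{k+1}-(x+\sigma Z_k+bh)|=O(xh)$, which gives the same $O(xh^{3/2})$ correction. With this repair the proof is complete.
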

%\begin{proof}
%    See Supplement.
%\end{proof}

So far, we have shown the order of local error for our scheme when $x - \sigma(x)\sqrt{3h} < 0$ (where we need to consider absorption probability $\lambda(x)$) and $\sigma(x)\sqrt{3h} \leq x < \sigma(x)\sqrt{3h} - b(x)h$ (where the negative drift can cause a reflected EM jump).
Notice for the first situation, we assume that $\sigma(0) \neq \frac{\sigma^2(0)}{2\kappa}$ so that $M(x)$ can be a bounded function within the boundary layer.
We need to compute the local error of our scheme when $b(0) = \frac{\sigma^2(0)}{2\kappa}$ separately. 
We will show, similar to situation 2, that the first conditional moment of $\Delta X_k$ is a good approximation of $b(x)h$ within the boundary layer. 
\begin{lemma}
\label{lemma:1st_2nd_mom_combination_generator_diffusion_special}
    When $b(0) = \frac{\sigma^2(0)}{2\kappa}$, there exist $\delta>0$ and a constant $C$ which depends only on the boundary-layer bounds, such that $\forall h< \delta$ and $\forall x\in \Omega_{b}$,
    $$\left| \mathbb{E}_x\left(\Delta X_k\right) - hb(x) \right| \leq C(hx + h^2) = \mathcal{O}(hx + h^2).$$
\end{lemma}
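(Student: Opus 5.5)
The plan is to start from the first-moment expansion of Lemma~\ref{lemma:1st_mom_diffusion}. Its derivation does not use the nondegeneracy assumption, so it still applies here. It lets us write, for $x\in\Omega_b$ and $h<\delta$,
\[
\mathbb{E}_x(\Delta X_k)-hb(x)=\lambda(x)A(x)-x-(1-\lambda(x))b(x)h+\mathcal{O}(hx+h^2),
\qquad
A(x):=\frac{x^2+3h\sigma^2(x)}{2\sqrt{3h}\,\sigma(x)} .
\]
It then suffices to show that $\lambda A-x-(1-\lambda)bh=\mathcal{O}(hx+h^2)$, and the idea is to do this exactly, with no further Taylor expansion. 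By Lemma~\ref{lemma:bound_on_drift_diffusion} we may fix the bounds $\sigma\in[\sigma_1,\sigma_2]$, $b\in[b_1,b_2]$, and use $0\le x\le\sigma_2\sqrt{3h}$ throughout.

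The key algebraic step is to introduce $\varepsilon(x):=\sigma^2(x)-2\kappa b(x)$ and rewrite the numerator $N$ and the denominator $D$ of \eqref{eqn:diffusion_lambda_cont_scheme} in terms of it. One checks directly that
\[
N=\varepsilon x^2+2\kappa\sigma^2x+\sigma^4h,
\qquad
D=\varepsilon x^2+2\kappa\sigma^2A+\sigma^4h ,
\]
so that $1-\lambda=2\kappa\sigma^2(A-x)/D$. We also have
\[
NA-xD=(A-x)(\varepsilon x^2+\sigma^4h).
\]
Combining these, the target quantity becomes
\[
\lambda A-x-(1-\lambda)bh=\frac{(A-x)\bigl(\varepsilon x^2+\sigma^4h-2\kappa\sigma^2bh\bigr)}{D}
=\frac{(A-x)\,\varepsilon(x)\,\bigl(x^2+\sigma^2(x)h\bigr)}{D}.
\]
This exact cancellation is the main point. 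Without it, the terms $\lambda A-x$ and $(1-\lambda)bh$ are each only of order $h$ individually, which would give $\mathcal{O}(h)$ rather than $\mathcal{O}(hx+h^2)$. I expect verifying this identity cleanly to be the only real obstacle; the subtraction of $2\kappa\sigma^2 bh$ from $\sigma^4 h$ is precisely what produces the factor $\varepsilon$.

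It remains to bound each factor.
\begin{itemize}
\item Since $b(0)=\sigma^2(0)/(2\kappa)$, we have $\varepsilon(0)=0$. Assumption~\ref{assump:local_coeff_boundary_regularity} makes $\varepsilon$ Lipschitz on $[0,r_0]$, so $|\varepsilon(x)|\le Cx$.
\item $A-x=(x-\sigma\sqrt{3h})^2/(2\sqrt{3h}\sigma)\in[0,C\sqrt h]$.
\item $x^2+\sigma^2h\le Ch$.
\item For the denominator, $\varepsilon x^2+\sigma^4h\ge \sigma^4h-Cx^3>0$ for small $h$. Hence $D\ge 2\kappa\sigma^2A\ge\kappa\sigma_1^3\sqrt{3h}$.
\end{itemize}
Putting these together, the quotient is bounded by $C\sqrt h\cdot x\cdot h/\sqrt h=Chx$. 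Adding the $\mathcal{O}(hx+h^2)$ remainder from Lemma~\ref{lemma:1st_mom_diffusion} gives the claim, with $\delta$ taken as the minimum of the finitely many smallness thresholds used.
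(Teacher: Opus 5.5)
Your proof is correct and is essentially the paper's argument. Your identity $\mathbb{E}_x(\Delta X_k)-hb(x)=(A-x)\,\varepsilon(x)\,(x^2+\sigma^2(x)h)/D$ is exactly the paper's $\tfrac12(1-\lambda(x))R(x)$ with $R=\varepsilon(x^2+\sigma^2h)/(\kappa\sigma^2)$, because $1-\lambda=2\kappa\sigma^2(A-x)/D$. The only differences are cosmetic: the paper uses $b(0)>0$ to drop the negative-drift remainder and bounds $1-\lambda$ via $\lambda\in[0,1]$, whereas you absorb that remainder through Lemma~\ref{lemma:1st_mom_diffusion} and bound $(A-x)/D$ directly.
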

%\begin{proof}
%    See Supplement.
%\end{proof}

We now proceed to show the order of local error for case 3, where $x-\sigma(x)\sqrt{3h} < 0$ and $b(0) = \frac{\sigma^2(0)}{2\kappa}$. This case must have a positive drift in $\Omega_{br}$ for $h$ small enough by its regularity assumption so there is no extra subtlety caused by negative drift.
\begin{theorem}
\label{thm:local_err_cont_scheme_diffusion_special}
    (Local Error of $\{X_k\}_{k\in\mathbb{N}}$ in algorithm \ref{alg:diffusion_cont_scheme}, situation 3)\\
    If $b(0)= \frac{\sigma^2(0)}{2\kappa}$, there exist $\delta > 0$ and $C\geq 0$, independent of $x$ and $h$, depending only on the relevant bounds of derivatives of $u$ and on the local boundary constants for $b,\sigma$ such that $\forall h<\delta$ and for all $x$ such that $x \in \Omega_{br}$,
    \begin{equation}
    \label{eqn:local_err_cont_scheme_drifted_speical}
        \left|\mathbb{E}\left(u_{k+1}-u_k|X_k=x\right)\right| \leq C(h^2 + hx) = \mathcal{O}(hx + h^2).
    \end{equation}
\end{theorem}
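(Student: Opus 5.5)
We prove Theorem~\ref{thm:local_err_cont_scheme_diffusion_special} with the same Taylor-expansion template used for Theorem~\ref{thm:local_err_cont_scheme} and for situation 2. The key difference from situation 1 is that we do not convert $\partial_x u$ into $\partial_{xx}u$ through the sticky boundary condition. Here $M(x)$ is unbounded because $\sigma^2(0)-2\kappa b(0)=0$. Instead, Lemma~\ref{lemma:1st_2nd_mom_combination_generator_diffusion_special} already states that the first moment matches the drift, $\mathbb{E}_x(\Delta X_k)=hb(x)+\mathcal{O}(hx+h^2)$.

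First, since $b(0)=\sigma^2(0)/(2\kappa)>0$ and $b$ is continuous on $[0,r_0]$, shrinking $\delta$ gives $b>0$ on $[0,\sigma_2\sqrt{3h}]$. Hence $\Omega_r=\emptyset$ and $\Omega_{br}=\Omega_b$. Lemma~\ref{lemma:bound_on_drift_diffusion} then places every $x\in\Omega_b$ in $[0,\sigma_2\sqrt{3h}]$, so all local coefficient bounds apply.

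Next, expand as in \eqref{eq:taylor4}:
\[
u_{k+1}-u_k=\partial_t u_k\,h+\partial_x u_k\,\Delta X_k+\tfrac12\partial_{xx}u_k\,\Delta X_k^2+\tfrac12\partial_{tt}u(s_k,X_k)h^2+\tfrac16\partial_{xxx}u(t_k,Y_k)\Delta X_k^3 .
\]
Substitute $\partial_t u=-b\partial_x u-a\partial_{xx}u$ from \eqref{eqn:diffusion_bckwd_eqn}. The leading terms then become
\[
\partial_x u(t_k,x)\bigl(\mathbb{E}_x\Delta X_k-hb(x)\bigr)+\tfrac12\partial_{xx}u(t_k,x)\bigl(\mathbb{E}_x\Delta X_k^2-\sigma^2(x)h\bigr).
\]
By Lemma~\ref{lemma:1st_2nd_mom_combination_generator_diffusion_special} and boundedness of $\partial_x u$, the first product is $\mathcal{O}(hx+h^2)$. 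The remainders are controlled as follows. The $\partial_{tt}u$ term is $\le h^2\|\partial_{tt}u\|_\infty$ by Assumption~\ref{assump:2nd_time_derivative_bound}. For the cubic term, the random evaluation point $Y_k$ forces us to bound $\mathbb{E}_x|\Delta X_k|^3$ rather than $|\mathbb{E}_x\Delta X_k^3|$. We repeat the positive/negative-part argument of Lemma~\ref{lemma:3rd_mom_bound}. The negative part of $\Delta X_k$ is at most $x$, giving a contribution $\le x^3\le Chx$. The positive part is at most $\sigma_2\sqrt{3h}+|b|h$ and occurs only with probability $\le\lambda(x)$, giving $\lambda(x)\,\mathcal{O}(h^{3/2})$. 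Lemma~\ref{lemma:UB_lambda_diffusion} bounds $\lambda$ by $S_1(x/\sqrt h+\sqrt h)$, so this contribution is also $\mathcal{O}(hx+h^2)$.

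The main obstacle is the second-moment term. We need $\mathbb{E}_x\Delta X_k^2-\sigma^2(x)h=\mathcal{O}(hx+h^2)$, but Lemma~\ref{lemma:2nd_mom_order_negative_drifted_diffusion} gives this difference only in the form
\[
D(x):=-\tfrac{\lambda x^3}{\sigma\sqrt{3h}}+(1+\lambda)x^2-\lambda\sigma\sqrt{3h}\,x+(\lambda-1)\sigma^2h+R_2 ,
\]
and the $(\lambda-1)\sigma^2h$ term is a priori only $\mathcal{O}(h)$. I would handle this by plugging the explicit $\lambda$ from \eqref{eqn:diffusion_lambda_cont_scheme} into the first-moment formula of Lemma~\ref{lemma:1st_mom_diffusion}. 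This gives the exact relation that the defining equation for $\lambda$ imposes with $M$ replaced by $\kappa\sigma^2/(\sigma^2-2\kappa b)$:
\[
(\sigma^2-2\kappa b)\bigl[\tfrac12 \mathbb{E}_x\Delta X_k^2 - ah\bigr]+\kappa\sigma^2\bigl(\mathbb{E}_x\Delta X_k-hb\bigr)+(\sigma^2-2\kappa b)x(\mathbb{E}_x\Delta X_k-hb)=\mathcal{O}(\cdots).
\]
Lemma~\ref{lemma:1st_2nd_mom_combination_generator_diffusion_special} bounds the $\mathbb{E}_x\Delta X_k-hb$ terms. This alone does not isolate the second moment, since $\sigma^2-2\kappa b=\mathcal{O}(x+h)$ may vanish.

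So I would instead avoid needing the second moment sharply. Use the sticky boundary condition \eqref{eqn:L_diffusion_BC} at $0$: there $a\partial_x u=\kappa\mathcal{L}u$ with $a(0)=\kappa b(0)$, which gives $\partial_{xx}u(t,0)=0$. Taylor expansion then yields $\partial_{xx}u(t,x)=\mathcal{O}(x)$. Combined with $|D(x)|=\mathcal{O}(h)$, which follows from $\lambda\in[0,1]$, $x\le\sigma_2\sqrt{3h}$ and Lemma~\ref{lemma:bound_on_lambda_diffusion}, the product $\tfrac12\partial_{xx}u\,D(x)$ is $\mathcal{O}(hx)$. Summing all contributions gives $C(h^2+hx)$ with $C$ depending only on derivative bounds of $u$ and the local constants for $b,\sigma$.
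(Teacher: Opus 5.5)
Your proof is correct and follows essentially the same route as the paper. The shared steps are the third-order Taylor expansion, the backward equation to reduce to the first- and second-moment discrepancies, and Lemma~\ref{lemma:1st_2nd_mom_combination_generator_diffusion_special} for the first discrepancy; the key is $\partial_{xx}u(t,0)=0$ (forced by $\kappa b(0)=a(0)$ in the sticky condition), giving $|\partial_{xx}u(t,x)|\le\|u^{(3)}\|_\infty x$, which absorbs the merely $\mathcal{O}(h)$ second-moment discrepancy. Bounding $\mathbb{E}_x|\Delta X_k|^3$ because of the random point $Y_k$ is slightly more careful than the paper's display, which writes $|\mathbb{E}_x(\Delta X_k^3)|$ although its third-moment lemma in fact proves the absolute bound on $\Omega_b$; the $M$-weighted identity you explored and set aside is not needed.
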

%\begin{proof}
%    See Supplement.
%\end{proof}

%\subsection{Global Error Analysis}

\subsubsection{Bounding the number of steps: Lemma \ref{lemma:key_diffusion_general}}

To enable comparison with the SBM construction, we first normalize the boundary diffusion coefficient. Let $\sigma_0:=\sigma(0)>0$ and define the rescaled process
$$
\hat X_t:=\frac{X_t}{\sigma_0}.
$$
Then $\hat X$ is again a sticky diffusion of the same form, with coefficients
$$
\hat b(y):=\frac{b(\sigma_0 y)}{\sigma_0},
\qquad
\hat \sigma(y):=\frac{\sigma(\sigma_0 y)}{\sigma_0},
$$
and sticky parameter $\hat\kappa=\kappa/\sigma_0$. In particular, $\hat\sigma(0)=1$. Therefore it is enough to prove the desired estimate in the normalized case $\sigma(0)=1$, and we adopt this normalization below.

Our hope is to recycle our calculation for the global error analysis done in the proof of Theorem \ref{thm:global_err_cont_scheme} to show similar theorems for the scheme with drift.

The first thing we notice is that when $\sigma(x) = 1$, then the jump probability $\lambda$ for the general diffusion, \eqref{eqn:diffusion_lambda_cont_scheme}, is
\[
 \lambda(x) = \frac{(1-2\kappa b(x))x^2 + 2\kappa x + h}{\left(\frac{\kappa}{\sqrt{3h}}+1-2\kappa b(x)\right)x^2 + h + \kappa\sqrt{3h}},
\]
which differs from the jump probability for the SBM, \eqref{eqn:lambda_cont_scheme}, only in terms involving $b(x)$.  Since these terms multiply $x^2$, which is small in the boundary layer, and since if $\sigma(0) = 1$ then $\sigma(x)\approx 1$ in the boundary layer, we expect that these two jump probabilities will be not too different. 
The next lemma shows that the difference between the two is $\mathcal{O}(\sqrt{h})$.

%for SBM, \eqref{eqn:lambda_cont_scheme}, and for the general diffusion, \eqref{eqn:diffusion_lambda_cont_scheme}, are 
%$\lambda(\cdot)$ we propose in algorithm \ref{alg:diffusion_cont_scheme} is not significantly different from the one we propose in algorithm \ref{alg:cont_scheme}  when the diffusion at boundary are them same.
%In fact, I will show that it is a $\mathcal{O}(\sqrt{h})$ approximation when the drift at the origin is $\sigma(0) = \sigma_0 = 1$.
%In other words, for general $\sigma(x)$, the jump probability in the boundary layer is an approximation of the jump probability with constant diffusion term.
%Reader will see it is also used in proving the global error of our scheme.

\begin{lemma}
\label{lemma:lambda_approx}
    Denote $\tilde{\lambda}(\cdot)$ as the jump probability for SBM, \eqref{eqn:lambda_cont_scheme}, %we propose in algorithm \ref{alg:cont_scheme} 
    and $\lambda(\cdot)$ as the jump probability for the general diffusion, \eqref{alg:diffusion_cont_scheme}. %one we propose in algorithm \ref{alg:diffusion_cont_scheme}, if for the diffusion $\sigma(0)= 1$ for a sticky diffusion, then
    If $\sigma(0) =1$, then 
    $$\left|\lambda(x) - \tilde{\lambda}(x)\right| \leq K_{b,\sigma} \sqrt{h},\quad \forall x\in [0,\sqrt{3h}],$$ 
    where $K_{b,\sigma} > 0$ is a constant depending on the drift term $b(\cdot)$, diffusion $\sigma(\cdot)$, and sticky parameter $\kappa$.
\end{lemma}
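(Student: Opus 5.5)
We write $\lambda = N/D$ and $\tilde\lambda=\tilde N/\tilde D$ with
\begin{align*}
N&=(\sigma^2-2\kappa b)x^2+2\kappa\sigma^2x+\sigma^4h, & D&=\Bigl(\tfrac{\kappa\sigma}{\sqrt{3h}}+\sigma^2-2\kappa b\Bigr)x^2+\kappa\sigma^3\sqrt{3h}+\sigma^4h,\\
\tilde N&=x^2+2\kappa x+h, & \tilde D&=\Bigl(\tfrac{\kappa}{\sqrt{3h}}+1\Bigr)x^2+\kappa\sqrt{3h}+h,
\end{align*}
all evaluated at $x\in[0,\sqrt{3h}]$, and estimate
\[
|\lambda-\tilde\lambda|\le \frac{|N-\tilde N|}{D}+\tilde\lambda\,\frac{|D-\tilde D|}{D}.
\]
Throughout we take $h$ small enough that $\sqrt{3h}\le r_0$. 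Assumption~\ref{assump:local_coeff_boundary_regularity} then gives $|b|\le C_{\mathrm{loc}}$ and $|\sigma(x)-1|\le C_{\mathrm{loc}}x\le C\sqrt h$ on $[0,\sqrt{3h}]$, since $\sigma(0)=1$. Consequently $|\sigma^k(x)-1|\le C\sqrt h$ for $k=2,3,4$, and for small $h$ we have $\sigma\in[1/2,2]$.

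\emph{Step 1: lower bound on $D$.} The $x^2$ coefficient in $D$ is $\kappa\sigma/\sqrt{3h}+O(1)\ge 0$ for small $h$. Hence $D\ge \kappa\sigma^3\sqrt{3h}\ge c\sqrt h$, and similarly $\tilde D\ge\kappa\sqrt{3h}$.

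\emph{Step 2: numerator and denominator differences.} We have
\[
N-\tilde N=(\sigma^2-1-2\kappa b)x^2+2\kappa(\sigma^2-1)x+(\sigma^4-1)h.
\]
With $x^2\le 3h$, $x\le\sqrt{3h}$ and $|\sigma^2-1|\le C\sqrt h$, each term is $O(h)$, so $|N-\tilde N|\le Ch$. For the denominators,
\[
D-\tilde D=\frac{\kappa(\sigma-1)}{\sqrt{3h}}x^2+(\sigma^2-1-2\kappa b)x^2+\kappa(\sigma^3-1)\sqrt{3h}+(\sigma^4-1)h.
\]
The first term is bounded by $C\sqrt h\cdot x^2/\sqrt h\le Ch$ (this uses $|\sigma-1|\le Cx$ rather than merely $C\sqrt h$), and the remaining terms are $O(h)$ or $O(h^{3/2})$. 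So $|D-\tilde D|\le Ch$.

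\emph{Step 3: combine.} Lemma~\ref{lemma:bound_on_lambda_SBM} gives $0\le\tilde\lambda\le1$. Together with Step 1 this yields
\[
|\lambda-\tilde\lambda|\le \frac{Ch}{c\sqrt h}+\frac{Ch}{c\sqrt h}=K_{b,\sigma}\sqrt h,
\]
where $K_{b,\sigma}$ depends on $C_{\mathrm{loc}}$ and $\kappa$.

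The only delicate step is the $x^2/\sqrt{h}$ term in $D-\tilde D$. There the crude estimate $|\sigma-1|=O(1)$ would fail, and we need the Lipschitz bound $|\sigma(x)-\sigma(0)|\le C_{\mathrm{loc}}x$ near $0$. A smaller point is that the lemma is stated for all $x\in[0,\sqrt{3h}]$ rather than only for $x\in\Omega_b$. The algebraic estimates above do not use membership in $\Omega_b$, so they hold on the whole interval.
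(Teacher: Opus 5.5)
Your proof is correct and is essentially the paper's own argument. Your splitting $|\lambda-\tilde\lambda|\le |N-\tilde N|/D+\tilde\lambda\,|D-\tilde D|/D$ is exactly the paper's comparison through the intermediate quotient $\hat\lambda=\tilde N/D$, and it uses the same lower bound $D\ge\kappa\sigma^3\sqrt{3h}$, the same $O(h)$ bounds on both differences, and $0\le\tilde\lambda\le1$ from Lemma~\ref{lemma:bound_on_lambda_SBM}.

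One cosmetic point: the parenthetical saying the term $\kappa(\sigma-1)x^2/\sqrt{3h}$ needs $|\sigma-1|\le Cx$ ``rather than merely $C\sqrt h$'' is inaccurate. On $[0,\sqrt{3h}]$ the two bounds are of the same size, and your own estimate there in fact uses $C\sqrt h$. What you actually need is only that $\sigma-1$ vanishes at $0$ at a Lipschitz rate, as opposed to the crude bound $|\sigma-1|=O(1)$.
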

%\begin{proof}
%    See Supplement.
%\end{proof}

These results may be used to show Lemma \ref{lemma:key_diffusion_general}, which is analogous to Lemma \ref{lemma:key}, and is the final step in proving the order of global error. The proof of Lemma \ref{lemma:key_diffusion_general} is in the Supplement.

%\begin{proof}[Proof of Lemma \ref{lemma:key_diffusion_general}]
%    See Supplement.
%\end{proof}

%%%%%%%%%%%%%%%%%%%%%%%%%%
%%%%%   Conclusion   %%%%%
%%%%%%%%%%%%%%%%%%%%%%%%%%

\section{Conclusion}

This paper presented and analyzed a numerical scheme for simulating one-dimensional sticky diffusions. 
The scheme combines a reflected Euler-Maruyama increment, with a carefully designed jump probability to the sticky point, to handle the challenge of moving both in continuous space, and to a lower-dimensional boundary. 
%Unlike previously studied schemes which make discrete jumps in space, the scheme we propose operates in continuous space. 
Our scheme achieves a weak local error of order 3/2 within the boundary layer and a weak global error of order 1. 

We provide rigorous theoretical analysis of our schemes, including detailed proofs of local and global error bounds. 
Numerical experiments corroborate our theoretical findings, showing empirical evidence of first-order global convergence for both sticky Brownian motion and sticky diffusion processes under various conditions.

A natural extension of our work would be to develop schemes for multidimensional sticky diffusions. A novel features of such diffusions, compared to the one-dimensional case, is that they can have nontrivial dynamics along their sticky boundaries, which can be different from the dynamics in the interior. We are optimistic that the method we have outlined can be adapted to such diffusions, because it provides a clear guideline for choosing how to switch between different types of  dynamics (i.e. on sets of different dimensions), which are otherwise standard to simulate. A challenge however will be ensuring that the switching probabilities are indeed probabilities, as we have found this nontrivial to ensure even for a one-dimensional diffusion. Further extensions could include adapting these methods to processes on manifolds \cite{Ciccotti.2007}, and considering multiple intersecting sticky boundaries which form a stratification, as outlined in \cite{Holmes-Cerfon.2013,Holmes-Cerfon.2020}. 

Even for a one-dimensional diffusion there is further theoretical development possible. One could investigate higher-order schemes, which might achieve even better convergence rates, or could ask how to solve general parabolic-type PDEs with sticky boundary conditions \cite{leimkuhler2023reflect_SDE,Sharma.2025}. One could also ask for \emph{worse} schemes, which achieve only $O(h^{1/2})$ convergence -- perhaps there is a wider scope of jump probabilities $\lambda$ which allow for worse convergence, but are easier to either derive, or to implement. Such analysis could perhaps be applied to Monte-Carlo schemes which perform local moves to sample from a stationary distribution supported on manifolds of different dimensions, and show their convergence to a particular sticky diffusion \cite{Holmes-Cerfon.2020}. Finally, it would be useful to perform a theoretical analysis of the long-time behavior of the numerical schemes and their ability to capture stationary distributions accurately.

% Future work could explore several directions. One important avenue is to investigate if there exists a better scheme that can achieve faster convergence when the drift is negative, as our current scheme requires extremely small time steps. 
% Other potential areas of research include:
% \begin{itemize}
%     \item Extension of the schemes to multi-dimensional sticky processes and more complex geometries.
%     \item Investigation of higher-order schemes that might achieve even better convergence rates.
%     \item Application of these methods to specific real-world problems involving sticky processes.
%     \item Theoretical analysis of the long-time behavior of the numerical schemes and their ability to capture stationary distributions accurately.
% \end{itemize}

In conclusion, this work contributes to the growing toolkit for numerical simulation of stochastic processes with complex boundary condition,  offering both practical algorithms and theoretical guarantees for simulating certain sticky processes. 
As the study of such processes continues to grow in importance across various disciplines, we believe these methods will prove valuable tools for researchers and practitioners alike.

%%%%%%%%%%%%%%%%%%%%%%%%%%%%%%%%%%%%%%%%%%%%%%
%% Single Appendix:                         %%
%%%%%%%%%%%%%%%%%%%%%%%%%%%%%%%%%%%%%%%%%%%%%%
%\begin{appendix}
%\section*{???}%% if no title is needed, leave empty \section*{}.
%\end{appendix}
%%%%%%%%%%%%%%%%%%%%%%%%%%%%%%%%%%%%%%%%%%%%%%
%% Multiple Appendixes:                     %%
%%%%%%%%%%%%%%%%%%%%%%%%%%%%%%%%%%%%%%%%%%%%%%
%\begin{appendix}
%\section{???}
%
%\section{???}
%
%\end{appendix}

%%%%%%%%%%%%%%%%%%%%%%%%%%%%%%%%%%%%%%%%%%%%%%
%% Support information, if any,             %%
%% should be provided in the                %%
%% Acknowledgements section.                %%
%%%%%%%%%%%%%%%%%%%%%%%%%%%%%%%%%%%%%%%%%%%%%%
%\begin{acks}[Acknowledgments]
% The authors would like to thank ... 
%\end{acks}
%%%%%%%%%%%%%%%%%%%%%%%%%%%%%%%%%%%%%%%%%%%%%%
%% Funding information, if any,             %%
%% should be provided in the                %%
%% funding section.                         %%
%%%%%%%%%%%%%%%%%%%%%%%%%%%%%%%%%%%%%%%%%%%%%%
\begin{funding}
M.H.C. acknowledges support from the Natural Sciences and Engineering Research Council of Canada (NSERC), RGPIN-2023-04449 / Cette recherche a \'{e}t\'{e} financ\'{e}e par le Conseil de recherches en sciences naturelles et en g\'{e}nie du Canada (CRSNG).
\end{funding}

%%%%%%%%%%%%%%%%%%%%%%%%%%%%%%%%%%%%%%%%%%%%%%
%% Supplementary Material, including data   %%
%% sets and code, should be provided in     %%
%% {supplement} environment with title      %%
%% and short description. It cannot be      %%
%% available exclusively as external link.  %%
%% All Supplementary Material must be       %%
%% available to the reader on Project       %%
%% Euclid with the published article.       %%
%%%%%%%%%%%%%%%%%%%%%%%%%%%%%%%%%%%%%%%%%%%%%%
\begin{supplement}
\stitle{Supplemental Information for ``A modified Euler-Maruyama method to simulate a one-dimensional sticky diffusion''}
\sdescription{Contains detailed proofs of intermediate convergence statements stated in the main text.}
\end{supplement}

%%%%%%%%%%%%%%%%%%%%%%%%%%%%%%%%%%%%%%%%%%%%%%%%%%%%%%%%%%%%%
%%                  The Bibliography                       %%
%%                                                         %%
%%  imsart-???.bst  will be used to                        %%
%%  create a .BBL file for submission.                     %%
%%                                                         %%
%%  Note that the displayed Bibliography will not          %%
%%  necessarily be rendered by Latex exactly as specified  %%
%%  in the online Instructions for Authors.                %%
%%                                                         %%
%%  MR numbers will be added by VTeX.                      %%
%%                                                         %%
%%  Use \cite{...} to cite references in text.             %%
%%                                                         %%
%%%%%%%%%%%%%%%%%%%%%%%%%%%%%%%%%%%%%%%%%%%%%%%%%%%%%%%%%%%%%

%% if your bibliography is in bibtex format, uncomment commands:
\bibliographystyle{imsart-number} % Style BST file (imsart-number.bst or imsart-nameyear.bst)
\bibliography{Sticky_BM,reference,SBMPaper}       % Bibliography file (usually '*.bib')

%% or include bibliography directly:
% \begin{thebibliography}{}
% \bibitem{b1}
% \end{thebibliography}

%%%%%%%%%%%%%%%%%%%%%%%%%%%%%%%%%%%%%%%%%%%%%%
%% Single Appendix:                         %%
%%%%%%%%%%%%%%%%%%%%%%%%%%%%%%%%%%%%%%%%%%%%%%
\begin{appendix}
\section{Supplement}%% if no title is needed, leave empty \section*{}.

\subsection{Additional proofs from Section 3}\label{sec:appendix_sec3}
\phantom{x}

\noindent\textbf{Verification of \eqref{eqn:wx_goal} in the proof of Lemma~\ref{lemma:key}}

As a reminder, we wish to show that there exist constants $C_1(s,M)>0$ and $\delta>0$, independent of $x$ and $h$, such that for all $h<\delta$,
    \begin{equation*}
        Pe^{w(x)} \leq e^{w(x)}\left(1-f_h(x)+C_1h\right),
    \end{equation*}
    where $f_h$ satisfies
    $$
    f_h(x)\geq x\,\mathbf{1}_{\{x\in[0,\sqrt{3h}]\}}.
    $$
We would like to calculate $Pe^{w(x)}-e^{w(x)}$ for $x$ in three cases: $[0,\frac{\sqrt{3h}}{2}]$, $[\frac{\sqrt{3h}}{2}, \sqrt{3h}]$ and $[\sqrt{3h},+\infty)$.
\begin{enumerate}
        \item 
        For $x\in [0, \frac{\sqrt{3h}}{2}]$, we compute
        \begin{equation*}
        \begin{split}
            Pe^{w(x)} &= \frac{\lambda(x)}{2}e^M 
            + \frac{\lambda(x)}{\sqrt{3h}}\cdot\int_{\frac{\sqrt{3h}}{2}}^{\sqrt{3h}-x} e^{M - s\left(z-\frac{\sqrt{3h}}{2}\right)}\; dz
            %\\&\quad 
            + \frac{\lambda(x)}{2\sqrt{3h}} \cdot \int_{\sqrt{3h}-x}^{\sqrt{3h}+x} e^{M- s\left(z-\frac{\sqrt{3h}}{2}\right)}\; dz
            \\&\quad + e^M(1-\lambda(x))\\
            &= \lambda(x) \left(\frac{1}{2}e^M 
            + e^M \cdot \frac{1}{2\sqrt{3h}s} \cdot \left( 2 - e^{-s\left(\frac{\sqrt{3h}}{2}-x\right)} - e^{-s\left(\frac{\sqrt{3h}}{2}+x\right)} \right)\right)
            + e^M(1-\lambda(x)).
        \end{split}
        \end{equation*}
        In the first line, the first term corresponds to reflected jumps that land in the flat part of $w$, the two integral terms correspond to reflected jumps that land in the sloping part of $w$ (split according to whether the reflection map has one or two preimages), and the last term corresponds to the jump directly to $0$.
        
        Now we simplify this expression by Taylor expansion of exponentials with a remainder of order $\mathcal{O}(h)$:
        \begin{equation*}
        \begin{split}
            \quad&Pe^{w(x)} - e^{w(x)}\\
            &= e^M \cdot \frac{\lambda(x)}{2\sqrt{3h}s} \cdot \left( 2 - e^{-s\left(\frac{\sqrt{3h}}{2}-x\right)} 
            - e^{-s\left(\frac{\sqrt{3h}}{2}+x\right)} \right) 
            - e^M \cdot \frac{\lambda(x)}{2}\\
            &= e^M\cdot \frac{\lambda(x)}{2\sqrt{3h}s} \cdot \left(\sqrt{3h}s - \frac{1}{2}s^2\left(\frac{3h}{2}+2x^2\right) + \mathcal{O}(h^{3/2})\right) - e^M \cdot \frac{\lambda(x)}{2}\\
            &= e^M\cdot \frac{\lambda(x)}{2\sqrt{3h}s} \cdot \left(- \frac{1}{2}s^2\left(\frac{3h}{2}+2x^2\right) + \mathcal{O}(h^{3/2})\right)\\
            &= e^M\cdot \left( -\frac{s}{4}\cdot \frac{\lambda(x)}{2}\sqrt{3h} - s \cdot \frac{\lambda(x)x^2}{2\sqrt{3h}} + \mathcal{O}(h)\right).
        \end{split}
        \end{equation*}

        In other words, we have shown that, for  $x \in [0,\sqrt{3h}/2]$,
        $$Pe^{w(x)} \leq e^{w(x)}\left(1-f_h(x) + \mathcal{O}(h)\right)$$
        where we define
        \begin{equation}
        \label{eqn:fh_in_0_to_sqrt(3h)/2}
            f_h(x):= \frac{s}{4}\cdot\frac{\lambda(x)}{2}\sqrt{3h} + s \cdot \frac{\lambda(x)\cdot x^2}{2\sqrt{3h}}
            \qquad \text{for } x\in [0, \frac{\sqrt{3h}}{2}].%, \quad \forall x\in [0, \frac{\sqrt{3h}}{2}].
        \end{equation}

%\todan{is there another way to explain the below, a bit more step-by-step? goal is to show that $f_h(x)-x>0$}
        
        %Notice that when $s>16$, 
        %\redan{$s>4$ shall be enough, I take $s>16$ because it is needed for case 2 \eqref{eqn:fh_in_sqrt(3h)/2_to_sqrt(3h)}, $\mathbb{E}_x(\Delta X)$ comes from \eqref{eq:DelX}, I make this more explicit below.}
        
        Recall the expression for $\mathbb{E}_x(\Delta X)$, \eqref{eq:DelX} and substitute the expression for $\lambda(x)$,  \eqref{eqn:lambda_cont_scheme},
        \begin{equation*}
        \begin{split}
            \mathbb{E}_x(\Delta X_k) &=-x + \frac{\lambda(x)\sqrt{3h}}{2} + \frac{\lambda(x)  x^2}{2\sqrt{3h}}\\ 
            &= \frac{1}{2\sqrt{3h}}\cdot\frac{\left(x-\sqrt{3h}\right)^2(x^2+h)}{\left(\frac{\kappa}{\sqrt{3h}}+1\right)x^2+h+\kappa\sqrt{3h}}\\
            &\geq 0.
        \end{split}
        \end{equation*}
        Hence,
        $$f_h(x) - x > \mathbb{E}_x(\Delta X) > 0, \quad \forall x\in [0, \frac{\sqrt{3h}}{2}].$$
        
        \item For $x\in [\frac{\sqrt{3h}}{2}, \sqrt{3h}]$, 
        % \todan{I deleted comments like ``\textit{i.e.} $\sqrt{3h}-x \in [0, \frac{\sqrt{3h}}{2}]$'' in these steps as I wasn't sure what this added to the calculation, but feel free to put back in if you think it helps understand the calculation.}
        %\textit{i.e.} $\sqrt{3h}-x \in [0, \frac{\sqrt{3h}}{2}]$, the scheme does a reflected uniform jump of length $\sqrt{3h}$ centered at $x$ with probability $\lambda(x)$ or jump to $0$ with probability $1-\lambda(x)$.
        the scheme does the same jump, but the expression for $e^{w(x)}$ is different, which makes the calculation much more complex.
        It can be checked that
        \begin{equation*}
        \begin{split}
            Pe^{w(x)} &= \lambda(x) \left(e^M \cdot \frac{\sqrt{3h}-x}{\sqrt{3h}}
            + e^M \cdot \frac{\frac{\sqrt{3h}}{2}-(\sqrt{3h}-x)}{2\sqrt{3h}}\right)\\
            &\quad + \frac{\lambda(x)}{2\sqrt{3h}}\cdot\int_{\frac{\sqrt{3h}}{2}}^{\sqrt{3h}+x}e^{M-s\left(z-\frac{\sqrt{3h}}{2}\right)}\; dz + (1-\lambda(x))e^M\\
            &= \lambda(x) e^M \cdot \frac{3\sqrt{3h}-2sx + 2 - 2e^{-s\left(\frac{\sqrt{3h}}{2}+x\right)}}{4\sqrt{3h}s}
            + (1-\lambda(x))e^M.
        \end{split}
        \end{equation*}
        Since $e^{w(x)} = e^{M-s\left(x-\frac{\sqrt{3h}}{2}\right)}$,
        \begin{equation*}
        \begin{split}
            Pe^{w(x)} - e^{w(x)} &= A + B
        \end{split}
        \end{equation*}
        where 
        \begin{align*}
            A&=  \lambda(x) e^M \cdot \frac{3\sqrt{3h}-2sx + 2 - 2e^{-s\left(\frac{\sqrt{3h}}{2}+x\right)}}{4\sqrt{3h}s} - \lambda(x) \cdot e^{M-s\left(x-\frac{\sqrt{3h}}{2}\right)}\\
        B &= (1-\lambda(x))\cdot e^M - (1-\lambda(x))\cdot e^{M-s\left(x-\frac{\sqrt{3h}}{2}\right)}.
        \end{align*}
        Simplifying $A,B$ by expanding the exponential, we have 
        \begin{equation}
        \label{eqn:A_simplify}
        \begin{split}
            A &= \frac{\lambda(x)}{4\sqrt{3h}s} \cdot e^{M-s\left(x-\frac{\sqrt{3h}}{2}\right)} 
            \cdot \left( (3\sqrt{3h}s-2sx+2)e^{s\left(x-\frac{\sqrt{3h}}{2} \right)} - 2e^{-s\sqrt{3h}} - 4\sqrt{3h}s\right)\\
            &= e^{w(x)} \cdot \frac{\lambda(x)}{4\sqrt{3h}s}
            \cdot \left( 3\sqrt{3h}s^2 x - \frac{9}{4}s^2\cdot 3h - s^2x^2 +  \mathcal{O}(h^{3/2})\right)\\
            &= \frac{\lambda(x)}{4}e^{w(x)} \left( 3sx - \frac{9}{4}\sqrt{3h}s - \frac{sx^2}{\sqrt{3h}} +  \mathcal{O}(h) \right).
        \end{split}
        \end{equation}
        Meanwhile,
        \begin{equation}
        \label{eqn:B_simplify}
        \begin{split}
            B &= e^{M-s\left(x-\frac{\sqrt{3h}}{2}\right)} \cdot (1-\lambda(x)) 
            \left( e^{s\left(x-\frac{\sqrt{3h}}{2}\right)} - 1 \right)\\
            &= e^{M-s\left(x-\frac{\sqrt{3h}}{2}\right)} \cdot (1-\lambda(x)) 
            \left( s\left(x-\frac{\sqrt{3h}}{2}\right) + \mathcal{O}(h) \right).
        \end{split}
        \end{equation}
        Combining \eqref{eqn:A_simplify}, \eqref{eqn:B_simplify} together, we have
        $$Pe^{w(x)} - e^{w(x)} = A + B = -e^{w(x)} \left( f_h(x) +  \mathcal{O}(h)\right)$$
        where we define
        \begin{equation}
        \label{eqn:fh_in_sqrt(3h)/2_to_sqrt(3h)}
            f_h(x) := \frac{s\lambda(x)}{16}\sqrt{3h} + \frac{s\lambda(x)x^2}{4\sqrt{3h}} + \frac{s\lambda(x)x}{4} + \frac{s\sqrt{3h}}{2} - sx\quad \text{for $x\in [\frac{\sqrt{3h}}{2}, \sqrt{3h}]$}.
        \end{equation}
        It can be checked by derivative that $f_h(x)$ is decreasing for $x\in [\frac{\sqrt{3h}}{2}, \sqrt{3h}]$, and $f_h(\sqrt{3h}) = \frac{s}{16}\sqrt{3h} > \sqrt{3h}$, so we must have that
        $$f_h(x) - x > 0, \quad \forall x\in [\frac{\sqrt{3h}}{2}, \sqrt{3h}].$$
        
        \item
        For $x\in (\sqrt{3h},+\infty)$: in this region, the scheme does a standard EM jump.
        Using the fact that $\mathbb{E}_x(\Delta X) = 0$, and then using the fact that $w$ is a piecewise linear function with a constant piece near the boundary, so it smaller than its linear part: $w(x+\Delta X) \leq M - s\left((x+\Delta X)-\frac{\sqrt{3h}}{2}\right)$, 
        we have 
        %\todan{what is $\Delta X_h$ in first line? and for the second line, is this a taylor expansion?}
        %\redan{It is a small typo $\Delta X_h$ should be $\Delta X$. For the second line, it is not Taylor, it is by definition of \eqref{eqn:chosen_wx_continuous} because it is a (piecewise) linear function with a constant piece near the boundary, so it smaller than its linear part: $w(x+\Delta X) \leq M - s\left((x+\Delta X)-\frac{\sqrt{3h}}{2}\right)$}
        \begin{equation*}
        \begin{split}
            Pe^{w(x)} &= \mathbb{E}_x\left(e^{w(x+\Delta X)}\right)\\
            &\leq \mathbb{E}_x\left(e^{w(x)- s\cdot \Delta X}\right)\\
            &= e^{w(x)}\cdot (1 - s\cdot\mathbb{E}_x(\Delta X) + \mathcal{O}(h))\\
            &= e^{w(x)}\cdot (1 + 0\cdot (-s) + \mathcal{O}(h))\\
            &= e^{w(x)}\left(1 - f_h(x) + \mathcal{O}(h) \right)\\
        \end{split}
        \end{equation*}
        where
        \begin{equation}
        \label{eqn:fh_in_sqrt(3h)_to_inf}
            f_h(x) := 0 ,\qquad \text{for $x\in (\sqrt{3h},+\infty).$}
        \end{equation}
    \end{enumerate}
    So far, we have shown that for $x\in [0,+\infty)$, 
    \begin{equation}
    \label{eqn:wx_goal_supp}
        Pe^{w(x)} \leq e^{w(x)} \left(1 - f_h(x) + \mathcal{O}(h)\right)
    \end{equation}
    where $f_h(x) \geq x \cdot \mathbf{1}_{x \in [0,\sqrt{3h}]}$ is defined according to \eqref{eqn:fh_in_0_to_sqrt(3h)/2},\eqref{eqn:fh_in_sqrt(3h)/2_to_sqrt(3h)},\eqref{eqn:fh_in_sqrt(3h)_to_inf}, and the bounding constant of the big-$\mathcal{O}$ term depends on $s$.
    In each of the three cases above, the remainder term $\mathcal O(h)$ is uniform in $x$ on the corresponding interval. Therefore, after taking the maximum of the three remainder constants and the minimum of the corresponding smallness thresholds, there exist constants $C_1(s,M)>0$ and $\delta>0$, independent of $x$ and $h$, such that for all $h<\delta$,
    $$
    Pe^{w(x)} \le e^{w(x)}\bigl(1-f_h(x)+C_1h\bigr),
    \qquad x\ge 0.
    $$
    Together with the bounds established above,
    $$
    f_h(x)\ge x\,\mathbf{1}_{\{x\in[0,\sqrt{3h}]\}},
    $$
    which proves \eqref{eqn:wx_goal}.

\subsection{Additional proofs from Section 4}
\phantom{x}

\noindent\textbf{Lemma~\ref{lemma:bound_on_drift_diffusion}.}
    There exists $\delta>0$ and constants $b_1,b_2,\sigma_1,\sigma_2$ such that, for every $h<\delta$,
    $$
    b(x)\in [b_1,b_2],\qquad \sigma(x)\in[\sigma_1,\sigma_2],
    $$
    whenever
    $$
    x-\sigma(x)\sqrt{3h}<0
    \qquad\text{or}\qquad
    x-\sigma(x)\sqrt{3h}+b(x)h<0.
    $$
\begin{proof}
    We first show that the boundary layer is contained in a fixed compact neighborhood of $0$ for all sufficiently small $h$. By Assumption~\ref{assump:linear_growth_coefficients}, there exists $L>0$ such that
    $$
    |b(x)|+|\sigma(x)|\leq L(1+x),\qquad x\geq 0.
    $$
    If $x-\sigma(x)\sqrt{3h}<0$, then
    $$
    x<\sigma(x)\sqrt{3h}\leq |\sigma(x)|\sqrt{3h}\leq L(1+x)\sqrt{3h}.
    $$
    If $x-\sigma(x)\sqrt{3h}+b(x)h<0$, then
    $$
    x<\sigma(x)\sqrt{3h}-b(x)h
    \leq |\sigma(x)|\sqrt{3h}+|b(x)|h
    \leq L(1+x)(\sqrt{3h}+h).
    $$
    Hence, after choosing $\delta_0>0$ small enough so that
    $$
    L(\sqrt{3\delta_0}+\delta_0)\leq \frac{1}{2},
    $$
    both cases imply
    $$
    x\leq 2L(\sqrt{3h}+h),\qquad h<\delta_0.
    $$
    In particular, by decreasing $\delta_0$ if necessary, every such $x$ lies in the interval $[0,r_0]$ from Assumption~\ref{assump:local_coeff_boundary_regularity}. Moreover,
    $$
    |\partial_x b(y)|+|\partial_x\sigma(y)|\leq C_{\mathrm{loc}},
    \qquad y\in[0,r_0].
    $$
    Now we sharpen this localization using the local boundedness of $\partial_x b$ and $\partial_x\sigma$ on $[0,r_0]$. Denote
    $$
    \sigma_0:=\sigma(0),\qquad b_0:=b(0),
    $$
    If $x-\sigma(x)\sqrt{3h}+b(x)h<0$, then by the mean value theorem,
    \begin{equation}
    \label{eqn:diffusion_boundary_layer_expansion}
    \begin{split}
        x
        &< \sigma(x)\sqrt{3h}-b(x)h\\
        &= \left(\sigma_0+x\partial_x\sigma(z_x)\right)\sqrt{3h}
        -\left(b_0+x\partial_x b(y_x)\right)h
    \end{split}
    \end{equation}
    for some $y_x,z_x\in[0,x]$. Therefore
    $$
    x\left(1-\partial_x\sigma(z_x)\sqrt{3h}+\partial_x b(y_x)h\right)
    < \sigma_0\sqrt{3h}-b_0h.
    $$
    After decreasing $\delta_0$ if necessary, we have
    $$
    1-\partial_x\sigma(z_x)\sqrt{3h}+\partial_x b(y_x)h\geq \frac{1}{2},
    \qquad h<\delta_0,
    $$
    and hence
    $$
    x<\sigma_0\sqrt{3h}+C_1h,
    $$
    where $C_1$ depends only on $b_0,\sigma_0$ and the local bound $C_{\mathrm{loc}}$.
    
    \noindent Similarly, if $x-\sigma(x)\sqrt{3h}<0$, then
    \begin{equation}
    \label{eqn:diffusion_boundary_layer_expansion_2}
        x
        < \sigma(x)\sqrt{3h} = \left(\sigma_0+x\partial_x\sigma(z_x)\right)\sqrt{3h},
    \end{equation}
    so
    $$
    x\left(1-\partial_x\sigma(z_x)\sqrt{3h}\right)<\sigma_0\sqrt{3h}.
    $$
    After decreasing $\delta_0$ if necessary,
    $$
    1-\partial_x\sigma(z_x)\sqrt{3h}\geq \frac{1}{2},
    $$
    and therefore
    $$
    x<\sigma_0\sqrt{3h}+C_2h,
    $$
    where $C_2$ depends only on $\sigma_0$ and $C_{\mathrm{loc}}$.
    Let $C:=C_1\vee C_2$. We have shown that for every $h<\delta_0$,
    $$
    \left\{x\geq 0:\ x-\sigma(x)\sqrt{3h}<0
    \ \text{or}\ 
    x-\sigma(x)\sqrt{3h}+b(x)h<0\right\}
    \subseteq [0,\sigma_0\sqrt{3h}+Ch].
    $$
    Now choose $\delta\in(0,\delta_0]$ small enough so that
    $$
    B_\delta:=[0,\sigma_0\sqrt{3\delta}+C\delta]\subseteq[0,r_0].
    $$
    Since $b$ and $\sigma$ are continuous on $B_\delta$, define
    $$
    b_1:=\min_{x\in B_\delta} b(x),\qquad
    b_2:=\max_{x\in B_\delta} b(x),
    $$
    and
    $$
    \sigma_1:=\min_{x\in B_\delta}\sigma(x),\qquad
    \sigma_2:=\max_{x\in B_\delta}\sigma(x).
    $$
    By Assumption~\ref{assump:local_coeff_boundary_regularity}, after decreasing $\delta$ if necessary, $\sigma_1>0$. Therefore, for every $h<\delta$ and every $x$ satisfying either boundary-layer inequality,
    $$
    b(x)\in[b_1,b_2],\qquad \sigma(x)\in[\sigma_1,\sigma_2],
    $$
    as claimed.
\end{proof}
As a side remark, $\sigma_i$ and $b_i$ are different from $\sigma_0 = \sigma(0)$ and $b_0 = b(0)$ up to a term of order $\sqrt{h}$ because we have shown the boundary layer of our scheme is included in set $B_\delta$. 
Lemma \ref{lemma:bound_on_drift_diffusion} is useful in the sense that it holds when we want to do error analysis for our scheme whenever we do not make a standard EM jump.

\medskip

\noindent\textbf{Lemma~\ref{lemma:bound_on_lambda_diffusion}.}
    There exists $\delta>0$ such that $\forall h < \delta$, $\forall x$ such that $x - \sigma(x)\sqrt{3h} < 0,$
    $$\lambda(x) \in [0,1]. \;$$
\begin{proof}
    By Lemma \ref{lemma:bound_on_drift_diffusion}, there exist $\delta_1>0$ and constants
    $b_1,b_2,\sigma_1,\sigma_2$, depending only on the local boundary constants and the linear-growth constant of $b,\sigma$, such that for every $h<\delta_1$ and every $x$ with
    $x-\sigma(x)\sqrt{3h}<0$, we have
    $$
    b(x)\in [b_1,b_2], \qquad \sigma(x)\in [\sigma_1,\sigma_2].
    $$
    Hence
    \[
    x<\sigma(x)\sqrt{3h}\le \sigma_2\sqrt{3h}.
    \]
    Therefore it suffices to prove the following algebraic statement: for all
    $(x,b,\sigma)\in[0,\sigma_2\sqrt{3h}]\times[b_1,b_2]\times[\sigma_1,\sigma_2]$,
    the quantity $\Lambda(x,b,\sigma)$ defined below \eqref{eqn:Lambda} satisfies
    $$
    0\le \Lambda(x,b,\sigma)\le 1.
    $$
    If $b_2 \leq 0$, choose $\delta_2 = 1$; if $b_2 > 0$, $\delta_2$ is chosen in such a way that the following inequalities hold for all $h < \delta_2$,
    \begin{equation*}
    \begin{split}
        \frac{\kappa}{\sqrt{3h}} &> \frac{2\kappa b_2 \sigma_2}{\sigma_1^2},\\
        \sqrt{3h} &< \frac{\sigma_2}{2b_2},
    \end{split}
    \end{equation*}
    which implies that for all $b\in [b_1, b_2]$ and $\sigma \in [\sigma_1, \sigma_2]$
    \begin{equation}
    \label{eqn:delta_lambda}
    \begin{split}
        \frac{\kappa}{\sqrt{3h}} &> \frac{(2\kappa b - \sigma^2)\sigma_2}{\sigma^2},\\
        \frac{\kappa \sigma}{\sqrt{3h}} &> -(\sigma^2 - 2\kappa b).\\
    \end{split}
    \end{equation}
    Consider, $\forall h < \min\{\delta_1, \delta_2\}$, 
    \begin{equation}
    \label{eqn:Lambda}
         \Lambda(x, b, \sigma) = 
         \frac{(\sigma^2-2\kappa b)x^2 + 2\kappa \sigma^2 x + \sigma^4h}{\left(\frac{\kappa\sigma}{\sqrt{3h}} + \sigma^2 - 2\kappa b\right)x^2 + \sigma^3 \kappa \sqrt{3h} + \sigma^4 h},
    \end{equation}
    and
    \begin{equation}
    \label{eqn:one_minus_lambda}
        1-\Lambda(x, b, \sigma) =
        \frac{{\frac{\kappa \sigma}{\sqrt{3h}}\left(x-\sigma\sqrt{3h}\right)^2}}{\left(\frac{\kappa\sigma}{\sqrt{3h}} + \sigma^2 - 2\kappa b\right)x^2 + \sigma^3 \kappa \sqrt{3h} + \sigma^4 h}.
    \end{equation}
    It suffices to conclude the lemma if we can show $\forall (x,b, \sigma) \in [0,\sigma_2\sqrt{3h}]\times [b_1, b_2] \times [\sigma_1, \sigma_2]$,
    \begin{equation}
    \label{eqn:sign_of_lambda}
        \Lambda(x, b, \sigma) \geq 0, \quad \text{and} \quad  1-\Lambda(x, b, \sigma) \geq 0.
    \end{equation}
    We consider three cases: (i) $b < \frac{\sigma^2}{2\kappa}$  (ii) $b > \frac{\sigma^2}{2\kappa}$ ;\; (iii) $b = \frac{\sigma^2}{2\kappa}$.
    \begin{enumerate}
        \item For $b < \frac{\sigma^2}{2\kappa}$, we notice that both numerator (denote as $N_1(x,b)$) and denominator (denote as $D(x,b)$) of $\Lambda(x,b)$ are strictly positive.
        Besides, the numerator of $1-\Lambda(x,b)$ is a square and its denominator is identical to $D(x,b)$, so it is strictly positive.
        \eqref{eqn:sign_of_lambda} is verified.
        \item For $b > \frac{\sigma^2}{2\kappa}$, we have $2\kappa b-\sigma^2>0$, and in particular $b_2>0$.
        From the first inequality in \eqref{eqn:delta_lambda}, for every $x\in [0,\sigma_2\sqrt{3h}]$,
        \[
        (2\kappa b-\sigma^2)x
        \le
        (2\kappa b-\sigma^2)\sigma_2\sqrt{3h}
        <
        \kappa\sigma^2.
        \]
        Hence
        \[
        2\kappa\sigma^2-(2\kappa b-\sigma^2)x>\kappa\sigma^2>0.
        \]
        Using this, the numerator of $\Lambda$ satisfies
        \begin{align*}
        N_1(x,b,\sigma)
        &= (\sigma^2-2\kappa b)x^2 + 2\kappa \sigma^2 x + \sigma^4h \\
        &= \sigma^4h + x\bigl(2\kappa \sigma^2-(2\kappa b-\sigma^2)x\bigr) \\
        &> \sigma^4h + \kappa\sigma^2 x \\
        &> 0.
        \end{align*}
        For the denominator, by the second inequality in \eqref{eqn:delta_lambda},
        \[
        \frac{\kappa\sigma}{\sqrt{3h}}+\sigma^2-2\kappa b>0.
        \]
        Therefore
        \begin{align*}
        D(x,b,\sigma)
        &=
        \left(\frac{\kappa\sigma}{\sqrt{3h}}+\sigma^2-2\kappa b\right)x^2
        +\sigma^3\kappa\sqrt{3h}
        +\sigma^4h \\
        &\ge \sigma^3\kappa\sqrt{3h}+\sigma^4h \\
        &>0.
        \end{align*}
        Thus $\Lambda(x,b,\sigma)\ge 0$ on $[0,\sigma_2\sqrt{3h}]\times [b_1,b_2]\times [\sigma_1,\sigma_2]$.
        We also have $1-\Lambda(x,b,\sigma)\ge 0$. 
        Hence \eqref{eqn:sign_of_lambda} holds in this case.
        \item For $b = \frac{\sigma^2}{2\kappa}$, by simple algebra,
        \[
        \Lambda(x,b,\sigma)
        =
        \frac{2\sigma^2x + 2b\sigma^2h}
        {\frac{\sigma x^2}{\sqrt{3h}}+ \sigma^3\sqrt{3h} + 2b\sigma^2 h}
        \ge 0,
        \]

        \[
        1-\Lambda(x,b,\sigma)
        =
        \frac{\frac{\sigma}{\sqrt{3h}}\left(x-\sigma\sqrt{3h}\right)^2}
        {\frac{\sigma x^2}{\sqrt{3h}}+ \sigma^3\sqrt{3h} + 2b\sigma^2 h}
        \ge 0.
        \]
        Hence \eqref{eqn:sign_of_lambda} is verified in this case.
    \end{enumerate}
\end{proof}

\noindent\textbf{Lemma~\ref{lemma:UB_lambda_diffusion}.}
    There exists constant $C, \delta > 0$ dependent on $\sigma(x)$ and $b(x)$ such that $\forall h < \delta$ and $\forall x\in [0, \sigma_2\sqrt{3h}]$ such that $x-\sigma(x)\sqrt{3h} < 0$, 
    $$0 \leq \lambda(x) \leq C\cdot \left(\frac{x}{\sqrt{h}} + \sqrt{h} \right).$$
\begin{proof}
    By Lemma~\ref{lemma:bound_on_drift_diffusion}, after decreasing $\delta$ if necessary, there exist constants
    $b_1,b_2,\sigma_1,\sigma_2$, independent of $x$ and $h$, such that whenever
    $h<\delta$ and $x-\sigma(x)\sqrt{3h}<0$,
    $$b(x)\in[b_1,b_2],\qquad \sigma(x)\in[\sigma_1,\sigma_2],$$
    and hence
    $$ 0\leq x<\sigma(x)\sqrt{3h}\leq \sigma_2\sqrt{3h}.$$
    Let $B_M:=|b_1|\vee |b_2|$, by Lemma~\ref{lemma:bound_on_lambda_diffusion}, after decreasing $\delta$ if necessary, we already know that $\lambda(x)\in [0,1]$.
    It remains to prove the sharper upper bound. Write
    $$\lambda(x)=\frac{N_h(x)}{D_h(x)}, $$
    where
    $$ N_h(x):=(\sigma^2(x)-2\kappa b(x))x^2+2\kappa\sigma^2(x)x+\sigma^4(x)h,$$
    and
    $$
    D_h(x):=
    \left(\frac{\kappa\sigma(x)}{\sqrt{3h}}+\sigma^2(x)-2\kappa b(x)\right)x^2
    +\sigma^3(x)\kappa\sqrt{3h}+\sigma^4(x)h.
    $$
    Decrease $\delta$ once more so that, for all $h<\delta$,
    $$\frac{\kappa\sigma_1}{\sqrt{3h}}\geq 2\kappa B_M.$$
    Then, for all $x$ under consideration,
    $$
    \frac{\kappa\sigma(x)}{\sqrt{3h}}+\sigma^2(x)-2\kappa b(x)
    \geq
    \frac{\kappa\sigma_1}{\sqrt{3h}}-2\kappa B_M
    \geq 0.
    $$
    Therefore
    $$D_h(x)\geq \sigma^3(x)\kappa\sqrt{3h}\geq \sigma_1^3\kappa\sqrt{3h}. $$
    On the other hand,
    $$
    |N_h(x)| \leq
    (\sigma_2^2+2\kappa B_M)x^2 +2\kappa\sigma_2^2x +\sigma_2^4h.
    $$
    Since $x\leq\sigma_2\sqrt{3h}$, we have $x^2\leq 3\sigma_2^2h$. Hence
    \begin{equation}
    \label{eqn:diffusion_lambda_UB}
    \begin{split}
        \lambda(x)
        &\leq
        \frac{|N_h(x)|}{D_h(x)}\\
        &\leq
        \frac{(\sigma_2^2+2\kappa B_M)\sigma_2^2\cdot 3h
        +2\kappa\sigma_2^2x+\sigma_2^4h}
        {\sigma_1^3\kappa\sqrt{3h}}\\
        &=
        \frac{2\sigma_2^2}{\sqrt{3}\sigma_1^3}\cdot \frac{x}{\sqrt h}
        +
        \frac{4\sigma_2^4+6\kappa B_M\sigma_2^2}
        {\sqrt{3}\sigma_1^3\kappa}\cdot \sqrt h.
    \end{split}
    \end{equation}
    Choosing
    $$
    C:=
    \max\left\{
    \frac{2\sigma_2^2}{\sqrt{3}\sigma_1^3},
    \frac{4\sigma_2^4+6\kappa B_M\sigma_2^2}
    {\sqrt{3}\sigma_1^3\kappa}
    \right\}
    $$
    completes the proof.
\end{proof}

\noindent\textbf{Lemma~\ref{lemma:sticky_bc@x_diffusion}.}
    Let $u$ be the solution of \eqref{eqn:diffusion_bckwd_eqn} satisfying corresponding boundary condition \eqref{eqn:L_diffusion_BC} and assume that $b(0) \neq \frac{\sigma^2(0)}{2\kappa}$.
    Then, there exists $\delta$ small such that for all $h<\delta$ and $x\in \Omega_{b}$,
    \begin{equation*}
    \label{eqn:diffusion_BC_consequence}
    \begin{split}
        \partial_x u(t,x) &= \left(x+M(x) \right)\partial_{xx}u(t,x) + Re(x)
    \end{split}
    \end{equation*}
    where
    $$
    |Re(x)|\leq C(h+x)=\mathcal{O}(h+x),
    $$
    and $C$ depends on bounds of $\partial_xu,\partial_{xx}u,u^{(3)}$ and on the local bounds of $b,\sigma$ and their derivatives up to order $2$ on the boundary neighborhood $[0,r_0]$, but is independent of $x$ and $h$.
\begin{proof}
    Similar to proof of Lemma \ref{lemma:sticky_bc@x}, apply Taylor expansion with remainder to \eqref{eqn:L_diffusion_BC} centered at $x$. 
    By Lemma~\ref{lemma:bound_on_drift_diffusion}, after decreasing $\delta$ if necessary, every $x\in\Omega_{b}$ lies in $B_\delta\subset[0,r_0]$. Hence all intermediate points in the Taylor expansions below also lie in $[0,r_0]$, where the local coefficient bounds from Assumption~\ref{assump:local_coeff_boundary_regularity} apply. We have
    \begin{equation*}
    \begin{split}
        \sigma^2(0) \partial_xu(t,0) &= \sigma^2(x)\partial_x u(t,x) - x\sigma^2(x)\partial_{xx}u(t,x) -2x\sigma(x)\sigma'(x)\partial_xu(t,x) \\&\qquad+ \frac{x^2}{2}\partial_{xx}\left(\sigma^2(y_{1,x})\partial_x u(t,y_{1,x})\right)\\
        &= \sigma^2(x)\partial_x u(t,x) - x\sigma^2(x)\partial_{xx}u(t,x) + \mathcal{O}(h+x)
    \end{split}
    \end{equation*}
    and
    \begin{equation*}
    \begin{split}
        &\quad \quad 2\kappa b(0)\partial_x u(t,0) + \kappa \sigma^2(0)\partial_{xx}u(t,0)\\ &= 2\kappa \left(b\cdot\partial_xu - x\partial_x\left(b\cdot \partial_x u\right) + \frac{x^2}{2}\partial_{xx}\left(b\cdot \partial_x u\right)\right)\\
        &\quad + \kappa \left(\sigma^2\cdot \partial_{xx}u - x\cdot\partial_x\left(\sigma^2\cdot \partial_{xx}u\right)\right)\\
        &= 2\kappa b(x)\partial_x u(t,x) + \left(- 2x\kappa b(x) + \kappa \sigma^2(x)\right)\partial_{xx}u(t,x) + \mathcal{O}(h+x)
    \end{split}
    \end{equation*}
    where we omit function arguments in expansion above.
    We choose $\delta$ small such that $\sigma^2(x) - 2\kappa b(x) \neq 0$ for all $x\in [0,\sigma_2\sqrt{3h}]$ because we assume drift and diffusion terms to be continuous and $b(0) \neq \frac{\sigma^2(0)}{2\kappa}$.
    This allows us to conclude that $(\sigma^2(x) - 2\kappa b(x))^{-1}$ to be bounded for $x\in [0,\sigma_2\sqrt{3h}]$.
    This allows us to finish proving the lemma by equating two expressions above and rearranging terms.
\end{proof}

\noindent\textbf{Lemma~\ref{lemma:1st_mom_diffusion}.}
There exists $\delta>0$ such that $\forall h < \delta$ and $\forall x\in \Omega_{b}$, we have, 
\begin{equation*}
    \label{eqn:first_mom_negative_drifted_diffusion_approx}
    \begin{split}
        \mathbb{E}_x(\Delta X_{k}) &= \frac{\lambda(x)x^2}{2\sqrt{3h} \cdot \sigma(x)} + \frac{\sigma(x) \lambda(x)\sqrt{3h}}{2} + \lambda(x)b(x)h - x + \mathcal{O}(hx + h^2).\\
    \end{split}
\end{equation*}
where
    $$R_1(x) = \frac{\lambda(x)b(x)^2h^2}{\sigma(x)\sqrt{3h}} = \mathcal{O}(hx + h^2)$$
\begin{proof}
    We first calculate $\mathbb{E}_x(X_{k+1})$ where $b(x)<0$.
    According to \eqref{eqn:diffusion_cont_scheme_inlayer}, given $X_k=x$, $X_{k+1}$ can take four different forms
    \begin{enumerate}
        \item If $x + \sigma(x)Z_{k+1} \geq 0$ and $x + \sigma(x)Z_{k+1} + b(x)h \geq 0$, then 
        $$X_{k+1} = x + \sigma(x)Z_{k+1} + b(x)h \;\;\; \text{where}\;\;\; Z_{k+1} \geq -\frac{1}{\sigma(x)}\left(b(x)h+x\right)$$
        \item If $x + \sigma(x)Z_{k+1} \geq 0$ and $x + \sigma(x)Z_{k+1} + b(x)h \leq 0$, then 
        $$X_{k+1} = -x - \sigma(x)Z_{k+1} - b(x)h \;\;\; \text{where}\;\;\; -\frac{x}{\sigma(x)}\leq Z_{k+1} \leq -\frac{1}{\sigma(x)}\left(b(x)h+x\right)$$
        \item If $x + \sigma(x)Z_{k+1} \leq 0$ and $x + \sigma(x)Z_{k+1} + b(x)h \leq 0$, then 
        $$X_{k+1} = x + \sigma(x)Z_{k+1} - b(x)h \;\;\; \text{where}\;\;\; \frac{1}{\sigma(x)}\left(b(x)h-x\right)\leq Z_{k+1} \leq -\frac{x}{\sigma(x)}$$
        \item If $x + \sigma(x)Z_{k+1} \leq 0$ and $x + \sigma(x)Z_{k+1} + b(x)h \geq 0$, then 
        $$X_{k+1} = -x - \sigma(x)Z_{k+1} + b(x)h \;\;\; \text{where}\;\;\; Z_{k+1} \leq \frac{1}{\sigma(x)}\left(b(x)h-x\right)$$
    \end{enumerate}
    Therefore,
    \begin{equation}
    \label{eqn:1st_mom_Xk+1_negative_drift_diffusion}
    \begin{split}
        \mathbb{E}_x(X_{k+1}) &= \lambda(x)\mathbb{E}_x \left( \left| \left|x + \sigma(x)Z_{k+1}\right| + b(x)h \right| \right) + (1-\lambda(x))\cdot 0\\
        &= \int_{-\frac{1}{\sigma(x)}\left(b(x)h+x\right)}^{\sqrt{3h}} (x+\sigma(x)y+b(x)h)\cdot \frac{\lambda(x)}{2\sqrt{3h}}\;dy\\
        &\quad +  \int_{-\frac{x}{\sigma(x)}}^{-\frac{1}{\sigma(x)}\left(b(x)h+x\right)} (-x-\sigma(x)y-b(x)h)\cdot \frac{\lambda(x)}{2\sqrt{3h}}\;dy\\
        &\quad +  \int_{\frac{1}{\sigma(x)}\left(b(x)h-x\right)}^{-\frac{x}{\sigma(x)}} (x+\sigma(x)y-b(x)h)\cdot \frac{\lambda(x)}{2\sqrt{3h}}\;dy\\
        &\quad +  \int_{-\sqrt{3h}}^{\frac{1}{\sigma(x)}\left(b(x)h-x\right)} (-x-\sigma(x)y+b(x)h)\cdot \frac{\lambda(x)}{2\sqrt{3h}}\;dy \\
        &= \frac{\lambda(x)x^2}{2\sqrt{3h} \cdot \sigma(x)} + \frac{\sigma(x) \lambda(x)\sqrt{3h}}{2} + \lambda(x)b(x)h + \frac{\lambda(x)b(x)^2h^2}{\sigma(x)\sqrt{3h}}.
    \end{split}
    \end{equation}
    This gives that
    \begin{equation}
    \label{eqn:first_mom_negative_drifted_diffusion}
    \begin{split}
        \mathbb{E}_x(\Delta X_{k}) &= \frac{\lambda(x)x^2}{2\sqrt{3h} \cdot \sigma(x)} + \frac{\sigma(x) \lambda(x)\sqrt{3h}}{2} + \lambda(x)b(x)h - x + \frac{\lambda(x)b(x)^2h^2}{\sigma(x)\sqrt{3h}}.\\
    \end{split}
    \end{equation}
    When the drift term $b(x)$ is positive calculation is simpler (as there is only absolute value on $x+\sigma(x)Z_{k+1}$), we can verify that:
    \begin{equation}
    \label{eqn:first_mom_positive_drifted_diffusion}
        \mathbb{E}_x(\Delta X_{k}) = \frac{\lambda(x)x^2}{2\sqrt{3h} \cdot \sigma(x)} + \frac{\sigma(x) \lambda(x)\sqrt{3h}}{2} + \lambda(x)b(x)h - x.
    \end{equation}
    In other word, the proof is complete if the $b(x) \geq 0$ and $R_1 = 0$.
    Back to the case where $b(x) < 0$, the first conditional moment of $\Delta X_k$ is
    \begin{equation*}
        \mathbb{E}_x(\Delta X_k) = \eqref{eqn:first_mom_positive_drifted_diffusion} + \frac{\lambda(x)b(x)^2h^2}{\sigma(x)\sqrt{3h}},
    \end{equation*}
    and the remainder
    \begin{equation}
    \label{eqn:first_mom_diffusion_remainder}
        R_1(x) = \frac{\lambda(x)b(x)^2h^2}{\sigma(x)\sqrt{3h}} = \mathcal{O}(hx + h^2)
    \end{equation}
    because of Lemma \ref{lemma:UB_lambda_diffusion} and the bounds
    $b(x)\in[b_1,b_2]$, $\sigma(x)\geq\sigma_1>0$ from Lemma~\ref{lemma:bound_on_drift_diffusion}.
\end{proof}

\noindent\textbf{Lemma~\ref{lemma:2nd_mom_order_negative_drifted_diffusion}.}
There exists $\delta>0$ and $C > 0$ such that $\forall h < \delta$ and $\forall x\in \Omega_{b}$, we have,
\begin{equation*}
    \mathbb{E}_x(\Delta X_k^2) = -\frac{\lambda(x)}{\sigma(x)\sqrt{3h}}\cdot x^3 + (1+\lambda(x))x^2 - \lambda(x)\sigma(x)\sqrt{3h}\cdot x + \lambda(x)\sigma^2(x)\cdot h + R_2(x)
\end{equation*}
    where
    $$|R_2(x)| \leq C\cdot (hx + h^2) = \mathcal{O}(hx + h^2).$$
\begin{proof}
    Again, by Lemma~\ref{lemma:bound_on_drift_diffusion}, after decreasing $\delta$ if necessary, every $x\in\Omega_{b}$ lies in $B_\delta\subset[0,r_0]$.
    When drift is negative, recycle part of calculation in \eqref{eqn:1st_mom_Xk+1_negative_drift_diffusion} and separate out remainder terms of order greater that $3/2$, we have: (when drift is positive, $\frac{b(x)^2h^2}{\sigma(x)\sqrt{3h}}$ will be removed from $R_2(x)$, but we still reach same conclusion)
    \begin{equation*}
    \begin{split}
       & \mathbb{E}_x(\Delta X_k^2) \\
        &= \lambda(x)\mathbb{E}_x\left(\left(\left| \left|x + \sigma(x) Z_{k+1}\right| + b(x)h \right| - x\right)^2\right) + (1-\lambda(x))x^2\\
        &= \lambda(x)\mathbb{E}_x\left( \left( \left|x + \sigma(x) Z_{k+1}\right| + b(x)h \right)^2 - 2x\left| \left|x + \sigma(x)Z_{k+1}\right| + b(x)h \right| + x^2 \right) + (1-\lambda(x))x^2\\
        &= \lambda(x)\mathbb{E}_x\left( \left|x + \sigma(x) Z_{k+1}\right|^2 - 2x \left| \left|x + \sigma(x) Z_{k+1}\right| + b(x)h \right| + x^2 \right) + (1-\lambda(x))x^2 + R_2'(x)\\
        &= \lambda(x) \cdot \left( x^2 + \sigma^2(x) h - 2x \left( \frac{x^2}{2 \sigma(x) \sqrt{3h}} + \frac{\sigma(x)\sqrt{3h}}{2} + b(x)h + \frac{b(x)^2h^2}{\sigma(x)\sqrt{3h}}\right) + x^2 \right)\\
        & \quad + (1-\lambda(x))x^2 + R_2'(x)\\
        &= \lambda(x) \cdot \left( 2x^2 + \sigma^2(x)h - 2x \left( \frac{x^2}{2\sigma(x)\sqrt{3h}} + \frac{\sigma(x)\sqrt{3h}}{2} \right) \right)\\
        &\quad + (1-\lambda(x))x^2 - 2x\lambda(x)\left(b(x)h + \frac{b(x)^2h^2}{\sigma(x)\sqrt{3h}}\right) + R_2'(x)\\
        &= -\frac{\lambda(x)}{\sigma(x)\sqrt{3h}}\cdot x^3 + (1+\lambda(x))x^2 - \lambda(x)\sigma(x)\sqrt{3h}\cdot x + \lambda(x)\sigma^2(x)\cdot h  + R_2(x)
    \end{split}
    \end{equation*}
    where
    $$R_2'(x) = \lambda(x)\mathbb{E}_x\left(b(x)^2h^2 + 2b(x)h\left|x+\sigma(x)Z_{k+1}\right|\right)$$
    and
    \begin{equation*}
    \begin{split}
        R_2(x) &= -2x\lambda(x)\left(b(x)h + \frac{b(x)^2h^2}{\sigma(x)\sqrt{3h}}\right) + R_2'(x)\\
        &= 2x\lambda(x)\left(b(x)h + \frac{b(x)^2h^2}{\sigma(x)\sqrt{3h}}\right) + \lambda(x)\mathbb{E}_x\left(b(x)^2h^2 + 2b(x)h\left|x+\sigma(x)Z_{k+1}\right|\right).
    \end{split}
    \end{equation*}
    We can bound $R_2(\cdot)$ using the local boundary-layer bounds from Lemma~\ref{lemma:bound_on_drift_diffusion}. Recall that $B_M:=|b_1|\vee |b_2|$ and $\sigma(x)\geq\sigma_1>0$ on the boundary layer:
    $$|R_2(x)|\leq 2B_Mhx + \frac{2B_M^2h^2x}{\sigma_1\sqrt{3h}} + B_M^2h^2 + 2B_Mh\mathbb{E}_x\left(\left|x+\sigma(x)Z_{k+1}\right|\right)\lambda(x).$$
    Hence, it suffices to complete the proof by showing that $h\mathbb{E}_x\left(\left|x+\sigma(x)Z_{k+1}\right|\right)\lambda(x) = \mathcal{O}(hx+h^2)$.
    By Lemma \ref{lemma:UB_lambda_diffusion}, and using
    $x\leq \sigma_2\sqrt{3h}$ and $|Z_{k+1}|\leq \sqrt{3h}$, we have
    \begin{equation*}
    \begin{split}
        h\mathbb{E}_x\left(\left|x+\sigma(x)Z_{k+1}\right|\right)|\lambda(x)|
        &\leq h\left(x+\sigma_2\sqrt{3h}\right)|\lambda(x)|\\
        &\leq C h^{3/2}\left(\frac{x}{\sqrt h}+\sqrt h\right)\\
        &\leq C(hx+h^2).
    \end{split}
    \end{equation*}
\end{proof}

\noindent\textbf{Lemma~\ref{lemma:1st_2nd_mom_combination_generator_diffusion}.}
Assume that $b(0) \neq \frac{\sigma^2(0)}{2\kappa}$, there exist $\delta>0$ such that $\forall h < \delta$ and $\forall x\in \Omega_{b}$
    \begin{equation}
    \label{eqn:1st_2nd_mom_goal_cont_scheme_drifted}
        2\left(\mathbb{E}_x(\Delta X_k) - hb(x)\right) \cdot (x + M(x)) 
        + \mathbb{E}_x(\Delta X_k^2) = \sigma^2(x)h + R_3(x) 
    \end{equation}
    where $|R_3(x)| \leq C(hx+h^2)=\mathcal{O}(hx+h^2)$ for some constant $C$ independent of $x$ and $h$. The constant depends only on the boundary-layer bounds $b_1,b_2,\sigma_1,\sigma_2$, $\kappa$, and the lower bound of $|\sigma^2(x)-2\kappa b(x)|$ in a sufficiently small neighborhood of $0$.

\begin{proof}
Let
$$\mathbb E_x(\Delta X_k)=A_h(x)+R_1(x),$$
where $A_h(x)$ denotes the principal term from
\eqref{eqn:first_mom_positive_drifted_diffusion}. By Lemma~\ref{lemma:UB_lambda_diffusion},
\begin{equation}\label{eq:R1_bound_rewrite}
|R_1(x)| \le C_1(hx+h^2)
\end{equation}
for some constant $C_1$ independent of $h$.

Likewise, by Lemma~\ref{lemma:2nd_mom_order_negative_drifted_diffusion},
\[
\mathbb E_x(\Delta X_k^2)=B_h(x)+R_2(x),
\]
where
\[
B_h(x)
=
-\frac{\lambda(x)}{\sigma(x)\sqrt{3h}}x^3
+(1+\lambda(x))x^2
-\lambda(x)\sigma(x)\sqrt{3h}\,x
+\lambda(x)\sigma^2(x)h,
\]
and
\begin{equation}\label{eq:R2_bound_rewrite}
|R_2(x)| \le C_2(hx+h^2)
\end{equation}
for some constant \(C_2\) independent of \(h\). We now substitute these expansions into the quantity of interest:
\[
2\bigl(\mathbb E_x(\Delta X_k)-hb(x)\bigr)(x+M(x))
+\mathbb E_x(\Delta X_k^2),
\]
where
\[
M(x)=\frac{\kappa \sigma^2(x)}{\sigma^2(x)-2\kappa b(x)}.
\]
This gives
\begin{equation}\label{eq:split_main_remainder}
\begin{aligned}
&2\bigl(\mathbb E_x(\Delta X_k)-hb(x)\bigr)(x+M(x))
+\mathbb E_x(\Delta X_k^2) \\
&\qquad=
\underbrace{2\bigl(A_h(x)-hb(x)\bigr)(x+M(x))+B_h(x)}_{=: \mathcal A_h(x)}
+2R_1(x)(x+M(x))+R_2(x).
\end{aligned}
\end{equation}

A direct rearrangement of \(\mathcal A_h(x)\) yields
\begin{equation}\label{eq:Ah_rearranged}
\begin{aligned}
\mathcal A_h(x)
&=
\lambda(x)\left(
\left(\frac{M(x)}{\sigma(x)\sqrt{3h}}+1\right)x^2
+M(x)\sigma(x)\sqrt{3h}
+\sigma^2(x)h
+2b(x)M(x)h
\right) \\
&\quad
-x^2-2M(x)x-2b(x)M(x)h
+2x(\lambda(x)-1)b(x)h.
\end{aligned}
\end{equation}
By the definition \eqref{eqn:diffusion_lambda_cont_scheme} of \(\lambda(x)\),
the bracketed terms are chosen precisely so that
\begin{multline*}
\lambda(x)\left(
\left(\frac{M(x)}{\sigma(x)\sqrt{3h}}+1\right)x^2
+M(x)\sigma(x)\sqrt{3h}
+\sigma^2(x)h
+2b(x)M(x)h
\right)\\
-x^2-2M(x)x-2b(x)M(x)h
=
\sigma^2(x)h.
\end{multline*}
Hence
\begin{equation}\label{eq:Ah_identity}
\mathcal A_h(x)=\sigma^2(x)h+2x(\lambda(x)-1)b(x)h.
\end{equation}

Substituting \eqref{eq:Ah_identity} into \eqref{eq:split_main_remainder}, we obtain
\[
2\bigl(\mathbb E_x(\Delta X_k)-hb(x)\bigr)(x+M(x))
+\mathbb E_x(\Delta X_k^2)
=
\sigma^2(x)h+R_3(x),
\]
where
\begin{equation}\label{eq:R3_definition_rewrite}
R_3(x):=
2x(\lambda(x)-1)b(x)h
+2R_1(x)(x+M(x))
+R_2(x).
\end{equation}

It remains to bound $R_3(x)$. Since $b(0)\neq \sigma^2(0)/(2\kappa)$, continuity implies that,
for sufficiently small \(\delta>0\), the quantity
$$\sigma^2(x)-2\kappa b(x)$$
is bounded away from $0$ on $x\in[0,\sigma_2\sqrt{3h}]$ whenever $h<\delta$. Therefore $M(x)$ is uniformly bounded on this region.
Using also the boundary-layer bound $b(x)\in[b_1,b_2]$, the fact that
$|\lambda(x)-1|\le 1$, the remainder bounds \eqref{eq:R1_bound_rewrite}--\eqref{eq:R2_bound_rewrite},
and $x\in[0,\sigma_2\sqrt{3h}]$, we conclude that
$$|R_3(x)| \le C(hx+h^2)$$
for some constant $C$ independent of $h$. This proves the claim.
\end{proof}

\noindent\textbf{Lemma~\ref{lemma:1st_mom_order_negative_drifted_diffusion}.}
    Assume that $b(0) \neq \frac{\sigma^2(0)}{2\kappa}$, there exists $\delta>0$ such that $\forall h < \delta$ and $\forall x\in \Omega_{b}$, 
     \begin{equation}
     \label{eqn:first_mom_negative_drifted_bound_diffusion}
        \left|\mathbb{E}_x(\Delta X_k)\right| \leq C(h + hx+h^2),
     \end{equation}
     where $C>0$ is independent of $x$ and $h$ and depends only on $\kappa$, the local boundary bounds for $b,\sigma$, and the nondegeneracy constant coming from $b(0)\neq \sigma^2(0)/(2\kappa)$.

\begin{proof}
    By Lemma~\ref{lemma:bound_on_drift_diffusion}, after decreasing $\delta$ if necessary, all $x\in[0,\sigma_2\sqrt{3h}]$ lie in the fixed boundary neighborhood where the local coefficient bounds apply. Since
    $$
    b(0)\neq \frac{\sigma^2(0)}{2\kappa},
    $$
    continuity implies that, after decreasing $\delta$ if necessary, there exists $c_0>0$ such that
    $$
    |\sigma^2(x)-2\kappa b(x)|\ge c_0,
    \qquad x\in[0,\sigma_2\sqrt{3h}].
    $$
    Hence $M(x)$ is uniformly bounded on this region. Moreover, since
    $$
    M(0)=\frac{\kappa\sigma^2(0)}{\sigma^2(0)-2\kappa b(0)}\neq 0,
    $$
    continuity also implies that there exists $m_0>0$ such that
    $$
    |M(x)|\ge m_0,\qquad x\in[0,\sigma_2\sqrt{3h}]
    $$
    for all sufficiently small $h$. Decreasing $\delta$ once more so that
    $$
    \sigma_2\sqrt{3h}\le \frac{m_0}{2}, \qquad h<\delta,
    $$
    we obtain
    $$
    |x+M(x)|\ge |M(x)|-x\ge \frac{m_0}{2},
    \qquad x\in[0,\sigma_2\sqrt{3h}].
    $$
    We derive from remainder estimation in Lemma~\ref{lemma:1st_2nd_mom_combination_generator_diffusion} and the crude bound $\mathbb{E}_x(\Delta X_k^2)\le Ch$  that
    \begin{equation*}
    \begin{split}
        |\mathbb{E}_x(\Delta X_k)|
        &= \left|\frac{\sigma^2(x)h + R_3(x) - \mathbb{E}_x(\Delta X_k^2)}
        {2\left(x+M(x)\right)} + hb(x)\right|\\
        &\leq \frac{\sigma_2^2 h + |R_3(x)| + (\sigma_2 \sqrt{3h} + B_Mh)^2}{m_0}
        + B_M h\\
        &\leq C(h+hx+h^2).
    \end{split}
    \end{equation*}
\end{proof}

\noindent\textbf{Lemma~\ref{lemma:3rd_mom_order_negative_drifted_diffusion}.}
    There exists $\delta>0$ such that $\forall h < \delta$ and $\forall x\in \Omega_{br}$,
     \begin{equation}
     \label{eqn:first_mom_drifted_bound}
        \left|\mathbb{E}_x(\Delta X_k^3)\right| \leq K_4(hx + h^2),
     \end{equation}
     where $K_4$ is a fixed constant dependent on drift, diffusion, and $\kappa$.
\begin{proof}
    By Lemma~\ref{lemma:bound_on_drift_diffusion}, after decreasing $\delta$ if necessary, all $x\in[0,\sigma_2\sqrt{3h}]$ lie in the fixed boundary neighborhood where the local coefficient bounds apply.
    We begin with the case where $x-\sigma(x)\sqrt{3h} \leq 0$.
    
    Similar to proof of Lemma~\ref{lemma:3rd_mom_bound}, fix $x\in[0,\sigma_2\sqrt{3h}]$, an increment $\Delta X_k$ is supported on $[-x,\sigma_2\sqrt{3h} + B_Mh]$.
    We decompose $\Delta X_k$ into its positive and negative parts, so that $|\Delta X_k|^3=(\Delta X_k^+)^3+(\Delta X_k^-)^3$ and therefore
    \[
    \mathbb{E}_x(|\Delta X_k|^3)
    =\mathbb{E}_x[(\Delta X_k^+)^3]+\mathbb{E}_x[(\Delta X_k^-)^3].
    \]
    
    \medskip\noindent
    \textit{Positive part.}
    Whenever $\Delta X_k>0$ the increment comes from the continuous branch $Z_k$ in Algorithm \ref{alg:diffusion_cont_scheme}.
    This means the largest positive increment is $\sigma(x)\sqrt{3h} + b(x)h$, hence
    \begin{equation}\label{eq:3nd_mom_pospart_diffusion}
    \mathbb{E}_x[(\Delta X_k^+)^3]\le \lambda(x)\,(\sigma_2\sqrt{3h} + B_Mh)^{3}.
    \end{equation}
    Recall by Lemma~\ref{lemma:UB_lambda_diffusion}, for $x\in[0,\sqrt{3h}]$, we have the following bound on $\lambda(x)$, 
    $$|\lambda(x)| \leq \frac{2 \sigma^2_2}{\sqrt{3}\sigma_1^3}\cdot \frac{x}{\sqrt{h}} + \frac{(4\sigma_2^4+6\kappa B_M\sigma_2^2)}{\sqrt{3}\sigma_1^3\kappa}\cdot \sqrt{h} \leq S_1\left(\frac{x}{\sqrt{h}} + \sqrt{h} \right).$$
    Combining with \eqref{eq:3nd_mom_pospart_diffusion} gives
    \begin{equation*}
    \begin{split}
        \mathbb{E}_x[(\Delta X_k^+)^3] &\leq S_1\left(\frac{x}{\sqrt{h}} + \sqrt{h} \right)(\sigma_2\sqrt{3h} + B_Mh)^{3}\\
        &\leq S_1\left( \sqrt{3}\sigma_2x + B_M\sqrt{h}x + \sqrt{3}\sigma_2 h + B_M h^{3/2} \right)(\sigma_2\sqrt{3h} + B_Mh)^{2}.
    \end{split}
    \end{equation*}
    In other word, exist constant $K_4^+ > 0$ that
    \begin{equation}\label{eq:3nd_mom_pos-final_diffusion}
    \mathbb{E}_x[(\Delta X_k^+)^3]\le K^+_4 (hx + h^2).
    \end{equation}
    
    \medskip\noindent
    \textit{Negative part.}
    Because $\Delta X_k\ge -x$, we have $0\le \Delta X_k^-\le x$, and therefore
    \[
    (\Delta X_k^-)^3 \le x^3.
    \]
    Since $x\in [0,\sigma_2\sqrt{3h}]$,
    \begin{equation}\label{eq:3nd_mom_neg-final_diffusion}
    \mathbb{E}_x[(\Delta X_k^-)^3] \le x^3 \le 3\sigma_2^2hx.
    \end{equation}
    
    \medskip\noindent
    Combining \eqref{eq:3nd_mom_pos-final_diffusion} and \eqref{eq:3nd_mom_neg-final_diffusion} gives
    \[
    \mathbb{E}_x(|\Delta X_k|^3)
    \;\le\;
    K_4^+ h^2 + (K_4^+ + 3\sigma_2^2)hx.
    \]
    Since $|\mathbb{E}_x(\Delta X_k^3)|\le \mathbb{E}_x(|\Delta X_k|^3)$, the same bound holds for
    $|\mathbb{E}_x(\Delta X_k^3)|$ with $K_4=K_4^+ + 3\sigma_2^2$.  
    We have shown the lemma for $x\in\Omega_{b}$. It remains to consider $x\in\Omega_{r}$. In this region the scheme performs the reflected Euler--Maruyama update
    $$
    X_{k+1}=\left|x+\sigma(x)Z_{k+1}+b(x)h\right|.
    $$
    Let $\Delta X_k^{\mathrm{EM}}:=\sigma(x)Z_{k+1}+b(x)h$ denote the unreflected Euler--Maruyama increment. By symmetry of $Z_{k+1}$ and the boundary-layer bounds on $b$ and $\sigma$,
    $$
    \left|\mathbb{E}_x\left[\left(\Delta X_k^{\mathrm{EM}}\right)^3\right]\right|
    =
    \left|3b(x)\sigma^2(x)h^2+b^3(x)h^3\right|
    \leq Ch^2.
    $$
    The reflected and unreflected increments differ only on
    $$
    A:=\left\{x+\sigma(x)Z_{k+1}+b(x)h<0\right\}.
    $$
    Since $x\in\Omega_{r}$, Lemma~\ref{lemma:bound_on_drift_diffusion} gives
    $x=\mathcal{O}(\sqrt h)$, and the interval of $Z_{k+1}$ values producing reflection has length $\mathcal{O}(h)$. Because $Z_{k+1}$ is uniform on an interval of length $2\sqrt{3h}$, we have
    $$
    \mathbb{P}_x(A)=\mathcal{O}(\sqrt h).
    $$
    On $A$, both $|\Delta X_k|$ and $|\Delta X_k^{\mathrm{EM}}|$ are $\mathcal{O}(\sqrt h)$, so
    $$
    \left|\mathbb{E}_x\left[\Delta X_k^3-\left(\Delta X_k^{\mathrm{EM}}\right)^3\right]\right|
    \leq C h^{3/2}\mathbb{P}_x(A)
    =\mathcal{O}(h^2).
    $$
    Hence
    $$
    |\mathbb{E}_x(\Delta X_k^3)|\leq K_4'h^2\leq K_4'(hx+h^2),
    $$
    and the proof is complete.

\end{proof}

\noindent\textbf{Theorem~\ref{thm:local_err_cont_scheme_diffusion}.}
    Given $b(0) \neq \frac{\sigma^2(0)}{2\kappa}$, there exists $\delta > 0$ and $C\geq 0$ depending on the relevant bounds of derivatives of $u$ and on the local boundary constants for $b,\sigma$ such that $\forall h < \delta$ and $\forall x\in \Omega_{b}$,
    \begin{equation}
    \label{eqn:local_err_cont_scheme_drifted}
        \left|\mathbb{E}\left(u_{k+1}-u_k|X_k=x\right)\right| \leq C(h^2 + hx) = \mathcal{O}(hx + h^2) 
    \end{equation}
\begin{proof}
    We choose $\delta$ small enough so that all lemmas we have proved in this section hold for $h < \delta$.
    In particular, by Lemma~\ref{lemma:bound_on_drift_diffusion}, for all $h<\delta$ and all $x\in\Omega_{b}$,
    $$
    b(x)\in[b_1,b_2],\qquad \sigma(x)\in[\sigma_1,\sigma_2].
    $$
    These constants are local boundary-layer constants and are independent of $x$ and $h$.
    Let $u$ be the solution to the backward equation \eqref{eqn:diffusion_bckwd_eqn}.
    We perform a Taylor expansion of $u$ to 3rd order centered at $(t_k, X_k)$. We write $u_k=u(t_k,X_k), \partial_xu_k = \partial_xu(t_k,X_k)$ (and similarly for $\partial_{xx}u_k, \partial_{xxx}u_k$), and we write $Y_k$ for a point such that $Y_k\in[X_k,X_{k+1}]$ or $Y_k\in[X_{k+1},X_{k}]$, and $s_k$ will be a point in interval $s_k\in[kh, (k+1)h]$). Expanding gives
    \begin{equation}
    \label{eq:taylor4_diffusion}
    \begin{split}
        u_{k+1} - u_k
        &= \partial_t u_k \cdot h  +  \partial_xu_k\cdot \Delta X_k + \frac{1}{2}\partial_{xx}u_k\cdot \Delta X_k^2\\
        &\quad + \frac{1}{2}\partial_{tt} u(s_k, X_k) \cdot h^2 + \frac{1}{6}\partial_{xxx}u(t_k, Y_{k})\cdot \Delta X_k^3.
    \end{split}
    \end{equation}
    We are interested in $\E_x(u_{k+1}-u_k)$ and know $x\in \Omega_{b}$. We now bound the expectation of each of the terms in the Taylor-expansion \eqref{eq:taylor4_diffusion}. 
    We have 
    \begin{align*}
    \E_x|\partial_{tt} u(s_k, X_k) \cdot h^2| & \leq h^2\|\partial_{tt}u\|_{\infty}\\
    \E_x|\partial_{xxx}u(t_k, Y_{k})\cdot \Delta X_k^3| &\leq C(h^2+hx)\|u^{(3)}\|_\infty,
    %\E_x|\partial_{xxxx}u(t_k, Y_{k})\cdot \Delta X_k^4|&\leq \|u^{(4)}\|_{\infty}h^2,
    \end{align*}
    where the second inequality follows from Lemma \ref{lemma:3rd_mom_order_negative_drifted_diffusion}.
    We also have, using the backward equation \eqref{eqn:diffusion_bckwd_eqn} and Lemma \ref{lemma:sticky_bc@x_diffusion},
    \begin{equation}
        \begin{split}
           & \E_x\left[\partial_t u_k\cdot h + \partial_xu_k\cdot \Delta X_k + \frac{1}{2}\partial_{xx}u_k\cdot \Delta X_k^2 \right]
            \\
            &\quad =
            \partial_xu(t_k,x)\left(\mathbb{E}_x(\Delta X_k)-b(x)h\right)
            +\frac{1}{2}\partial_{xx}u(t_k,x)
            \left(\mathbb{E}_x(\Delta X_k^2)-\sigma^2(x)h\right)
            \\
            &\quad =
            \left[
            \left(x+M(x)\right)\left(\mathbb{E}_x(\Delta X_k)-b(x)h\right)
            +\frac{1}{2}\left(\mathbb{E}_x(\Delta X_k^2)-\sigma^2(x)h\right)
            \right]\partial_{xx}u(t_k,x)
            \\
            &\qquad
            +\left(\mathbb{E}_x(\Delta X_k)-b(x)h\right)Re(x)
            \\
            &\quad =
            \frac{1}{2}\partial_{xx}u(t_k,x)R_3(x)
            +\left(\mathbb{E}_x(\Delta X_k)-b(x)h\right)Re(x).
        \end{split}
    \end{equation}
    The last step uses Lemma~\ref{lemma:1st_2nd_mom_combination_generator_diffusion}.
    Therefore, applying the triangle inequality to \eqref{eq:taylor4_diffusion}, and then using Lemma \ref{lemma:1st_mom_order_negative_drifted_diffusion} and remainder estimates in Lemma~\ref{lemma:sticky_bc@x_diffusion}, \ref{lemma:1st_2nd_mom_combination_generator_diffusion} shows that there exist constants $M_1,M_2$, independent of $x,t,h$, such that 
    % \begin{equation}\label{eqn:local_err4}
    \begin{align*}
         \left|\mathbb{E}\left(u_{k+1}-u_k|X_k=x\right)\right| 
         & \leq \frac{1}{2}\|\partial_{xx}u\|_{\infty} \cdot |R_3(x)| + \left(\left|\mathbb{E}_x(\Delta X_k)\right| + B_Mh\right)\cdot |Re(x)| \\
        &\qquad+ h^2\|\partial_{tt}u\|_{\infty}
        + C(h^2+hx)\|u^{(3)}\|_\infty\\
        %+ \|u^{(4)}\|_{\infty}h^2\\
        &\leq M_1 h^2 + M_2 hx 
    \end{align*}
    %\end{equation}
    where $B_M:=|b_1|\vee |b_2|.$
\end{proof}

\noindent\textbf{Lemma~\ref{lemma:first_mom_est_reflection_layer_diffusion}.}
   There exists $\delta > 0$ and constant $C$ which depends only on the boundary-layer bounds such that $\forall h < \delta$ and $\forall x\in \Omega_{r}$,
    $$|E_x(\Delta X_k) - b(x)h| \leq C( hx + h^2) = \mathcal{O}(hx + h^2)$$
\begin{proof}
Recall that
\[
\Omega_{r}:=\left\{x\ge \sigma(x)\sqrt{3h}:\ x-\sigma(x)\sqrt{3h}+b(x)h\le 0\right\}.
\]
For $x\in \Omega_{r}$ the scheme performs a reflected Euler--Maruyama step,
\[
X_{k+1}=\left|x+\sigma(x)Z_{k+1}+b(x)h\right|,
\qquad Z_{k+1}\sim \mathrm{Unif}[-\sqrt{3h},\sqrt{3h}].
\]
Hence
\begin{multline*}
\mathbb{E}_x(X_{k+1})
=\\
\frac{1}{2\sqrt{3h}}
\int_{-\frac{x+b(x)h}{\sigma(x)}}^{\sqrt{3h}}
\left(x+\sigma(x)z+b(x)h\right)\,dz
+
\frac{1}{2\sqrt{3h}}
\int_{-\sqrt{3h}}^{-\frac{x+b(x)h}{\sigma(x)}}
\left(-x-\sigma(x)z-b(x)h\right)\,dz.
\end{multline*}
A direct calculation gives
\[
\mathbb{E}_x(X_{k+1})
=
\frac{\sigma(x)\sqrt{3h}}{2}
+\frac{(x+b(x)h)^2}{2\sigma(x)\sqrt{3h}}.
\]
Therefore
\begin{align*}
\mathbb{E}_x(\Delta X_k)-b(x)h
&=
\frac{\sigma(x)\sqrt{3h}}{2}
+\frac{(x+b(x)h)^2}{2\sigma(x)\sqrt{3h}}
-x-b(x)h \\
&=
\frac{\left(x+b(x)h-\sigma(x)\sqrt{3h}\right)^2}{2\sigma(x)\sqrt{3h}}.
\end{align*}
In particular,
\[
\left|\mathbb{E}_x(\Delta X_k)-b(x)h\right|
=
\frac{\left(x+b(x)h-\sigma(x)\sqrt{3h}\right)^2}{2\sigma(x)\sqrt{3h}}.
\]

\noindent Let $B_M:=|b_1|\vee |b_2|$. By Lemma~\ref{lemma:bound_on_drift_diffusion}, after decreasing $\delta$ if needed, we have
\[
\sigma(x)\ge \sigma_1>0,
\qquad
|b(x)|\le B_M
\]
for all $x\in \Omega_{r}$ and all $h<\delta$. Moreover, since $x\in \Omega_{r}$,
$$
x\ge \sigma(x)\sqrt{3h},
\qquad
x+b(x)h\le \sigma(x)\sqrt{3h}.
$$
Hence
\[
0\le \sigma(x)\sqrt{3h}-x-b(x)h \le -b(x)h \le B_M h,
\]
and therefore
\[
\left|\mathbb{E}_x(\Delta X_k)-b(x)h\right|
\le
\frac{B_M^2h^2}{2\sigma(x)\sqrt{3h}}
\le
\frac{B_M^2}{2\sigma_1\sqrt{3}}\,h^{3/2}.
\]
Finally, $x\in \Omega_{r}$ also implies
\[
x\ge \sigma(x)\sqrt{3h}\ge \sigma_1\sqrt{3h},
\]
so
\[
h^{3/2}\le \frac{1}{\sigma_1\sqrt{3}}\,hx.
\]
Combining the last two bounds yields
\[
\left|\mathbb{E}_x(\Delta X_k)-b(x)h\right|
\le
\frac{B_M^2}{6\sigma_1^2}\,hx
\le C(hx+h^2)
\]
for some constant $C$ independent of $h$ and $x$. This completes the proof.
\end{proof}

\noindent\textbf{Theorem~\ref{thm:local_err_cont_scheme_diffusion_ring}.}
    (Local Error of $\{X_k\}_{k\in\mathbb{N}}$ in algorithm \ref{alg:diffusion_cont_scheme}, situation 2)\\
    Given $b(0) \neq \frac{\sigma^2(0)}{2\kappa}$, there exists $\delta > 0$ and $C \geq 0$ depending on the relevant derivative bounds of $u$ and on the local boundary constants for $b$ and $\sigma$ such that $\forall h < \delta$ and $\forall x\in \Omega_{r}$,
    \begin{equation}
    \label{eqn:local_err_cont_scheme_drifted_ring}
        \left|\mathbb{E}\left(u_{k+1}-u_k|X_k=x\right)\right| \leq C(h^2 + hx) = \mathcal{O}(hx + h^2) 
    \end{equation}
\begin{proof}
    The proof is a simplified version of proof of Theorem \ref{thm:local_err_cont_scheme_diffusion}.
    By Lemma~\ref{lemma:bound_on_drift_diffusion}, after decreasing $\delta$ if necessary, for all $h<\delta$ and all $x\in\Omega_{r}$,
    $$
    b(x)\in[b_1,b_2],\qquad \sigma(x)\in[\sigma_1,\sigma_2].
    $$
    Let $B_M:=|b_1|\vee |b_2|$.
    Recall from regularity assumption and Lemma~\ref{lemma:3rd_mom_order_negative_drifted_diffusion},
    \begin{align*}
    \E_x|\partial_{tt} u(s_k, X_k) \cdot h^2| & \leq h^2\|\partial_{tt}u\|_{\infty}\\
    \E_x|\partial_{xxx}u(t_k, Y_{k})\cdot \Delta X_k^3| &\leq C(h^2+hx)\|u^{(3)}\|_\infty,
    \end{align*}
    We also have, using the fact that $u$ solves the backward equation \eqref{eqn:diffusion_bckwd_eqn},
    \begin{equation*}
        \begin{split}
           & \E_x\left[\partial_t u_k\cdot h + \partial_xu_k\cdot \Delta X_k + \frac{1}{2}\partial_{xx}u_k\cdot (\Delta X_k^2) \right]
            \\
            &\quad = \partial_xu(t_k,x)\cdot \left(\mathbb{E}_x(\Delta X_k) - b(x)h\right) + \frac{1}{2}\partial_{xx}u(t_k,x) \cdot \left(-\sigma^2(x)\cdot h +  \mathbb{E}_x(\Delta X_k^2)\right).
        \end{split}
    \end{equation*}
    Apply Lemma~\ref{lemma:first_mom_est_reflection_layer_diffusion} and apply the lower bound of $\mathbb{E}_x(X_{k+1})$ to the second conditional moment calculation, we have,
    $$|E_x(\Delta X_k) - b(x)h| \leq C_2( hx + h^2)$$
    $$|E_x(\Delta X_k^2) - \sigma^2(x)h| \leq C_3( hx + h^2)$$
    Therefore, applying the triangle inequality to the Taylor expansion \eqref{eq:taylor4_diffusion}, and then using the first- and second-moment estimates above and Lemma~\ref{lemma:3rd_mom_order_negative_drifted_diffusion}, there exist constants $M_1,M_2$, independent of $x,t,h$, such that 
    % \begin{equation}\label{eqn:local_err4}
    \begin{align*}
         \left|\mathbb{E}\left(u_{k+1}-u_k|X_k=x\right)\right| 
         & \leq \|\partial_{x}u\|_{\infty} \cdot C_2( hx + h^2) + \frac{1}{2}\|\partial_{xx}u\|_{\infty} \cdot C_3( hx + h^2)\\
        &\qquad+ h^2\|\partial_{tt}u\|_{\infty}
        + C_1(h^2+hx)\|u^{(3)}\|_\infty\\
        %+ \|u^{(4)}\|_{\infty}h^2\\
        &\leq M_1 h^2 + M_2 hx. 
    \end{align*}
    %\end{equation}
\end{proof}

\noindent\textbf{Lemma~\ref{lemma:1st_2nd_mom_combination_generator_diffusion_special}.}
    When $b(0) = \frac{\sigma^2(0)}{2\kappa}$, there exist $\delta>0$ and and constant $C$ which depends only on the boundary-layer bounds such that $\forall h< \delta$ and $\forall x\in \Omega_{b}$,
    $$\left| \mathbb{E}_x\left(\Delta X_k\right) - hb(x) \right| \leq C(hx + h^2) = \mathcal{O}(hx + h^2).$$
\begin{proof}
    As before,we know there exists $\delta > 0$ such that for all $h < \delta$,
    $$b(x) \in [b_1, b_2]\,,\qquad \sigma(x) \in [\sigma_1, \sigma_2]\,,\qquad \lambda(x)\in[0,1], \qquad \forall x \in \Omega_{b}.$$
    It suffices to complete the proof if we show the inequality holds for all $x\in [0, \sigma_2\sqrt{3h}]$.
    Since $\sigma(0)>0$, the assumption $b(0)=\sigma^2(0)/(2\kappa)$ implies $b(0)>0$. By continuity, after further decreasing $\delta$ if necessary, $b(x)\ge 0$ for any $x\in \Omega_{b}$ and $\Omega_{r} = \emptyset$.
    We can check that when $b(x) = \frac{\sigma^2(x)}{2\kappa}$, 
    $$\mathbb{E}_x\left(\Delta X_k\right) = hb(x),$$ 
    we shall expect that $=$ is replaced by $\approx$ within the boundary layer.
    We use Taylor expansion to make the argument more accurate, $\mathbb{E}_x\left(\Delta X_k\right)$ is a good approximation to $hb(x)$.
    We know that, denote $\sigma_0 := \sigma(0)$ and $b_0 := b(0)$,
    $$\sigma^2(x) = \left(\sigma_0 + x\cdot\partial_x\sigma(z_x)\right)^2\,,$$
    $$b(x) = b_0 + x\cdot\partial_x b(y_x)\,.$$
    Therefore,
    \begin{equation}
    \label{eqn:sigma_square_miunus_2kappa_b_remainder}
    \begin{split}
        \sigma^2(x) - 2\kappa b(x) &= \sigma_0^2 - 2\kappa b_0 + \hat{R}(x) = \hat{R}(x)
    \end{split}
    \end{equation}
    where 
    $$|\hat{R}(x)| = \left|\left(2\sigma_0\cdot\partial_x\sigma(z_x)-2\kappa\cdot\partial_xb(y_x)\right)x + \left(\partial_x\sigma(z_x)\right)^2 \cdot x^2 \right| \leq \hat{K}(x+x^2).$$
    where $\hat K$ depends only on the local first-derivative bounds of $b$ and $\sigma$ on the boundary neighborhood, and is independent of $x$ and $h$.
    We substitute \eqref{eqn:sigma_square_miunus_2kappa_b_remainder} into \eqref{eqn:diffusion_lambda_cont_scheme},
    \begin{equation*}
    \begin{split}
        \lambda(x) &= \frac{\hat{R}(x)x^2 + 2\kappa \sigma^2(x)x + \sigma^2(x)h\cdot\left(2\kappa b(x) + \hat{R}(x)\right)}{\frac{\kappa \sigma(x)}{\sqrt{3h}}\cdot x^2 + \hat{R}(x)\cdot x^2 + \sigma^3(x)\kappa\sqrt{3h} + \sigma^2(x)h\cdot\left(2\kappa b(x) + \hat{R}(x)\right)}\\
        \\
        &= \frac{\kappa \sigma^2(x)\left(2x + 2b(x)h + R(x)\right)}{\kappa \sigma^2(x)\left(\frac{x^2}{\sigma(x)\sqrt{3h}} + \sigma(x)\sqrt{3h} + 2b(x)h + R(x)\right)}\\
        &= \frac{2x + 2b(x)h + R(x)}{\frac{x^2}{\sigma(x)\sqrt{3h}} + \sigma(x)\sqrt{3h} + 2b(x)h + R(x)}
    \end{split}
    \end{equation*}
    where
    $$R(x) = \frac{\sigma^2(x)h +x^2}{\kappa \sigma^2(x)}\cdot \hat{R}(x)\,.$$
    Recall $x \le \sigma_2\sqrt{3h}$,
    $$\frac{\sigma^2(x)h +x^2}{\kappa \sigma^2(x)} \leq \frac{\sigma_2^2h + 3\sigma_2^2h}{k\sigma_1^2} = Ch.$$
    Hence,
    $$|R(x)| \leq Ch |\hat R(x)| \leq Ch(x + x^2) \leq C(hx+h^2).$$
    Since the drift is positive, substitute $\lambda(x)$ back into \eqref{eqn:first_mom_positive_drifted_diffusion}, we have
    \begin{equation*}
    \begin{split}
        \mathbb{E}_x(\Delta X_k) &= \lambda(x) \cdot \left( \frac{x^2}{2\sigma(x)\sqrt{3h}} + \frac{1}{2}\sigma(x)\sqrt{3h} + b(x)h + \frac{1}{2}R(x) - \frac{1}{2}R(x) \right) - x\\
        &= x + b(x)h + \frac{1}{2}\left(1-\lambda(x)\right)R(x) - x\\
        &= b(x)h + \frac{1}{2}\left(1-\lambda(x)\right)R(x)
    \end{split}
    \end{equation*}
    Hence,
    $$\left| \mathbb{E}_x(\Delta X_k) - b(x)h\right| = \left|\frac{1}{2}\left(1-\lambda(x)\right)R(x)\right| \leq C(hx + h^2)$$
    because we know $\lambda(x) \in [0,1]$, $x\in [0,\sigma_2\sqrt{3h}]$, $|\hat{R}(x)| \leq \hat{K}(x + x^2)$, and the coefficient $\frac{\sigma^2(x)h +x^2}{\kappa \sigma^2(x)}$ we scale $\hat{R}(\cdot)$ to $R(\cdot)$ is of order $h$ using the boundary-layer bounds $\sigma_1\le \sigma(x)\le \sigma_2$ from Lemma~\ref{lemma:bound_on_drift_diffusion}.
\end{proof}

\noindent\textbf{Theorem~\ref{thm:local_err_cont_scheme_diffusion_special}.}
    (Local Error of $\{X_k\}_{k\in\mathbb{N}}$ in algorithm \ref{alg:diffusion_cont_scheme}, situation 3)\\
    If $b(0)= \frac{\sigma^2(0)}{2\kappa}$, there exist $\delta > 0$ and $C\geq 0$, independent of $x$ and $h$, depending only on the relevant bounds of derivatives of $u$ and on the local boundary constants for $b,\sigma$ such that $\forall h<\delta$ and for all $x \in \Omega_{br}$,
    \begin{equation}
    \label{eqn:local_err_cont_scheme_drifted_speical}
        \left|\mathbb{E}\left(u_{k+1}-u_k|X_k=x\right)\right| \leq C(h^2 + hx) = \mathcal{O}(hx + h^2).
    \end{equation}
\begin{proof}
    The proof follows the same Taylor-expansion argument as in Theorem~\ref{thm:local_err_cont_scheme_diffusion}. 
    Fix $x$ such that $x-\sigma(x)\sqrt{3h}<0$. 
    By Lemma~\ref{lemma:bound_on_drift_diffusion}, after decreasing $\delta$ if necessary, for all $h<\delta$ and all $x$ satisfying $x-\sigma(x)\sqrt{3h}<0$,
    $$
    b(x)\in[b_1,b_2],\qquad \sigma(x)\in[\sigma_1,\sigma_2].
    $$
    These constants are independent of $x$ and $h$.
    By Taylor expansion of $u$ centered at $(t_k,x)$, with $Y_k$ between $x$ and $X_{k+1}$ and $s_k\in[t_k,t_{k+1}]$, we have
    \begin{equation*}
    \begin{split}
    u_{k+1}-u_k
    &= \partial_tu_k\,h + \partial_xu_k\,\Delta X_k + \frac12 \partial_{xx}u_k\,\Delta X_k^2 \\
    &\quad + \frac12 \partial_{tt}u(s_k,x)\,h^2 + \frac16 \partial_{xxx}u(t_k,Y_k)\,\Delta X_k^3 .
    \end{split}
    \end{equation*}
    Taking conditional expectation given $X_k=x$ and using the backward equation \eqref{eqn:diffusion_bckwd_eqn}, we obtain
    \begin{equation}
    \label{eq:taylor4_diffusion_expectation_s3}
    \begin{split}
    &\left|\mathbb{E}\left(u_{k+1}-u_k\mid X_k=x\right)\right| \\
    &\le \left|\partial_xu(t_k,x)\right|\cdot \left|\mathbb{E}_x(\Delta X_k)-b(x)h\right|
     + \frac12 \left|\partial_{xx}u(t_k,x)\right|\cdot \left|\mathbb{E}_x(\Delta X_k^2)-\sigma^2(x)h\right| \\
    &\quad + \frac{h^2}{2}\|\partial_{tt}u\|_\infty
     + \frac16 \|u^{(3)}\|_\infty\,\left|\mathbb{E}_x(\Delta X_k^3)\right|.
    \end{split}
    \end{equation}
    
    Because $b(0)=\frac{\sigma^2(0)}{2\kappa}$, the sticky boundary condition implies
    \[
    \partial_{xx}u(t,0)=0.
    \]
    Hence, by Taylor expansion in space,
    \[
    \partial_{xx}u(t_k,x)=x\,u^{(3)}(t_k,y_x)\,, \qquad \text{for some $y_x\in[0,x]$.}
    \]
    Therefore,
    $$|\partial_{xx}u(t_k,x)|\le \|u^{(3)}\|_\infty\,x.$$
    Next, by Lemma~\ref{lemma:1st_2nd_mom_combination_generator_diffusion_special},
    \[
    \left|\mathbb{E}_x(\Delta X_k)-b(x)h\right|\le C_1(hx+h^2).
    \]
    Moreover, since $x-\sigma(x)\sqrt{3h}<0$, the increment is supported in an interval of size $O(\sqrt h)$, so
    \[
    \mathbb{E}_x(\Delta X_k^2)\le C_2 h
    \]
    for some constant $C_2>0$ independent of $x$ and $h$ because on the boundary layer the update either jumps to $0$ or takes a reflected Euler--Maruyama step.
    Finally, by the third-moment bound already established for the boundary layer,
    \[
    \left|\mathbb{E}_x(\Delta X_k^3)\right|\le C_3(hx+h^2).
    \]
    Substituting these estimates into the inequality~\eqref{eq:taylor4_diffusion_expectation_s3} yields
    \begin{equation*}
    \begin{split}
    \left|\mathbb{E}\left(u_{k+1}-u_k\mid X_k=x\right)\right|
    &\le \|\partial_xu\|_\infty \cdot C_1(hx+h^2)
     + \frac12 \|u^{(3)}\|_\infty x \cdot \bigl(\sigma^2(x)h+C_2h\bigr) \\
    &\quad + \frac{h^2}{2}\|\partial_{tt}u\|_\infty
     + \frac16 \|u^{(3)}\|_\infty C_3(hx+h^2) \\
    &\le M_1 h^2 + M_2 hx,
    \end{split}
    \end{equation*}
    for suitable constants $M_1,M_2\ge 0$ independent of $x$ and $h$, after possibly decreasing $\delta$ so that all preceding lemmas hold simultaneously.
\end{proof}

\begin{proof}[Proof of Theorem \ref{thm:local_err_cont_scheme_diffusion_general}]
    The estimate on $\Omega_{br}$ follows from the three boundary-layer results already established. More precisely, when
    $b(0)\neq \frac{\sigma^2(0)}{2\kappa}$, Theorems
    \ref{thm:local_err_cont_scheme_diffusion} and
    \ref{thm:local_err_cont_scheme_diffusion_ring}
    cover $\Omega_{b}$ and $\Omega_{r}$, respectively. When
    $b(0)=\frac{\sigma^2(0)}{2\kappa}$, we have $b(0)>0$, so by continuity,
    after possibly decreasing $\delta$, one has $b(x)>0$ throughout the boundary layer; hence
    $\Omega_{r}=\varnothing$, and Theorem
    \ref{thm:local_err_cont_scheme_diffusion_special}
    yields the estimate on all of $\Omega_{br}=\Omega_{b}$.
    Therefore, after decreasing $\delta>0$ finitely many times and enlarging the constant if needed, we may assume that
    $$\left|\mathbb{E}\left(u_{k+1}-u_k\mid X_k=x\right)\right|
    \le M_b(h^2+hx),
    \qquad \forall x\in \Omega_{br},\ \forall h<\delta.$$

    We now check $x \in \Omega_e$. Recall that, outside the boundary layer, the simulation
    scheme makes a standard Euler--Maruyama jump. Since
    $$x-\sigma(x)\sqrt{3h}+b(x)h>0,$$
    the absolute value in \eqref{eqn:diffusion_cont_scheme_outlayer} is inactive, and therefore
    $$\Delta X_k=\sigma(x)Z_{k+1}+b(x)h.$$
    Hence
    $$\mathbb{E}_x(\Delta X_k)=b(x)h,\qquad
    \mathbb{E}_x(\Delta X_k^2)=\sigma^2(x)h+b^2(x)h^2,$$
    $$\mathbb{E}_x(\Delta X_k^3)=3b(x)\sigma^2(x)h^2+b^3(x)h^3,$$
    and
    \begin{equation}
    \label{eq:EM_fourth_moment_diffusion}
    \begin{split}
        \mathbb{E}_x(\Delta X_k^4)
        &=
        \mathbb{E}_x\left(\sigma(x)Z_{k+1}+b(x)h\right)^4\\
        &=
        \frac{9}{5}\sigma^4(x)h^2
        +6b^2(x)\sigma^2(x)h^3
        +b^4(x)h^4.
    \end{split}
    \end{equation}
    The Taylor expansion upto fourth order is
    \begin{equation}\label{eq:taylor4_diffusion_ext}
    \begin{split}
        u_{k+1} - u_k
        &= \partial_t u_k \cdot h  +  \partial_xu_k\cdot \Delta X_k + \frac{1}{2}\partial_{xx}u_k\cdot \Delta X_k^2\\
        &\quad + \frac{1}{2}\partial_{tt} u(s_k, X_k) \cdot h^2 + \frac{1}{6}\partial_{xxx}u_k\cdot \Delta X_k^3 + \frac{1}{24}\partial_{xxxx}u(t_k, Y_{k})\cdot \Delta X_k^4.
    \end{split}
    \end{equation}
    Using the fact that $u$ solves the backward equation \eqref{eqn:diffusion_bckwd_eqn}, we know
    \begin{equation*}
    \begin{split}
       & \mathbb{E}_x\left[\partial_t u_k\cdot h + \partial_xu_k\cdot \Delta X_k + \frac{1}{2}\partial_{xx}u_k\cdot (\Delta X_k^2) \right]
        \\
        &\quad = -b(x)\partial_xu(t_k,x)\cdot h -\frac{\sigma^2(x)}{2}\partial_{xx}u(t_k,x)\cdot h
        + \partial_xu_k \cdot b(x)h
        + \frac{1}{2}\partial_{xx}u(t_k,x)\cdot \bigl(\sigma^2(x)h+b^2(x)h^2\bigr)\\
        &\quad = \frac{1}{2}b^2(x)\partial_{xx}u(t_k,x)\,h^2.
    \end{split}
    \end{equation*}
    Taking conditional expectation in \eqref{eq:taylor4_diffusion_ext}, and using the previous calculation, gives
    \begin{align*}
         \left|\mathbb{E}\left(u_{k+1}-u_k\mid X_k=x\right)\right|
         &\le \frac{1}{2}|b(x)|^2\|\partial_{xx}u\|_\infty h^2
         + \frac{h^2}{2}\|\partial_{tt}u\|_{\infty}\\
         &\quad
         + \frac{1}{6}\|u^{(3)}\|_\infty
         \left|3b(x)\sigma^2(x)h^2+b^3(x)h^3\right|\\
         &\quad
         +\frac{1}{24}\|u^{(4)}\|_\infty\,\left|\frac{9}{5}\sigma^4(x)h^2
        +6b^2(x)\sigma^2(x)h^3
        +b^4(x)h^4\right|.
    \end{align*}
    By Assumption~\ref{assump:linear_growth_coefficients}, there exists $L>0$ such that
    $$|b(x)|+|\sigma(x)|\le L(1+x),\qquad x\ge 0.$$
    Hence, using $h<1$ and $(1+x)^4\le C(1+x^4)$ for $x\ge0$, we have
    $$|b(x)|^2+\left|b(x)\right|\sigma^2(x)+|b(x)|^3h
    \le C(1+x^3)\le C(1+x^4),$$
    and
    $$\sigma^4(x)+b^2(x)\sigma^2(x)h+b^4(x)h^2
    \le C(1+x^4).$$
    Substituting these bounds and \eqref{eq:EM_fourth_moment_diffusion} into the previous estimate yields
    $$\left|\mathbb{E}\left(u_{k+1}-u_k\mid X_k=x\right)\right|
    \le M_eh^2(1+x^4),\qquad \forall x\in\Omega_e,\ \forall h<\delta,$$
    for some constant $M_e$ independent of $x$ and $h$.
\end{proof}

\noindent\textbf{Lemma~\ref{lemma:lambda_approx}.}
    Denote $\Tilde{\lambda}(\cdot)$ as the jump probability we propose in algorithm \ref{alg:cont_scheme} and $\lambda(\cdot)$ as the one we propose in algorithm \ref{alg:diffusion_cont_scheme}, if for the diffusion $\sigma(0)= 1$ for a sticky diffusion, then
    $$\left|\lambda(x) - \Tilde{\lambda}(x)\right| \leq K_{b,\sigma} \sqrt{h},\quad \forall x\in [0,\sqrt{3h}]$$ where $K_{b,\sigma} > 0$ is a constant depending on the drift term $b(\cdot)$, diffusion $\sigma(\cdot)$, and sticky parameter $\kappa$.
\begin{proof}
    Recall that
    $$
    \lambda(x)=\frac{N_\sigma(x)}{D_\sigma(x)},
    $$
    where
    $$
    N_\sigma(x):=(\sigma^2(x)-2\kappa b(x))x^2+2\kappa\sigma^2(x)x+\sigma^4(x)h
    $$
    and
    $$
    D_\sigma(x):=
    \left(\frac{\kappa\sigma(x)}{\sqrt{3h}}+\sigma^2(x)-2\kappa b(x)\right)x^2
    +\sigma^3(x)\kappa\sqrt{3h}
    +\sigma^4(x)h.
    $$
    Similarly,
    $$
    \Tilde{\lambda}(x)=\frac{N_0(x)}{D_0(x)},
    $$
    where
    $$
    N_0(x):=x^2+2\kappa x+h,
    \qquad
    D_0(x):=\left(\frac{\kappa}{\sqrt{3h}}+1\right)x^2+h+\kappa\sqrt{3h}.
    $$
    
    Using the same compactness argument as in Lemma \ref{lemma:bound_on_drift_diffusion}, and using $\sigma(0)=1$, there exist $\delta_1>0$ and constants
    $b_1,b_2,\sigma_1,\sigma_2$, with $\sigma_1>0$, such that for every $h<\delta_1$ and every $x\in[0,\sqrt{3h}]$,
    $$
    b(x)\in[b_1,b_2],
    \qquad
    \sigma(x)\in[\sigma_1,\sigma_2].
    $$
    Set
    $$
    B_M:=|b_1|\vee |b_2|.
    $$
    Moreover, since $\sigma(0)=1$ and $\sigma$ is continuously differentiable near $0$, after decreasing $\delta_1$ if necessary, there exists a constant $C_\sigma>0$, independent of $x$ and $h$, such that for $n=1,2,3,4$,
    \begin{equation}
    \label{eq:lambda_approx_sigma_power}
    |\sigma^n(x)-1|\le C_\sigma x\le C_\sigma\sqrt{3h},
    \qquad
    \forall x\in[0,\sqrt{3h}],\quad h<\delta_1.
    \end{equation}
    
    We also decrease $\delta_1$ so that $\delta_1\le 1$ and, for every $h<\delta_1$,
    $$
    \frac{\kappa\sigma_1}{\sqrt{3h}}>\sigma_2^2+2\kappa B_M.
    $$
    Then, for every $x\in[0,\sqrt{3h}]$,
    \begin{equation}
    \label{eq:lambda_approx_D_lower}
    D_\sigma(x)
    \ge \sigma^3(x)\kappa\sqrt{3h}
    \ge \kappa\sigma_1^3\sqrt{3h}.
    \end{equation}
    
    Now define
    $$
    \hat{\lambda}(x):=\frac{N_0(x)}{D_\sigma(x)}.
    $$
    We first compare $\lambda(x)$ and $\hat{\lambda}(x)$. Since
    $$
    N_\sigma(x)-N_0(x)
    =
    (\sigma^2(x)-1)x^2
    -2\kappa b(x)x^2
    +2\kappa(\sigma^2(x)-1)x
    +(\sigma^4(x)-1)h,
    $$
    using \eqref{eq:lambda_approx_sigma_power} and $x\le \sqrt{3h}$ gives
    \begin{equation}
    \label{eq:lambda_approx_N_bound}
    |N_\sigma(x)-N_0(x)|\le C_Nh
    \end{equation}
    for some constant $C_N>0$ independent of $x$ and $h$. Hence, by \eqref{eq:lambda_approx_D_lower},
    $$
    |\lambda(x)-\hat{\lambda}(x)|
    =
    \frac{|N_\sigma(x)-N_0(x)|}{D_\sigma(x)}
    \le
    \frac{C_Nh}{\kappa\sigma_1^3\sqrt{3h}}
    \le C'_N\sqrt{h},
    $$
    where $C'_N>0$ is independent of $x$ and $h$.
    
    Next, we compare $\hat{\lambda}(x)$ and $\Tilde{\lambda}(x)$. We have
    $$
    D_\sigma(x)-D_0(x)
    =
    \left(\frac{\kappa(\sigma(x)-1)}{\sqrt{3h}}+\sigma^2(x)-1-2\kappa b(x)\right)x^2
    +\kappa(\sigma^3(x)-1)\sqrt{3h}
    +(\sigma^4(x)-1)h.
    $$
    Using \eqref{eq:lambda_approx_sigma_power} again, together with $x\le \sqrt{3h}$ and $|b(x)|\le B_M$, we obtain
    \begin{equation}
    \label{eq:lambda_approx_D_bound}
    |D_\sigma(x)-D_0(x)|\le C_Dh
    \end{equation}
    for some constant $C_D>0$ independent of $x$ and $h$. Moreover,
    $$
    \hat{\lambda}(x)-\Tilde{\lambda}(x)
    =
    \frac{D_0(x)-D_\sigma(x)}{D_\sigma(x)}\cdot\frac{N_0(x)}{D_0(x)}.
    $$
    Using Lemma \ref{lemma:bound_on_lambda_SBM}, which gives $0\le \Tilde{\lambda}(x)\le 1$ on $[0,\sqrt{3h}]$, together with \eqref{eq:lambda_approx_D_lower} and \eqref{eq:lambda_approx_D_bound}, we obtain
    $$
    |\hat{\lambda}(x)-\Tilde{\lambda}(x)|
    \le
    \frac{|D_\sigma(x)-D_0(x)|}{D_\sigma(x)}|\Tilde{\lambda}(x)|
    \le
    \frac{C_Dh}{\kappa\sigma_1^3\sqrt{3h}}
    \le C'_D\sqrt{h},
    $$
    where $C'_D>0$ is independent of $x$ and $h$.
    
    Combining the two estimates, for every $h<\delta_1$ and every $x\in[0,\sqrt{3h}]$,
    $$
    |\lambda(x)-\Tilde{\lambda}(x)|
    \le
    |\lambda(x)-\hat{\lambda}(x)|+|\hat{\lambda}(x)-\Tilde{\lambda}(x)|
    \le
    K_{b,\sigma}\sqrt{h}.
    $$
    Here $K_{b,\sigma}:=C'_N+C'_D$ depends only on $\kappa$, the local bounds $b_1,b_2,\sigma_1,\sigma_2$, and the local derivative bound for $\sigma$, but is independent of $x$ and $h$. This completes the proof with $\delta=\delta_1$.
\end{proof}

\noindent\textbf{Lemma~\ref{lemma:key_diffusion_general}.}
    There exists $\delta > 0$ and constant $C$ such that for all $h < \delta$, 
    $$\mathbb{E}\left(\sum_{k=0}^{N-1} X_k \cdot \mathbf{1}_{\Omega_{br}} (X_k) \right) < C$$
\begin{proof}
    Throughout this proof we use the normalization $\sigma(0)=1$ made before the statement of the lemma. 
    By the proof of Lemma~\ref{lemma:bound_on_drift_diffusion}, there exist $\delta_B>0$ and a constant $K_B>0$, depending only on local bounds for $b$ and $\sigma$ near $0$, such that for every $h<\delta_B$,
    \begin{equation}
    \label{eq:keydiffusion_Bh}
        \Omega_{br}\subset B_h:= [0,\sqrt{3h}+K_Bh].
    \end{equation}
    Hence it is enough to prove a uniform bound with $\mathbf{1}_{B_h}$ in place of $\mathbf{1}_{\Omega_{br}}$. %insert here
    The rest of the proof is a discrete supersolution argument. We will construct a bounded barrier function $\varphi_h=e^{w_h}$ whose one-step drift is negative by an amount comparable to $x$ on the enlarged boundary layer $B_h$. More precisely, the key estimate is
    $$
    P\varphi_h(x)\leq \varphi_h(x)(1-f_h(x)+C_1h),
    \qquad
    f_h(x)\geq x\mathbf{1}_{B_h}(x).
    $$
    Once this is proved, the time factor $e^{K_T(T-t_i)}$ will absorb the harmless $C_1h$ error, giving a supersolution for the discrete occupation equation. A backward comparison then bounds the expected occupation sum.

    We now modify the test function used in Lemma~\ref{lemma:key} so that it is unchanged near the boundary but becomes constant away from the boundary. This avoids requiring $b$ and $\sigma$ to be globally bounded. Fix $s>16$ and $M>0$. Choose a fixed number $R>1$, independent of $h$. Later we decrease $\delta_B$ if necessary so that
    $$B_h\subset [0,R/4].$$

    Let $\chi\in C^2(\mathbb{R})$ satisfy
    $$0\leq \chi\leq 1,\qquad \chi(r)=1\text{ for }r\leq 0,\qquad \chi(r)=0\text{ for }r\geq 1.$$
    Define
    $$\eta_R(x):=\int_0^x \chi\left(\frac{y-R}{R}\right)\,dy.$$
    Then $\eta_R(x)=x$ for $0\leq x\leq R$, while $\eta_R(x)=A_R$ for $x\geq 2R$, where
    $$A_R:=\int_0^{2R}\chi\left(\frac{y-R}{R}\right)\,dy.$$
    Moreover, $0\leq \eta_R'(x)\leq 1$ and $|\eta_R''(x)|\leq C_R$ for a constant $C_R$ depending only on $R$ and the cutoff $\chi$.

    Define
    \begin{equation}
    \label{eq:keydiffusion_wh}
        w_h(x):=
        \begin{cases}
        M, & 0\leq x\leq \sqrt{3h}/2,\\
        M-s\left(\eta_R(x)-\frac{\sqrt{3h}}{2}\right), & x>\sqrt{3h}/2.
        \end{cases}
    \end{equation}
    Then $w_h$ agrees with the function used in Lemma~\ref{lemma:key} on $[0,R]$:
    $$
        w_h(x)=
        \begin{cases}
        M, & 0\leq x\leq \sqrt{3h}/2,\\
        M-s\left(x-\frac{\sqrt{3h}}{2}\right), & \sqrt{3h}/2<x\leq R.
        \end{cases}
    $$
    Also, $w_h$ is constant on $[2R,\infty)$. Let $\varphi_h(x):=e^{w_h(x)}.$ Since $w_h(x)\leq M$ for all $x\geq 0$, $\varphi_h(x)\leq e^M$ uniformly in $x$ and $h$.
    We first prove that there exist constants $C_1>0$ and $\delta_1>0$, independent of $x$ and $h$, such that for all $h<\delta_1$,
    \begin{equation}
    \label{eq:keydiffusion_Pphi_goal}
        P\varphi_h(x)\leq \varphi_h(x)\left(1-f_h(x)+C_1h\right),
        \qquad \forall x\geq 0,
    \end{equation}
    where $f_h$ satisfies
    \begin{equation}
    \label{eq:keydiffusion_fh_goal}
        f_h(x)\geq x\mathbf{1}_{B_h}(x).
    \end{equation}% Insert here

    We introduce an auxiliary transition operator $P_0$. 
    The role of $P_0$ is to separate the genuinely new diffusion effects from the SBM calculation. On $B_h$, the normalization $\sigma(0)=1$ implies $\sigma(x)-1=O(\sqrt h)$ and the scheme makes uniformly distributed jump of size $O(\sqrt h)$, so the continuous update in the diffusion scheme differs from the SBM reflected update by only $O(h)$.
    The remaining complication is that $P$ and $P_0$ correspond to different boundary layer 
    %($\Omega_{br}$ for $P_0$ and $\Omega_{br}$ for $P$)
    .
    Given $Z\sim \mathcal{U}[-\sqrt{3h},\sqrt{3h}]$, 
    $$
    P_0\varphi_h(x)=
    \begin{cases}
    \lambda(x)\mathbb{E}\left[\varphi_h(|x+Z|)\right]+(1-\lambda(x))\varphi_h(0), & 0\leq x\leq \sqrt{3h},\\
    \mathbb{E}\left[\varphi_h(|x+Z|)\right], & x>\sqrt{3h}.
    \end{cases}
    $$
    Thus $P_0$ is the same as the SBM transition operator, except that it uses the diffusion jump probability $\lambda(x)$ instead of $\Tilde{\lambda}(x)$.
    We claim that for $x\in B_h$,
\begin{equation}
\label{eq:keydiffusion_P_P0}
    P\varphi_h(x)\leq P_0\varphi_h(x)+C_2h\varphi_h(x),
\end{equation}
where $C_2$ is independent of $x$ and $h$. We prove this by coupling the true diffusion update and the auxiliary update using the same random variable
$Z\sim \mathcal{U}[-\sqrt{3h},\sqrt{3h}]$.

First, after decreasing $\delta_1$ if necessary, $B_h\subset[0,R/4]$. Since $\sigma(0)=1$ and $\sigma$ is locally Lipschitz near $0$, there exists a constant $C_\sigma>0$, independent of $x$ and $h$, such that
$$
|\sigma(x)-1|\leq C_\sigma x\leq C_\sigma(\sqrt{3h}+K_Bh)\leq C\sqrt h,
\qquad x\in B_h.
$$
Similarly, $b$ is locally bounded on a fixed compact neighborhood of $0$, so
$$
|b(x)|\leq C,\qquad x\in B_h.
$$
Thus, for all possible values of $Z$ and all $x\in B_h$,
$$
|x+Z|\leq x+\sqrt{3h}\leq C\sqrt h,
$$
and
$$
\left||x+\sigma(x)Z|+b(x)h\right|
\leq x+|\sigma(x)Z|+|b(x)|h
\leq C\sqrt h.
$$
After decreasing $\delta_1$ once more so that $C\sqrt h<R$, all one-step positions appearing in the comparison below lie in $[0,R]$.
We also record a bound on the Lipschitz constant of $\varphi_h$. Since $0\leq \eta_R'\leq 1$, the function $\eta_R$ is $1$-Lipschitz. For $h$ small enough that $\sqrt{3h}/2<R$, we can write
$$
w_h(x)=M-s\left(\eta_R(x)-\frac{\sqrt{3h}}{2}\right)_+,
$$
Hence $w_h$ is $s$-Lipschitz, uniformly in $h$. Since $w_h\leq M$,
\begin{equation}
\label{eq:phi_lipschitz}
    |\varphi_h(y)-\varphi_h(z)|
    =
    |e^{w_h(y)}-e^{w_h(z)}|
    \leq e^M |w_h(y)-w_h(z)|
    \leq se^M|y-z|.
\end{equation}
Therefore $\varphi_h$ is Lipschitz with constant $L_\varphi:=se^M$, independent of $x$ and $h$.
Finally, on $B_h$ we also have a uniform lower bound on $\varphi_h(x)$. Indeed, after decreasing $\delta_1$ if necessary,
\begin{equation}
\label{eq:lower_bound_phi}
    \varphi_h(x)=\exp(w_h(x))
    \geq
    \exp\left(M-s\left(\frac{\sqrt{3h}}{2}+K_Bh\right)\right)
    \geq e^{M/2},
    \qquad x\in B_h.
\end{equation}
Consequently, any estimate of the form $Ch$ on $B_h$ may be rewritten as $Ch\varphi_h(x)$ after increasing the constant $C$.
We now compare the continuous branches. Let
$$
Y_0:=|x+Z|
$$
denote the continuous update in the auxiliary operator $P_0$. For the true diffusion scheme, define the continuous update
$$
Y_1:=
\left||x+\sigma(x)Z|+b(x)h\right|
$$
Using $||a|-|b||\leq |a-b|$ gives
\begin{equation}
\label{eq:keydiffusion_cont_branch_inlayer}
\begin{split}
    |Y_1-Y_0|
    &=
    \left|\left||x+\sigma(x)Z|+b(x)h\right|-|x+Z|\right|\\
    &\leq
    \left||x+\sigma(x)Z|+b(x)h-|x+Z|\right|\\
    &\leq
    \left||x+\sigma(x)Z|-|x+Z|\right|+|b(x)|h\\
    &\leq
    |(\sigma(x)-1)Z|+|b(x)|h\\
    &\leq Ch.
\end{split}
\end{equation}
Therefore,
\begin{equation}
\label{eq:continuous_branch_comparison}
    |\varphi_h(Y_1)-\varphi_h(Y_0)|
    \leq L_\varphi |Y_1-Y_0|
    \leq Ch
    \leq Ch\varphi_h(x).
\end{equation}

It remains to control the possible mismatch between the two boundary thresholds $\sigma(x)\sqrt{3h}$ and $\sqrt{3h}$. A mismatch occurs only when one of the inequalities
$$
x\leq \sigma(x)\sqrt{3h},\qquad x\leq \sqrt{3h}
$$
holds and the other fails. Equivalently, $x$ lies between $\sqrt{3h}$ and $\sigma(x)\sqrt{3h}$. Hence
\begin{equation}
\label{eq:keydiffusion_threshold_mismatch}
    |x-\sigma(x)\sqrt{3h}|
    \leq |\sigma(x)\sqrt{3h}-\sqrt{3h}|
    =
    |\sigma(x)-1|\sqrt{3h}
    \leq Ch.
\end{equation}
In this mismatch region, we also need that $1-\lambda(x)$ is of order $h$. Using the identity
\begin{equation}
\label{eq:keydiffusion_one_minus_lambda}
1-\lambda(x)
=
\frac{\frac{\kappa\sigma(x)}{\sqrt{3h}}\left(x-\sigma(x)\sqrt{3h}\right)^2}
{\left(\frac{\kappa\sigma(x)}{\sqrt{3h}}+\sigma^2(x)-2\kappa b(x)\right)x^2+\sigma^3(x)\kappa\sqrt{3h}+\sigma^4(x)h},
\end{equation}
and using the same denominator lower-bound argument as in Lemma~\ref{lemma:lambda_approx}, after decreasing $\delta_1$ if necessary, there exists $c>0$ such that
$$
\left(\frac{\kappa\sigma(x)}{\sqrt{3h}}+\sigma^2(x)-2\kappa b(x)\right)x^2+\sigma^3(x)\kappa\sqrt{3h}+\sigma^4(x)h
\geq c\sqrt h,
\qquad x\in B_h.
$$
Combining this lower bound with \eqref{eq:keydiffusion_threshold_mismatch} and \eqref{eq:keydiffusion_one_minus_lambda}, we obtain
\begin{equation}
\label{eq:lambda_complement_estimate}
    1-\lambda(x)
    \leq
    \frac{C\frac{1}{\sqrt h}\cdot h^2}{c\sqrt h}
    \leq Ch.
\end{equation}
% Insert here
We now consider the possible threshold configurations.
The following four cases only account for which operator regards $x$ as being in the sticky boundary layer. If both operators make the same decision, the comparison follows from the $O(h)$ continuous-branch estimate. If their decisions differ, then $x$ lies in the thin mismatch region, where $1-\lambda(x)=O(h)$, so the extra sticky-jump contribution is also only $O(h)$.
\begin{enumerate}
    \item Suppose $x\in\Omega_{b}$ and $x\leq \sqrt{3h}.$
    Then
    $$
    P\varphi_h(x)
    =
    \lambda(x)\mathbb{E}\left[\varphi_h(Y_1)\right]+(1-\lambda(x))\varphi_h(0),
    $$
    and
    \begin{equation}
    \label{eq:P0_boundary_layer}
        P_0\varphi_h(x)
        =
        \lambda(x)\mathbb{E}\left[\varphi_h(Y_0)\right]+(1-\lambda(x))\varphi_h(0).
    \end{equation}
    Thus, by \eqref{eq:continuous_branch_comparison}
    $$
    P\varphi_h(x)-P_0\varphi_h(x)
    =
    \lambda(x)\mathbb{E}\left[\varphi_h(Y_1)-\varphi_h(Y_0)\right]
    \leq Ch\varphi_h(x).
    $$ 
    \item Suppose that $x\in\Omega_{b}$ and $x>\sqrt{3h}$,
    then $P_0$ uses only the continuous branch,
    \begin{equation*}
    \begin{split}
        P\varphi_h(x)-P_0\varphi_h(x)
        &=
        \lambda(x)\mathbb{E}\left[\varphi_h(Y_1)-\varphi_h(Y_0)\right]
        +(1-\lambda(x))\left(\varphi_h(0)-\mathbb{E}\left[\varphi_h(Y_0)\right]\right)\\
        &\leq Ch\varphi_h(x)
        +(1-\lambda(x))\cdot C\varphi_h(x)\\
        &\leq Ch\varphi_h(x).
    \end{split}
    \end{equation*}
    where we use \eqref{eq:continuous_branch_comparison}, \eqref{eq:lambda_complement_estimate} and the fact that
    $$
    |\varphi_h(0)-\varphi_h(Y_0)|\leq 2e^M \leq 2e^{M/2}\varphi_h(x)\leq C\varphi_h(x),
    \qquad x\in B_h,
    $$
    which follows from the upper bound $\varphi_h\leq e^M$ and the lower bound \eqref{eq:lower_bound_phi}.
    \item Suppose that $x\notin\Omega_{b}$ and $x\leq\sqrt{3h}$,
    then the true diffusion scheme uses only the continuous branch, while $P_0$ is in its sticky layer. Hence
    \begin{equation*}
    \begin{split}
        P\varphi_h(x)-P_0\varphi_h(x)
        &=
        \mathbb{E}\left[\varphi_h(Y_1)-\varphi_h(Y_0)\right]
        +(1-\lambda(x))\left(\mathbb{E}\left[\varphi_h(Y_0)\right]-\varphi_h(0)\right)\\
        &\leq Ch\varphi_h(x)
        +(1-\lambda(x))\cdot C\varphi_h(x)\\
        &\leq Ch\varphi_h(x).
    \end{split}
    \end{equation*}
    \item Suppose that $x\notin\Omega_{b}$ and $x\geq\sqrt{3h}$,
    then both operators use only their continuous branch, and therefore
    $$
    P\varphi_h(x)-P_0\varphi_h(x)
    =
    \mathbb{E}\left[\varphi_h(Y_1)-\varphi_h(Y_0)\right]
    \leq Ch\varphi_h(x).
    $$
    Taking $C_2$ large enough proves \eqref{eq:keydiffusion_P_P0}.
\end{enumerate}
    
    We now estimate $P_0\varphi_h$ on $[0,\sqrt{3h}]$. On this interval $w_h$ agrees with the test function in Lemma~\ref{lemma:key}. The only difference between $P_0$ and the SBM transition operator is the replacement of $\Tilde{\lambda}(x)$ by $\lambda(x)$.
    Let $\Tilde P_0$ denote the same auxiliary operator as $P_0$, but with $\lambda$ replaced by $\Tilde{\lambda}$. 
    We wish to show  $P_0\varphi_h(x) \approx \Tilde P_0\varphi_h(x)$ so that we can reuse calculation done in Lemma~\ref{lemma:key}.
    For $x\in[0,\sqrt{3h}]$, together with \eqref{eq:P0_boundary_layer},
    $$
    P_0\varphi_h(x)-\Tilde P_0\varphi_h(x)
    =
    \left(\lambda(x)-\Tilde{\lambda}(x)\right)
    \left(
    \mathbb{E}\left[\varphi_h(Y_0)\right]-\varphi_h(0)
    \right),
    $$
    where $Y_0=|x+Z| \leq 2\sqrt{3h}$.
    Using the Lipschitz bound for $\varphi_h$ \eqref{eq:phi_lipschitz} and the lower bound \eqref{eq:lower_bound_phi}, 
    $$
    \left|
    \mathbb{E}\left[\varphi_h(Y_0)\right]-\varphi_h(0)
    \right|
    \leq
    \mathbb{E}\left|\varphi_h(Y_0)-\varphi_h(0)\right|
    \leq C\sqrt h
    \leq C\sqrt h\,\varphi_h(x).
    $$
    Apply Lemma~\ref{lemma:lambda_approx},
    $$
    |P_0\varphi_h(x)-\Tilde P_0\varphi_h(x)|
    \leq Ch\varphi_h(x),
    \qquad x\in[0,\sqrt{3h}].
    $$
    We now estimate $P_0\varphi_h(x)-\varphi_h(x)$  by the case-by-case calculation in Lemma~\ref{lemma:key}.
    \begin{enumerate}
        \item
        For $x\in[0,\sqrt{3h}/2]$,
        $$
        P_0\varphi_h(x)\leq \varphi_h(x)\left(1-f_h(x)+C_3h\right),
        $$
        where
        $$
        f_h(x):=
        \frac{s}{4}\cdot\frac{\Tilde{\lambda}(x)}{2}\sqrt{3h}
        +s\cdot\frac{\Tilde{\lambda}(x)x^2}{2\sqrt{3h}}.
        $$
        The proof of Lemma~\ref{lemma:key} shows that, for $s>16$,
        $$
        f_h(x)\geq x,\qquad x\in[0,\sqrt{3h}/2].
        $$
        \item 
        Likewise, for $x\in[\sqrt{3h}/2,\sqrt{3h}]$,
        $$
        P_0\varphi_h(x)\leq \varphi_h(x)\left(1-f_h(x)+C_3h\right),
        $$
        where
        $$
        f_h(x):=
        \frac{s\Tilde{\lambda}(x)}{16}\sqrt{3h}
        +\frac{s\Tilde{\lambda}(x)x^2}{4\sqrt{3h}}
        +\frac{s\Tilde{\lambda}(x)x}{4}
        +\frac{s\sqrt{3h}}{2}
        -sx.
        $$
        Again, the proof of Lemma~\ref{lemma:key} shows that, for $s>16$,
        $$
        f_h(x)\geq x,\qquad x\in[\sqrt{3h}/2,\sqrt{3h}].
        $$
        \item 
        The preceding SBM comparison covers $[0,\sqrt{3h}]$. The enlarged set $B_h$ contains one additional strip of width $O(h)$, namely $[\sqrt{3h},\sqrt{3h}+K_Bh]$. This strip appears only because the diffusion boundary layer is not exactly the SBM boundary layer, so it must be checked directly.
        It remains to treat the thin strip $[\sqrt{3h},\sqrt{3h}+K_Bh]$. On this interval $P_0$ performs standard EM jump without reflection because $x\geq \sqrt{3h}$. Also $x+Z\in[0,R]$ for $h$ sufficiently small, so $w_h$ agrees with the original function used in Lemma~\ref{lemma:key}. A direct calculation gives, uniformly for $x\in[\sqrt{3h},\sqrt{3h}+K_Bh]$,
        \begin{equation}
        \label{eq:keydiffusion_thin_strip_calc}
        \begin{split}
            P_0\varphi_h(x)-\varphi_h(x)
            &=
            \left(\frac{3}{4}-\frac{x}{2\sqrt{3h}}\right)e^M
            +\frac{e^M}{2\sqrt{3h}}\int_{\sqrt{3h}/2}^{\sqrt{3h}+x}e^{-s(z-\sqrt{3h}/2)}\,dz
            -e^{w_h(x)}\\
            &=
            -e^{w_h(x)}
            \left[
            \frac{s}{4}
            \left(
            \frac{x^2}{\sqrt{3h}}-3x+\frac{9}{4}\sqrt{3h}
            \right)
            +\mathcal{O}(h)
            \right].
        \end{split}
        \end{equation}
        Thus on this thin strip define
        $$
        f_h(x):=\frac{s}{4}
        \left(
        \frac{x^2}{\sqrt{3h}}-3x+\frac{9}{4}\sqrt{3h}
        \right).
        $$
        For $x\in[\sqrt{3h},\sqrt{3h}+K_Bh]$,
        $$
        f_h'(x)=\frac{s}{4}\left(\frac{2x}{\sqrt{3h}}-3\right)\leq -\frac{s}{8}
        $$
        after decreasing $\delta_1$ so that $2K_Bh/\sqrt{3h}\leq 1/2$. Hence $f_h$ is decreasing on the thin strip. Therefore
        \begin{equation}
        \label{eq:keydiffusion_thin_strip_lower}
        \begin{split}
            f_h(x)
            &\geq f_h(\sqrt{3h}+K_Bh)\\
            &=\frac{s}{16}\sqrt{3h}-\frac{sK_B}{4}h+\frac{sK_B^2}{4}\frac{h^2}{\sqrt{3h}}.
        \end{split}
        \end{equation}
        Since $s>16$, after decreasing $\delta_1$ if necessary,
        $$
        f_h(\sqrt{3h}+K_Bh)>\sqrt{3h}+K_Bh.
        $$
        Hence
        $$
        f_h(x)\geq x,\qquad x\in[\sqrt{3h},\sqrt{3h}+K_Bh].
        $$
    \end{enumerate}
    % Insert here
    Combining the estimates for $P_0\varphi_h(x)-\varphi_h(x)$ on $B_h$ with \eqref{eq:keydiffusion_P_P0}, we obtain
    $$
    P\varphi_h(x)\leq \varphi_h(x)(1-f_h(x)+Ch),
    \qquad x\in B_h.
    $$
    
    We now prove the corresponding estimate outside $B_h$. 
    For $x\notin B_h$, set
        $$f_h(x):=0.$$
    Since $\Omega_{br}\subset B_h$, if $x\notin B_h$, then the true scheme is outside the boundary layer and the standard Euler--Maruyama update is nonnegative for every possible $Z$. Thus
    $$X_{k+1}=x+\sigma(x)Z+b(x)h.$$
    By Assumption~\ref{assump:linear_growth_coefficients}, there exists $L>0$ such that
    $$
    |b(x)|+|\sigma(x)|\leq L(1+x),\qquad x\geq 0.
    $$
    If we can assume global boundedness on drift and diffusion, this part is straightforward because the usual Taylor expansion gives an order-$h$ one-step bound for the test function under bounded coefficients.
    Without global boundedness assumption, we analyze $P\varphi_h(x)$ in $$
    B_h^c\cap[0,R/2],\qquad [R/2,4R],\qquad [4R,\infty).
    $$
    The first region exploits the piecewise linear structure of $w(x)$ and boundedness of $\sigma(x),\, b(x)$ in the compact domain. 
    The second region exploits the smoothness of $\varphi$ and the boundedness of drift and diffusion.
    The third region, $\varphi$ is constant by construction and we can make sure the next step is within the constant region by linear growth assumption.
    \begin{enumerate}
        \item 
        Consider $x\in B_h^c\cap[0,R/2]$. On $[0,R]$, $b$ and $\sigma$ are bounded. For $h$ sufficiently small, every possible value of $x+\sigma(x)Z+b(x)h$ lies in $[0,R]$. Since $x\notin B_h$, we also have $x>\sqrt{3h}/2$. On $[0,R]$, the function $w_h$ is flat-linear, and for $y\in[0,R]$ and $x>\sqrt{3h}/2$,
        $$w_h(y)-w_h(x)\leq -s(y-x).$$
        Therefore, by Taylor expansion around $0$, using first and second moment of uniform random variable and the boundedness of $\sigma$, $b$ in $[0,R]$, there exists $C_4$ is independent of $x$ and $h$ such that
        $$
        \frac{P\varphi_h(x)}{\varphi_h(x)}
        \leq
        \mathbb{E}\left[e^{-s(\sigma(x)Z+b(x)h)}\right]
        \leq 1+C_4h.
        $$
        \item 
        Consider $x\in[R/2,4R]$. For $h$ sufficiently small, every possible value of
        $$Y:=x+\sigma(x)Z+b(x)h$$
        lies in $[R/4,5R]$. Also $\sqrt{3h}/2<R/4$, so $\varphi_h$ is $C^2$ on $[R/4,5R]$. 
        There exist constants $c_{\varphi},C_{\varphi}>0$, depending on $s,M,R$ and $\chi$ but not on $x$ or $h$, such that
        $$
        0<c_{\varphi}\leq \varphi_h(y),
        \qquad
        |\varphi_h'(y)|+|\varphi_h''(y)|\leq C_{\varphi},
        \qquad y\in[R/4,5R].
        $$
        Taylor expansion gives
        $$
        \varphi_h(Y)
        =
        \varphi_h(x)+\varphi_h'(x)(Y-x)+\frac{1}{2}\varphi_h''(\xi)(Y-x)^2
        $$
        for some $\xi$ between $x$ and $Y$. Taking expectations and using $\mathbb{E}Z=0$ and $\mathbb{E}Z^2=h$,
        $$
        |P\varphi_h(x)-\varphi_h(x)|
        \leq C h.
        $$
        Since $\varphi_h(x)\geq c_{\varphi}$ on this compact interval, this implies
        $$
        P\varphi_h(x)\leq \varphi_h(x)(1+C_5h),
        \qquad x\in[R/2,4R],
        $$
        where $C_5$ is independent of $x$ and $h$.
        \item 
        Consider $x\geq 4R$. By the linear-growth assumption, for all possible $Z$,
        $$
        x+\sigma(x)Z+b(x)h
        \geq x-L(1+x)(\sqrt{3h}+h).
        $$
        Since $R\geq 1$, $1+x\leq \frac{5}{4}x$ for $x\geq 4R$. After decreasing $\delta_1$ so that
        $$\frac{5}{4}L(\sqrt{3h}+h)\leq \frac{1}{2},$$
        we get
        $$x+\sigma(x)Z+b(x)h\geq \frac{x}{2}\geq 2R.$$
        Thus both $x$ and $X_{k+1}$ lie in the region where $w_h$ is constant. Therefore
        $$P\varphi_h(x)=\varphi_h(x),\qquad x\geq 4R.$$
    \end{enumerate}

    Taking $C_1$ to be larger than the constants appearing in the estimates above, and decreasing $\delta_1$ if necessary, we obtain \eqref{eq:keydiffusion_Pphi_goal} for all $x\geq 0$ and all $h<\delta_1$. Moreover, by construction,
    $$f_h(x)\geq x\mathbf{1}_{B_h}(x).$$
    Since $f_h$ is supported on $B_h$ and the formulas above are all of order $\sqrt h$, there exists $C_6>0$, independent of $x$ and $h$, such that
    \begin{equation}
    \label{eq:keydiffusion_fh_upper}
        0\leq f_h(x)\leq C_6\sqrt h,\qquad \forall x\geq 0,\quad h<\delta_1.
    \end{equation}
    % Insert here
    We now introduce the time-dependent supersolution
    \begin{equation}
    \label{eq:keydiffusion_V_define}
        V(t_i,x):=e^{K_T(T-t_i)}\varphi_h(x),
        \qquad i=0,\dots,N.
    \end{equation}
    The constant $K_T>0$ will be chosen below. Using \eqref{eq:keydiffusion_Pphi_goal},
    \begin{equation}
    \label{eq:keydiffusion_PV_calc}
    \begin{split}
        PV(t_i,x)-V(t_i,x)
        &=e^{K_T(T-t_i-h)}P\varphi_h(x)-e^{K_T(T-t_i)}\varphi_h(x)\\
        &\leq
        e^{K_T(T-t_i)}\varphi_h(x)
        \left[
        e^{-K_Th}(1-f_h(x)+C_1h)-1
        \right].
    \end{split}
    \end{equation}
    After decreasing $\delta_1$ if necessary,
    $$e^{-K_Th}=1-K_Th+r_{K_T}(h),\qquad |r_{K_T}(h)|\leq C_7(K_T)h^2.$$
    Using this expansion in \eqref{eq:keydiffusion_PV_calc}, together with \eqref{eq:keydiffusion_fh_upper}, gives
    $$
    PV(t_i,x)-V(t_i,x)
    \leq
    -e^{K_T(T-t_i)}\varphi_h(x)
    \left(
    f_h(x)+(K_T-C_1)h-C_8(K_T)h^{3/2}
    \right).
    $$
    Choose
    $$K_T:=C_1+1.$$
    After decreasing $\delta_1$ again, we may assume
    $$
    (K_T-C_1)h-C_8(K_T)h^{3/2}\geq 0.
    $$
    Hence
    \begin{equation}
    \label{eq:keydiffusion_PV_super}
        PV(t_i,x)-V(t_i,x)
        \leq
        -e^{K_T(T-t_i)}\varphi_h(x)f_h(x).
    \end{equation}
    Finally, decrease $\delta_1$ once more so that
    $$M-s\left(\frac{\sqrt{3h}}{2}+K_Bh\right)\geq 0.$$
    Then $w_h(x)\geq 0$ on $B_h$, so $e^{K_T(T-t_i)}\varphi_h(x)\geq 1$ on $B_h$. Combining this with \eqref{eq:keydiffusion_fh_goal} and \eqref{eq:keydiffusion_PV_super}, we conclude that
    \begin{equation}
    \label{eq:keydiffusion_PV_final}
        PV(t_i,x)-V(t_i,x)
        \leq
        -x\mathbf{1}_{B_h}(x),
        \qquad i=0,\dots,N-1,\quad x\geq 0.
    \end{equation}
    Define
    $$
    U_B(t_i,x):=
    \mathbb{E}\left(
    \sum_{k=i}^{N-1}X_k\mathbf{1}_{B_h}(X_k)
    \,\middle|\, X_i=x
    \right).
    $$
    By Lemma~\ref{lem:V}, applied with $g(t_i,x)=x\mathbf{1}_{B_h}(x)$, we have
    $$
    PU_B(t_i,x)-U_B(t_i,x)=-x\mathbf{1}_{B_h}(x),
    \qquad i=0,\dots,N-1,
    $$
    with terminal condition $U_B(T,x)=0$. On the other hand, $V(T,x)=\varphi_h(x)\geq 0=U_B(T,x)$, and \eqref{eq:keydiffusion_PV_final} shows that $V$ is a supersolution. A backward induction, exactly as in the proof of Lemma~\ref{lemma:key}, gives
    $$
    U_B(t_i,x)\leq V(t_i,x),
    \qquad i=0,\dots,N,\quad x\geq 0.
    $$
    Since $w_h(x)\leq M$ for all $x\geq 0$,
    $$
    V(t_i,x)\leq e^{K_TT+M},
    \qquad i=0,\dots,N,\quad x\geq 0.
    $$
    Therefore
    $$
    U_B(t_0,x)\leq e^{K_TT+M},
    \qquad x\geq 0.
    $$
    Using $\Omega_{br}\subset B_h$, we get
    $$
    \mathbb{E}\left(
    \sum_{k=0}^{N-1}X_k\mathbf{1}_{\Omega_{br}}(X_k)
    \,\middle|\, X_0=x
    \right)
    \leq
    U_B(t_0,x)
    \leq e^{K_TT+M}.
    $$
    If $X_0$ is random, taking expectation and using the tower property gives
    $$
    \mathbb{E}\left(
    \sum_{k=0}^{N-1}X_k\mathbf{1}_{\Omega_{br}}(X_k)
    \right)
    \leq e^{K_TT+M}.
    $$
    Thus the desired bound holds with $C=e^{K_TT+M}$, which is independent of $h$.
\end{proof}

\begin{proof}[Proof of Lemma \ref{lemma:fourth_moment_bound_diffusion_scheme}]
    It suffices to prove the one-step estimate
    $$
    \mathbb{E}_x(X_{k+1}^4)\le (1+Ch)x^4+Ch,
    $$
    for a constant $C$ independent of $x$ and $h$.
    
    First consider $x\in\Omega_e$. In this region the scheme makes a standard Euler--Maruyama jump, and the absolute value is inactive, so
    $$
    X_{k+1}=x+b(x)h+\sigma(x)Z_{k+1},
    \qquad
    Z_{k+1}\sim \mathrm{Unif}[-\sqrt{3h},\sqrt{3h}].
    $$
    Using $\mathbb{E}Z_{k+1}=0$, $\mathbb{E}Z_{k+1}^3=0$, $\mathbb{E}Z_{k+1}^2=h$, and $\mathbb{E}Z_{k+1}^4=\frac95h^2$, we have
    \begin{equation}
    \begin{split}
        \mathbb{E}_x(X_{k+1}^4)
        &=
        (x+b(x)h)^4
        +6(x+b(x)h)^2\sigma^2(x)h
        +\frac95\sigma^4(x)h^2\\
        &=
        x^4
        +4x^3b(x)h
        +6x^2b^2(x)h^2
        +4xb^3(x)h^3
        +b^4(x)h^4\\
        &\quad
        +6x^2\sigma^2(x)h
        +12xb(x)\sigma^2(x)h^2
        +6b^2(x)\sigma^2(x)h^3
        +\frac95\sigma^4(x)h^2.
    \end{split}
    \end{equation}
    By the linear-growth assumption, $|b(x)|+|\sigma(x)|\le L(1+x)$. Hence, using $h<1$ and $(1+x)^4\le C(1+x^4)$ for $x\ge0$, each term after $x^4$ is bounded by $Ch(1+x^4)$. For example,
    $$|x^3b(x)h|\le Chx^3(1+x) \le 2Ch(1+x^4),$$
    $$x^2b^2(x)h^2+x^2\sigma^2(x)h\le Chx^2(1+x)^2\le 3Ch(1+x^4),$$
    and the remaining terms are bounded similarly by $Ch(1+x)^4\le Ch(1+x^4)$. Therefore,
    $$\mathbb{E}_x(X_{k+1}^4)\le x^4+Ch(1+x^4)\le (1+Ch)x^4+Ch,\qquad x\in\Omega_e.$$

    Next consider $x\in\Omega_{br}$. By Lemma~\ref{lemma:bound_on_drift_diffusion}, after decreasing $\delta$ if necessary, $x\le C\sqrt h$ and $b,\sigma$ are bounded on the boundary layer. The scheme either jumps to $0$ or makes a reflected Euler--Maruyama-type update, so in either case
    $$
    X_{k+1}\le x+\sigma_2\sqrt{3h}+B_Mh\le C\sqrt h.
    $$
    Hence
    $$
    \mathbb{E}_x(X_{k+1}^4)\le Ch^2\le Ch
    \le (1+Ch)x^4+Ch,
    \qquad x\in\Omega_{br}.
    $$
    Combining the two cases gives
    $$
    \mathbb{E}_x(X_{k+1}^4)\le (1+Ch)x^4+Ch,
    \qquad x\ge 0.
    $$
    Taking expectation with respect to $X_k$ yields
    $$
    \mathbb{E}(X_{k+1}^4)\le (1+Ch)\mathbb{E}(X_k^4)+Ch.
    $$
    By the discrete Gronwall inequality, for $0\le k\le N$ and $Nh=T$,
    $$
    \mathbb{E}(X_k^4)\le e^{CT}\left(\mathbb{E}(X_0^4)+CT\right).
    $$
    Therefore,
    $$
    \sup_{0\le k\le N}\mathbb{E}(1+X_k^4)\le C_T,
    $$
    where $C_T$ is independent of $h$.
\end{proof}

\end{appendix}

\end{document}